\documentclass[11pt]{amsart}

\usepackage{amsmath,amssymb,amsfonts,mathtools}
\usepackage[margin=1.05in]{geometry}
\usepackage[colorlinks=true,linkcolor=blue,citecolor=blue,urlcolor=blue]{hyperref}
\usepackage[nameinlink,capitalize]{cleveref}

\newtheorem{theorem}{Theorem}[section]
\newtheorem{proposition}[theorem]{Proposition}
\newtheorem{lemma}[theorem]{Lemma}
\newtheorem{corollary}[theorem]{Corollary}
\theoremstyle{definition}
\newtheorem{definition}[theorem]{Definition}
\theoremstyle{remark}
\newtheorem{remark}[theorem]{Remark}

\newcommand{\R}{\mathbb R}
\newcommand{\cA}{\mathcal A}

\newcommand{\cE}{\mathcal E}
\newcommand{\cP}{\mathcal P}
\newcommand{\bfR}{\mathbf R}
\newcommand{\bfK}{\mathbf K}
\newcommand{\bfE}{\mathbf E}
\newcommand{\one}{\mathbf 1}
\newcommand{\supp}{\operatorname{supp}}
\newcommand{\diam}{\operatorname{diam}}
\newcommand{\Lip}{\operatorname{Lip}}
\newcommand{\dist}{\operatorname{dist}}
\newcommand{\qdist}{\operatorname{qdist}}

\title[A two-weight theorem on a $C^{1,\mathrm{Dini}}$ curve]
{Two-weight $T1$ theorem for fractional Riesz transforms
---\\ {\small
one measure supported on a $C^{1,\mathrm{Dini}}$ curve}}

\author{Ji Li}
\address{School of Mathematical and Physical Sciences, Macquarie University,
NSW 2109, Australia}
\email{ji.li@mq.edu.au}

\author{Chun-Yen Shen}
\address{Department of Mathematics, National Taiwan University,
Taipei 106, Taiwan}
\email{cyshen@math.ntu.edu.tw}

\author{Eric T. Sawyer}
\address{Department of Mathematics and Statistics, McMaster University,
Hamilton, Ontario L8S 4K1, Canada}
\email{sawyer@mcmaster.ca}

\author{Brett D. Wick}
\address{Department of Mathematics, Washington University in St. Louis,
One Brookings Drive, St. Louis, Missouri 63130-4899, USA}
\email{wick@math.wustl.edu}

\date{}
\subjclass[2020]{Primary 42B20; Secondary 42B25, 42B35, 35J05, 76B47}

\keywords{fractional Riesz transforms, two-weight inequalities,
$T1$ theorem, $C^{1,\mathrm{Dini}}$ curves, energy conditions,
common point masses, Poisson equation, Green potential, Biot--Savart law}

\begin{document}
\raggedbottom

\begin{abstract}
Let $0\leq\alpha<n$, and let $\sigma$ and $\omega$ be locally finite
positive Borel measures on $\R^n$, possibly with common point masses.
Suppose that one measure is supported on a compact regular
$C^{1,\mathrm{Dini}}$ curve. We prove that the $\alpha$-fractional Riesz
vector maps $L^2(\sigma)$ to $L^2(\omega)$ if and only if the two
$\cA_2^\alpha$ conditions with holes, the two punctured $A_2^\alpha$
conditions, and the forward and dual testing conditions over rotated
cubes hold. The theorem includes curves whose tangent is not
H\"older continuous of any positive order. Straightening such a curve
produces a kernel whose first derivatives need not satisfy the
H\"older estimates required by the energy-side-condition $T1$ theorem.
We prove both strong energy conditions in the straightened coordinates
using only a sufficiently small graph slope, and show that these
conditions are preserved under biLipschitz changes of variables.
The Dini condition makes the errors in the kernel and testing
comparisons summable. We then transfer the energy conditions back
and apply the $T1$ theorem to the original smooth Riesz kernel.

We also obtain a two-weight criterion for Green gradients with
curve-supported sources.  For arclength measure on the curve and
observation measure $\dist(x,\Gamma)^\beta\,dx$, the range is
$\beta>n-3$ in a bounded tube and $n-3<\beta<n-2$ in $\R^n$.
The endpoints fail logarithmically whenever the measure is locally
finite.  In dimension three, the corresponding whole-space range for
the Biot--Savart field with arbitrary vector densities is $0<\beta<1$.
\end{abstract}

\maketitle

\section{Introduction and statement of the result}

Let \(T\) be a singular integral operator, and let \(\sigma\) and
\(\omega\) be locally finite positive Borel measures.  The two-weight
problem asks for conditions on \((\sigma,\omega)\) under which \(\|T_\sigma f\|_{L^2(\omega)} := \|T(f\sigma)\|_{L^2(\omega)} \leq C\|f\|_{L^2(\sigma)}\). The measures need not be doubling or mutually absolutely continuous.
They may also have common point masses.  These features distinguish the
two-weight problem from the usual one-weight theory.

Testing conditions first appeared in the two-weight characterizations
for positive operators.  Sawyer proved such characterizations for the
Hardy--Littlewood maximal operator and for fractional and Poisson
integrals in \cite{SawyerMaximal,SawyerFractional}.  For singular
integrals, cancellation creates an additional difficulty.  The
David--Journ\'e \(T1\) theorem \cite{DavidJourne} gives a testing
criterion in the unweighted setting, but its direct two-weight analogue
does not follow from the Muckenhoupt and testing conditions alone by the
same proof.

A testing characterization for the Hilbert transform was obtained
through a series of papers.  Nazarov, Treil and Volberg
introduced random dyadic grids, corona decompositions and a pivotal
condition in their study of this problem; see
\cite{NTVHilbert}.  They conjectured that the two-weight inequality for
the Hilbert transform should be characterized by a Poisson
\(A_2\) condition and the forward and dual testing conditions.

Lacey, Sawyer, Shen and Uriarte-Tuero reduced the problem to a
local estimate in \cite{DukeI}.  Lacey proved that estimate in
\cite{DukeII}.  Thus, under the assumption
that the two measures have no common point masses, the Hilbert transform
is bounded if and only if the Poisson \(A_2\) and testing conditions
hold.  Hyt\"onen later removed the restriction on common point masses
in \cite{HytonenHilbert}.

Energy conditions played an important part in these proofs.  For the
Hilbert transform, they can be recovered from the operator by using the
identity \(\frac{1}{y-x}-\frac{1}{y-x'} = \frac{x-x'}{(y-x)(y-x')}\). When \(y\) lies outside the interval containing \(x\) and \(x'\), the
denominator on the right has a fixed sign.  This gives the energy
reversal used in \cite{HilbertEnergy,DukeI,DukeII}.

The higher-dimensional problem is different.  The corresponding
energy reversal fails in general for the fractional Riesz vector; see
\cite{FailureEnergy}.  For this reason, the general two-weight theorem
for fractional singular integrals in \cite{EnergyCommon} assumes the
forward and dual strong energy conditions in addition to the
Muckenhoupt and testing conditions.  The good-\(\lambda\) theorem in
\cite{GoodLambda} gives the same conclusion without the quasiweak
boundedness condition, while retaining the energy conditions.

There are geometric cases in which the energy conditions follow from
boundedness or testing.  Sawyer, Shen and Uriarte-Tuero proved this in \cite{CurvePaper}
when one measure is supported on a regular
\(C^{1,\delta}\) curve, where \(\delta>0\).  Small pieces of the curve
are written as graphs and straightened to a line.  The one-dimensional
geometry then gives the required energy reversal.  Combining this with
the energy-side-condition theorem gives a characterization by the
Muckenhoupt and testing conditions alone.

We replace the $C^{1,\delta}$ hypothesis in \cite{CurvePaper} by
$C^{1,\mathrm{Dini}}$.  A H\"older modulus $\vartheta(t)\lesssim t^\delta$
with $\delta>0$ satisfies the Dini condition.  The modulus
$\vartheta(t)=(\log(e/t))^{-1-\varepsilon}$, where $0<t\leq1$ and
$\varepsilon>0$, is Dini but satisfies no H\"older bound of positive
order.

The energy-side-condition theorem used in \cite{CurvePaper} requires
H\"older-continuous first derivatives of the kernel.  Straightening a
$C^{1,\mathrm{Dini}}$ curve gives a conformal kernel whose first
derivatives have Dini continuity.  We prove the energy conditions for
this kernel using the small $C^1$ norm of the graph, then transfer the
energy conditions back to the original coordinates.  We apply the
general $T1$ theorem there, to the smooth fractional Riesz kernel.
The Dini condition is used to sum the kernel errors in the change of
variables.

We now state the result.  We use the notation and the definitions of the
Muckenhoupt, punctured Muckenhoupt, and testing constants from
\cite[Sections~2.2--2.3]{CurvePaper}, and the strong energy constants
from \cite[Definition~5]{EnergyCommon}.  All operator estimates are
required uniformly in the truncation parameters.

Throughout the paper,
\begin{equation*}
 \mathfrak A_2^\alpha
 :=
 \mathcal A_2^\alpha
 +\mathcal A_2^{\alpha,*}
 +A_2^{\alpha,\mathrm{punct}}
 +A_2^{\alpha,*,\mathrm{punct}}.
\end{equation*}
Thus \(\mathfrak A_2^\alpha\) includes both one-tailed conditions with
holes and both punctured conditions.

\begin{definition}\label{def:modulus}
A function \(\vartheta:[0,1]\to[0,\infty)\) is a modulus of continuity
if it is nondecreasing, \(\vartheta(0)=0\), and
\(\lim_{t\downarrow0}\vartheta(t)=0\).  It is a Dini modulus if
\begin{equation}\label{eq:Dini}
 [\vartheta]_{\mathrm{Dini}}
 :=
 \int_0^1\vartheta(t)\,\frac{dt}{t}<\infty.
\end{equation}
For a continuously differentiable map \(F\), put
\begin{equation*}
 \vartheta_F(t)
 :=
 \sup\bigl\{\|DF(x)-DF(y)\|:|x-y|\leq t\bigr\},
 \qquad 0\leq t\leq1.
\end{equation*}
We write \(F\in C^{1,\mathrm{Dini}}\) if \(\vartheta_F\) is a Dini
modulus.  If \(F\) is defined on an interval, the supremum is taken
over that interval.  If its domain is a parameter circle, we use the
cyclic distance.
\end{definition}

The definition is local on compact sets.  The graph maps used below
have the form \(\Psi(x_1,x') = (x_1,x'+\psi(x_1))\), with  \(\psi\in C_c^{1,\mathrm{Dini}}(\R;\R^{n-1})\). Their derivative moduli are therefore defined on all of \(\R^n\).

\begin{definition}\label{def:regular-curve}
A compact curve \(\Gamma\subset\R^n\) is a regular
\(C^{1,\mathrm{Dini}}\) curve if
$
 \Gamma=\Phi([a,b]),
$
where \(\Phi\) is parametrized by arc length,
\(\Phi\in C^{1,\mathrm{Dini}}([a,b];\R^n)\), and \(\Phi\) is one-to-one,
apart from the possible equality \(\Phi(a)=\Phi(b)\) when the curve is
closed.  Thus
$
 |\Phi'(t)|=1,
 \ t\in[a,b].
$
In the closed case, we also require
\(\Phi'(a)=\Phi'(b)\), and we regard \(\Phi\) as a regular
\(C^{1,\mathrm{Dini}}\) embedding of
\(\R/(b-a)\mathbb Z\).  The derivative modulus is then computed using
the cyclic distance on this parameter circle.
\end{definition}

The fractional Riesz vector is
\begin{equation*}
 \bfR^{\alpha,n}
 =
 \bigl(R_1^{\alpha,n},\ldots,R_n^{\alpha,n}\bigr),
 \qquad
 \bfK^{\alpha,n}(x,y)
 =
c_{\alpha,n}\frac{y-x}{|y-x|^{n+1-\alpha}}.
\end{equation*}

A rotated cube is a set \(a+UQ\), where \(Q\) is a half-open
axis-parallel cube, \(a\in\R^n\), and \(U\) is orthogonal.  Its
position, side length and rotation are arbitrary.

\begin{theorem}\label{thm:main}
Let \(0\leq\alpha<n\).  Let \(\sigma\) and \(\omega\) be locally finite
positive Borel measures on \(\R^n\), possibly with common point masses.
Suppose that at least one of the measures is supported on a regular
\(C^{1,\mathrm{Dini}}\) curve.  Then \(\|\bfR^{\alpha,n}_\sigma f\|_{L^2(\omega)} \leq \mathfrak N_{\bfR^{\alpha,n}} \|f\|_{L^2(\sigma)}\) if and only if the following conditions hold:
\begin{enumerate}
\item the two one-tailed \(\cA_2^\alpha\) conditions with holes;
\item the two punctured \(A_2^\alpha\) conditions;
\item the testing conditions for \(\bfR^{\alpha,n}\) and its adjoint
over all rotated cubes.
\end{enumerate}
Moreover,
\begin{equation}\label{eq:norm-comparison}
 \mathfrak N_{\bfR^{\alpha,n}}
 \approx
 \sqrt{\mathfrak A_2^\alpha}
 +\mathfrak T_{\bfR^{\alpha,n}}
 +\mathfrak T_{\bfR^{\alpha,n}}^{\mathrm{dual}}.
\end{equation}
The constants depend on \(n\), \(\alpha\), and the fixed finite graph
cover of the curve.  More precisely, they may depend on the number of
graph pieces, their collar and separation constants, the biLipschitz
constants of the graph maps, and the Dini integrals of their derivative
moduli.  They may also depend on the fixed cutoff, the fixed goodness
exponent, and the resulting choices of \(A_0\), \(\gamma\), and \(r\).
\end{theorem}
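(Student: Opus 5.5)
The plan is to reduce the theorem to the energy-side-condition $T1$ theorem of \cite{EnergyCommon}. That theorem shows $\mathfrak N_{\bfR^{\alpha,n}}$ is controlled by $\sqrt{\mathfrak A_2^\alpha}$, the testing constants, and the two strong energy constants. So everything hinges on bounding both strong energy constants by $\sqrt{\mathfrak A_2^\alpha}+\mathfrak T_{\bfR^{\alpha,n}}+\mathfrak T^{\mathrm{dual}}_{\bfR^{\alpha,n}}$.

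The necessity direction is standard. Testing over rotated cubes follows by applying the inequality to $\one_Q$, since the Riesz vector is rotation invariant. The holed $\cA_2^\alpha$ and punctured conditions follow as in \cite[Sections~2.2--2.3]{CurvePaper}. Indeed, those arguments use only the kernel lower bound $|\bfK^{\alpha,n}(x,y)|\gtrsim|x-y|^{\alpha-n}$ and the fact that one component of the kernel has a fixed sign on cones; neither depends on the curve. This gives the lower bound in \eqref{eq:norm-comparison}.

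For sufficiency, I will assume (say) that $\sigma$ is supported on $\Gamma$. I fix a finite cover of $\Gamma$ by collars in which $\Gamma$ is a graph $\{(x_1,x'+\psi(x_1))\}$ after a rotation, with $\|\psi'\|_\infty$ as small as desired and $\psi\in C^{1,\mathrm{Dini}}_c$. Pieces that are separated in the cover interact only through smooth bounded kernels, and those interactions are controlled by $\mathfrak A_2^\alpha$. Near a single piece, I pull back by $\Psi(x_1,x')=(x_1,x'+\psi(x_1))$. This straightens $\sigma$ onto the line $\R e_1$ and yields the kernel $\widetilde K(u,v)=\bfK^{\alpha,n}(\Psi u,\Psi v)$. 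I then prove the strong energy conditions for $(\Psi^*\sigma,\Psi^*\omega)$ in the new coordinates, following \cite{CurvePaper}.

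For $u,u'$ on the line and $v$ outside a cube containing them, write
\[
\widetilde K(u,v)-\widetilde K(u',v)=\nabla_1\widetilde K(\xi,v)\,(u_1-u'_1).
\]
Because $D\Psi=I+O(\|\psi'\|_\infty)$, the $e_1$-component of $\nabla_1\widetilde K$ stays comparable to the model Riesz quantity $|v-\xi|^{\alpha-n-1}$, with a small error. The error term needs only $\|\psi'\|_\infty$ small, not any Hölder modulus. This gives the one-dimensional energy reversal, which bounds the energy by testing and $\mathcal A_2$ in the straightened picture, as in \cite{CurvePaper}.

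Next I must transfer back. I first show the strong energy constants are biLipschitz invariant up to constants. The plan is to compare cubes with their images: each rotated cube contains, and is contained in, images of cubes of comparable size. The Poisson integrals and the energy functional $\sum |\Psi^*\omega|(J)\,\bfE(J)^2|J|^{2/n}$ are each comparable under $\Psi$. Finally I compare the straightened testing constants to the original testing over rotated cubes.

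This last comparison is where Dini enters, and I expect it to be the main obstacle. The pulled-back operator with kernel $\widetilde K$ differs from a genuine Riesz operator in the straightened variables by a kernel error. At scale $t$ this error is roughly $\vartheta_\psi(t)|u-v|^{\alpha-n}$. One must show that the sum over dyadic annuli, $\sum_k\vartheta(2^{-k})\lesssim[\vartheta]_{\mathrm{Dini}}$, applied to Poisson-type tails, is dominated by $\sqrt{\mathfrak A_2^\alpha}$ (a Schur/positive-operator bound). I would first try to bound the error operator by a Dini-weighted fractional maximal operator. That operator is handled by the $\cA_2$ conditions with holes, and the punctured conditions handle the common point masses.

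Once the energy conditions hold in the original coordinates with bounds by the right-hand side of \eqref{eq:norm-comparison}, I apply \cite{EnergyCommon} (or \cite{GoodLambda}) to the smooth kernel $\bfK^{\alpha,n}$ and conclude.
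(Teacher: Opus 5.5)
Your architecture matches the paper's: straighten, reverse energy using only the small slope, transfer energy back by biLipschitz invariance, apply the $T1$ theorem to the smooth kernel, and use Dini only to sum kernel errors. Necessity is fine. Two steps, however, fail as written.

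First, you never localize to one arc, and this cannot be avoided. After pulling back by one chart map, only the part of the curve measure in that chart lies on $L_0$. You cannot prove energy chart by chart and add, because the variance of the curve measure is superadditive: $\|P_J^{\sigma}\mathbf x\|_{L^2(\sigma)}^2\geq\sum_i\|P_J^{\sigma_i}\mathbf x\|_{L^2(\sigma_i)}^2$ when $\sigma=\sum_i\sigma_i$. So the $T1$ theorem must be applied to each restricted pair $(\sigma_i,\omega)$, with $\sigma_i=\one_{\Gamma_i}\sigma$, and the finitely many norms summed. That needs testing for the restricted pair, and the condition with $\sigma_i$ inside the operator is not inherited. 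On $Q$, $\bfR^{\alpha,n}(\one_Q\sigma_i)$ differs from $\bfR^{\alpha,n}(\one_Q\sigma)$ by $\bfR^{\alpha,n}(\one_Q\one_{\Gamma\setminus\Gamma_i}\sigma)$. Your remark that separated pieces interact through bounded kernels does not cover this. The arcs of a partition of a connected curve are adjacent, and near a common endpoint this term is as singular as the operator itself; moreover $Q$ may be much larger than the arc. The paper needs all of \cref{sec:6} for this:
tube separation of the nonadjacent parts of $\Gamma$;
a diagonal rotation making every testing quasicube meet $L_0$ in an interval (\cref{lem:axis-sections});
an auxiliary quasicube $P_Q$ with $P_Q\cap L_0=Q\cap I_i$;
a reduction of the endpoint contributions on $3P_Q$ to one-dimensional weighted Hardy inequalities whose constants are the holed one-tailed conditions (\cref{lem:Hardy,lem:local-tripled});
and \cref{lem:A2-restriction} for the punctured conditions, since restriction can remove the largest common atom.

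Second, the Dini step and the choice of kernel are misidentified. Your $\widetilde K=\bfK^{\alpha,n}(\Psi\,\cdot,\Psi\,\cdot)$ differs from the Euclidean Riesz kernel in the straightened variables by an error of size $\|\psi'\|_\infty|x-y|^{\alpha-n}$ on every annulus, and this is not summable. A Dini-small error appears only after factoring out the Jacobian: for $|x-y|\leq2r_0$, $\widetilde K(x,y)=D\Psi(x)\bfK_\Psi^{\alpha,n}(x,y)+O\bigl(\vartheta_\psi(|x-y|)|x-y|^{\alpha-n}\bigr)$. Here $\bfK_\Psi^{\alpha,n}$ is the conformal kernel, and $D\Psi(x)$ depends on one variable only, with $(D\Psi(x)-I)^2=0$. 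The far-field error separately requires $\psi$ to be bounded (\cref{prop:change-variable}).

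The reversal must therefore be run for $\bfK_\Psi^{\alpha,n}$, not for $\widetilde K$. Consider the energy in which the variance is taken for the measure off the curve. There \cref{lem:weak-backward} uses $\xi(y)\cdot(x-y)=-|y'|$ for every $x\in L_0$. For $\widetilde K$ the extra term $\psi(x_1)-\psi(y_1)$ destroys this sign at distant $x$: for a shallow arc curving above $\Psi(y)$, the field at $y$ can vanish while $P^\alpha(J,\nu)>0$.

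Likewise, your claim about the $e_1$-derivative holds only in the end cone. It changes sign near $|y'|\approx(n-\alpha)^{1/2}|y^1-x^1|$, so the side region must be bounded separately by $\cA_2^\alpha$ and an overlap count (\cref{lem:forward-hole}).

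Finally, testing is not monotone under inclusion. Sandwiching rotated cubes between images of cubes therefore cannot supply the testing that the reversal uses on $I$ and $\gamma J$. The paper instead runs \cref{sec:5} for an arbitrary biLipschitz quasicube map $\Omega=\Psi^{-1}\circ R$, so that the rotated-cube hypothesis applies exactly. The energy then returns exactly to the rotated grid $R\mathcal D$ through \cref{prop:energy-invariance}.
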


The case \(\alpha=1\) applies to Poisson equations with sources
supported on the curve.  Such equations occur in models of line wells, thin heat sources,
and mixed-dimensional flow; see
\cite{GjerdeKumarNordbottenWohlmuth2019,
GjerdeKumarNordbotten2021,Ojea2024}.  If \(-\Delta u=f\,\sigma\) in a bounded domain, then the singular part of \(\nabla u\) is the
\(1\)-fractional Riesz vector.  The remaining part of the Green gradient
has a bounded kernel when the curve is separated from the boundary.
Hence \cref{thm:main} shows that the Green-gradient estimate is
equivalent to the four Muckenhoupt conditions and the forward and dual
Green testing conditions, for any finite observation measure.

For arclength measure on the curve and observation measure
\(\dist(x,\Gamma)^\beta\,dx\), we then determine the exact range for the
gradient estimate.  In a bounded tubular neighbourhood the range is \(\beta>n-3\), whereas in the whole space it is \(n-3<\beta<n-2\). At every endpoint compatible with local finiteness, the corresponding
integral diverges logarithmically.  In dimension three, the
Biot--Savart operator for arbitrary vector densities on the filament
has norm comparable to that of \(\bfR^{1,3}\), and its whole-space range
is exactly \(0<\beta<1\); see also \cite{Polanco2025} for Biot--Savart
integrals over curves.  These results apply, in particular, to regular
Dini curves whose tangent is not H\"older continuous of any positive
order.  They are proved in Section~\ref{sec:Poisson-application}.

Necessity follows from
\cite[Theorem~5(2)]{CurvePaper}.  We include the argument at the end of
Section~\ref{sec:7}.  The rest of the paper proves sufficiency.

\medskip
\noindent
\textit{Main points of the proof.}
The proof has five steps.

\smallskip
\noindent
\textit{1. Local graph maps.}
The continuity of the tangent vector allows us to divide the curve into
finitely many arcs.  After a rotation, each arc has the form \(\bigl\{(t,\psi_i(t)):t\in I_i\bigr\}, \quad \|\psi_i'\|_\infty\leq\varepsilon_0\), where \(\varepsilon_0>0\) is sufficiently small.  Using nested intervals
with a fixed collar, we extend \(\psi_i\) to a compactly supported
\(C^{1,\mathrm{Dini}}\) function on \(\R\).  The resulting graph shear
is globally biLipschitz, and the Dini integral of its derivative is
controlled by that of the original parametrization.

\smallskip
\noindent
\textit{2. The Dini change-of-variables estimate.}
The first-order remainder satisfies \(|\psi_i(s)-\psi_i(t)-\psi_i'(t)(s-t)| \leq |s-t|\,\vartheta_{\psi_i}(|s-t|)\). Hence the error in the pulled-back fractional Riesz kernel obeys \(|E_{\mathrm{near}}(x,y)| \leq C|x-y|^{\alpha-n} \vartheta_{\psi_i}(|x-y|)\). On the annulus
\(2^{-j-1}r_0<|x-y|\leq2^{-j}r_0\), its operator norm is bounded by
\(C\vartheta_{\psi_i}(2^{-j}r_0)\sqrt{\mathfrak A_2^\alpha}\).  The sum
converges because
\[
 \sum_{j\geq0}
 \vartheta_{\psi_i}(2^{-j}r_0)
 \lesssim
 \vartheta_{\psi_i}(r_0)
 +
 \int_0^{r_0}
 \vartheta_{\psi_i}(t)\,\frac{dt}{t}.
\]
Since the graph extension
is bounded, the error away from the diagonal satisfies \(|E_{\mathrm{far}}(x,y)| \leq C\|\psi_i\|_\infty |x-y|^{\alpha-n-1}\). The additional factor \(|x-y|^{-1}\) makes the outer annuli summable.
These bounds are uniform in the supports and truncation parameters.
They give the operator and testing comparisons between the Riesz and
conformal Riesz transforms.

\smallskip
\noindent
\textit{3. Energy on the straight line.}
After straightening an arc, the restricted curve measure is supported on
the first coordinate axis.  The derivative computations from
\cite[Theorem~13]{CurvePaper} use
\(\|\psi_i'\|_\infty\leq\varepsilon_0\), but no difference of the form
\(\psi_i'(s)-\psi_i'(t)\) occurs.  Thus the energy reversal needs neither
a H\"older nor a Dini modulus.  In the end region, the first coordinate
derivative has a fixed sign.  The side contribution is bounded by the one-tailed Muckenhoupt
condition and the overlap of the dilated cubes.  The weak and strong
backward reversals give the dual energy condition.
The calculations are given in Section~\ref{sec:5}.

\smallskip
\noindent
\textit{4. Transfer and localization.}
Strong energy is preserved by a globally biLipschitz change of variables.
This follows from the comparisons of the Poisson integrals and variances,
together with the correspondence of deep quasicubes.  We can therefore
transfer the energy estimates from the line to the curve.

It remains to obtain testing after restricting the curve measure to one
arc.  The arcs are half-open, so a common endpoint atom belongs to one
arc only.  Quantitative tube separation keeps the other parts of the
curve away from the straightened interval.  The terms near its endpoints
reduce to one-dimensional weighted Hardy inequalities, whose constants
are controlled by the one-tailed Muckenhoupt conditions with holes.  This
also gives testing for arbitrarily large quasicubes.

\smallskip
\noindent
\textit{5. Return to the smooth kernel.}
For each restricted pair, we have the four Muckenhoupt conditions, the
forward and dual testing conditions, and both strong energy conditions.
We apply \cite[Theorem~2]{GoodLambda} to each scalar component
\(R_j^{\alpha,n}\) of the original fractional Riesz vector.  This kernel is smooth, so the hypotheses of the general two-weight
theorem hold.  Summing over the finitely many
disjoint curve pieces proves the result.  

Section~\ref{sec:2} contains the estimates for Dini moduli and the graph
extensions.  Section~\ref{sec:3} proves the change-of-variables theorem, including
the comparison of the truncations.  Section~\ref{sec:4} proves the biLipschitz
invariance of the strong energy conditions.  Section~\ref{sec:5} proves the
forward and dual energy conditions when one measure is supported on a
line.  Section~\ref{sec:6} gives the geometric decomposition of the curve and the
localized testing estimates.  Section~\ref{sec:7} proves \cref{thm:main}.
Section~\ref{sec:8} proves the two-weight criterion for Green gradients with a
curve-supported source.  It then gives the exact distance-power ranges
in a bounded tubular neighbourhood and in the whole space, including
the logarithmic endpoint failures, and applies the three-dimensional
result to the Biot--Savart field.

\section{Dini estimates}\label{sec:2}

\begin{lemma}\label{lem:dyadic-Dini}
Let $\vartheta$ be a nondecreasing modulus.  Then
\begin{equation}\label{eq:dyadic-Dini}
 \sum_{k=0}^\infty \vartheta(2^{-k})
 \approx
 \vartheta(1)+\int_0^1\vartheta(t)\,\frac{dt}{t}.
\end{equation}
For each fixed $c\geq1$,
\begin{equation}\label{eq:dilated-Dini}
 \sum_{k=0}^\infty
 \vartheta\bigl(\min\{1,c2^{-k}\}\bigr)
 \leq C_c\left(\vartheta(1)+[\vartheta]_{\mathrm{Dini}}\right).
\end{equation}
\end{lemma}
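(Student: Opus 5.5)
The plan is to compare the series with the integral block by block, using only monotonicity of \(\vartheta\).

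First split \((0,1]\) into the dyadic intervals \(I_k=(2^{-k-1},2^{-k}]\), \(k\geq0\), so that
\[
\int_0^1\vartheta(t)\,\frac{dt}{t}=\sum_{k\geq0}\int_{I_k}\vartheta(t)\,\frac{dt}{t}.
\]
Each \(I_k\) has \(dt/t\)-measure \(\log 2\). Since \(\vartheta\) is nondecreasing, we have \(\vartheta(2^{-k-1})\leq\vartheta(t)\leq\vartheta(2^{-k})\) for \(t\in I_k\). Hence
\[
\log 2\,\sum_{k\geq1}\vartheta(2^{-k})\leq\int_0^1\vartheta\,\frac{dt}{t}\leq\log2\,\sum_{k\geq0}\vartheta(2^{-k}).
\]
The upper bound on the integral gives the lower half of \eqref{eq:dyadic-Dini}, using \(\vartheta(1)\leq\sum_k\vartheta(2^{-k})\). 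For the upper half, add the \(k=0\) term to the left inequality:
\[
\sum_{k\geq0}\vartheta(2^{-k})\leq\vartheta(1)+(\log2)^{-1}\int_0^1\vartheta\,\frac{dt}{t}.
\]
This proves \eqref{eq:dyadic-Dini} with absolute constants. Both sides may be infinite, and the argument still applies.

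For \eqref{eq:dilated-Dini}, fix \(c\geq1\) and let \(m\geq0\) be the least integer with \(2^m\geq c\). The \(k\)-th term is at most \(\vartheta(1)\) for every \(k\). For \(k\geq m\) we have \(c2^{-k}\leq 2^{m-k}\leq1\), so monotonicity gives \(\vartheta(\min\{1,c2^{-k}\})\leq\vartheta(2^{-(k-m)})\). Splitting the sum at \(k=m\) then yields
\[
\sum_{k\geq0}\vartheta\bigl(\min\{1,c2^{-k}\}\bigr)\leq m\,\vartheta(1)+\sum_{j\geq0}\vartheta(2^{-j}).
\]
By \eqref{eq:dyadic-Dini} this is at most \((m+1)\vartheta(1)+(\log 2)^{-1}[\vartheta]_{\mathrm{Dini}}\). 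So \(C_c=\max\{m+1,(\log2)^{-1}\}\), which is at most \(2+\log_2 c\), works.

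There is no real obstacle here. The only point needing care is the bookkeeping of the finitely many initial terms for which \(c2^{-k}>1\). These are absorbed into the \(\vartheta(1)\) term, which is why \(\vartheta(1)\) appears alongside \([\vartheta]_{\mathrm{Dini}}\) and why the constant depends on \(c\) only through \(\log c\).
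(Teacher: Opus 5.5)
Your proposal is correct and follows the paper's proof: you compare $\vartheta$ with its values at the endpoints of the dyadic blocks $(2^{-k-1},2^{-k}]$, integrate against $dt/t$, and then, for the dilated sum, bound the finitely many initial terms by $\vartheta(1)$ and the tail by a shift of the dyadic sum. You also track the constants explicitly, obtaining $C_c\le 2+\log_2 c$, which the paper leaves implicit.
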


\begin{proof}
For $k\geq0$ and $2^{-k-1}<t\leq2^{-k}$, monotonicity gives \(\vartheta(2^{-k-1}) \leq \vartheta(t) \leq \vartheta(2^{-k})\). Integration with respect to $dt/t$ and summation in $k$ give
\eqref{eq:dyadic-Dini}.  To obtain \eqref{eq:dilated-Dini}, choose an
integer $m_c$ such that $c\leq2^{m_c}$.  The terms with $k\leq m_c$ are
bounded by $(m_c+1)\vartheta(1)$, and the remaining sum is bounded by a
shift of the sum in \eqref{eq:dyadic-Dini}.
\end{proof}

\begin{lemma}\label{lem:Taylor-Dini}
Let $F\in C^1(\R^n;\R^n)$.  If $|x-y|\leq1$, then \(|F(y)-F(x)-DF(x)(y-x)| \leq |x-y|\,\vartheta_F(|x-y|)\). \end{lemma}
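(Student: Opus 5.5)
The plan is to use the fundamental theorem of calculus along the segment from \(x\) to \(y\). Since \(F\in C^1(\R^n;\R^n)\), the map \(s\mapsto F(x+s(y-x))\) is continuously differentiable on \([0,1]\). Its derivative is \(DF(x+s(y-x))(y-x)\), so
\begin{equation*}
 F(y)-F(x)-DF(x)(y-x)
 =\int_0^1\bigl(DF(x+s(y-x))-DF(x)\bigr)(y-x)\,ds .
\end{equation*}
This identity holds componentwise, and hence as a vector identity. It reduces the remainder to a pointwise comparison of derivatives.

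Next I bound the integrand. For each \(s\in[0,1]\), the points \(x+s(y-x)\) and \(x\) are at distance \(s|x-y|\leq|x-y|\leq1\). By the definition of \(\vartheta_F\) in \cref{def:modulus}, their derivatives differ in operator norm by at most \(\vartheta_F(s|x-y|)\). Since \(\vartheta_F\) is nondecreasing (it is a supremum over a growing set), this is at most \(\vartheta_F(|x-y|)\). Therefore
\begin{equation*}
 \bigl\|\bigl(DF(x+s(y-x))-DF(x)\bigr)(y-x)\bigr\|\leq \vartheta_F(|x-y|)\,|x-y| .
\end{equation*}
Moving the norm inside the integral (Minkowski's inequality for the Bochner/Riemann integral) and integrating over \(s\in[0,1]\) gives the claimed bound.

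There is no real obstacle. The only points to check are that the hypothesis \(|x-y|\leq1\) keeps every distance \(s|x-y|\) inside the domain \([0,1]\) of \(\vartheta_F\), and that the norm on \(DF\) is the operator norm, so that \(\|A(y-x)\|\leq\|A\|\,|y-x|\). The estimate holds trivially, with a possibly infinite right-hand side, even when \(\vartheta_F\) is not Dini. For maps defined only on an interval or a parameter circle, the same argument applies along the corresponding segment or arc.
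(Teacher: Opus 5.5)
Your proof is correct and is essentially the paper's argument: the paper also writes the remainder as $\int_0^1\bigl(DF(x+t(y-x))-DF(x)\bigr)(y-x)\,dt$ and bounds the integrand by $\vartheta_F(|x-y|)\,|x-y|$. The appeal to monotonicity is not needed, since $x+s(y-x)$ and $x$ are at distance at most $|x-y|$, so the definition of $\vartheta_F(|x-y|)$ applies to them directly.
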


\begin{proof}
By the fundamental theorem of calculus,
\[
F(y)-F(x)-DF(x)(y-x)
 =\int_0^1\bigl(DF(x+t(y-x))-DF(x)\bigr)(y-x)\,dt.
\]
The norm of the integrand is at most
$\vartheta_F(|x-y|)|x-y|$.
\end{proof}

\begin{lemma}
\label{lem:graph-modulus}
Let \(\Gamma:[c,d]\to\R^n\) be parametrized by arc length.  Suppose,
after a rotation, that
\[
 \Gamma_1'(s)\geq c_0>0,
 \qquad c\leq s\leq d.
\]
Write \(u=\Gamma_1(s)\), and let \(\Gamma(s)=(u,g(u))\). Then
\begin{equation}\label{eq:graph-modulus}
 \vartheta_g(r)
 \leq C_{c_0}\vartheta_\Gamma(C_{c_0}r),
 \qquad
 0<r\leq C_{c_0}^{-1}.
\end{equation}
In particular, \(g\in C^{1,\mathrm{Dini}}\) whenever
\(\Gamma\in C^{1,\mathrm{Dini}}\).
\end{lemma}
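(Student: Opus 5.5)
The plan is to view the graph function through the inverse of the first coordinate. Since \(\Gamma_1'\geq c_0>0\) on \([c,d]\), the map \(s\mapsto u=\Gamma_1(s)\) is a \(C^1\) bijection of \([c,d]\) onto an interval \(J\). Its inverse \(s(u)\) satisfies \(s'(u)=1/\Gamma_1'(s(u))\in(0,c_0^{-1}]\), so \(s(\cdot)\) is \(c_0^{-1}\)-Lipschitz. Writing \(\Gamma=(\Gamma_1,\Gamma')\), we have \(g(u)=\Gamma'(s(u))\). By the chain rule,
\[
 g'(u)=\frac{(\Gamma')'(s(u))}{\Gamma_1'(s(u))}.
\]
This is the only formula needed. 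The estimate \eqref{eq:graph-modulus} should then follow from an elementary bound for differences of quotients, together with the Lipschitz property of \(s(\cdot)\).

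Fix \(u,v\in J\) with \(|u-v|\leq r\), and put \(s=s(u)\), \(t=s(v)\). Then \(|s-t|\leq c_0^{-1}r\). Abbreviate \(a=(\Gamma')'(s)\), \(b=\Gamma_1'(s)\), \(a'=(\Gamma')'(t)\), \(b'=\Gamma_1'(t)\). Arc-length parametrization gives \(|a|,|a'|\leq1\), and the hypothesis gives \(b,b'\geq c_0\). I will use the identity
\[
 \frac{a}{b}-\frac{a'}{b'}=\frac{a-a'}{b}+\frac{a'(b'-b)}{bb'}.
\]
Both \(|a-a'|\) and \(|b-b'|\) are at most \(\|\Gamma'(s)-\Gamma'(t)\|\leq\vartheta_\Gamma(|s-t|)\), where in the last expression \(\Gamma'\) denotes the full derivative. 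Therefore
\[
 |g'(u)-g'(v)|\leq (c_0^{-1}+c_0^{-2})\,\vartheta_\Gamma(c_0^{-1}r).
\]
Taking the supremum over such \(u,v\), and using monotonicity of \(\vartheta_\Gamma\), gives \eqref{eq:graph-modulus} with \(C_{c_0}=\max\{c_0^{-1}+c_0^{-2},\,c_0^{-1}\}\).

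The restriction \(r\leq C_{c_0}^{-1}\) guarantees \(c_0^{-1}r\leq1\), so \(\vartheta_\Gamma\) is evaluated inside its domain \([0,1]\). The same restriction also keeps \(r\) within the domain of \(\vartheta_g\).

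For the final assertion, substitute \(\rho=C_{c_0}r\) in the Dini integral:
\[
 \int_0^{C_{c_0}^{-1}}\vartheta_g(r)\,\frac{dr}{r}
 \leq C_{c_0}\int_0^1\vartheta_\Gamma(\rho)\,\frac{d\rho}{\rho}<\infty.
\]
On the remaining range \([C_{c_0}^{-1},1]\), we have \(\vartheta_g\leq2\|g'\|_\infty\leq2c_0^{-1}\), so that part of the integral is trivially finite. Hence \(g\in C^{1,\mathrm{Dini}}\). Also \(\vartheta_g(t)\to0\) as \(t\downarrow0\), by the displayed bound.

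I do not expect a real obstacle. The only point needing care is bookkeeping of domains: \(J\) may be shorter than \(1\), and the moduli are defined only on \([0,1]\). This is exactly why the estimate is stated only for small \(r\) with a dilated argument, consistent with the dilation allowed in \eqref{eq:dilated-Dini} of \cref{lem:dyadic-Dini}.
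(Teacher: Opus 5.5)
Your proposal is correct and follows essentially the same route as the paper. Both use the Lipschitz inverse $s(u)$, the chain-rule quotient formula for $g'$, and a quotient-difference bound from $\Gamma_1'\geq c_0$ and $|\Gamma'|=1$, followed by a dilation of the modulus and a change of variables in the Dini integral; your explicit constant $C_{c_0}=c_0^{-1}+c_0^{-2}$ (note $c_0\leq 1$ automatically) simply makes precise the paper's ``choose $C_{c_0}$ large enough.''
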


\begin{proof}
Since \(u'(s)=\Gamma_1'(s)\geq c_0\), the inverse function
\(s=s(u)\) satisfies \(|s(u)-s(v)|\leq c_0^{-1}|u-v|\). Moreover, \(g'(u) =\frac{\Gamma_\perp'(s(u))}{\Gamma_1'(s(u))}\). The denominators are at least \(c_0\), and
\(|\Gamma'(s)|=1\).  Hence \(|g'(u)-g'(v)| \leq C_{c_0} |\Gamma'(s(u))-\Gamma'(s(v))| \leq C_{c_0} \vartheta_\Gamma(c_0^{-1}|u-v|)\). Choose \(C_{c_0}\geq\max\{1,c_0^{-1}\}\) large enough that the
preceding estimate has the form in \eqref{eq:graph-modulus}.  If
\(0<r\leq C_{c_0}^{-1}\), then \(C_{c_0}r\leq1\), so the argument of
\(\vartheta_\Gamma\) lies in its defining interval \([0,1]\).
This proves \eqref{eq:graph-modulus}.  For
\(C_{c_0}^{-1}\leq r\leq1\), we have
\(\vartheta_g(r)\leq2\|g'\|_\infty\).  A change of variables on the
remaining part of the Dini integral proves the last assertion.
\end{proof}

\begin{lemma}\label{lem:graph-extension}
Let \(I^*\) be a bounded open interval, and let
\(\widehat I\Subset I^*\).  Assume that
\begin{equation}\label{eq:fixed-collar}
 \dist(\widehat I,\R\setminus I^*)
 \geq \kappa |I^*|
\end{equation}
for some \(\kappa>0\).  Let
\(g:I^*\to\R^{n-1}\) be \(C^{1,\mathrm{Dini}}\).  Suppose that
\(t_0\in I^*\), \(g(t_0)=0\), and
\begin{equation}\label{eq:local-smallness}
 \|g'\|_{L^\infty(I^*)}\leq\varepsilon_0.
\end{equation}
There is a compactly supported function
$\widetilde g\in C^{1,\mathrm{Dini}}(\R;\R^{n-1})$ such that
$\widetilde g=g$ on $\widehat I$ and
\begin{equation}\label{eq:extension-slope}
 \|\widetilde g'\|_\infty\leq C_\kappa\varepsilon_0.
\end{equation}
If $\vartheta_g$ is the modulus of $g'$, then the derivative of
$\widetilde g$ has a modulus satisfying
\begin{equation}\label{eq:extension-modulus}
 \vartheta_{\widetilde g}(t)
 \leq C_\kappa\vartheta_g(t)
 +C_\kappa\varepsilon_0\min\{1,t/|I^*|\},\qquad 0<t\leq1.
\end{equation}
In particular, $\vartheta_{\widetilde g}$ is a Dini modulus.
\end{lemma}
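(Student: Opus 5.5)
The plan is to multiply \(g\) by a smooth cutoff adapted to the collar, so that \(\widetilde g=\chi g\). Write \(I^*=(a,b)\) and \(L=|I^*|\).

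\textbf{Choice of cutoff.} Choose \(\chi\in C_c^\infty(I^*)\) with \(0\le\chi\le1\) and \(\chi=1\) on \(\widehat I\). By \eqref{eq:fixed-collar}, \(\chi\) can be chosen to be supported in the \(\kappa L/2\)-neighbourhood of \(\widehat I\) and to satisfy
\[
\|\chi'\|_\infty\le C\kappa^{-1}L^{-1},\qquad \|\chi''\|_\infty\le C\kappa^{-2}L^{-2}.
\]
For instance, take a fixed bump rescaled by \(\kappa L\) and convolved with the indicator of the \(\kappa L/4\)-neighbourhood of \(\widehat I\). Set \(\widetilde g=\chi g\) on \(I^*\) and \(\widetilde g=0\) elsewhere. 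Then \(\widetilde g\) is \(C^1\), compactly supported, and equal to \(g\) on \(\widehat I\).

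\textbf{Slope bound.} Since \(g(t_0)=0\) and \(|g'|\le\varepsilon_0\) on \(I^*\), we have \(|g(t)|\le\varepsilon_0|t-t_0|\le\varepsilon_0L\) on \(I^*\). Hence
\[
|\widetilde g'|\le|\chi'||g|+|\chi||g'|\le C\kappa^{-1}L^{-1}\varepsilon_0L+\varepsilon_0,
\]
which gives \eqref{eq:extension-slope} with \(C_\kappa=C(1+\kappa^{-1})\).

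\textbf{Modulus estimate.} Write \(\widetilde g'=\chi g'+\chi' g\). For \(x,y\) in \(\supp\chi\) with \(|x-y|\le t\), the second term satisfies
\[
|\chi'(x)g(x)-\chi'(y)g(y)|\le\|\chi''\|_\infty\,t\,\varepsilon_0L+\|\chi'\|_\infty\,\varepsilon_0 t\le C_\kappa\varepsilon_0\,t/L.
\]
For the first term,
\[
|\chi(x)g'(x)-\chi(y)g'(y)|\le\vartheta_g(t)+\|\chi'\|_\infty t\,\varepsilon_0\le\vartheta_g(t)+C_\kappa\varepsilon_0t/L.
\]

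The one genuine obstacle is pairs \(x,y\) with one point outside \(\supp\chi\), or outside \(I^*\), where \(g\) is not defined. This is handled by the collar. If \(|x-y|\le\kappa L/2\) and one point lies in \(\supp\chi\), then both lie in \(I^*\); setting \(\chi=0\) at the outside point, the same estimates apply. If \(|x-y|>\kappa L/2\), then trivially \(|\widetilde g'(x)-\widetilde g'(y)|\le2\|\widetilde g'\|_\infty\le C_\kappa\varepsilon_0\). Combining the two ranges, each bound is at most \(C_\kappa\vartheta_g(t)+C_\kappa\varepsilon_0\min\{1,t/L\}\), which is \eqref{eq:extension-modulus}.

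\textbf{Dini property.} The first term in \eqref{eq:extension-modulus} is Dini by hypothesis. For the second,
\[
\int_0^1\min\{1,t/L\}\,\frac{dt}{t}\le1+\log^+(1/L)<\infty.
\]
So \(\vartheta_{\widetilde g}\) is a Dini modulus; it is nondecreasing and tends to \(0\) at \(0\) by its definition as a supremum.
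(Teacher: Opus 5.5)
Your proposal is correct and follows essentially the same route as the paper: multiply $g$ by a smooth cutoff adapted to the collar, use $g(t_0)=0$ to get $|g|\le\varepsilon_0|I^*|$, and estimate the two terms of $\chi g'+\chi' g$ separately. The only difference is cosmetic. You handle pairs straddling the support through the $\kappa|I^*|/2$ margin, whereas the paper inserts the intervening endpoint of $I^*$ and applies the trivial bound for $t\ge|I^*|$.
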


\begin{proof}
By \(g(t_0)=0\) and \eqref{eq:local-smallness}, \(\|g\|_{L^\infty(I^*)} \leq \varepsilon_0|I^*|\). Condition \eqref{eq:fixed-collar} gives
\(\rho\in C_c^\infty(I^*)\) such that \(\rho=1\) on
\(\widehat I\) and \(\|\rho'\|_\infty\leq C_\kappa|I^*|^{-1}, \quad \|\rho''\|_\infty\leq C_\kappa|I^*|^{-2}\). Extend $\rho g$ by zero outside $I^*$ and put
$\widetilde g=\rho g$.  Then \(\widetilde g'=\rho g'+\rho'g\). The preceding bounds for \(g\), \(g'\), and \(\rho'\) give
\eqref{eq:extension-slope}.  If $|s-t|\leq r$, the difference of the first
term is bounded by \(\vartheta_g(r)+C\varepsilon_0 r/|I^*|\). The difference of the second term has the same bound, by the estimates
for $\rho'$, $\rho''$, $g$, and $g'$.  If one point lies outside
$I^*$, insert the intervening endpoint of $I^*$; both the extension
and its derivative vanish near that endpoint.  This proves the
estimate in \eqref{eq:extension-modulus} when \(0<t<|I^*|\).
When \(t\geq|I^*|\), \eqref{eq:extension-slope} gives \(\vartheta_{\widetilde g}(t) \leq2\|\widetilde g'\|_\infty \leq C_\kappa\varepsilon_0\). Combining these two ranges proves \eqref{eq:extension-modulus}.  Its
Dini integral is finite by
\eqref{eq:Dini}.
\end{proof}

\section{The Dini change of variables}\label{sec:3}

The maps used in the proof have the form
\begin{equation}\label{eq:graph-change}
 \Psi(x)=x+u(x),
 \qquad
u(x_1,x')=(0,\psi(x_1)),
 \qquad
 \psi\in C_c^{1,\mathrm{Dini}}(\R;\R^{n-1}).
\end{equation}
Thus \(u\) is bounded, although its support as a function on \(\R^n\)
need not be compact.  For every homeomorphism
\(F:\R^n\to\R^n\), we use the notation
\begin{equation}\label{eq:pullback-measure}
F^*\mu:=(F^{-1})_\#\mu,
 \qquad
 (F^*\mu)(E)=\mu(F(E)).
\end{equation}
Thus \(F^*\mu\) is the measure in the \(F^{-1}\)-coordinates.  For
every nonnegative Borel function \(h\),
\[
 \int_{\R^n}h(x)\,d(F^*\mu)(x)
 =\int_{\R^n}h(F^{-1}X)\,d\mu(X).
\]
We write \(\widetilde\mu=\Psi^*\mu\).
The conformal fractional Riesz kernel is
\begin{equation}\label{eq:conformal-kernel}
 \bfK_\Psi^{\alpha,n}(x,y)
 =c_{\alpha,n}\frac{y-x}
 {|\Psi(y)-\Psi(x)|^{n+1-\alpha}}.
\end{equation}

For \(r>0\), put
\begin{equation*}
 \mathcal M_{\alpha,r}(f\sigma)(x)
 :=
r^{\alpha-n}
 \int_{\{r/2<|x-y|\leq2r\}}|f(y)|\,d\sigma(y).
\end{equation*}

\begin{lemma}\label{lem:annular-A2}
For every \(r>0\),
\[
 \|\mathcal M_{\alpha,r}(f\sigma)\|_{L^2(\omega)}
 \leq C_{n,\alpha}
 \sqrt{\mathfrak A_2^\alpha(\sigma,\omega)}
       \|f\|_{L^2(\sigma)}.
\]
The constant is independent of \(r\).  The dual estimate also holds.

Let \(\Omega:\R^n\to\R^n\) be a fixed globally biLipschitz map.  Write \(d_\Omega(x,y):=|\Omega^{-1}x-\Omega^{-1}y|\) and define
\[
 \mathcal M_{\alpha,r}^{\Omega}(f\sigma)(x)
 :=
r^{\alpha-n}
 \int_{\{r/2<d_\Omega(x,y)\leq2r\}}
 |f(y)|\,d\sigma(y).
\]
Then
\[
 \|\mathcal M_{\alpha,r}^{\Omega}(f\sigma)\|_{L^2(\omega)}
 \leq C_{n,\alpha,\Omega}
 \sqrt{\mathfrak A_{2,\Omega}^\alpha(\sigma,\omega)}
 \|f\|_{L^2(\sigma)}.
\]
The corresponding dual estimate also holds.
The subscript \(\Omega\) means that the four Muckenhoupt constants
are computed over the \(\Omega\)-quasicubes.
\end{lemma}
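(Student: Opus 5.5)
We prove the unweighted-in-$r$ annular bound by a covering argument that reduces everything to a single cube Muckenhoupt estimate. Fix $r>0$ and tile $\R^n$ by the half-open axis-parallel cubes $Q$ of side length $r$ lying in a fixed grid. For $x\in Q$, the annulus $\{r/2<|x-y|\leq2r\}$ is contained in $Q^{(5)}$, the union of the cube $Q$ and its neighbours within distance $2r$. This is a cube $5Q$ of side $5r$ concentric with $Q$. Hence
\[
 \mathcal M_{\alpha,r}(f\sigma)(x)\leq r^{\alpha-n}\int_{5Q}|f|\,d\sigma,
 \qquad x\in Q .
\]
We then square, integrate over $Q$ against $\omega$, and apply Cauchy--Schwarz:
\[
 \int_Q|\mathcal M_{\alpha,r}(f\sigma)|^2d\omega
 \leq r^{2(\alpha-n)}\,\omega(Q)\,\sigma(5Q)\int_{5Q}|f|^2d\sigma .
\]

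The key step is to bound $r^{2(\alpha-n)}\omega(Q)\sigma(5Q)$ by $C\,\mathfrak A_2^\alpha$. Here the common point masses matter, since the plain $A_2^\alpha$ product $|5Q|^{2(\alpha/n-1)}\omega(5Q)\sigma(5Q)$ is not among the hypotheses. The separation $|x-y|>r/2$ is what rescues this. We split $5Q$ into boundedly many subcubes $P$ of side $r/8$, and likewise split $Q$ into subcubes $P'$ of side $r/8$. We keep only the pairs $(P',P)$ that are compatible, meaning that some $x\in P'$ and some $y\in P$ satisfy $|x-y|>r/2$. For such a pair the cubes are disjoint and separated by a distance comparable to $r$.

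We therefore bound $\omega(P')\sigma(P)$ using the one-tailed $\mathcal A_2^\alpha$ condition with holes, applied with the cube $P'$ and the tail measure $\sigma\one_{\R^n\setminus P'}$. On $P$ the Poisson-type tail $\bigl(|P'|^{1/n}/(|P'|^{1/n}+\dist(y,P'))\bigr)^{2(n-\alpha)}$ is comparable to $1$, because $\dist(y,P')\approx r$. This gives
\[
 |P'|^{\alpha/n-1}\omega(P')\cdot |P'|^{\alpha/n-1}\sigma(P)
 \lesssim \mathcal A_2^\alpha ,
\]
so each compatible pair contributes $r^{2(\alpha-n)}\omega(P')\sigma(P)\lesssim\mathcal A_2^\alpha$. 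To make this clean we redo the pointwise bound: for $x\in P'$ we write $\mathcal M_{\alpha,r}(f\sigma)(x)\le r^{\alpha-n}\sum_{P\text{ compatible}}\int_P|f|\,d\sigma$, and then apply Cauchy--Schwarz pair by pair. The number of pairs is bounded by a dimensional constant.

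It remains to sum over the grid cubes $Q$, which gives the result with the factor $\int_{5Q}|f|^2d\sigma$. The cubes $5Q$ overlap at most $5^n$ times, so
\[
 \|\mathcal M_{\alpha,r}(f\sigma)\|_{L^2(\omega)}^2\lesssim \mathfrak A_2^\alpha\|f\|^2_{L^2(\sigma)} .
\]
None of the bounds depends on $r$. The dual estimate follows by exchanging the roles of $\sigma$ and $\omega$ and using $\mathcal A_2^{\alpha,*}$. Since the annulus condition is symmetric in $x$ and $y$, this also bounds the adjoint. The punctured conditions are not needed here; we only use that $\mathfrak A_2^\alpha$ dominates the one-tailed constants.

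For the $\Omega$ version we run the same argument in the $\Omega^{-1}$-coordinates. The grid cubes are replaced by the $\Omega$-quasicubes $\Omega(Q)$. If $\Omega^{-1}x\in Q$ and $r/2<d_\Omega(x,y)\le2r$, then $\Omega^{-1}y\in5Q$. Subcubes that are separated in the original sense correspond to separated quasicubes. The biLipschitz constant enters only to compare the Euclidean side length and distances with those of the quasicubes, when the quasicube $\mathcal A_{2,\Omega}^\alpha$ tails are evaluated on $\Omega(P)$. We expect the main obstacle to be this separation bookkeeping. We must make sure that the hole condition only pairs disjoint pieces, so that common point masses cause no issue. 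If the paper's hole condition is formulated with the tail over $\R^n\setminus P'$, the argument above applies directly.
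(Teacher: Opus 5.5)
Your strategy matches the paper's: tile at a scale comparable to $r$, bound each pair by the one-tailed condition with a hole, then apply Cauchy--Schwarz and bounded overlap. However, the subcube scale $r/8$ is not adjusted to the dimension, and the step you identify as the crux fails as stated.

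A cube of side $r/8$ has diameter $\sqrt n\,r/8$. So $|x-y|>r/2$ for one pair $x\in P'$, $y\in P$ only yields $\dist(P',P)\geq r/2-\sqrt n\,r/4$, which gives nothing once $n\geq4$. For $n\geq5$, corner-adjacent subcubes are in fact compatible although they touch, since their points can be nearly $\sqrt n\,r/4>r/2$ apart.

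More seriously, for $n\geq17$ one has $\sqrt n\,r/8>r/2$. Then each $P'\subset Q$, which is also one of the subcubes of $5Q$, is compatible with itself. The pair $(P',P')$ produces $r^{2(\alpha-n)}\omega(P')\sigma(P')\int_{P'}|f|^2\,d\sigma$ with no hole. A common atom of $\sigma$ and $\omega$ in $P'$ makes this term uncontrollable by $\mathfrak A_2^\alpha$: the operator itself never pairs an atom with itself, but your majorant does.

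The repair is to use subcubes of side $r/k$ with an integer $k\geq8\sqrt n$, so that compatible pairs are at distance $>r/4$. For the lower bound on the Poisson factor you only need $P\cap P'=\varnothing$ and $|y-c_{P'}|\lesssim r$, not separation comparable to $r$.

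With this correction the proof is complete and agrees with the paper's, with one difference in economy. The paper takes cubes $Q_m$ of side $\delta r$, $\delta=(16\sqrt n)^{-1}$, directly. It shows that the annulus about any $x\in Q_m$ lies in the single set $E_m=Q_m^*\setminus3Q_m\subset Q_m^c$, where $Q_m^*$ is concentric of side $8r$, and applies the hole condition once per $Q_m$. This avoids your intermediate side-$r$ tiling and the pair bookkeeping. Your treatment of the $\Omega$ case is the same as the paper's: you work in $\Omega^{-1}$-coordinates, injectivity preserves disjointness, and the biLipschitz constants only compare $|\Omega P'|^{1/n}$ and $|y-c_{\Omega P'}|$ with $r$.
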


\begin{proof}
Put \(s=n-\alpha\).  We first prove the Euclidean assertion.  For an
ordinary cube \(Q\), recall that
\[
 \mathcal P^\alpha(Q,\nu)
 =
 \int_{\R^n}
 \left(
 \frac{\ell(Q)}
 {(\ell(Q)+|y-c_Q|)^2}
 \right)^s d\nu(y).
\]
The forward one-tailed condition with a hole is
\[
 \mathcal A_2^\alpha(\sigma,\omega)
 =
 \sup_Q
 \frac{\omega(Q)}{\ell(Q)^s}
 \mathcal P^\alpha(Q,\one_{Q^c}\sigma).
\]

Set \(\delta=(16\sqrt n)^{-1}\), and partition \(\R^n\) into half-open
cubes \(Q_m\) of side length \(\delta r\).  Let \(Q_m^*\) be the cube
concentric with \(Q_m\) and of side length \(8r\).

Fix \(x\in Q_m\).  If \(r/2<|x-y|\leq2r\), then \(y\notin3Q_m\).
Indeed, \(x\in Q_m\) and \(y\in3Q_m\) would give \(|x-y|\leq2\sqrt n\,\delta r=\frac r8\). Also, \(|y-c_{Q_m}| \leq |y-x|+|x-c_{Q_m}| <2r+\frac r{32}<4r\), so \(y\in Q_m^*\).  Consequently, \(\{y:r/2<|x-y|\leq2r\} \subset E_m:=Q_m^*\setminus3Q_m\). In particular, \(E_m\subset Q_m^c\).

For \(y\in E_m\), we have
\(|y-c_{Q_m}|\leq4\sqrt n\,r\).  Hence
\begin{align*}
 \mathcal P^\alpha(Q_m,\one_{Q_m^c}\sigma)
 &\geq
 \mathcal P^\alpha(Q_m,\one_{E_m}\sigma)
 \geq
 \left(
 \frac{\delta r}
 {(\delta r+4\sqrt n\,r)^2}
 \right)^s
 \sigma(E_m)
 \geq c_{n,\alpha}r^{-s}\sigma(E_m).
\end{align*}
Since \(\ell(Q_m)=\delta r\), the forward one-tailed condition gives \(r^{-2s}\omega(Q_m)\sigma(E_m) \leq C_{n,\alpha}\mathcal A_2^\alpha(\sigma,\omega)\).

By Cauchy--Schwarz, for \(x\in Q_m\),
\[
 |\mathcal M_{\alpha,r}(f\sigma)(x)|^2
 \leq
r^{-2s}\sigma(E_m)\int_{E_m}|f|^2\,d\sigma.
\]
It follows that
\begin{align*}
 \int_{Q_m}|\mathcal M_{\alpha,r}(f\sigma)|^2\,d\omega
 &\leq
r^{-2s}\omega(Q_m)\sigma(E_m)
 \int_{E_m}|f|^2\,d\sigma
 \leq
C_{n,\alpha}\mathcal A_2^\alpha(\sigma,\omega)
 \int_{Q_m^*}|f|^2\,d\sigma.
\end{align*}
The cubes \(Q_m^*\) have bounded overlap.  Summing in \(m\) proves the
forward estimate.  Interchanging \(\sigma\) and \(\omega\) proves the
dual estimate, using \(\mathcal A_2^{\alpha,*}(\sigma,\omega)\).

We now prove the quasicube assertion.  Let \(L\geq1\) be a common
biLipschitz constant for \(\Omega\) and \(\Omega^{-1}\).  Use the same
partition \(\{Q_m\}\) in the preimage coordinates, and put \(H_m=\Omega Q_m,\quad H_m^*=\Omega Q_m^*,\quad F_m=H_m^*\setminus\Omega(3Q_m)\). If \(x\in H_m\) and \(r/2<d_\Omega(x,y)\leq2r\), the preceding
Euclidean calculation applied to \(\Omega^{-1}x\) and
\(\Omega^{-1}y\) gives \(y\in F_m\).  Also \(F_m\subset H_m^c\).

Write \(c_{H_m}=\Omega(c_{Q_m})\) and
\(a_{H_m}=|H_m|^{1/n}\).  BiLipschitz continuity gives \(L^{-1}\delta r\leq a_{H_m}\leq L\delta r\). For \(y\in F_m\), \(|y-c_{H_m}| \leq L|\Omega^{-1}y-c_{Q_m}| \leq4L\sqrt n\,r\). The quasicube reproducing Poisson integral is
\[
 \mathcal P^\alpha(H,\nu)
 =
 \int_{\R^n}
 \left(
 \frac{|H|^{1/n}}
 {\bigl(|H|^{1/n}+|y-c_H|\bigr)^2}
 \right)^s d\nu(y).
\]
Therefore \(\mathcal P^\alpha(H_m,\one_{H_m^c}\sigma) \geq c_{n,\alpha,L}r^{-s}\sigma(F_m)\). The forward quasicube one-tailed condition gives \(r^{-2s}\omega(H_m)\sigma(F_m) \leq C_{n,\alpha,L}\mathcal A_{2,\Omega}^\alpha(\sigma,\omega)\). Cauchy--Schwarz now gives
\[
 \int_{H_m}
 |\mathcal M_{\alpha,r}^{\Omega}(f\sigma)|^2\,d\omega
 \leq
C_{n,\alpha,L}\mathcal A_{2,\Omega}^\alpha(\sigma,\omega)
 \int_{H_m^*}|f|^2\,d\sigma.
\]
The sets \(H_m^*\) have the same overlap multiplicity as the cubes
\(Q_m^*\), because \(\Omega\) is one-to-one.  Summing in \(m\) proves
the quasicube estimate.  The dual quasicube estimate follows by
interchanging the two measures.  The half-open partition assigns every
boundary atom to one member, and \(F_m\subset H_m^c\) separates the two
measures in each one-tailed product.
\end{proof}

Fix \(\eta_0\in C^\infty([0,\infty))\) with \(0\leq\eta_0\leq1,\quad \eta_0=0\text{ on }[0,1/2],\quad \eta_0=1\text{ on }[1,\infty)\). For \(0<\varepsilon<R\), put
\begin{equation}\label{eq:smooth-truncation}
 \tau_{\varepsilon,R}(0)=0,
 \qquad
 \tau_{\varepsilon,R}(t)
 =\eta_0(t/\varepsilon)\eta_0(R/t)\quad(t>0),
 \qquad
q_{\varepsilon,R}(x,y)
 =\tau_{\varepsilon,R}(|\Psi(y)-\Psi(x)|).
\end{equation}
We also put
\begin{equation}\label{eq:source-radial-cutoff}
s_{\varepsilon,R}(x,y)
 =\tau_{\varepsilon,R}(|x-y|).
\end{equation}
Thus \(s_{\varepsilon,R}\) is radial in the present coordinates, and
\(q_{\varepsilon,R}\) is radial after applying \(\Psi\).  We have
\[
 \begin{array}{lll}
s_{\varepsilon,R}(x,y)=0
   &\text{if }|x-y|\leq\varepsilon/2
   &\text{or }|x-y|\geq2R,\\
s_{\varepsilon,R}(x,y)=1
   &\text{if }\varepsilon\leq|x-y|\leq R.&
 \end{array}
\]
The analogous statements for \(q_{\varepsilon,R}\) hold with
\(|x-y|\) replaced by \(|\Psi(y)-\Psi(x)|\).

Let \(K\) be a scalar or vector kernel, and let \(\mu,\nu\) be locally finite
positive Borel measures.  For \(a=s\) or \(a=q\), define
\begin{equation*}
T_{\varepsilon,R,\mu}^{K,a}f(x)
 :=\int a_{\varepsilon,R}(x,y)K(x,y)f(y)\,d\mu(y),
 \qquad 0<\varepsilon<R,
\end{equation*}
where the truncated kernel is defined to be zero when \(x=y\).  This
convention removes the diagonal contribution of every common point mass.
Set
\begin{equation}\label{eq:uniform-radial-norms}
 \mathfrak N_K^{a}(\mu,\nu)
 :=\sup_{0<\varepsilon<R}
 \bigl\|T_{\varepsilon,R,\mu}^{K,a}
 \bigr\|_{L^2(\mu)\to L^2(\nu)}.
\end{equation}
For a family \(\mathcal F\) of bounded Borel sets, put
\begin{align*}
 \mathfrak T_{K,\mathcal F}^{a}(\mu,\nu)
 &:=\sup_{\substack{E\in\mathcal F,\ \mu(E)>0\\0<\varepsilon<R}}
 \frac{
 \bigl\|\one_E T_{\varepsilon,R,\mu}^{K,a}\one_E
 \bigr\|_{L^2(\nu)}}{\mu(E)^{1/2}},\\
 \mathfrak T_{K,\mathcal F}^{a,\mathrm{dual}}(\mu,\nu)
 &:=\sup_{\substack{E\in\mathcal F,\ \nu(E)>0\\0<\varepsilon<R}}
 \frac{
 \bigl\|\one_E T_{\varepsilon,R,\nu}^{K^*,a}\one_E
 \bigr\|_{L^2(\mu)}}{\nu(E)^{1/2}}.
\end{align*}
Here \(K^*(x,y)=\overline{K(y,x)}\), component by component.  Both
\(s_{\varepsilon,R}\) and \(q_{\varepsilon,R}\) are symmetric in
\(x,y\).  Thus, when \(K\) is vector-valued, the last expression is
formed from the formal adjoints of its scalar components.
Here and below the norm of a vector-valued function is its Euclidean
norm.  The superscript \(s\) always refers to the source-radial cutoff in
\eqref{eq:source-radial-cutoff}.  The superscript \(q\) always refers to
the pulled-back target-radial cutoff in \eqref{eq:smooth-truncation}.

\begin{lemma}\label{lem:cutoff-stability}
Let \(\Psi\) be globally biLipschitz.  Let
\(\chi\in C^\infty([0,\infty))\) satisfy
\begin{equation}\label{eq:fixed-near-cutoff}
 0\leq\chi\leq1,\qquad
 \chi=1\text{ on }[0,r_0],\qquad
 \chi=0\text{ on }[2r_0,\infty),\qquad
 |\chi'|\leq C/r_0.
\end{equation}
Fix a measure pair \((\mu,\nu)\) with
\(\mathfrak A_2^\alpha(\mu,\nu)<\infty\).  Suppose that
\(K\) satisfies \(|K(x,y)|\leq C_K|x-y|^{\alpha-n}\). Then the operators with kernels \(q_{\varepsilon,R}(x,y)\chi(|x-y|)K(x,y),\quad q_{\varepsilon,R}(x,y)(1-\chi(|x-y|))K(x,y)\) have norm at most
\begin{equation}\label{eq:cutoff-stability-bound}
C\left(\mathfrak N_K^s(\mu,\nu)
 +\sqrt{\mathfrak A_2^\alpha(\mu,\nu)}\right)
\end{equation}
from \(L^2(\mu)\) to \(L^2(\nu)\), uniformly in
\(0<\varepsilon<R\).  Here \(C\) depends on
\(n,\alpha,C_K\) and the biLipschitz constant of \(\Psi\), but not on
\(\varepsilon,R\).  For any fixed family \(\mathcal F\), the
corresponding forward and dual testing bounds hold with
\(\mathfrak N_K^s\) in \eqref{eq:cutoff-stability-bound} replaced by
\(\mathfrak T_{K,\mathcal F}^s\) and
\(\mathfrak T_{K,\mathcal F}^{s,\mathrm{dual}}\), respectively.
For a fixed globally biLipschitz map \(\Omega\), the same assertions hold
with \(\mathfrak A_2^\alpha\) replaced by the combined
\(\Omega\)-quasicube constant.  The testing family may be any
fixed family of bounded Borel sets; in the application it is a family of
\(\Omega\)-quasicubes.  The constant may depend on \(\Omega\).
\end{lemma}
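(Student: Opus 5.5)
The plan is to compare each of the two kernels with a truncation of the source-radial operator \(T^{K,s}_{\varepsilon',R',\mu}\) and to bound the discrepancy by a sum of annular maximal operators from \cref{lem:annular-A2}. Let \(L\) be the biLipschitz constant of \(\Psi\). Then \(L^{-1}|x-y|\leq|\Psi(y)-\Psi(x)|\leq L|x-y|\). Write
\[
 q_{\varepsilon,R}(x,y)=s_{\varepsilon',R'}(x,y)+\bigl(q_{\varepsilon,R}(x,y)-s_{\varepsilon',R'}(x,y)\bigr),
\]
where \(\varepsilon'\) and \(R'\) are chosen suitably (for instance \(\varepsilon'=L\varepsilon\), \(R'=R/L\) when \(L\varepsilon<R/L\)). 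The difference \(q_{\varepsilon,R}-s_{\varepsilon',R'}\) is bounded by \(1\). It is supported where \(|x-y|\) lies in \([\varepsilon/(2L),L\varepsilon]\) or in \([R/(2L),2LR]\). Each of these ranges is covered by \(O(\log L)\) dyadic annuli \(r/2<|x-y|\leq2r\). On each such annulus \(|K|\leq C_Kr^{\alpha-n}\), so the difference operator is pointwise dominated by \(C_K\sum\mathcal M_{\alpha,r}(|f|\mu)\) over \(O(\log L)\) values of \(r\). By \cref{lem:annular-A2}, its norm is at most \(C\sqrt{\mathfrak A_2^\alpha}\). If \(L\varepsilon\geq R/L\), the whole kernel \(q_{\varepsilon,R}K\) is supported in boundedly many annuli, and the same bound applies directly. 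This gives \(\|T^{K,q}_{\varepsilon,R}\|\leq\mathfrak N^s_K+C\sqrt{\mathfrak A_2^\alpha}\).

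Next I would insert \(\chi(|x-y|)\), which is radial in the source coordinates, so \(\chi(|x-y|)s_{\varepsilon',R'}\) is again a source-radial profile. I would show that \(\chi(|x-y|)\tau_{\varepsilon',R'}(|x-y|)\) differs from \(\tau_{\varepsilon',R''}(|x-y|)\) with \(R''=\min(R',r_0)\) by a function supported in \(|x-y|\in[r_0/2,4r_0]\) or in \([R''/2,2R']\). That set is again a bounded number of annuli, up to the degenerate comparisons. The exact form of \(\eta_0\) does not matter; only the support of the difference and the bound \(|\cdot|\leq2\) are used. The far piece \((1-\chi)q_{\varepsilon,R}K\) equals \(q_{\varepsilon,R}K-\chi q_{\varepsilon,R}K\), so it follows from the two previous bounds. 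Hence both operators are bounded by \(C(\mathfrak N^s_K+\sqrt{\mathfrak A_2^\alpha})\), uniformly in \(\varepsilon,R\).

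For testing, the same identities hold after multiplying by \(\one_E\) on both sides. The main terms then become testing quantities \(\|\one_ET^{K,s}_{\varepsilon',R''}\one_E\|_{L^2(\nu)}\leq\mathfrak T^s_{K,\mathcal F}\mu(E)^{1/2}\). The error terms are dominated by \(\one_E\mathcal M_{\alpha,r}(\one_E\mu)\), and \cref{lem:annular-A2} with \(f=\one_E\) bounds them by \(C\sqrt{\mathfrak A_2^\alpha}\,\mu(E)^{1/2}\). The dual statement is identical, since all cutoffs are symmetric in \(x,y\), and uses the dual estimate of \cref{lem:annular-A2}. In the \(\Omega\)-quasicube version, the Euclidean annuli \(r/2<|x-y|\leq2r\) are contained in boundedly many \(\Omega\)-annuli \(r'/2<d_\Omega(x,y)\leq2r'\), with the number depending on the biLipschitz constant of \(\Omega\). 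The annular bound then comes from the \(\Omega\)-part of \cref{lem:annular-A2}.

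The main obstacle is bookkeeping rather than analysis. I need to check that every difference of cutoff profiles is supported in a number of dyadic annuli bounded independently of \(\varepsilon,R\), including the regimes where \(\varepsilon\), \(R\) and \(r_0\) are comparable. I also need to check that the parameters \(\varepsilon'<R''\) remain admissible, and to treat the degenerate cases \(\varepsilon'\geq R''\) separately, by bounding the whole kernel through annuli.
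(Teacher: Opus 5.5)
Your proposal is essentially the paper's proof. You compare $q_{\varepsilon,R}$ with a source-radial truncation on $O_L(1)$ dyadic annuli, and insert $\chi$ by comparing with the source-radial truncation at outer radius $\min\{R,r_0\}$, treating the degenerate case separately. You get the $(1-\chi)$ piece by subtraction, and handle testing, duals and $\Omega$-quasicubes through \cref{lem:annular-A2}. The shift to $\varepsilon'=L\varepsilon$, $R'=R/L$ is harmless but unnecessary, since $s_{\varepsilon,R}$ itself differs from $q_{\varepsilon,R}$ only on $\{\varepsilon/(2L)<|x-y|<L\varepsilon\}\cup\{R/L<|x-y|<2LR\}$.

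One bookkeeping correction is needed. Drop the interval $[R''/2,2R']$ from your support claim for the $\chi$-insertion error: when $R'\gg r_0$ it contains unboundedly many dyadic annuli, so the claim that the error lives in a bounded number of annuli fails as written. The interval is not needed anyway. For $\varepsilon'<R''$, the difference $\chi\tau_{\varepsilon',R'}-\tau_{\varepsilon',R''}$, viewed as a function of $|x-y|$, vanishes on $[0,r_0]\cup[2r_0,\infty)$. It is therefore supported in $\{r_0<|x-y|<2r_0\}$ alone.
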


\begin{proof}
Let \(L\) be a common biLipschitz constant for \(\Psi,\Psi^{-1}\).  Then \(L^{-1}|x-y|\leq|\Psi(y)-\Psi(x)|\leq L|x-y|\). The two cutoff families satisfy
\begin{align*}
 \{q_{\varepsilon,R}\neq s_{\varepsilon,R}\}
 &\subset
 \left\{\frac{\varepsilon}{2L}<|x-y|<L\varepsilon\right\}\cup
 \left\{\frac{R}{L}<|x-y|<2LR\right\}.
\end{align*}
Each set on the right is a union of at most \(C_L\) dyadic annuli.
The size estimate for \(K\) and \cref{lem:annular-A2} therefore give
\begin{equation}\label{eq:q-s-general-kernel}
 \sup_{0<\varepsilon<R}
 \bigl\|T_{\varepsilon,R,\mu}^{K,q}
       -T_{\varepsilon,R,\mu}^{K,s}\bigr\|_{L^2(\mu)\to L^2(\nu)}
 \leq C_L C_K\sqrt{\mathfrak A_2^\alpha(\mu,\nu)}.
\end{equation}

It remains to insert \(\chi\).  Put \(\rho=\min\{R,r_0\}\).  If
\(\varepsilon<\rho\), then \(s_{\varepsilon,R}(x,y)\chi(|x-y|) -s_{\varepsilon,\rho}(x,y)\) is supported in
\(\{r_0\leq|x-y|\leq2r_0\}\).  If
\(\varepsilon\geq\rho\), then necessarily \(\rho=r_0\), and
\(s_{\varepsilon,R}\chi\) is supported in
\(\{r_0/2<|x-y|<2r_0\}\).  In both cases the error is either zero or is
supported where \(|x-y|\approx r_0\).  Its kernel is bounded by a fixed
number of kernels of the form \(C\rho_0^{\alpha-n} \one_{\{\rho_0/2<|x-y|\leq2\rho_0\}}, \quad \rho_0\approx r_0\). Hence
\begin{align*}
 &\sup_{0<\varepsilon<R}
 \left\|\int q_{\varepsilon,R}(x,y)\chi(|x-y|)
K(x,y)f(y)\,d\mu(y)\right\|_{L^2(\nu)}\leq C\left(\mathfrak N_K^s(\mu,\nu)
 +\sqrt{\mathfrak A_2^\alpha(\mu,\nu)}\right)
 \|f\|_{L^2(\mu)}.
\end{align*}
The full \(q\)-truncation is controlled by
\eqref{eq:q-s-general-kernel}; more precisely,
\[
 \sup_{0<\varepsilon<R}
 \|T_{\varepsilon,R,\mu}^{K,q}\|_{L^2(\mu)\to L^2(\nu)}
 \leq \mathfrak N_K^s(\mu,\nu)
 +C_LC_K\sqrt{\mathfrak A_2^\alpha(\mu,\nu)}.
\]
Subtracting the part with \(\chi\)
gives the same estimate for the part with \(1-\chi\).

For forward testing, apply these identities to \(f=\one_E\), restrict
the result to \(E\), and divide by \(\mu(E)^{1/2}\).  The source-radial
term is bounded by \(\mathfrak T_{K,\mathcal F}^s\).  Every annular error
is bounded by its operator norm and hence by
\(C\sqrt{\mathfrak A_2^\alpha}\).  Interchanging the variables proves
the dual testing estimate.

For the \(\Omega\)-quasicube assertion, use the preimage distance
\(|\Omega^{-1}x-\Omega^{-1}y|\).  By biLipschitz continuity, each
Euclidean annulus above is contained in a fixed number of annuli for
this distance, with comparable radii.  Apply the quasicube form of
\cref{lem:annular-A2} to these annuli.  The preceding proof is otherwise
unchanged.
\end{proof}

We now define the tangent-line truncations used in this paper.  Put
\(d=n-\alpha>0\).  For \(0<\varepsilon<R\), let \(S_R=(1+d^{-1})R\) and define
\[
 \rho_{\varepsilon,R}^{\alpha}(r)
 =
 \begin{cases}
  \varepsilon^{-d}(1+d-d\,r/\varepsilon),
     &0<r<\varepsilon,\\
r^{-d},&\varepsilon\leq r\leq R,\\
R^{-d}(1+d-d\,r/R),
     &R<r<S_R,\\
  0,&r\geq S_R.
 \end{cases}
\]
The values and first derivatives of the adjacent formulas agree at
\(r=\varepsilon\) and \(r=R\), and
\(\rho_{\varepsilon,R}^{\alpha}(S_R)=0\).  This is the profile used in
\cite[Section~2]{CurvePaper}.  For \(x\neq y\), put \(t_{\varepsilon,R}(x,y) :=|x-y|^d \rho_{\varepsilon,R}^{\alpha}(|x-y|)\). The tangent-line truncated kernel is
\[
K_{\varepsilon,R}^{\mathrm{tl}}(x,y)
 :=
 \begin{cases}
t_{\varepsilon,R}(x,y)K(x,y),&x\neq y,\\
 0,&x=y.
 \end{cases}
\]
The corresponding operator is
\[
T_{\varepsilon,R,\mu}^{K,\mathrm{tl}}f(x)
 :=\int K_{\varepsilon,R}^{\mathrm{tl}}(x,y)f(y)\,d\mu(y).
\]
For \(K=\bfK_\Psi^{\alpha,n}\), we also write this operator as
\(\bfR_{\Psi,\varepsilon,R}^{\alpha,n,\mathrm{tl}}\).
For \(K=\bfK^{\alpha,n}\), we write it as
\(\bfR_{\varepsilon,R}^{\alpha,n,\mathrm{tl}}\).
The quantities
\(\mathfrak N_K^{\mathrm{tl}}\),
\(\mathfrak T_{K,\mathcal F}^{\mathrm{tl}}\), and
\(\mathfrak T_{K,\mathcal F}^{\mathrm{tl},\mathrm{dual}}\)
are defined as in \eqref{eq:uniform-radial-norms} and the two displays
following it.  Notice that \(t_{\varepsilon,R}(x,y)=1 \quad\text{when}\quad \varepsilon\leq |x-y|\leq R\). Thus, if \(A,B\subset\R^n\) satisfy
\[
 \varepsilon<\inf_{x\in A,\,y\in B}|x-y|
 \leq\sup_{x\in A,\,y\in B}|x-y|<R,
\]
then the tangent-line truncated kernel equals the full kernel on
\(A\times B\).

\begin{lemma}
\label{lem:tangent-smooth-comparison}
Let \(K\) be the fractional Riesz vector \(\bfK^{\alpha,n}\), one of
its scalar components \(K_j^{\alpha,n}(x,y) :=e_j\cdot\bfK^{\alpha,n}(x,y), \quad 1\leq j\leq n\), the conformal fractional Riesz vector \(\bfK_\Psi^{\alpha,n}\), or one
of its scalar components \(K_{\Psi,j}^{\alpha,n}(x,y) :=e_j\cdot\bfK_\Psi^{\alpha,n}(x,y), \quad 1\leq j\leq n\), where \(\Psi\) is globally biLipschitz.
Let \(\mu,\nu\) be locally finite positive Borel measures, possibly
with common point masses.  Let \(\Omega\) be a fixed globally
biLipschitz map, and compute the combined Muckenhoupt constant with
respect to the \(\Omega\)-quasicubes.  Then
\[
 \sup_{0<\varepsilon<R}
 \left\|
T_{\varepsilon,R,\mu}^{K,\mathrm{tl}}
 -T_{\varepsilon,R,\mu}^{K,s}
 \right\|_{L^2(\mu)\to L^2(\nu)}
 \leq
C\sqrt{\mathfrak A_{2,\Omega}^\alpha(\mu,\nu)}.
\]
The same estimate holds for the adjoint after interchanging
\(\mu\) and \(\nu\); for a vector kernel, the adjoint is taken
componentwise.  Consequently,
\begin{equation}\label{eq:smooth-tangent-comparison}
 \begin{aligned}
 \mathfrak N_K^s
 &\leq C\left(
 \mathfrak N_K^{\mathrm{tl}}
 +\sqrt{\mathfrak A_{2,\Omega}^\alpha}\right),
 &\qquad
 \mathfrak N_K^{\mathrm{tl}}
 &\leq C\left(
 \mathfrak N_K^s
 +\sqrt{\mathfrak A_{2,\Omega}^\alpha}\right).
 \end{aligned}
\end{equation}
For every family \(\mathcal F\) of bounded Borel sets, the same two
inequalities hold with
\((\mathfrak N_K^s,\mathfrak N_K^{\mathrm{tl}})\) replaced by either
\[
 (\mathfrak T_{K,\mathcal F}^s,
   \mathfrak T_{K,\mathcal F}^{\mathrm{tl}})
 \quad\text{or}\quad
 (\mathfrak T_{K,\mathcal F}^{s,\mathrm{dual}},
   \mathfrak T_{K,\mathcal F}^{\mathrm{tl},\mathrm{dual}}).
\]
The constant depends only on \(n,\alpha\), the fixed function
\(\eta_0\), and the biLipschitz constants of \(\Omega\) and, in the
conformal case, \(\Psi\).  It is independent of
\(\varepsilon,R,\mu,\nu\), and \(\mathcal F\).
\end{lemma}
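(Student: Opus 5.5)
The plan is to compare the two truncation profiles pointwise and reduce the difference to finitely many annular averages, which \cref{lem:annular-A2} controls. The difference kernel is
\[
 \bigl(t_{\varepsilon,R}(x,y)-s_{\varepsilon,R}(x,y)\bigr)K(x,y),\qquad x\neq y .
\]
Both cutoffs equal $1$ when $\varepsilon\leq|x-y|\leq R$. Both vanish when $|x-y|\geq\max\{2R,S_R\}$. For the remaining small distances, $s_{\varepsilon,R}$ vanishes when $|x-y|\leq\varepsilon/2$, while $0\leq t_{\varepsilon,R}\leq 1$. Indeed, on $(0,\varepsilon)$ we have $t_{\varepsilon,R}(x,y)=(r/\varepsilon)^d(1+d-dr/\varepsilon)$ with $r=|x-y|$, and this expression increases from $0$ to $1$. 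Likewise $t_{\varepsilon,R}\leq1$ on $(R,S_R)$.

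Hence the difference is supported in
\[
 \{0<|x-y|<\varepsilon\}\cup\{R<|x-y|<C_dR\},
\]
and it is bounded by $|K(x,y)|$ there. In all four cases of the lemma, $|K(x,y)|\leq C|x-y|^{\alpha-n}$. In the conformal case this uses the biLipschitz bound $|\Psi(y)-\Psi(x)|\geq L^{-1}|x-y|$.

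The outer region $\{R<|x-y|<C_dR\}$ is a union of boundedly many dyadic annuli of radius $\approx R$. Each piece is dominated by $C\,\mathcal M_{\alpha,r}(|f|\mu)$ with $r\approx R$.

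The inner region is the main point. On $\{\varepsilon/2<|x-y|<\varepsilon\}$ both cutoffs are bounded, so that piece is one more annulus. On $\{|x-y|\leq\varepsilon/2\}$ only $t_{\varepsilon,R}$ survives. There it is bounded by $(1+d)(|x-y|/\varepsilon)^d$, so the difference kernel is at most
\[
 C\,\varepsilon^{-d}|x-y|^{d}\,|x-y|^{\alpha-n}=C\,\varepsilon^{-d}=C\,\varepsilon^{\alpha-n}.
\]
Split $\{0<|x-y|\leq\varepsilon/2\}$ into the annuli $\{2^{-k-1}\varepsilon<|x-y|\leq2^{-k}\varepsilon\}$ for $k\geq1$. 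On the $k$-th annulus the kernel is at most $C\,2^{-kd}\,(2^{-k}\varepsilon)^{\alpha-n}$. The annulus is contained in the support of $\mathcal M_{\alpha,r}$ with $r=2^{-k}\varepsilon$. So the $k$-th piece is dominated by $C\,2^{-kd}\,\mathcal M_{\alpha,2^{-k}\varepsilon}(|f|\mu)$.

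The factor $2^{-kd}$, coming from the linear vanishing of the tangent-line profile raised to the power $d$, makes the series summable. Since $d=n-\alpha>0$, this is the step where the specific shape of $\rho^\alpha_{\varepsilon,R}$ matters. The diagonal $x=y$ contributes nothing because both truncated kernels are zero there, so common point masses cause no trouble.

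Apply the quasicube form of \cref{lem:annular-A2}. Each Euclidean annulus is covered by a fixed number, depending on the biLipschitz constant of $\Omega$, of annuli for $d_\Omega$ with comparable radii, as in the proof of \cref{lem:cutoff-stability}. Summing the geometric series gives
\[
 \bigl\|T^{K,\mathrm{tl}}_{\varepsilon,R,\mu}-T^{K,s}_{\varepsilon,R,\mu}\bigr\|_{L^2(\mu)\to L^2(\nu)}\leq C\sqrt{\mathfrak A^\alpha_{2,\Omega}(\mu,\nu)},
\]
uniformly in $\varepsilon<R$. The cutoffs are symmetric in $x,y$ and the size bound is symmetric. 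Hence the same argument with $\mu,\nu$ interchanged, using the dual annular estimate, handles the adjoint componentwise.

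The consequences follow from the triangle inequality. For \eqref{eq:smooth-tangent-comparison}, write $T^{\mathrm{tl}}=T^{s}+(T^{\mathrm{tl}}-T^{s})$ and take the supremum over $\varepsilon<R$. For testing, apply both operators to $\one_E$ and restrict to $E$. Bound the difference by $\|T^{\mathrm{tl}}-T^s\|\,\mu(E)^{1/2}$ and divide by $\mu(E)^{1/2}$. The dual testing bound follows the same way from the adjoint estimate. No property of $\mathcal F$ beyond boundedness of its members is used, and the constants depend only on $n$, $\alpha$, $\eta_0$ and the biLipschitz constants, as claimed.
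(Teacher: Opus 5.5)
Your proposal is correct and follows the paper's argument. Both bound the difference kernel by $C\varepsilon^{-d}$ on $\{0<|x-y|<\varepsilon\}$, decompose that region into dyadic annuli where $\varepsilon^{-d}=2^{-jd}(2^{-j}\varepsilon)^{-d}$ gives a summable series, and handle the boundedly many annuli near $R$ directly, all via the quasicube form of the annular lemma. The norm and testing comparisons then follow from the triangle inequality, and your explicit check that $0\le t_{\varepsilon,R}\le 1$ on $(0,\varepsilon)$ and on $(R,S_R)$ is a detail the paper leaves implicit.
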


\begin{proof}
Both kernels satisfy \(|K(x,y)|\leq C|x-y|^{-d}, \quad x\neq y\). For the conformal kernel this follows from \(|\Psi(y)-\Psi(x)|\geq \Lip(\Psi^{-1})^{-1}|x-y|\). Set \(r=|x-y|\).  On
\(\varepsilon\leq r\leq R\), both truncation multipliers are one.
If \(0<r<\varepsilon\), then \(\left| \bigl(t_{\varepsilon,R}(x,y) -s_{\varepsilon,R}(x,y)\bigr)K(x,y) \right| \leq C\varepsilon^{-d}\). Indeed, the tangent-line term is bounded by
\(C\rho_{\varepsilon,R}^{\alpha}(r)\leq C\varepsilon^{-d}\).
The smooth term vanishes for \(r\leq\varepsilon/2\), while for
\(\varepsilon/2<r<\varepsilon\) it is bounded by
\(Cr^{-d}\leq C\varepsilon^{-d}\).

Let \(A_d=\max\{2,1+d^{-1}\}\). For \(r>R\), both truncated kernels vanish when \(r\geq A_dR\), and
on \(R<r<A_dR\) their difference is bounded by \(CR^{-d}\).
It follows that
\[
 \left|
 \bigl(t_{\varepsilon,R}-s_{\varepsilon,R}\bigr)(x,y)K(x,y)
 \right|
 \leq C\varepsilon^{-d}\one_{\{0<|x-y|<\varepsilon\}}
 +CR^{-d}\one_{\{R<|x-y|<A_dR\}}.
\]

For the first term, write
\[
 \{0<|x-y|\leq\varepsilon\}
 =
 \bigcup_{j=0}^{\infty}
 \left\{
 2^{-j-1}\varepsilon<|x-y|
 \leq2^{-j}\varepsilon
 \right\}.
\]
On the \(j\)-th annulus, \(\varepsilon^{-d} = 2^{-jd}(2^{-j}\varepsilon)^{-d}\). The quasicube form of \cref{lem:annular-A2} therefore gives a bound
\[
C\sum_{j=0}^{\infty}2^{-jd}
 \sqrt{\mathfrak A_{2,\Omega}^\alpha(\mu,\nu)}
 \leq
C\sqrt{\mathfrak A_{2,\Omega}^\alpha(\mu,\nu)}.
\]
The outer region \(R<|x-y|<A_dR\) is the union of a number of dyadic
annuli depending only on \(d\).  Another application of
\cref{lem:annular-A2} gives the same bound for the outer term.

To use the quasicube version of that lemma, take the annuli in the
preimage distance \(|\Omega^{-1}x-\Omega^{-1}y|\). The biLipschitz inequalities for \(\Omega\) place each Euclidean
annulus above in a fixed number of preimage annuli with comparable
radii.  The factors \(2^{-jd}\) in the inner sum change only by a
fixed constant.

For a vector kernel, take the Euclidean norm before applying the
positive annular majorants.  This proves the vector-valued operator
estimate.  For every \(j\),
\[
 |K_j^{\alpha,n}(x,y)|\leq|\bfK^{\alpha,n}(x,y)|,
 \qquad
 |K_{\Psi,j}^{\alpha,n}(x,y)|
 \leq|\bfK_\Psi^{\alpha,n}(x,y)|.
\]
The same annular majorants therefore prove the scalar estimates, with
constants independent of \(j\).  Since the multipliers are symmetric
in \(x,y\), the same proof applies to the vector or scalar adjoint after
the measures are interchanged.

Let \(E\in\mathcal F\).  Apply the operator estimate to
\(\one_E\), restrict the result to \(E\), and divide by
\(\mu(E)^{1/2}\).  This proves the forward testing comparison.
Interchanging \(\mu,\nu\) proves the dual comparison.  This argument
applies to every family of bounded Borel sets, including
\(\Omega\)-quasicubes.

\end{proof}

We omit the superscript \(\mathrm{tl}\) from the tangent-line operator
and testing constants in the rest of the paper.

\begin{proposition}
\label{prop:change-variable}
Let \(\Psi\) be as in \eqref{eq:graph-change}.  Assume that \(\Psi\) is
globally biLipschitz.  Let \(\sigma\) and \(\omega\) be locally finite
positive Borel measures with arbitrary supports, possibly with common
point masses.  Let \(\mathcal F\) be a family of bounded
Borel sets.  Then
\begin{align*}
 &\mathfrak N_{\bfR^{\alpha,n}}(\sigma,\omega)
   +\sqrt{\mathfrak A_2^\alpha(\sigma,\omega)}
\approx
 \mathfrak N_{\bfR_\Psi^{\alpha,n}}
   (\widetilde\sigma,\widetilde\omega)
   +\sqrt{\mathfrak A_2^\alpha(\sigma,\omega)}.
\end{align*}
The forward and dual testing constants over
\(\mathcal F\) and \(\Psi^{-1}\mathcal F\) satisfy the corresponding
comparisons.  All constants are uniform in the truncation parameters.
For any \(r_0>0\) with \(2r_0\leq1\), put
\begin{equation*}
 \Lambda_{\psi,r_0}
 :=\vartheta_\psi(2r_0)
 +\int_0^{2r_0}\vartheta_\psi(t)\,\frac{dt}{t}
 +\frac{\|\psi\|_\infty}{r_0}.
\end{equation*}
The Taylor and far-field kernel errors are bounded by
\(C\Lambda_{\psi,r_0}\sqrt{\mathfrak A_2^\alpha}\).  The cutoff
comparisons add \(C\sqrt{\mathfrak A_2^\alpha}\).  Thus the full
additive error is bounded by
\(C(1+\Lambda_{\psi,r_0})\sqrt{\mathfrak A_2^\alpha}\).
In these additive estimates, \(C\) depends only on \(n,\alpha\),
the biLipschitz constant of \(\Psi\), and \(\|D\psi\|_\infty\).
The constants in the norm and testing comparisons may also depend on
\(\Lambda_{\psi,r_0}\).
\end{proposition}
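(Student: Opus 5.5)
The plan is to compare the two operators on the same pair of measures by a change of variables. For $f\in L^2(\sigma)$ put $\widetilde f=f\circ\Psi$. Then $\|\widetilde f\|_{L^2(\widetilde\sigma)}=\|f\|_{L^2(\sigma)}$, and
\[
 \bfR^{\alpha,n}_{\varepsilon,R,\sigma}f(\Psi x)
 =\int \rho_{\varepsilon,R}^\alpha(|\Psi y-\Psi x|)\,c_{\alpha,n}(\Psi y-\Psi x)\,\widetilde f(y)\,d\widetilde\sigma(y),
\]
with $L^2(\omega)$ norms turning into $L^2(\widetilde\omega)$ norms. The truncation profile is radial in the $\Psi$-coordinates, so \cref{lem:tangent-smooth-comparison} (applied with $\Omega=\Psi$) and \cref{lem:cutoff-stability} let us replace it by the $q$- or $s$-cutoffs. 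Each replacement costs $C\sqrt{\mathfrak A^\alpha_{2}}$. Here the Muckenhoupt constants of $(\widetilde\sigma,\widetilde\omega)$ over ordinary cubes equal those of $(\sigma,\omega)$ over $\Psi$-quasicubes. These are comparable to the ordinary ones, by biLipschitz continuity and the comparison of Poisson integrals for comparable sets. This comparison is the cutoff part of the additive error. The whole problem then reduces to bounding the kernel difference
\[
 E(x,y)=c_{\alpha,n}\frac{(\Psi y-\Psi x)-(y-x)}{|\Psi y-\Psi x|^{n+1-\alpha}}
 =c_{\alpha,n}\frac{u(y)-u(x)}{|\Psi y-\Psi x|^{n+1-\alpha}}
\]
as an operator from $L^2(\widetilde\sigma)$ to $L^2(\widetilde\omega)$, multiplied by a cutoff $q_{\varepsilon,R}$.

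I would split $E$ with the cutoff $\chi(|x-y|)$ of \eqref{eq:fixed-near-cutoff}. By \cref{lem:cutoff-stability}, inserting $\chi$ costs only $C\sqrt{\mathfrak A_2^\alpha}$.

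\emph{Far part.} For $|x-y|\ge r_0$ we have $|u(y)-u(x)|\le2\|\psi\|_\infty$, so $|E_{\mathrm{far}}|\le C\|\psi\|_\infty|x-y|^{\alpha-n-1}$. On the annulus $|x-y|\approx2^kr_0$, \cref{lem:annular-A2} gives the bound $C\|\psi\|_\infty(2^kr_0)^{-1}\sqrt{\mathfrak A_2^\alpha}$. Summing over $k\ge0$ gives $C(\|\psi\|_\infty/r_0)\sqrt{\mathfrak A_2^\alpha}$.

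\emph{Near part.} The raw difference $u(y)-u(x)$ is only $O(\|\psi'\|_\infty|x-y|)$, which is not small enough to sum. So I would subtract the linear part:
\[
 u(y)-u(x)=Du(x)(y-x)+\bigl(u(y)-u(x)-Du(x)(y-x)\bigr).
\]
By \cref{lem:Taylor-Dini} the remainder is at most $|x-y|\vartheta_\psi(|x-y|)$. This contributes $|x-y|^{\alpha-n}\vartheta_\psi(|x-y|)$ to the kernel. The dyadic sum over annuli $|x-y|\approx2^{-j}r_0$ is then controlled by \cref{lem:annular-A2} and \cref{lem:dyadic-Dini}, giving $C\bigl(\vartheta_\psi(2r_0)+\int_0^{2r_0}\vartheta_\psi\,dt/t\bigr)\sqrt{\mathfrak A_2^\alpha}$.

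\emph{Main obstacle.} The linear term $Du(x)(y-x)/|\Psi y-\Psi x|^{n+1-\alpha}$ is a genuine Calder\'on--Zygmund-size kernel; it is not small. I would handle it by organizing the comparison differently. Write $\Psi y-\Psi x=D\Psi(x)(y-x)+\mathrm{rem}$, so that
\[
 \bfK^{\alpha,n}(\Psi x,\Psi y)=D\Psi(x)\,\bfK^{\alpha,n}_\Psi(x,y)+\text{(Taylor error)}.
\]
Since $D\Psi(x)=I+Du(x)$ is a bounded, boundedly invertible matrix multiplier depending only on $x$, it factors out of the $L^2(\widetilde\omega)$ norm. Its inverse factors out in the reverse direction, with constants depending on $\|D\psi\|_\infty$. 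Hence the norm of one operator is controlled by the norm of the other plus $C\Lambda_{\psi,r_0}\sqrt{\mathfrak A_2^\alpha}$.

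For the dual comparison, the matrix $D\Psi(y)$ appears on the source side. There I would write $D\Psi(y)=D\Psi(x)+O(\vartheta_\psi(|x-y|))$ in the near region and absorb the difference into the Dini error. In the far region the bounded-kernel estimate applies. Everything is uniform in $\varepsilon,R$, because all majorants are positive annular kernels.

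\emph{Testing.} For testing over $\mathcal F$ versus $\Psi^{-1}\mathcal F$, I would apply the same identities to $f=\one_E$ and restrict to $E$ (respectively $\Psi^{-1}E$). The pointwise matrix multiplier does not affect the localization, and every error term is bounded by its operator norm. This yields the stated comparisons with total additive error $C(1+\Lambda_{\psi,r_0})\sqrt{\mathfrak A^\alpha_2}$.
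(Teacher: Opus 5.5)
Your proposal follows the paper's proof: the same $\chi$-split, the Dini Taylor remainder summed over the inner annuli, the far bound $C\|\psi\|_\infty|x-y|^{\alpha-n-1}$, and $D\Psi(x)$ as a left multiplier on the near part. Inverting $D\Psi(x)$ on the near part is equivalent to the paper's use of $(I+\chi Du(x))^{-1}=I-\chi Du(x)$. Trading $D\Psi(y)$ for $D\Psi(x)$ at cost $\vartheta_\psi(|x-y|)|x-y|^{\alpha-n}$ is equivalent to the paper's expansion at $y$ for dual testing.

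One step is not justified as written: the Muckenhoupt comparison. Euclidean annuli in the straightened coordinates give $\mathfrak A_2^\alpha(\widetilde\sigma,\widetilde\omega)$ over ordinary cubes. Up to constants, this is the constant of $(\sigma,\omega)$ over the sets $H=\Psi Q$. You assert it is comparable to $\mathfrak A_2^\alpha(\sigma,\omega)$ ``by biLipschitz continuity and the comparison of Poisson integrals,'' but that ignores the holes.

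If $Q'\supset H$ is a cube of comparable size, the one-tailed expression for $H$ is bounded below by a multiple of $\omega(H)\sigma(Q'\setminus H)\ell(Q)^{-2(n-\alpha)}$. This is a product of masses of two adjacent sets across the curved boundary of $H$. That $\sigma$-mass lies inside the hole of $Q'$, so comparing Poisson kernels of comparable sets cannot control it.

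The comparison is in fact true, but only through the punctured conditions. Let $p$ be a largest common $\sigma$-atom in $Q'$. Then $\omega(H)\sigma(Q'\setminus H)\le\omega(Q')\sigma(Q',\mathfrak P)+\one_{p\notin H}\,\sigma(\{p\})\,\omega(H)$. The last term is handled by the starred punctured condition, plus a small cube separating $p$ from the largest common $\omega$-atom, as in the proof of \cref{lem:A2-restriction}.

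The simpler repair is the paper's. Apply \cref{lem:annular-A2,lem:cutoff-stability} in their quasicube form with $\Omega=\Psi^{-1}$, not $\Omega=\Psi$; each Euclidean annulus lies in boundedly many annuli for $|\Psi x-\Psi y|$. Do the tangent-line/smooth comparison in the original coordinates before pulling back. The relevant quasicubes are then exactly the preimages of ordinary cubes, and \eqref{eq:A2-pullback} returns $\mathfrak A_2^\alpha(\sigma,\omega)$ with no change of family. (Your first display also lacks the factor $|\Psi y-\Psi x|^{-1}$.)
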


\begin{proof}
Put \(p=n+1-\alpha\).  The pullback of the Riesz kernel is \(\bfK^{\mathrm{pb}}(x,y) := c_{\alpha,n} \frac{\Psi(y)-\Psi(x)} {|\Psi(y)-\Psi(x)|^p}\). Fix \(r_0>0\) with \(2r_0\leq1\), and choose \(\chi\) as in
\eqref{eq:fixed-near-cutoff}.
Write \(\chi(x,y)=\chi(|x-y|)\).
All kernel identities below are for \(x\neq y\).  As above, every
truncated kernel is defined to be zero on the diagonal.

Since \(\Psi=I+u\), we have
\begin{align}
 \bfK^{\mathrm{pb}}(x,y)
 &=\chi(x,y)D\Psi(x)\bfK_\Psi^{\alpha,n}(x,y)
 +(1-\chi(x,y))\bfK_\Psi^{\alpha,n}(x,y)
 +\bfE_{x,\mathrm{near}}(x,y)+\bfE_{\mathrm{far}}(x,y),
 \label{eq:near-decomposition}
\end{align}
where
\begin{align*}
 \bfE_{x,\mathrm{near}}(x,y)
 &=c_{\alpha,n}\chi(x,y)
 \frac{u(y)-u(x)-Du(x)(y-x)}
 {|\Psi(y)-\Psi(x)|^p},\\
 \bfE_{\mathrm{far}}(x,y)
 &=c_{\alpha,n}(1-\chi(x,y))
 \frac{u(y)-u(x)}{|\Psi(y)-\Psi(x)|^p}.
\end{align*}
Expansion at \(y\) gives the analogous identity with \(D\Psi(y)\) and
\[
 \bfE_{y,\mathrm{near}}(x,y)
 =c_{\alpha,n}\chi(x,y)
 \frac{u(y)-u(x)-Du(y)(y-x)}{|\Psi(y)-\Psi(x)|^p}.
\]
Since \(\Psi\) is biLipschitz, \cref{lem:Taylor-Dini} gives, for
\(z=x,y\),
\begin{align*}
 |\bfE_{z,\mathrm{near}}(x,y)|
 &\leq C|x-y|^{\alpha-n}\vartheta_\psi(|x-y|)
 \one_{\{|x-y|\leq2r_0\}},\\
 |\bfE_{\mathrm{far}}(x,y)|
 &\leq C\|u\|_\infty |x-y|^{\alpha-n-1}
 \one_{\{|x-y|\geq r_0\}}.
\end{align*}

For \(j\in\mathbb Z\) with \(j\leq1\), the coefficient of
\(|x-y|^{\alpha-n}\) in the near error on the annulus \(2^{j-1}r_0<|x-y|\leq2^jr_0\) is at most \(C\vartheta_\psi(2^jr_0)\).  For \(j\in\mathbb Z\) with
\(j\geq0\), the corresponding coefficient in the far error is at most
\(C\|u\|_\infty/(2^jr_0)\).  Hence
\cref{lem:dyadic-Dini,lem:annular-A2} gives, for \(z=x,y\),
\begin{align}
 &\left\|\int q_{\varepsilon,R}(x,y)
 \bigl(\bfE_{z,\mathrm{near}}(x,y)+\bfE_{\mathrm{far}}(x,y)\bigr)
f(y)\,d\widetilde\sigma(y)\right\|_{L^2(\widetilde\omega)}
\leq
C\Lambda_{\psi,r_0}\sqrt{\mathfrak A_2^\alpha}
 \|f\|_{L^2(\widetilde\sigma)}.
 \label{eq:uniform-error-operator}
\end{align}
The constant is independent of the supports of the measures and of
\(\varepsilon,R\).
Here we use the quasicube form of \cref{lem:annular-A2} with
\(\Omega=\Psi^{-1}\).  A Euclidean annulus is contained in a fixed
number of \(\Psi^{-1}\)-quasiannuli at comparable radii.  The combined
Muckenhoupt constant is then replaced by the original one using
\eqref{eq:A2-pullback}.

More generally, if \(F\) and \(\Omega\) are globally biLipschitz, then
the combined pullback estimate is
\begin{equation}\label{eq:A2-pullback}
 \mathfrak A_{2,F^{-1}\circ\Omega}^\alpha
   (F^*\sigma,F^*\omega)
 \approx
 \mathfrak A_{2,\Omega}^\alpha(\sigma,\omega).
\end{equation}
For $H=\Omega Q$ and $\widetilde H=F^{-1}H$, the masses of
corresponding sets agree, and $\widetilde H^c=F^{-1}(H^c)$.
Moreover, $c_{\widetilde H}=F^{-1}(c_H)$ and
$|\widetilde H|^{1/n}\approx|H|^{1/n}$.  BiLipschitz continuity
therefore compares the reproducing Poisson kernels after the change
of variables.  This proves the forward and dual one-tailed comparisons
for these corresponding families.  The common atoms and their masses
also correspond, so the two punctured constants compare as well.
Taking the suprema gives \eqref{eq:A2-pullback}.

By \eqref{eq:uniform-radial-norms}, the cutoff in
\(\mathfrak N_{\bfK^{\mathrm{pb}}}^q\) is radial in the original
variables \(X=\Psi(x)\) and \(Y=\Psi(y)\).  The cutoff in
\(\mathfrak N_{\bfK^{\mathrm{pb}}}^s\) uses the distance \(|x-y|\).
Let \(L\geq1\) be a common biLipschitz constant for
\(\Psi\) and \(\Psi^{-1}\).  Then
\begin{align}
 \{q_{\varepsilon,R}\neq s_{\varepsilon,R}\}
 &\subset
 \left\{\frac{\varepsilon}{2L}<|x-y|<L\varepsilon\right\}
 \cup
 \left\{\frac{R}{L}<|x-y|<2LR\right\}.
 \label{eq:target-source-cutoff-support}
\end{align}
Indeed, both inner cutoff factors vanish below
\(\varepsilon/(2L)\) and equal one above \(L\varepsilon\).  Both outer
cutoff factors equal one below \(R/L\) and vanish above \(2LR\).

Since \(|\bfK^{\mathrm{pb}}(x,y)|\leq C|x-y|^{\alpha-n}\), each set on the right side of
\eqref{eq:target-source-cutoff-support} is the union of at most
\(C_L\) dyadic annuli.  Hence
\cref{lem:annular-A2} and \eqref{eq:A2-pullback} give
\begin{align}
 &\sup_{0<\varepsilon<R}
 \left\|\int
 (q_{\varepsilon,R}-s_{\varepsilon,R})(x,y)
 \bfK^{\mathrm{pb}}(x,y)f(y)\,
d\widetilde\sigma(y)\right\|_{L^2(\widetilde\omega)}
 \leq
C_L\sqrt{\mathfrak A_2^\alpha(\sigma,\omega)}
 \|f\|_{L^2(\widetilde\sigma)}.
 \label{eq:target-source-truncation}
\end{align}
The same estimate holds for the adjoint and for both normalized testing
constants.

We also need a fixed upper cutoff.  Put \(\rho=\min\{R,r_0\}\).  If
\(\varepsilon<\rho\), then \(s_{\varepsilon,R}(x,y)\chi(|x-y|) -s_{\varepsilon,\rho}(x,y)\) is supported in \(\{r_0\leq|x-y|\leq2r_0\}\).  If
\(\varepsilon\geq\rho\), then \(\rho=r_0\), and
\(s_{\varepsilon,R}\chi\) is supported in
\(\{\rho/2<|x-y|<2\rho\}\).  Another application of
\cref{lem:annular-A2} gives the required bound for the source-radial
cutoff.  The difference between that cutoff and
\(q_{\varepsilon,R}\) is controlled by
\eqref{eq:target-source-truncation}.  More explicitly, if \(T_a\)
denotes the operator with kernel
\(a(x,y)\bfK^{\mathrm{pb}}(x,y)\), then, uniformly in
\(0<\varepsilon<R\),
\begin{align*}
 \|T_{q_{\varepsilon,R}\chi}\|
 &\leq \|T_{s_{\varepsilon,R}\chi}\|
     +C\sqrt{\mathfrak A_2^\alpha}
 \leq \mathfrak N_{\bfK^{\mathrm{pb}}}^s
     +C\sqrt{\mathfrak A_2^\alpha}
 \leq \mathfrak N_{\bfK^{\mathrm{pb}}}^q
     +C\sqrt{\mathfrak A_2^\alpha}.
\end{align*}
The first and last inequalities use
\eqref{eq:target-source-truncation}.  The middle inequality uses the
two cases above and \cref{lem:annular-A2}.  Consequently,
\begin{align}
 &\sup_{0<\varepsilon<R}
 \left\|\int
q_{\varepsilon,R}(x,y)\chi(|x-y|)
 \bfK^{\mathrm{pb}}(x,y)f(y)\,
d\widetilde\sigma(y)\right\|_{L^2(\widetilde\omega)}
 \notag\\
 &\qquad\leq C\left(
 \mathfrak N_{\bfK^{\mathrm{pb}}}^q
 +\sqrt{\mathfrak A_2^\alpha(\sigma,\omega)}\right)
 \|f\|_{L^2(\widetilde\sigma)}.
 \label{eq:fixed-upper-pullback}
\end{align}
This estimate also holds for the adjoint.  Repeating the preceding three
inequalities with each operator norm replaced by its normalized testing
expression proves the forward and dual testing estimates.

Multiplying \eqref{eq:near-decomposition} by
\(q_{\varepsilon,R}\), we obtain
\begin{align*}
q_{\varepsilon,R}\bfK^{\mathrm{pb}}
 &=D\Psi(x)q_{\varepsilon,R}\chi\bfK_\Psi^{\alpha,n}
 +q_{\varepsilon,R}(1-\chi)\bfK_\Psi^{\alpha,n}+q_{\varepsilon,R}
 (\bfE_{x,\mathrm{near}}+\bfE_{\mathrm{far}}).
\end{align*}
The matrix \(D\Psi(x)\) depends only on the left variable.  Thus
we apply the quasicube form of \cref{lem:cutoff-stability} with
\(\Omega=\Psi^{-1}\).  Equation \eqref{eq:A2-pullback} bounds its
combined quasicube constant by
\(C\mathfrak A_2^\alpha(\sigma,\omega)\).  The lemma and
\eqref{eq:uniform-error-operator} give
\begin{equation}\label{eq:one-direction-change}
 \mathfrak N_{\bfK^{\mathrm{pb}}}^q
 \leq C\left(
 \mathfrak N_{\bfK_\Psi^{\alpha,n}}^s
 +(1+\Lambda_{\psi,r_0})
   \sqrt{\mathfrak A_2^\alpha}\right).
\end{equation}
By \eqref{eq:smooth-tangent-comparison}, the source-radial norm on the
right is bounded by the tangent-line norm plus
\(C\sqrt{\mathfrak A_2^\alpha}\).  Repeating the preceding inequalities
with each operator norm replaced by its normalized testing expression
proves forward testing.  Expansion at \(y\) proves dual testing because
\(D\Psi(y)\) then depends only on the left variable for the adjoint.
The error in both estimates is
\(C(1+\Lambda_{\psi,r_0})\sqrt{\mathfrak A_2^\alpha}\).  In both
testing estimates, the left-hand side uses the \(q\)-cutoffs, while the
first term on the right-hand side uses the \(s\)-cutoffs.  Equation
\eqref{eq:smooth-tangent-comparison} gives the testing
constants.

For the reverse inequality, we use the following identity for a graph
shear: \(Du(x)^2=0\). Consequently, \((I-\chi(x,y)Du(x))(I+\chi(x,y)Du(x))=I\). Equation \eqref{eq:near-decomposition} is \(\bfK^{\mathrm{pb}} =(I+\chi Du(x))\bfK_\Psi^{\alpha,n} +\bfE_{x,\mathrm{near}}+\bfE_{\mathrm{far}}\). It follows that
\begin{align}
 \bfK_\Psi^{\alpha,n}
 &=\bfK^{\mathrm{pb}}-\chi Du(x)\bfK^{\mathrm{pb}}
 -(I-\chi Du(x))
 (\bfE_{x,\mathrm{near}}+\bfE_{\mathrm{far}}).
 \label{eq:reverse-kernel-identity}
\end{align}
Multiply \eqref{eq:reverse-kernel-identity} by
\(q_{\varepsilon,R}\).  The first term is one of the truncations used in
\(\mathfrak N_{\bfK^{\mathrm{pb}}}^q\).  For the second term,
\[
q_{\varepsilon,R}(x,y)\chi(|x-y|)
Du(x)\bfK^{\mathrm{pb}}(x,y)
 =Du(x)\bigl[q_{\varepsilon,R}(x,y)\chi(|x-y|)
 \bfK^{\mathrm{pb}}(x,y)\bigr].
\]
Equations \eqref{eq:target-source-truncation} and
\eqref{eq:fixed-upper-pullback} therefore give
\[
 \|T_{q_{\varepsilon,R}\chi Du\,\bfK^{\mathrm{pb}}}\|
 \leq C\|Du\|_\infty\left(
 \mathfrak N_{\bfK^{\mathrm{pb}}}^q
 +\sqrt{\mathfrak A_2^\alpha}\right).
\]
For the last term, multiplication by \(\chi\) only decreases the
positive annular majorants used in
\eqref{eq:uniform-error-operator}.  Hence
\[
 \|T_{q_{\varepsilon,R}(I-\chi Du)
 (\bfE_{x,\mathrm{near}}+\bfE_{\mathrm{far}})}\|
 \leq C(1+\|Du\|_\infty)
 \Lambda_{\psi,r_0}\sqrt{\mathfrak A_2^\alpha}.
\]
The norms in the last two displays are from
\(L^2(\widetilde\sigma)\) to \(L^2(\widetilde\omega)\).
It follows that
\begin{equation}\label{eq:reverse-q-change}
 \mathfrak N_{\bfK_\Psi^{\alpha,n}}^q
 \leq C\left(
 \mathfrak N_{\bfK^{\mathrm{pb}}}^q
 +(1+\Lambda_{\psi,r_0})
   \sqrt{\mathfrak A_2^\alpha}\right).
\end{equation}
The same estimates, with the same factor
\(1+\Lambda_{\psi,r_0}\), hold for the normalized \(q\)-testing
constants.
Expansion at \(y\) gives the reverse dual \(q\)-testing estimate.

These estimates use the target-radial cutoffs
\(q_{\varepsilon,R}\).  Since \(|\bfK_\Psi^{\alpha,n}(x,y)| \leq C|x-y|^{\alpha-n}\), the support comparison
\eqref{eq:target-source-cutoff-support} and
\cref{lem:annular-A2} also give
\begin{align*}
 &\sup_{0<\varepsilon<R}
 \left\|\int
 (q_{\varepsilon,R}-s_{\varepsilon,R})(x,y)
 \bfK_\Psi^{\alpha,n}(x,y)f(y)\,
d\widetilde\sigma(y)\right\|_{L^2(\widetilde\omega)}\leq C_L\sqrt{\mathfrak A_2^\alpha}
 \|f\|_{L^2(\widetilde\sigma)}.
\end{align*}
The same estimate holds for the adjoint and for both normalized testing
constants.  Combining it with \eqref{eq:reverse-q-change} gives
\begin{equation}\label{eq:reverse-source-change}
 \mathfrak N_{\bfK_\Psi^{\alpha,n}}^s
 \leq C\left(
 \mathfrak N_{\bfK^{\mathrm{pb}}}^q
 +(1+\Lambda_{\psi,r_0})
   \sqrt{\mathfrak A_2^\alpha}\right),
\end{equation}
and the analogous forward and dual \(s\)-testing estimates, with the
same factor \(1+\Lambda_{\psi,r_0}\).  Equation
\eqref{eq:smooth-tangent-comparison} converts the left-hand sides to the
tangent-line constants.

Finally, under \(X=\Psi(x)\), \(Y=\Psi(y)\), the target-radial kernel \(\tau_{\varepsilon,R}(|X-Y|)\bfK^{\alpha,n}(X,Y)\) pulls back to \(q_{\varepsilon,R}(x,y)\bfK^{\mathrm{pb}}(x,y)\). The measures become \(\widetilde\sigma\) and
\(\widetilde\omega\), the source function becomes \(f\circ\Psi\), and
an original testing set \(Q\) becomes \(\Psi^{-1}Q\).  In particular, \(\|f\circ\Psi\|_{L^2(\widetilde\sigma)} =\|f\|_{L^2(\sigma)}\). Therefore
\begin{align*}
 \mathfrak N_{\bfK^{\alpha,n}}^s(\sigma,\omega)
 &=\mathfrak N_{\bfK^{\mathrm{pb}}}^q
   (\widetilde\sigma,\widetilde\omega),\\
 \mathfrak T_{\bfK^{\alpha,n},\mathcal F}^s(\sigma,\omega)
 &=\mathfrak T_{\bfK^{\mathrm{pb}},\Psi^{-1}\mathcal F}^q
   (\widetilde\sigma,\widetilde\omega),\\
 \mathfrak T_{\bfK^{\alpha,n},\mathcal F}^{s,\mathrm{dual}}
   (\sigma,\omega)
 &=\mathfrak T_{\bfK^{\mathrm{pb}},\Psi^{-1}\mathcal F}^{q,\mathrm{dual}}
   (\widetilde\sigma,\widetilde\omega).
\end{align*}
Apply \eqref{eq:smooth-tangent-comparison} to the fractional Riesz
kernel in the original coordinates and to the conformal kernel in the
pullback coordinates.  Together with
\eqref{eq:one-direction-change} and
\eqref{eq:reverse-source-change}, these identities prove the norm and
testing comparisons in the proposition.
\end{proof}

\begin{remark}\label{rem:Dini-exact}
The small-distance error is summable by the Dini integral.  The
large-distance error is summable because \(u\) is bounded and gains one
power of \(|x-y|^{-1}\).  Both estimates hold for measures with
unbounded support.  For
the extension in \cref{lem:graph-extension}, choose
\(r_0\approx|I^*|\) after a fixed dilation.  Then
\(\|\psi\|_\infty/r_0\leq C_\kappa\varepsilon_0\).
\end{remark}

\section{Energy and biLipschitz maps}\label{sec:4}

We prove that strong energy is preserved by a globally biLipschitz change
of variables.  We will then apply the energy-side-condition theorem to the
original smooth Riesz kernel.

We first recall the strong energy constant used in
\cite[Definition~5]{EnergyCommon}.  Fix the goodness parameters
\(r\in\mathbb N\) and \(0<\varepsilon<1\).  For a dyadic quasigrid
\(\Omega\mathcal D\), define \(\qdist_\Omega(E,F) :=\dist\bigl(\Omega^{-1}E,\Omega^{-1}F\bigr)\). For \(J\in\Omega\mathcal D\) and a quasicube \(K\), write
\(J\Subset_{r,\varepsilon}K\) if
\begin{equation}\label{eq:deep-embedding-definition}
J\subset K,\qquad
 \ell(J)\leq2^{-r}\ell(K),\qquad
 \qdist_\Omega(J,\partial K)
 \geq\frac12\ell(J)^\varepsilon\ell(K)^{1-\varepsilon}.
\end{equation}
The family \(M_{r,\varepsilon\text{-deep}}(K)\) consists of the
maximal \(J\in\Omega\mathcal D\) satisfying
\eqref{eq:deep-embedding-definition}.

An alternate quasicube \(K\in A\Omega\mathcal D\) is a union of
\(2^n\) quasicubes \(K'\in\Omega\mathcal D\), each with side length
\(\ell(K)/2\).  Denote these constituents by
\(\mathcal C_{\Omega\mathcal D}(K)\), and denote by \(\pi^\ell K'\)
the \(\ell\)-th dyadic ancestor of \(K'\).  For integers \(\ell\geq0\), define
\begin{align*}
 &M_{r,\varepsilon\text{-deep},\Omega\mathcal D}^{\ell}(K)
:=\left\{
J\in M_{r,\varepsilon\text{-deep}}(\pi^\ell K'):
 \begin{array}{l}
K'\in\mathcal C_{\Omega\mathcal D}(K),\\[2pt]
J\subset L\text{ for some }
L\in M_{r,\varepsilon\text{-deep}}(K)
 \end{array}
 \right\}.
\end{align*}
These are the families in
\cite[Section~2.6 and Definition~5]{EnergyCommon}.  The map \(\Omega\)
is fixed in the following definition; each supremum over
\(\Omega\mathcal D\) means that the ordinary dyadic grid
\(\mathcal D\) varies over all dyadic grids.  Then
\begin{align}
 \bigl(\cE_{\alpha,\Omega}^{\mathrm{strong}}
       (\sigma,\omega)\bigr)^2
 &:=
 \sup_{\Omega\mathcal D}
 \sup_{\substack{I\in\Omega\mathcal D,\ \sigma(I)>0,\\
I=\dot\bigcup_s I_s,\ I_s\in\Omega\mathcal D}}
 \frac1{\sigma(I)}
 \sum_s\sum_{J\in
M_{r,\varepsilon\text{-deep}}(I_s)}
 \left(
  \frac{P^\alpha(J,\one_I\sigma)}{|J|^{1/n}}
 \right)^2
 \|P_J^\omega\mathbf x\|_{L^2(\omega)}^2
 \notag\\
 &\quad+
 \sup_{\Omega\mathcal D}
 \sup_{\substack{I\in A\Omega\mathcal D\\\sigma(I)>0}}
 \sup_{\ell\geq0}
 \frac1{\sigma(I)}
 \sum_{J\in
M_{r,\varepsilon\text{-deep},\Omega\mathcal D}^{\ell}(I)}
 \left(
  \frac{P^\alpha(J,\one_I\sigma)}{|J|^{1/n}}
 \right)^2
 \|P_J^\omega\mathbf x\|_{L^2(\omega)}^2.
 \label{eq:strong-energy-definition}
\end{align}
In the first term, the subpartition may be finite or countable.  In the
second term,
\(I\in A\Omega\mathcal D\) is an alternate quasicube and
\(\ell\geq0\) is the refinement index.  These are the two expressions in
\cite[Definition~5]{EnergyCommon}.
The dual constant is obtained by interchanging \(\sigma\) and
\(\omega\).
We choose \(\varepsilon\) sufficiently small and \(r\) sufficiently
large, depending only on \(n\) and \(\alpha\), as required in
\cite[Theorem~2]{GoodLambda}.  In Section~5 we may increase \(r\) once
more so that \eqref{eq:gamma-goodness} holds.

Let \(F\) and \(\Omega\) be globally biLipschitz maps.  If
\(Q=\Omega Q_0\) is an \(\Omega\)-quasicube, put
\[
 \widetilde\Omega=F^{-1}\circ\Omega,
 \qquad
 \widetilde Q=F^{-1}Q=\widetilde\Omega Q_0.
\]
The inherited side length and center are
defined by \(\ell(\widetilde Q)=\ell(Q)=\ell(Q_0), \quad c_{\widetilde Q}=F^{-1}(c_Q)\). The dyadic children and descendants of $\widetilde Q$ are the preimages of
the corresponding dyadic cubes.
If \(G\) is a fixed globally biLipschitz map and \(Q=GC\), then \(c_G\ell(Q) \leq |Q|^{1/n} \leq C_G\ell(Q), \quad \ell(Q):=\ell(C)\). Thus the denominator \(|J|^{1/n}\) in
\eqref{eq:strong-energy-definition} is the Lebesgue-volume normalization in
\cite[Definition~5]{EnergyCommon}.  It may be
replaced by the inherited quasicube side length at the cost of constants
depending only on \(G\).

Write $\mathbb E_J^\mu f=\mu(J)^{-1}\int_J f\,d\mu$ when
$\mu(J)>0$, and zero otherwise.

\begin{lemma}\label{lem:variance}
Let $\mu$ be a positive Borel measure and let $J$ be a dyadic cube or
quasicube.  Then
\begin{equation}\label{eq:variance}
 \|P_J^\mu\mathbf x\|_{L^2(\mu)}^2
 =\int_J|x-m_J^\mu|^2\,d\mu(x)
 =\frac1{2\mu(J)}
   \iint_{J\times J}|x-y|^2\,d\mu(x)d\mu(y),
\end{equation}
with the usual value $0$ when $\mu(J)=0$.
\end{lemma}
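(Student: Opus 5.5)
The plan is to identify $P_J^\mu\mathbf x$ with the function $\one_J(x-m_J^\mu)$, where $m_J^\mu:=\mathbb E_J^\mu\mathbf x$ is the $\mu$-center of mass of $J$. Here $P_J^\mu$ is the orthogonal projection onto the Haar space of $J$, that is, onto functions supported in $J$ and constant on each child of $J$ with $\mu$-mean zero. In the energy definition of \cite{EnergyCommon}, $P_J^\mu$ is the sum of the Haar projections $\triangle_{J'}^\mu$ over the dyadic descendants $J'$ of $J$ (or quasidescendants $J'\subset J$ in the quasigrid). Its range is the orthogonal complement in $L^2(\one_J\mu)$ of the constants on $J$, intersected with the closed span of the Haar functions inside $J$. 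If $\mu$ has no atom preventing martingale convergence, this is exactly $\{g\in L^2(\one_J\mu):\int_J g\,d\mu=0\}$. In general, I would use the standard convention that $P_J^\mu\mathbf x=\sum_{J'\subset J}\triangle_{J'}^\mu\mathbf x$ telescopes, by martingale convergence in $L^2(\mu)$ and Lebesgue differentiation along the shrinking dyadic (quasi)cubes, to $\one_J(\mathbf x-\mathbb E_J^\mu\mathbf x)$. Atoms cause no trouble: at an atom the conditional expectations stabilize at the value of $\mathbf x$ at that atom, since the coordinate function is constant on a point. I would apply this componentwise to $x_1,\dots,x_n$.

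With the identification in hand, the first equality is immediate. We have $\|P_J^\mu\mathbf x\|_{L^2(\mu)}^2=\int_J|x-m_J^\mu|^2\,d\mu(x)$, where the norm of the vector-valued function is Euclidean, summed over components.

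For the second equality, expand
\[
 \iint_{J\times J}|x-y|^2\,d\mu(x)\,d\mu(y)
 =\iint_{J\times J}|(x-m_J^\mu)-(y-m_J^\mu)|^2\,d\mu(x)\,d\mu(y).
\]
The cross term vanishes because $\int_J(x-m_J^\mu)\,d\mu=0$. Each square term contributes $\mu(J)\int_J|x-m_J^\mu|^2\,d\mu$. This gives $2\mu(J)\int_J|x-m_J^\mu|^2\,d\mu$, and dividing by $2\mu(J)$ proves the identity. When $\mu(J)=0$, all three expressions vanish by convention. Finiteness holds because $J$ is bounded and $\mu$ is locally finite.

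The only real point, and hence the main obstacle, is justifying $P_J^\mu\mathbf x=\one_J(\mathbf x-m_J^\mu)$. For a quasicube $J=\Omega J_0$, the descendants are images of dyadic cubes, so their diameters still tend to zero uniformly by biLipschitz continuity. Martingale convergence of $\mathbb E^\mu_{J'}\mathbf x$ to $\mathbf x$ along the nested descendants containing $x$ then follows because $\mathbf x$ is continuous and the diameters shrink. This argument needs neither a doubling property nor absolute continuity of $\mu$.
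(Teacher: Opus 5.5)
Your proposal is correct and follows the paper's proof. You identify $P_J^\mu\mathbf x$ with $\one_J(\mathbf x-m_J^\mu)$ by telescoping the Haar projections over the subquasicubes of $J$, and then expand $|x-y|^2$ about $m_J^\mu$ so that the cross term vanishes. Your justification of the telescoping limit, via continuity of $\mathbf x$ and the uniformly shrinking diameters of the descendants, supplies the detail the paper leaves implicit in its ``Hence''. Your caveat about atoms ``preventing martingale convergence'' is unnecessary, since that uniform-convergence argument works for every locally finite $\mu$.
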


\begin{proof}
The projection \(P_J^\mu\) is the sum of the weighted Haar projections
over all dyadic subquasicubes \(Q\subseteq J\), including \(J\) itself.
Hence \(P_J^\mu\mathbf x=\one_J(\mathbf x-m_J^\mu)\). The second equality in \eqref{eq:variance} follows by expanding
$|x-y|^2$ and using
$m_J^\mu=\mu(J)^{-1}\int_Jx\,d\mu(x)$.
\end{proof}

\begin{proposition}
\label{prop:energy-invariance}
Let \(F,\Omega:\R^n\to\R^n\) be globally biLipschitz.  Put \(\widetilde\Omega=F^{-1}\circ\Omega,\quad \widetilde\sigma=F^*\sigma,\quad \widetilde\omega=F^*\omega\). Then
\begin{equation}\label{eq:energy-equivalence}
 \cE_{\alpha,\widetilde\Omega}^{\mathrm{strong}}
   (\widetilde\sigma,\widetilde\omega)
 \approx
 \cE_{\alpha,\Omega}^{\mathrm{strong}}
   (\sigma,\omega),
\end{equation}
and the same comparison holds for the dual strong energy constants.  The
constants depend only on $n$, $\alpha$, and the biLipschitz constants of
$F$.
\end{proposition}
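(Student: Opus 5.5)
The plan is to show that each ingredient of \eqref{eq:strong-energy-definition} is carried by \(F^{-1}\) to the corresponding ingredient in the \(\widetilde\Omega\)-quasigrid, with at most a constant loss. The ingredients are the quasigrids, the deep families, the Poisson integrals, the masses and the variances. It suffices to prove one inequality, say \(\lesssim\) for the left side, since the reverse follows by applying the same argument to \(F^{-1}\) and \(\Omega=\widetilde\Omega\). The dual statement follows by interchanging \(\sigma\) and \(\omega\).

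\emph{Step 1: combinatorics.} The map \(Q\mapsto F^{-1}Q\) is a bijection from \(\Omega\mathcal D\) onto \(\widetilde\Omega\mathcal D\) that preserves inclusion, children, ancestors and inherited side lengths. It also preserves alternate quasicubes and their constituents, and it preserves subpartitions \(I=\dot\bigcup_sI_s\). We have \(\qdist_{\widetilde\Omega}(F^{-1}E,F^{-1}G)=\dist(\Omega^{-1}E,\Omega^{-1}G)=\qdist_\Omega(E,G)\), and \(\partial(F^{-1}K)=F^{-1}\partial K\) because \(F\) is a homeomorphism. Hence condition \eqref{eq:deep-embedding-definition} is \emph{exactly} invariant, and \(F^{-1}\) maps \(M_{r,\varepsilon\text{-deep}}(K)\) onto \(M_{r,\varepsilon\text{-deep}}(F^{-1}K)\). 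The same holds for the families \(M^\ell_{r,\varepsilon\text{-deep},\Omega\mathcal D}\). The suprema on the two sides therefore range over corresponding index sets, and \(\widetilde\sigma(F^{-1}I)=\sigma(I)\). This is the point I would check most carefully. Its content is that the goodness and deepness are defined through \(\qdist\) and not through the Euclidean distance, so no loss occurs in the parameters \(r,\varepsilon\). If one instead used the Euclidean distance, one would need to shrink \(\varepsilon\) or increase \(r\), and the comparison would hold only between different parameter choices.

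\emph{Step 2: termwise comparison.} For corresponding \(J\) and \(\widetilde J=F^{-1}J\), the remark preceding \cref{lem:variance} gives \(|\widetilde J|^{1/n}\approx\ell(J)\approx|J|^{1/n}\). We also have \(c_{\widetilde J}=F^{-1}c_J\), so \(|F^{-1}y-c_{\widetilde J}|\approx|y-c_J|\). The Poisson kernel is a fixed power of \(\ell/(\ell+|y-c|)^2\), so the two kernels are pointwise comparable, with constants depending on \(L\), \(n\), \(\alpha\). Since \(\one_{\widetilde I}\widetilde\sigma\) is the pullback of \(\one_I\sigma\), this gives
\[
P^\alpha(\widetilde J,\one_{\widetilde I}\widetilde\sigma)\approx P^\alpha(J,\one_I\sigma).
\]
For the variance we use the double-integral form in \cref{lem:variance}. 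Pulling back gives
\[
\frac1{2\widetilde\omega(\widetilde J)}\iint_{\widetilde J\times\widetilde J}|x-y|^2\,d\widetilde\omega\,d\widetilde\omega
=\frac1{2\omega(J)}\iint_{J\times J}|F^{-1}X-F^{-1}Y|^2\,d\omega\,d\omega,
\]
which lies between \(L^{-2}\) and \(L^{2}\) times \(\|P_J^\omega\mathbf x\|^2_{L^2(\omega)}\). This is why we avoid the mean \(m_J\), which does not transform well: the double-integral identity compares the variances with no need to track the barycentre. Point masses are harmless, since the masses of the sets \(J\) and \(I\) are preserved exactly.

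\emph{Step 3: conclusion.} Multiplying the comparisons of Step 2 shows that each summand in either term of \eqref{eq:strong-energy-definition} is comparable to its counterpart. The index sets correspond bijectively by Step 1, and the normalizing masses agree. Taking the sums and then the suprema over grids, cubes, partitions and \(\ell\) yields \eqref{eq:energy-equivalence}, with constants depending only on \(n,\alpha\) and \(L\).
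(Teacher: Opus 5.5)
Your proposal is correct and follows the paper's proof essentially step for step: the quasigrids, deep families, alternate tops and subpartitions correspond exactly via the invariance of $\qdist$, the Poisson kernels are compared pointwise using $c_{\widetilde J}=F^{-1}(c_J)$, and the variances are compared through the double-integral form in \cref{lem:variance}. One small correction: to keep the constants depending only on $F$, compare $|\widetilde J|^{1/n}$ with $|J|^{1/n}$ directly, as the paper does, via $|F^{-1}J|\le\Lip(F^{-1})^n|J|$ and $|J|\le\Lip(F)^n|F^{-1}J|$, rather than through $\ell(J)$, which brings in the constants of $\Omega$ and $\widetilde\Omega$; also, the strong energy uses $P^\alpha$, with kernel $|J|^{1/n}/(|J|^{1/n}+|y-c_J|)^{n+1-\alpha}$, rather than a power of $\ell/(\ell+|y-c|)^2$ (that is $\mathcal P^\alpha$), although your comparability argument applies to it unchanged.
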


\begin{proof}
For every ordinary dyadic grid \(\mathcal D\), \(F^{-1}(\Omega\mathcal D)=\widetilde\Omega\mathcal D\). We compare the two lines of
\eqref{eq:strong-energy-definition}.  Deep embedding is preserved because \(\qdist_{\widetilde\Omega} (F^{-1}E,F^{-1}G) =\qdist_{\Omega}(E,G)\). The families
$M_{r,\varepsilon\text{-deep}}(I)$ and
$M_{r,\varepsilon\text{-deep}}^\ell(I)$ therefore correspond under \(F\).
Alternate cubes, the auxiliary cubes in the refined energy condition, and
dyadic subpartitions also correspond under \(F\), by their definitions.
More precisely,
\begin{align*}
F^{-1}(A\Omega\mathcal D)
 &=A\widetilde\Omega\mathcal D,\\
 \{F^{-1}J:J\in M_{r,\varepsilon\text{-deep}}(I)\}
 &=M_{r,\varepsilon\text{-deep}}(F^{-1}I),\\
 \{F^{-1}J:J\in
M_{r,\varepsilon\text{-deep},\Omega\mathcal D}^{\ell}(I)\}
 &=M_{r,\varepsilon\text{-deep},
 \widetilde\Omega\mathcal D}^{\ell}(F^{-1}I).
\end{align*}
Hence the indices in both lines of
\eqref{eq:strong-energy-definition} correspond under $F$.

Let $J\subset I$ correspond to
$\widetilde J\subset\widetilde I$.  For a quasicube \(Q\), put
\(a_Q=|Q|^{1/n}\).  Recall that
\begin{align*}
P^\alpha(Q,\nu)
 &=\int_{\R^n}
 \frac{a_Q}{(a_Q+|x-c_Q|)^{n+1-\alpha}}\,d\nu(x),\\
 \cP^\alpha(Q,\nu)
 &=\int_{\R^n}
 \left(\frac{a_Q}{(a_Q+|x-c_Q|)^2}\right)^{n-\alpha}
d\nu(x).
\end{align*}
The strong energy condition uses \(P^\alpha\).

Let \(L\) be a common biLipschitz constant for \(F\) and \(F^{-1}\).
Then \(c_{\widetilde J}=F^{-1}(c_J),\quad L^{-1}|J|^{1/n} \leq|\widetilde J|^{1/n} \leq L|J|^{1/n}\), and \(L^{-1}|u-c_J| \leq|F^{-1}(u)-F^{-1}(c_J)| \leq L|u-c_J|\). Using \eqref{eq:pullback-measure}, we obtain
\begin{align*}
P^\alpha(\widetilde J,
        \one_{\widetilde I}\widetilde\sigma)
 &=
 \int_I
 \frac{|\widetilde J|^{1/n}}
 {\left(|\widetilde J|^{1/n}
 +|F^{-1}(u)-F^{-1}(c_J)|\right)^{n+1-\alpha}}
 \,d\sigma(u)\\
 &\approx
 \int_I
 \frac{|J|^{1/n}}
 {\left(|J|^{1/n}+|u-c_J|\right)^{n+1-\alpha}}
 \,d\sigma(u)\\
 &=P^\alpha(J,\one_I\sigma).
\end{align*}
Consequently,
\begin{equation}\label{eq:Poisson-comparison}
 \frac{P^\alpha(\widetilde J,\one_{\widetilde I}\widetilde\sigma)}
      {|\widetilde J|^{1/n}}
 \approx
 \frac{P^\alpha(J,\one_I\sigma)}{|J|^{1/n}}.
\end{equation}
Replacing \(P^\alpha\) by \(\cP^\alpha\) in the preceding calculation
gives \(\cP^\alpha(\widetilde J, \one_{\widetilde I}\widetilde\sigma) \approx \cP^\alpha(J,\one_I\sigma)\).
The masses of corresponding sets are equal: \(\widetilde\sigma(\widetilde I)=\sigma(I), \quad \widetilde\omega(\widetilde J)=\omega(J)\). By
\cref{lem:variance},
\begin{align}
 \|P_{\widetilde J}^{\widetilde\omega}\mathbf x\|_{L^2(\widetilde\omega)}^2
 &=\frac1{2\omega(J)}
   \iint_{J\times J}
   |F^{-1}(u)-F^{-1}(v)|^2
   \,d\omega(u)d\omega(v)
 \notag\\
 &\approx
 \frac1{2\omega(J)}
   \iint_{J\times J}|u-v|^2\,d\omega(u)d\omega(v)
 \notag\\
 &=\|P_J^\omega\mathbf x\|_{L^2(\omega)}^2.
 \label{eq:variance-comparison}
\end{align}
Equations \eqref{eq:Poisson-comparison} and
\eqref{eq:variance-comparison}, followed by summation over the corresponding
deep cubes, compare the first line of
\eqref{eq:strong-energy-definition}.  Apply the same two comparisons to
each \(I\in A\Omega\mathcal D\) and each \(\ell\geq0\).  This compares
the second line.  Taking the suprema over the grids, subpartitions, alternate tops, and
\(\ell\geq0\) proves one inequality in \eqref{eq:energy-equivalence}.  For the opposite inequality, apply the comparison just
proved with \(F\) replaced by \(F^{-1}\), \(\Omega\) replaced by
\(F^{-1}\circ\Omega\), and the pulled-back pair in place of
\((\sigma,\omega)\).
Interchanging the measures gives the dual statement.
\end{proof}

\begin{remark}\label{rem:no-Dini-energy}
\Cref{prop:energy-invariance} requires only global biLipschitz
continuity.  It allows us to return the energy estimates to the original
coordinates, where the fractional Riesz kernel is smooth.
\end{remark}

\section{Energy reversal after straightening}\label{sec:5}

We prove the line-supported energy estimates under a \(C^1\)
small-slope assumption.  Let
\begin{equation}\label{eq:graph-map}
 \Psi(x_1,x')=(x_1,x'+\psi(x_1)),
 \qquad
 \|D\psi\|_\infty
 <\varepsilon_{n,\alpha}
 :=\frac{n-\alpha}{8n^2}.
\end{equation}

Write \(\Gamma\) for the curved support and \(L_0=\R e_1\) for the
supporting line in the straightened coordinates.  Let
\(J=\Omega Q\) be a quasicube for a fixed
globally biLipschitz map \(\Omega\).  Constants in this section may depend
on the biLipschitz constant of \(\Omega\).  In this section, testing and
Muckenhoupt constants are computed over the \(\Omega\)-quasicubes,
and we omit the subscript \(\Omega\).  Choose
\(A_0=A_0(n,\Omega)\) large enough for
\cref{lem:end-side-geometry}.  Next choose \(\gamma\) large enough for
\cref{lem:end-side-geometry,lem:end-reversal,lem:C1-energy,lem:line-geometry,lem:backward-reversal}.
The strict margin in \eqref{eq:graph-map}, together with
\(C_\Omega/\gamma\to0\), ensures that one such choice of \(\gamma\)
works uniformly for all slopes allowed in \eqref{eq:graph-map}.
Finally, take the goodness parameter \(r\) so large that
\begin{equation}\label{eq:gamma-goodness}
 \gamma\leq c_n2^{(1-\varepsilon)r}.
\end{equation}
If \(J=\Omega Q\), its dilation means \(\gamma J=\Omega(\gamma Q)\), where \(\gamma Q\) has the same center as \(Q\) and side length
\(\gamma\ell(Q)\).
A family \(\mathcal J\) is called admissible in \(I\) if
\begin{equation}\label{eq:admissible-family}
 \gamma J\subset I\quad(J\in\mathcal J),
 \qquad
 \sum_{J\in\mathcal J}\one_{\gamma J}\leq\beta\one_I.
\end{equation}
The constant \(\beta\) is fixed.

\begin{lemma}\label{lem:admissible-families}
Each family occurring in the two lines of
\eqref{eq:strong-energy-definition} is admissible.  More precisely, this
holds for
\[
 \mathcal J=\bigcup_s
M_{r,\varepsilon\text{-deep}}(I_s),
 \qquad I=\dot\bigcup_s I_s,
\]
and for
\[
 \mathcal J=
M_{r,\varepsilon\text{-deep},\Omega\mathcal D}^{\ell}(I),
 \qquad I\in A\Omega\mathcal D,\quad \ell\geq0.
\]
The overlap constant is independent of the grid and of \(\ell\).
\end{lemma}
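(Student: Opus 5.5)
The plan is to work entirely in the preimage coordinates of the quasigrid, where every quasicube is \(\Omega Q\) with \(Q\) an ordinary dyadic cube. The dilation \(\gamma J=\Omega(\gamma Q)\), inclusions, intersections and overlap counts are all preserved by the bijection \(\Omega\), and \(\qdist_\Omega\) is exactly the Euclidean distance of preimages. So it suffices to prove both assertions for ordinary cubes. I will use the \(\ell^\infty\) geometry of cubes to keep the constants clean: \(\gamma Q\) is the closed \(\ell^\infty\)-neighbourhood of \(Q\) of radius \((\gamma-1)\ell(Q)/2\), which lies in the Euclidean neighbourhood of radius \(\sqrt n(\gamma-1)\ell(Q)/2\). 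I take \(c_n=1/\sqrt n\) in \eqref{eq:gamma-goodness}.

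\emph{Step 1: containment.} Let \(J\in M_{r,\varepsilon\text{-deep}}(K)\). By \eqref{eq:deep-embedding-definition} and \(\ell(J)\leq2^{-r}\ell(K)\),
\[
 \dist(J,\partial K)\geq\tfrac12\ell(J)^\varepsilon\ell(K)^{1-\varepsilon}
 \geq\tfrac12 2^{r(1-\varepsilon)}\ell(J)\geq\tfrac{\sqrt n}{2}\gamma\ell(J),
\]
where the last inequality is \eqref{eq:gamma-goodness}. Hence the \(\sqrt n(\gamma-1)\ell(J)/2\)-neighbourhood of \(J\) does not meet \(\partial K\). Since \(J\subset K\), this neighbourhood lies in \(K\), and so \(\gamma J\subset K\).

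For the first family, \(K=I_s\subset I\). For the second family, \(J\subset L\) with \(L\in M_{r,\varepsilon\text{-deep}}(I)\), so \(\gamma L\subset I\) by the same argument. Moreover \(J\subset L\) and \(\ell(J)\leq\ell(L)\) give \(\gamma J\subset\gamma L\), because the \(\ell^\infty\)-neighbourhood of \(J\) of radius \((\gamma-1)\ell(J)/2\) lies in that of \(L\) of radius \((\gamma-1)\ell(L)/2\). Thus \(\gamma J\subset I\) in both cases.

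\emph{Step 2: bounded overlap for one top cube.} This is the main point. Fix a cube \(K\) and a point \(x\) with \(\delta:=\dist(x,\partial K)\). Suppose \(x\in\gamma J\) for some \(J\in M_{r,\varepsilon\text{-deep}}(K)\). Step 1 gives \(\sqrt n\gamma\ell(J)\leq\dist(J,\partial K)\). Hence
\[
 \tfrac12\dist(J,\partial K)\leq\delta\leq 2\dist(J,\partial K)+\sqrt n\ell(J),
\]
so \(\delta\approx\dist(J,\partial K)\).

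Next I use maximality of \(J\) in the dyadic grid. Either \(\ell(J)=2^{-r}\ell(K)\), or the parent \(\pi J\) violates the distance condition. In the second case
\[
 \dist(J,\partial K)\leq\dist(\pi J,\partial K)+\sqrt n\ell(\pi J)\lesssim\ell(J)^\varepsilon\ell(K)^{1-\varepsilon}.
\]
In the first case \(\dist(J,\partial K)\leq\ell(K)=2^{r\varepsilon}\ell(J)^\varepsilon\ell(K)^{1-\varepsilon}\). In both cases
\[
 \ell(J)^\varepsilon\ell(K)^{1-\varepsilon}\approx_{n,r,\varepsilon}\delta.
\]
Therefore all such \(J\) have side lengths within a fixed ratio of one another. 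Maximal cubes are pairwise disjoint, and each such \(J\) lies within Euclidean distance \(\sqrt n\gamma\ell(J)\) of \(x\). A volume count then bounds their number by a constant \(\beta_0=\beta_0(n,r,\varepsilon,\gamma)\).

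\emph{Step 3: the two families.} For \(\bigcup_sM_{r,\varepsilon\text{-deep}}(I_s)\), the sets \(\gamma J\) with \(J\) attached to \(I_s\) lie in \(I_s\), and the \(I_s\) are disjoint. So the overlap is at most \(\beta_0\).

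For \(M^\ell_{r,\varepsilon\text{-deep},\Omega\mathcal D}(I)\), the family is contained in the union over the \(2^n\) constituents \(K'\in\mathcal C_{\Omega\mathcal D}(I)\) of \(M_{r,\varepsilon\text{-deep}}(\pi^\ell K')\). Step 2 applied with top cube \(\pi^\ell K'\) bounds each piece by \(\beta_0\), so the total overlap is at most \(\beta=2^n\beta_0\). This constant is independent of the grid, of \(\ell\), and of \(I\). Containment in \(I\) was shown in Step 1.

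The step needing care is Step 2: it combines the deep-embedding lower bound with maximality to get a Whitney-type comparability. I expect it to go through with constants depending on \(r\), \(\varepsilon\), \(\gamma\), which are fixed.
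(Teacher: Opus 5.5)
Your argument is correct, but it follows a different route from the paper. The paper proves none of the inclusions or overlap bounds itself. It invokes \cite[(3.2) and (3.4)]{EnergyCommon} under \eqref{eq:gamma-goodness}, and adds only that disjointness of the $I_s$ lets one sum the per-top-cube bounds over $s$.

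You reprove the cited facts from scratch:
\begin{enumerate}
\item You pull back by $\Omega^{-1}$ to ordinary cubes, which is legitimate since $\Omega$ is a homeomorphism and $\qdist_\Omega$ is Euclidean distance of preimages.
\item You derive $\gamma J\subset K$ from the deep-embedding lower bound combined with \eqref{eq:gamma-goodness}.
\item You get bounded overlap from the Whitney-type comparability $\ell(J)^\varepsilon\ell(K)^{1-\varepsilon}\approx\dist(x,\partial K)$, which maximality supplies, followed by a volume count.
\item For alternate tops you use $\gamma J\subset\gamma L\subset I$ and the at most $2^n$ tops $\pi^\ell K'$.
\end{enumerate}
Your version is self-contained and shows that the overlap constant is $\beta=\beta(n,r,\varepsilon,\gamma)$, visibly independent of the grid, of $\ell$, and of $I$. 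The paper's version is shorter and keeps the constants aligned with the framework it later feeds into \cite[Theorem~2]{GoodLambda}.

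Two small points should be tidied.
\begin{enumerate}
\item In Step~2 you say Step~1 gives $\sqrt n\,\gamma\ell(J)\leq\dist(J,\partial K)$. With $c_n=1/\sqrt n$, Step~1 only gives $\tfrac{\sqrt n}{2}\gamma\ell(J)$. This still yields $\delta\geq\dist(J,\partial K)-\tfrac{\sqrt n}{2}(\gamma-1)\ell(J)\geq\gamma^{-1}\dist(J,\partial K)$, so the comparability $\delta\approx\dist(J,\partial K)$ survives with a $\gamma$-dependent constant. Alternatively, take $c_n=1/(2\sqrt n)$.
\item In the maximality dichotomy, the parent can also fail because $\pi J\not\subset K$, which can happen when $K$ is an alternate cube. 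Since $\pi J$ is connected and meets $K$, this forces $\dist(\pi J,\partial K)=0$. So it is already covered by your distance-violation case, but you should say so.
\end{enumerate}
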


\begin{proof}
By \eqref{eq:gamma-goodness}, \cite[(3.2) and (3.4)]{EnergyCommon}
gives the required inclusions and overlap estimates.  For the first
family, these estimates give
\[
 \gamma J\subset I_s,\qquad
 \sum_{J\in M_{r,\varepsilon\text{-deep}}(I_s)}
 \one_{\gamma J}\leq\beta\one_{I_s}.
\]
The \(I_s\) are disjoint, so summing in \(s\) gives
\eqref{eq:admissible-family}.  For an alternate top,
\cite[(3.4)]{EnergyCommon} gives
\[
 \gamma J\subset I,\qquad
 \sum_{J\in
M_{r,\varepsilon\text{-deep},\Omega\mathcal D}^{\ell}(I)}
 \one_{\gamma J}\leq\beta\one_I
\]
uniformly in the grid and in \(\ell\).
\end{proof}

\begin{lemma}\label{lem:C1-energy}
Let \(\varphi_J\) be an \(\R^n\)-valued function supported on \(J\),
and suppose that \(\int_J\varphi_J\,d\omega=0\) componentwise.  If a
finite positive Borel measure \(\nu\) is supported outside
\(\gamma J\), then
\begin{equation*}
 \left|\left\langle
 \bfR_\Psi^{\alpha,n}\nu,\varphi_J
 \right\rangle_\omega\right|
 \leq C\|\varphi_J\|_{L^2(\omega)}
 \frac{P^\alpha(J,\nu)}{\ell(J)}
 \|P_J^\omega\mathbf x\|_{L^2(\omega)}.
\end{equation*}
The constant depends only on \(n,\alpha,\Omega,\gamma\), and
\(\|D\psi\|_\infty\); it does not depend on a modulus of \(D\psi\).
\end{lemma}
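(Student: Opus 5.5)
The plan is the standard mean-value/cancellation argument: use $\int_J\varphi_J\,d\omega=0$ to subtract the kernel at a reference point, and use a first-order Taylor bound for the conformal kernel with only $C^1$ control. Write $m=m_J^\omega$ for the $\omega$-mean of $\mathbf x$ on $J$; if $\omega(J)=0$ there is nothing to prove. Since each component of $\varphi_J$ has $\omega$-mean zero on $J$,
\[
 \left\langle \bfR_\Psi^{\alpha,n}\nu,\varphi_J\right\rangle_\omega
 =\int_J\int\bigl(\bfK_\Psi^{\alpha,n}(x,y)-\bfK_\Psi^{\alpha,n}(m,y)\bigr)\,d\nu(y)\cdot\varphi_J(x)\,d\omega(x).
\]
The point $m$ lies in the convex hull of $J$, which for a quasicube $J=\Omega Q$ is contained in $C_\Omega J$. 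Take $\gamma$ large compared with this $C_\Omega$. Then for $y\notin\gamma J$ the segment from $m$ to $x\in J$ stays at distance $\gtrsim|y-c_J|$ from $y$, and also $|y-c_J|\gtrsim\ell(J)$. Truncations play no role: for $y$ off $\gamma J$ the truncated kernel agrees with the untruncated one up to radial cutoffs, and these factors are handled in the same way.

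The key step is a gradient bound in $x$ for the conformal kernel:
\[
 |\nabla_x\bfK_\Psi^{\alpha,n}(z,y)|\leq C(1+\|D\psi\|_\infty)|z-y|^{\alpha-n-1}.
\]
Differentiate $(y-z)|\Psi(y)-\Psi(z)|^{-(n+1-\alpha)}$ directly. The derivative of the numerator is $-I$, giving $|\Psi(y)-\Psi(z)|^{-p}$. The derivative of the denominator gives
\[
 p\,(y-z)\otimes\frac{D\Psi(z)^{T}(\Psi(y)-\Psi(z))}{|\Psi(y)-\Psi(z)|^{p+2}},
\]
and its size is controlled by $\|D\Psi\|_\infty\leq 1+\|D\psi\|_\infty$ and by the biLipschitz lower bound $|\Psi(y)-\Psi(z)|\geq L^{-1}|y-z|$. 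Only the values of $D\psi$ appear, never differences $D\psi(s)-D\psi(t)$, so no modulus of continuity enters. This is the content of the final sentence of the lemma. The mean value theorem along the segment then gives
\[
 |\bfK_\Psi^{\alpha,n}(x,y)-\bfK_\Psi^{\alpha,n}(m,y)|\leq C|x-m|\,(\ell(J)+|y-c_J|)^{\alpha-n-1}.
\]

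Integrating in $\nu$, the factor $(\ell(J)+|y-c_J|)^{\alpha-n-1}$ equals $\ell(J)^{-1}$ times the Poisson kernel density of $P^\alpha(J,\cdot)$. The side-length normalizations $\ell(J)$ and $|J|^{1/n}$ are comparable, with constants depending on $\Omega$. This yields the bound
\[
 C\,\frac{P^\alpha(J,\nu)}{\ell(J)}\int_J|x-m|\,|\varphi_J(x)|\,d\omega(x).
\]
Cauchy--Schwarz and \cref{lem:variance}, namely $\int_J|x-m|^2\,d\omega=\|P_J^\omega\mathbf x\|_{L^2(\omega)}^2$, finish the argument.

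The only delicate point is the geometric one: ensuring that the segment $[m,x]$ stays uniformly away from $\supp\nu$ when $J$ is a quasicube rather than a cube. I would handle it by taking $\gamma\geq 4L_\Omega^2\sqrt n$. This places $\operatorname{conv}(J)$ inside $\frac{\gamma}{2}J$, and the separation from $\gamma J$ is then $\gtrsim\ell(J)$, by biLipschitz comparison of Euclidean balls with $\Omega$-images of cubes.
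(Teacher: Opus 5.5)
Your proposal is correct and is essentially the paper's proof. Both use the mean-zero condition to subtract the kernel at the barycenter $m_J^\omega$, bound the first-variable derivative of $\bfK_\Psi^{\alpha,n}$ by $C|z-y|^{-(n+1-\alpha)}$ using only $\|D\psi\|_\infty$ and the biLipschitz bounds, use $[x,m_J^\omega]\subset\overline{\operatorname{conv}}(J)$ with $\gamma$ large to get $|\zeta-y|\approx\ell(J)+|y-c_J|$, and finish with Cauchy--Schwarz and the variance identity. Your remark on truncations is valid, since the radial cutoff factors have $x$-gradient $O(|x-y|^{-1})$; the paper does not need it, because it applies the lemma only where the tangent-line multiplier is identically one.
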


\begin{proof}
Put \(p=n+1-\alpha\).  Direct differentiation gives
\begin{align*}
D_y\bfK_\Psi^{\alpha,n}(y,x)
 &=c_{\alpha,n}\left[
 -\frac{I}{|\Psi(y)-\Psi(x)|^p}
 +p\frac{(y-x)\otimes
D\Psi(y)^T(\Psi(y)-\Psi(x))}
 {|\Psi(y)-\Psi(x)|^{p+2}}\right].
\end{align*}
Here \(D_y\bfK_\Psi^{\alpha,n}(y,x)\) means the derivative of the map
\(y\mapsto\bfK_\Psi^{\alpha,n}(y,x)\).  By antisymmetry, it has the
opposite overall sign from \(D_y\bfK_\Psi^{\alpha,n}(x,y)\), where
\(y\) is the second kernel variable.
Thus \(|D_y\bfK_\Psi^{\alpha,n}(y,x)| \leq C|y-x|^{-p}\). If \(\omega(J)=0\), there is nothing to prove.  Otherwise put
\[
m_J^\omega=\frac1{\omega(J)}\int_Jy\,d\omega(y).
\]
Then \(m_J^\omega\in\overline{\operatorname{conv}}(J)\).  Hence, for
\(y\in J\) and \(\zeta\in[y,m_J^\omega]\), \(\zeta\in\overline{\operatorname{conv}}(J), \quad |\zeta-c_J| \leq\diam J \leq C_\Omega\ell(J)\). If \(x\notin\gamma J=\Omega(\gamma Q)\), then
\(\Omega^{-1}(x)\notin\gamma Q\), and therefore \(|x-c_J|\geq c_\Omega\gamma\ell(J)\). By the choice of \(\gamma\), these estimates give \(|\zeta-x|\approx_\Omega \ell(J)+|x-c_J|\). The fundamental theorem of
calculus and the mean-zero condition give
\begin{align*}
 \left|\left\langle
 \bfR_\Psi^{\alpha,n}\nu,\varphi_J
 \right\rangle_\omega\right|
 &\leq C\frac{P^\alpha(J,\nu)}{\ell(J)}
 \int_J|\varphi_J(y)|\,|y-m_J^\omega|\,d\omega(y)
 \leq C\frac{P^\alpha(J,\nu)}{\ell(J)}
 \|\varphi_J\|_{L^2(\omega)}
 \|P_J^\omega\mathbf x\|_{L^2(\omega)}.
\end{align*}
No difference \(D\psi(s)-D\psi(t)\) is used.
\end{proof}

\subsection{Forward energy}

Let \(\omega\) be supported on \(L_0\).  Recall the fixed choices of
\(A_0\) and \(\gamma\) above.  For \(J^*=\gamma J\), define
\begin{align*}
 \mathrm E(J^*)
 &=(I\setminus J^*)\cap
 \left\{y:|y'-c_J'|
 \leq\frac{A_0}{\gamma}|y^1-c_J^1|\right\},\\
 \mathrm S(J^*)&=(I\setminus J^*)\setminus\mathrm E(J^*).
\end{align*}
The replacement of the fixed number \(10\) in
\cite[proof of Theorem~13]{CurvePaper} by \(A_0(\Omega)\) is needed
for an arbitrary
biLipschitz quasicube map.

\begin{lemma}\label{lem:end-side-geometry}
There are \(A_0=A_0(n,\Omega)\) and
\(\gamma_0=\gamma_0(n,\Omega,A_0)\) such that, for
\(\gamma\geq\gamma_0\), the following holds whenever
\(J\cap L_0\neq\varnothing\).

If \(y\in\mathrm E(J^*)\), \(x,z\in J\cap L_0\), and \(u\) lies between
\(x^1-y^1\) and \(z^1-y^1\), then
\begin{equation}\label{eq:end-geometry}
 |u|\approx\ell(J)+|y-c_J|,\qquad
 |y'|\leq\frac{CA_0}{\gamma}|u|,\qquad
 |x^1-z^1|\leq\frac C\gamma|u|.
\end{equation}
If \(y\in\mathrm S(J^*)\), then, with \(d(y)=|y'|\),
\begin{align}
d(y)&>0,\qquad
 \ell(J)\leq Cd(y),\qquad
 |y-c_J|\geq c\,d(y),
 \label{eq:side-geometry}\\
 \mathcal H^1(J^*\cap L_0)&\geq c\gamma\ell(J),\qquad
J^*\cap L_0\subset
 \{(t,0'):|t-y^1|\leq C\gamma d(y)\}.
 \label{eq:side-shadow}
\end{align}
\end{lemma}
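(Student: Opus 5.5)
We work entirely with the Euclidean geometry of \(J=\Omega Q\) and \(\gamma J=\Omega(\gamma Q)\), converting biLipschitz information into inclusions of Euclidean balls. With \(L\) a common biLipschitz constant for \(\Omega,\Omega^{-1}\) and \(c_J=\Omega(c_Q)\), the first step records the two inclusions
\[
B(c_J,c_1\ell(J))\subset J\subset B(c_J,C_1\ell(J)),
\qquad
B(c_J,c_1\gamma\ell(J))\subset\gamma J\subset B(c_J,C_1\gamma\ell(J)),
\]
where \(c_1,C_1\) depend only on \(n\) and \(L\). Since \(J\cap L_0\neq\varnothing\), there is a point of \(L_0\) within \(C_1\ell(J)\) of \(c_J\), so \(|c_J'|\leq C_1\ell(J)\). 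In particular every \(y\notin\gamma J\) satisfies \(|y-c_J|\geq c_1\gamma\ell(J)\).

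For the end region, let \(y\in\mathrm E(J^*)\). Then \(|y'-c_J'|\leq (A_0/\gamma)|y^1-c_J^1|\). Once \(\gamma\geq 2A_0\), this gives \(|y-c_J|\approx|y^1-c_J^1|\). Combined with \(|y-c_J|\geq c_1\gamma\ell(J)\), we obtain \(|y^1-c_J^1|\gtrsim\gamma\ell(J)\).

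For \(x,z\in J\cap L_0\) we have \(|x^1-c_J^1|,|z^1-c_J^1|\leq C_1\ell(J)\). Hence every \(u\) between \(x^1-y^1\) and \(z^1-y^1\) satisfies
\[
\bigl|u-(c_J^1-y^1)\bigr|\leq C_1\ell(J)\leq (C/\gamma)|y^1-c_J^1|.
\]
For large \(\gamma\) this yields \(|u|\approx|y^1-c_J^1|\approx\ell(J)+|y-c_J|\), and also \(|x^1-z^1|\leq 2C_1\ell(J)\leq (C/\gamma)|u|\). Finally,
\[
|y'|\leq|y'-c_J'|+|c_J'|\leq (A_0/\gamma)|y^1-c_J^1|+C_1\ell(J)\leq (CA_0/\gamma)|u|,
\]
using \(\ell(J)\lesssim|u|/\gamma\). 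This proves \eqref{eq:end-geometry}.

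For the side region, let \(y\in\mathrm S(J^*)\), so \(|y'-c_J'|>(A_0/\gamma)|y^1-c_J^1|\). The main point is to force \(d(y)=|y'|\) to dominate \(\ell(J)\), and this is where \(A_0\) is chosen. The square of
\[
|y-c_J|^2=|y^1-c_J^1|^2+|y'-c_J'|^2
\]
gives \(|y-c_J|\leq (1+\gamma/A_0)|y'-c_J'|\). Combined with \(|y-c_J|\geq c_1\gamma\ell(J)\), this yields
\[
|y'-c_J'|\geq \frac{c_1\gamma\ell(J)}{1+\gamma/A_0}\geq \tfrac12 c_1A_0\ell(J)\quad(\gamma\geq A_0).
\]
Choose \(A_0\geq 4C_1/c_1\). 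Then \(|y'|\geq|y'-c_J'|-C_1\ell(J)\geq \tfrac14c_1A_0\ell(J)>0\). This gives \(d(y)>0\), \(\ell(J)\leq Cd(y)\), and \(|y'-c_J'|\leq 2d(y)\), with constant depending on \(A_0\). Moreover \(|y-c_J|\geq|y'-c_J'|\geq|y'|-C_1\ell(J)\geq \tfrac12 d(y)\), which completes \eqref{eq:side-geometry}. The subtle part is only the order of choices: \(A_0\) depends on \(n,L\), and then \(\gamma_0\geq A_0\) together with the end-region requirements.

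For \eqref{eq:side-shadow}, the lower bound comes from the inner ball. The set \(L_0\cap B(c_J,c_1\gamma\ell(J))\) is a segment of length at least
\[
2\sqrt{c_1^2\gamma^2\ell(J)^2-|c_J'|^2}\geq c_1\gamma\ell(J)
\]
for large \(\gamma\), since \(|c_J'|\leq C_1\ell(J)\).

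For the upper bound, each \(t\) with \((t,0')\in J^*\) satisfies \(|t-c_J^1|\leq C_1\gamma\ell(J)\). The side condition then gives
\[
|y^1-c_J^1|<(\gamma/A_0)|y'-c_J'|\leq (2\gamma/A_0)d(y).
\]
Therefore
\[
|t-y^1|\leq C_1\gamma\ell(J)+(2\gamma/A_0)d(y)\leq C\gamma d(y),
\]
using \(\ell(J)\leq Cd(y)\).
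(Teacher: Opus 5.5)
Your proof is correct and is essentially the paper's argument: both use the biLipschitz inner and outer balls for $J$ and $\gamma J$, the bound $|c_J'|\leq C\ell(J)$ coming from $J\cap L_0\neq\varnothing$, and absorption of the $\ell(J)$-terms in the side region by fixing $A_0$ large first and then $\gamma\geq\gamma_0(A_0)$. The paper centers the inner ball of $\gamma J$ at a point $x_0\in J\cap L_0$ rather than at $c_J$, which makes the length bound for $J^*\cap L_0$ immediate. There is one harmless constant slip: with $A_0\geq 4C_1/c_1$ your chain $|y'|-C_1\ell(J)\geq\frac12 d(y)$ only gives $\geq 0$, so either take $A_0\geq 8C_1/c_1$, or note directly that $d(y)\leq|y'-c_J'|+C_1\ell(J)\leq\frac32|y'-c_J'|\leq\frac32|y-c_J|$.
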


\begin{proof}
Let \(\Lambda\) be a common biLipschitz constant for
\(\Omega,\Omega^{-1}\).  There are \(a,b>0\), depending only on
\(n,\Lambda\), such that \(J\subset B(c_J,b\ell(J)),\quad B(x_0,a(\gamma-1)\ell(J))\subset\gamma J \quad(x_0\in J)\). Choose \(A_0\geq8b/a\), and then
\(\gamma\geq\max\{2A_0,8b/a\}\).

Suppose \(y\in\mathrm E(J^*)\).  Since \(y\notin\gamma J\),
\(|y-c_J|\geq c\gamma\ell(J)\).  Put \(A=|y'-c_J'|,\quad B=|y^1-c_J^1|\). The end condition gives \(A\leq A_0B/\gamma\), and hence
\(B\approx|y-c_J|\).  Points of \(J\cap L_0\) have first coordinate within
\(b\ell(J)\) of \(c_J^1\).  This proves
\eqref{eq:end-geometry}.

Now let \(y\in\mathrm S(J^*)\), and fix \(x_0\in J\cap L_0\).  The side
condition gives \(B<\frac{\gamma}{A_0}A \leq\frac{\gamma}{A_0}\{d(y)+b\ell(J)\}\). Since \(y\notin\gamma J\), \(a(\gamma-1)\ell(J) \leq|y-x_0|\leq B+b\ell(J)+d(y)\). Our choices allow the terms containing \(\ell(J)\) on the right to be
absorbed on the left.  Thus \(d(y)\geq cA_0\ell(J)\), and
\(|y-c_J|\geq c\,d(y)\).  We also obtain
\(|y^1-x_0^1|\leq C\gamma d(y)\).  Intersecting
\(B(x_0,a(\gamma-1)\ell(J))\) with \(L_0\) gives the lower bound for
\(\mathcal H^1(J^*\cap L_0)\).  Finally,
\(\diam J^*\leq C\gamma\ell(J)\leq C\gamma d(y)\), which gives
\eqref{eq:side-shadow}.
\end{proof}

\begin{lemma}\label{lem:end-reversal}
For \(x,z\in J\cap L_0\) with \(x\neq z\), or equivalently
\(x^1\neq z^1\),
\begin{align}
 &\left|
 \frac{R_{\Psi,1}^{\alpha,n}
  (\one_{\mathrm E(J^*)}\sigma)(x)
 -R_{\Psi,1}^{\alpha,n}
  (\one_{\mathrm E(J^*)}\sigma)(z)}
 {x^1-z^1}\right|
 \approx
 \frac{P^\alpha(J,\one_{\mathrm E(J^*)}\sigma)}
 {\ell(J)}.
 \label{eq:end-reversal}
\end{align}
\end{lemma}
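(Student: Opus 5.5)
The plan is to write the difference quotient as an integral of the first-coordinate derivative of the first component of the conformal kernel, and then to show that this derivative has a fixed sign and the right size on the end region. For \(y\in\mathrm E(J^*)\) and \(x=(x^1,0')\in L_0\), put \(\Psi(x)=(x^1,\psi(x^1))\) and \(\Psi(y)=(y^1,y'+\psi(y^1))\), and set \(p=n+1-\alpha\). Then
\[
 K_{\Psi,1}^{\alpha,n}(x,y)=c_{\alpha,n}\frac{y^1-x^1}{|\Psi(y)-\Psi(x)|^{p}}.
\]
By the fundamental theorem of calculus along the segment \([z,x]\subset L_0\), which lies in \(J\cap L_0\) only up to convexity, but the estimates of \cref{lem:end-side-geometry} hold for every \(u\) between \(x^1-y^1\) and \(z^1-y^1\), we get
\[
 \frac{R_{\Psi,1}^{\alpha,n}(\one_{\mathrm E}\sigma)(x)-R_{\Psi,1}^{\alpha,n}(\one_{\mathrm E}\sigma)(z)}{x^1-z^1}
 =\int_{\mathrm E(J^*)}\int_0^1\partial_{t}K_{\Psi,1}^{\alpha,n}\bigl((t,0'),y\bigr)\Big|_{t=z^1+\theta(x^1-z^1)}\,d\theta\,d\sigma(y).
\]
Truncations are harmless here, since \(\mathrm E(J^*)\) is at distance \(\gtrsim\gamma\ell(J)\) from \(J\). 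Alternatively, one works with the tangent-line truncation uniformly, as in \cite{CurvePaper}.

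The key step is the pointwise computation. Write \(u=x^1-y^1\) and \(D=|\Psi(y)-\Psi(x)|\). Differentiating in \(t=x^1\) gives
\[
 \partial_t K_{\Psi,1}=c_{\alpha,n}\Bigl[-\frac1{D^p}+p\,\frac{(y^1-t)\,\bigl(-(y^1-t)-\psi'(t)\cdot(y'+\psi(y^1)-\psi(t))\bigr)}{D^{p+2}}\cdot(-1)\Bigr].
\]
Up to sign this equals
\[
 \frac{c_{\alpha,n}}{D^{p+2}}\Bigl[(p-1)u^2-|y'+\psi(y^1)-\psi(t)|^2+p\,u\,\psi'(t)\cdot(\cdots)\Bigr].
\]
By \eqref{eq:end-geometry} we have \(|y'|\leq CA_0\gamma^{-1}|u|\). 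By the mean value theorem, \(|\psi(y^1)-\psi(t)|\leq\|D\psi\|_\infty|u|\). Hence the vertical displacement is at most \((CA_0/\gamma+\varepsilon_{n,\alpha})|u|\). Since \(p-1=n-\alpha\) and \(\varepsilon_{n,\alpha}=(n-\alpha)/(8n^2)\), the bracket is at least \(\tfrac12(n-\alpha)u^2\) once \(\gamma\) is large. The margin in \eqref{eq:graph-map} gives this uniformly in the slope. The same bounds give \(D\approx|u|\). Therefore \(\partial_t K_{\Psi,1}\) has a fixed sign and
\[
 |\partial_tK_{\Psi,1}|\approx|u|^{-p}\approx(\ell(J)+|y-c_J|)^{-p},
\]
using \eqref{eq:end-geometry} again. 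Only \(\|D\psi\|_\infty\) enters, and no modulus of continuity is used.

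Because the sign is fixed, the integral of absolute values equals the absolute value of the integral up to constants. This yields
\[
 \approx\int_{\mathrm E(J^*)}\frac{d\sigma(y)}{(\ell(J)+|y-c_J|)^{p}}=\frac{P^\alpha(J,\one_{\mathrm E}\sigma)}{\ell(J)},
\]
where we replace \(|J|^{1/n}\) by \(\ell(J)\) at a cost depending on \(\Omega\). The expected obstacle is the sign bookkeeping in the derivative together with the cross term \(p\,u\,\psi'(t)\cdot(\cdot)\). One must check that it is bounded by \(p\,\varepsilon_{n,\alpha}(CA_0/\gamma+\varepsilon_{n,\alpha})u^2\), which is small compared with \((n-\alpha)u^2\). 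This is where the specific constant \(8n^2\) and the order of choice (first \(A_0\), then \(\gamma\)) matter.
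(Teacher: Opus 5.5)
Your proposal is correct and is essentially the paper's proof: the paper sets $a(u)=\psi(u+y^1)-\psi(y^1)-y'$ and $\varphi(u)=u(u^2+|a(u)|^2)^{-p/2}$, so that $K_{\Psi,1}^{\alpha,n}(x,y)=-c_{\alpha,n}\varphi(x^1-y^1)$. Exactly as you do, it uses $|a(u)|\le(\|D\psi\|_\infty+CA_0/\gamma)|u|$ from the end geometry to show that the numerator $|a|^2-pu\,a\cdot D\psi-(p-1)u^2$ of $\varphi'$ is at most $-\frac{p-1}{2}u^2$, and then integrates by the fundamental theorem of calculus and in $y$; your vertical displacement is just $-a(u)$, and your fixed sign with $|\partial_tK_{\Psi,1}^{\alpha,n}|\approx|u|^{-p}\approx(\ell(J)+|y-c_J|)^{-p}$ gives both directions of the comparison.
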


\begin{proof}
Fix \(y=(y^1,y')\in\mathrm E(J^*)\).  For
\(u\) between \(x^1-y^1\) and \(z^1-y^1\), put \(a(u)=\psi(u+y^1)-\psi(y^1)-y', \quad \varphi(u)=u(u^2+|a(u)|^2)^{-p/2}\), where \(p=n+1-\alpha\).  Direct differentiation gives
\begin{align}
 \varphi'(u)
 &=(u^2+|a(u)|^2)^{-p/2-1}
 \times\left\{
 |a(u)|^2-pu\,a(u)\cdot D\psi(u+y^1)
 -(p-1)u^2\right\}.
 \label{eq:end-exact-derivative}
\end{align}
By \cref{lem:end-side-geometry}, \(|u|\approx\ell(J)+|y-c_J|, \quad |a(u)|\leq \left(\|D\psi\|_\infty+\frac{CA_0}{\gamma}\right)|u|\). The bound in \eqref{eq:graph-map}, followed by the choice of \(\gamma\)
above, ensures that \(q_0^2+pq_0\|D\psi\|_\infty\leq\frac{p-1}{2}, \quad q_0=\|D\psi\|_\infty+\frac{CA_0}{\gamma}\). It follows from \eqref{eq:end-exact-derivative} that \(c|u|^{-p}\leq-\varphi'(u)\leq C|u|^{-p}\). The sign is independent of \(u\) and \(y\).  The fundamental theorem of
calculus, followed by integration in \(y\), proves
\eqref{eq:end-reversal}.
\end{proof}

\begin{lemma}\label{lem:forward-hole}
If \(\mathcal J\) is admissible, then
\begin{align}
 &\sum_{J\in\mathcal J}
 \left(
 \frac{P^\alpha(J,\one_{I\setminus\gamma J}\sigma)}
 {\ell(J)}\right)^2
 \|P_J^\omega\mathbf x\|_{L^2(\omega)}^2
 \leq C\beta
 \left[
 \bigl(\mathfrak T_{\bfR_\Psi^{\alpha,n}}\bigr)^2
 +\mathcal A_2^\alpha
 \right]\sigma(I).
 \label{eq:forward-hole}
\end{align}
\end{lemma}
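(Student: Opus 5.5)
Split $I\setminus\gamma J=\mathrm E(J^*)\cup\mathrm S(J^*)$ and treat the two parts separately. Since $\omega$ is supported on $L_0$, only quasicubes with $J\cap L_0\neq\varnothing$ contribute; otherwise $\|P_J^\omega\mathbf x\|=0$.

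\emph{End part.} The goal is
\[
\frac{P^\alpha(J,\one_{\mathrm E(J^*)}\sigma)}{\ell(J)}\,\|P_J^\omega\mathbf x\|_{L^2(\omega)}
\lesssim \|\one_J R^{\alpha,n}_{\Psi,1}(\one_{\mathrm E(J^*)}\sigma)\|_{L^2(\omega)}.
\]
This comes from \cref{lem:end-reversal} together with \cref{lem:variance}. On $J\cap L_0$ only the first coordinate varies, so
\[
\|P_J^\omega\mathbf x\|^2=\frac1{2\omega(J)}\iint|x^1-z^1|^2\,d\omega\,d\omega .
\]
The reversal gives $|R(x)-R(z)|\approx \frac{P^\alpha}{\ell(J)}|x^1-z^1|$. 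The double integral of $|R(x)-R(z)|^2$ is in turn bounded by $4\omega(J)\int_J|R|^2d\omega$.

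Write $\one_{\mathrm E(J^*)}\sigma=\one_I\sigma-\one_{\gamma J}\sigma-\one_{\mathrm S(J^*)}\sigma$, restricted to $I$. Then
\[
\|\one_J R(\one_{\mathrm E}\sigma)\|\le \|\one_JR(\one_I\sigma)\|+\|\one_JR(\one_{\gamma J\cap I}\sigma)\|+\|\one_JR(\one_{\mathrm S}\sigma)\|.
\]
Sum each term over $J\in\mathcal J$.
\begin{itemize}
\item The first sum is at most $\beta\int_I|R(\one_I\sigma)|^2d\omega\le\beta\mathfrak T^2\sigma(I)$, using bounded overlap and testing on $I$.
\item For the second, the sets $\gamma J\subset I$ are not themselves testing quasicubes of the grid. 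I would instead redo the reversal with $\one_{\mathrm E}$ directly and avoid this decomposition. Cleaner still, apply \cref{lem:C1-energy}-type smoothness in reverse: bound $\langle R(\one_{\mathrm E}\sigma),\varphi_J\rangle$ from below with $\varphi_J=(x^1-m_J)\one_J$.
\end{itemize}
So the plan is to control the smooth part $R(\one_I\sigma)$ paired against the Haar-type functions $\varphi_J/\|P_J^\omega\mathbf x\|$. The pairings $\langle R(\one_I\sigma),h_J\rangle$ with $h_J$ mean-zero on $J$ satisfy $\sum_J|\langle \cdot,h_J\rangle|^2\lesssim\beta\|\one_IR(\one_I\sigma)\|^2$. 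This is not automatic for nonorthogonal $h_J$, but it is fine because $|\langle F,h_J\rangle|\le\|\one_JF\|$ and the overlap is bounded by $\beta$. The remaining contributions are $\one_{\gamma J\cap I}\sigma$ and $\one_{\mathrm S}\sigma$, each paired against $h_J$.
\begin{itemize}
\item The $\gamma J$ piece is bounded via the local testing/$A_2$ estimate. Its pairing is at most $\|\one_J R(\one_{\gamma J}\sigma)\|_{L^2(\omega)}$. I would dominate this by Muckenhoupt-type bounds, splitting off $\one_{\gamma J\setminus 2J}$ (controlled by $\mathcal A_2^\alpha$ via \cref{lem:annular-A2}-type annular arguments), with $\one_{2J}$ controlled by testing on a fixed number of dyadic neighbours. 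The square sums then give $\beta(\mathfrak T^2+\mathcal A_2^\alpha)\sigma(I)$.
\end{itemize}

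\emph{Side part.} The direct bound is used here. By \eqref{eq:side-geometry}, each $y\in\mathrm S(J^*)$ satisfies
\[
\frac{P^\alpha(J,\delta_y)}{\ell(J)}\approx \frac{1}{(\ell(J)+|y-c_J|)^{p}}\lesssim d(y)^{-p},
\]
and $\|P_J^\omega\mathbf x\|^2\le \ell(J)^2\omega(J)$. By Cauchy--Schwarz,
\[
\sum_J\Big(\frac{P^\alpha(J,\one_{\mathrm S}\sigma)}{\ell(J)}\Big)^2\|P_J^\omega\mathbf x\|^2\le\int_I\sum_{J:\,y\in \mathrm S(J^*)}\frac{\ell(J)^{2}\omega(J)}{d(y)^{2p}}\,P^\alpha(J,\one_{\mathrm S}\sigma)\,d\sigma(y)\cdot(\dots).
\]
By \eqref{eq:side-shadow}, the $J$ with $y\in\mathrm S(J^*)$ and side length $\approx 2^{-k}d(y)$ have $J^*\cap L_0$ inside an interval of length $C\gamma d(y)$ near $y^1$, with bounded overlap $\beta$. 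Summing their $\omega$-mass therefore gives at most $\beta\,\omega(B(y,C\gamma d(y)))$, with geometric gain $2^{-2k}$ in $k$. Then $\omega(B)\,d(y)^{-2(n-\alpha)}$ is controlled by the one-tailed condition $\mathcal A_2^{\alpha,*}$ with hole. The hole applies because $y\notin L_0$ and a quasicube around the shadow of size $\approx d(y)$ can be chosen to avoid $y$.

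\emph{Main obstacle.} The main obstacle is the end-part bookkeeping: converting the pointwise reversal into a bound by the single testing integral over $I$, while the unwanted pieces $\one_{\gamma J}\sigma$ are absorbed with constants uniform in $\beta$ and $\gamma$.
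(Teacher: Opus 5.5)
\textbf{The genuine gap is in your side part.} This is where the substance of the lemma lies; the end part is routine, contrary to your assessment.

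After $\|P_J^\omega\mathbf x\|^2\lesssim\ell(J)^2\omega(J)$ you must control $\sum_J\ell(J)^2\omega(J)\,b_J^2$, with $b_J=P^\alpha(J,\one_{\mathrm S(J^*)}\sigma)/\ell(J)$. This is the \emph{square} of a $\sigma$-integral. Your displayed Cauchy--Schwarz inequality leaves both a stray factor $P^\alpha(J,\one_{\mathrm S(J^*)}\sigma)$ and an unidentified factor $(\dots)$.

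Summing $\omega(J)$ over the $J$ below a fixed $y$ to reach $\beta\,\omega(B(y,C\gamma d(y)))\,d(y)^{-2(n-\alpha)}$ does not close the argument. A one-tailed constant bounds an $\omega$-mass only in product with a $\sigma$-Poisson tail, so this quantity is not bounded by itself. Pairing it with the remaining $\sigma$-factor (for instance $\sigma(\mathrm S(J^*))\le\sigma(I)$) gives a bound quadratic in $\sigma$. Two smaller points here: putting $\omega$ inside and $\sigma$ in the hole is the forward $\mathcal A_2^\alpha$ of the statement, not $\mathcal A_2^{\alpha,*}$; and a quasicube large enough to contain the whole shadow of length $\approx\gamma d(y)$ will generally contain $y$.

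The missing idea is to apply the forward one-tailed condition to $J$ itself. Cauchy--Schwarz gives
\[
 P^\alpha(J,\one_{\mathrm S(J^*)}\sigma)^2
 \le C\,\mathcal P^\alpha(J,\one_{\mathrm S(J^*)}\sigma)
 \int_{\mathrm S(J^*)}\frac{\ell(J)^{\alpha+2-n}}{(\ell(J)+|y-c_J|)^2}\,d\sigma(y).
\]
The bound $\omega(J)\mathcal P^\alpha(J,\one_{\mathrm S(J^*)}\sigma)\le C\mathcal A_2^\alpha\ell(J)^{n-\alpha}$ absorbs $\omega(J)$ together with one copy of the Poisson integral. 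This leaves $C\mathcal A_2^\alpha\int_{\mathrm S(J^*)}\bigl(\ell(J)/(\ell(J)+|y-c_J|)\bigr)^2d\sigma(y)$.

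Now interchange sum and integral. By \eqref{eq:side-geometry}, $\ell(J)\le Cd(y)$ and $|y-c_J|\ge c\,d(y)$. By \eqref{eq:side-shadow} and bounded overlap of the $J^*$ on $L_0$,
\[
 \sum_{J:\,y\in\mathrm S(J^*)}\ell(J)\le\frac C\gamma\int_{|t-y^1|\le C\gamma d(y)}\sum_J\one_{J^*}(t,0)\,dt\le C\beta d(y).
\]
Hence the inner sum is at most $C\beta$, and the side sum is at most $C\beta\mathcal A_2^\alpha\sigma(I)$.

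\textbf{Corrections in the end part.}

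\emph{Oscillation, not size.} Your first reduction is to the full norm $\|\one_JR_{\Psi,1}^{\alpha,n}(\one_{\mathrm E(J^*)}\sigma)\|_{L^2(\omega)}$. After the three-way split this would leave the size term $\|\one_JR_{\Psi,1}^{\alpha,n}(\one_{\mathrm S(J^*)}\sigma)\|_{L^2(\omega)}$, which is not bounded by $b_J\|P_J^\omega\mathbf x\|$. You must keep the oscillation on $J$ (equivalently, your mean-zero pairing with $h_J$). Then \cref{lem:C1-energy}, applied to $\one_{\mathrm S(J^*)}\sigma$ (which lives outside $\gamma J$), bounds that contribution by $Cb_J\|P_J^\omega\mathbf x\|$ and feeds it into the side sum. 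Note that \cref{lem:C1-energy} is only an upper bound. The lower bound is \cref{lem:end-reversal}, and the pairing form also uses the fixed sign of the difference quotient established in its proof.

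\emph{The $\gamma J$ piece needs no workaround.} Testing in this section is over all $\Omega$-quasicubes, and $\gamma J=\Omega(\gamma Q)$ is one. Testing on each $\gamma J$ together with $\sum_J\sigma(\gamma J)\le\beta\sigma(I)$ gives $\beta\,\mathfrak T_{\bfR_\Psi^{\alpha,n}}^2\sigma(I)$. Your substitute for $\one_{2J}\sigma$, testing on dyadic neighbours, would fail: testing on a neighbour $J'$ controls $\one_{J'}R(\one_{J'}\sigma)$ but not the cross term $\one_JR(\one_{J'}\sigma)$.

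\emph{Truncations.} To make $\one_{\mathrm E(J^*)}\sigma=\one_I\sigma-\one_{J^*}\sigma-\one_{\mathrm S(J^*)}\sigma$ an identity for the operators used in testing, fix one tangent-line truncation equal to $1$ on $J\times(I\setminus J^*)$ for all $J$ in a finite subfamily. Then pass to the full family by monotone convergence.
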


\begin{proof}
Terms with \(\omega(J)=0\) vanish.  Since \(\omega\) is supported on
\(L_0\), the variance of \(\mathbf x\) on \(J\) is the variance of the
first coordinate.  Put
\[
a_J=\frac{P^\alpha(J,\one_{\mathrm E(J^*)}\sigma)}
 {\ell(J)},\qquad
b_J=\frac{P^\alpha(J,\one_{\mathrm S(J^*)}\sigma)}
 {\ell(J)}.
\]
Since \(I\setminus J^*=\mathrm E(J^*)\dot\cup\mathrm S(J^*)\),
linearity of \(P^\alpha\) and \((a_J+b_J)^2\leq2a_J^2+2b_J^2\)
reduce the proof to the two corresponding energy sums.
By \cref{lem:end-reversal} and the variance identity,
\[
a_J^2\|P_J^\omega\mathbf x\|_2^2
 \leq C\int_J
 \left|R_{\Psi,1}^{\alpha,n}
  (\one_{\mathrm E(J^*)}\sigma)
 -\mathbb E_J^\omega R_{\Psi,1}^{\alpha,n}
 (\one_{\mathrm E(J^*)}\sigma)\right|^2d\omega.
\]
First take a finite subfamily
\(\mathcal J_0\subset\mathcal J\).  Terms for which
\(I\setminus J^*=\varnothing\) have zero Poisson factor and may be
discarded.  If no term remains, there is nothing to prove.  Otherwise
put
\[
d_0=\min_{J\in\mathcal J_0}\dist(J,I\setminus J^*),
 \qquad
D_0=\max_{J\in\mathcal J_0}
 \sup_{\substack{x\in J\\y\in I\setminus J^*}}|x-y|.
\]
Since \(J^*=\gamma J\), quasicube geometry gives \(d_0\geq c_\Omega(\gamma-1) \min_{J\in\mathcal J_0}\ell(J)>0, \quad D_0\leq\diam I\). Choose one tangent-line truncation with \(0<\delta<d_0,\quad R>D_0\). Then \(t_{\delta,R}=1\) on
\(J\times(I\setminus J^*)\) for every \(J\in\mathcal J_0\).
The half-open convention gives the partition \(I=J^*\dot\cup\mathrm E(J^*)\dot\cup\mathrm S(J^*)\). Consequently, for \(x\in J\),
\begin{align*}
 \bfR_\Psi^{\alpha,n}
   (\one_{\mathrm E(J^*)}\sigma)(x)
 &=
 \bfR_{\Psi,\delta,R}^{\alpha,n,\mathrm{tl}}
   (\one_I\sigma)(x)
 -\bfR_{\Psi,\delta,R}^{\alpha,n,\mathrm{tl}}
   (\one_{J^*}\sigma)(x)
 -\bfR_{\Psi,\delta,R}^{\alpha,n,\mathrm{tl}}
   (\one_{\mathrm S(J^*)}\sigma)(x).
\end{align*}
In the next three testing estimates,
each transform denotes this fixed tangent-line truncation.
After subtracting the \(\omega\)-average on \(J\), the triangle
inequality gives three oscillation terms.  Testing on \(I\) gives
\begin{align*}
 \sum_{J\in\mathcal J_0}\int_J
 |R_{\Psi,1}^{\alpha,n}(\one_I\sigma)|^2\,d\omega
 &\leq\beta\int_I
 |R_{\Psi,1}^{\alpha,n}(\one_I\sigma)|^2\,d\omega
 \leq\beta\mathfrak T_{\bfR_\Psi^{\alpha,n}}^2\sigma(I).
\end{align*}
Testing on each \(J^*\), followed by
\eqref{eq:admissible-family}, gives
\begin{align*}
 \sum_{J\in\mathcal J_0}\int_J
 |R_{\Psi,1}^{\alpha,n}(\one_{J^*}\sigma)|^2\,d\omega
 &\leq\mathfrak T_{\bfR_\Psi^{\alpha,n}}^2
       \sum_{J\in\mathcal J_0}\sigma(J^*)
 \leq\beta\mathfrak T_{\bfR_\Psi^{\alpha,n}}^2\sigma(I).
\end{align*}
For the side term, apply \cref{lem:C1-energy} to a vector test
function whose only nonzero component is the first one.  By duality,
\[
 \int_J\left|R_{\Psi,1}^{\alpha,n}
 (\one_{\mathrm S(J^*)}\sigma)
 -\mathbb E_J^\omega R_{\Psi,1}^{\alpha,n}
 (\one_{\mathrm S(J^*)}\sigma)\right|^2d\omega
 \leq Cb_J^2\|P_J^\omega\mathbf x\|_2^2.
\]
All bounds are uniform in the truncation.  Letting the finite
subfamilies \(\mathcal J_0\) increase to \(\mathcal J\), monotone
convergence applies to the nonnegative energy sums.  It remains to
estimate
\[
B=\sum_{J\in\mathcal J}
 \left(
 \frac{P^\alpha(J,\one_{\mathrm S(J^*)}\sigma)}
 {\ell(J)}\right)^2
 \|P_J^\omega\mathbf x\|_2^2.
\]
Write
\[
 \cP^\alpha(J,\nu)
 \approx_\Omega\int_{\R^n}
 \left(\frac{\ell(J)}
 {(\ell(J)+|y-c_J|)^2}\right)^{n-\alpha}d\nu(y).
\]
Suppose first that \(n-1\leq\alpha<n\).  Since
\(\mathrm S(J^*)\subset J^c\), \(P^\alpha(J,\one_{\mathrm S(J^*)}\sigma) \leq C\cP^\alpha(J,\one_{\mathrm S(J^*)}\sigma)\). Using
\(\|P_J^\omega\mathbf x\|_2^2\leq C\ell(J)^2\omega(J)\) and the
one-tailed condition gives
\begin{align*}
 &\left(
 \frac{P^\alpha(J,\one_{\mathrm S(J^*)}\sigma)}
 {\ell(J)}\right)^2
 \|P_J^\omega\mathbf x\|_2^2\leq C\mathcal A_2^\alpha
 \int_{\mathrm S(J^*)}
 \left(\frac{\ell(J)}
 {\ell(J)+|y-c_J|}\right)^{n+1-\alpha}d\sigma(y).
\end{align*}
If \(0\leq\alpha<n-1\), Cauchy--Schwarz gives
\begin{align*}
P^\alpha(J,\one_{\mathrm S(J^*)}\sigma)^2
 &\leq C_\Omega
 \cP^\alpha(J,\one_{\mathrm S(J^*)}\sigma)
 \int_{\mathrm S(J^*)}
 \frac{\ell(J)^{\alpha+2-n}}
 {(\ell(J)+|y-c_J|)^2}\,d\sigma(y).
\end{align*}
Using
\(\|P_J^\omega\mathbf x\|_2^2\leq C\ell(J)^2\omega(J)\) and \(\omega(J)\cP^\alpha(J,\one_{\mathrm S(J^*)}\sigma) \leq C\mathcal A_2^\alpha\ell(J)^{n-\alpha}\), we obtain
\begin{align*}
 &\left(
 \frac{P^\alpha(J,\one_{\mathrm S(J^*)}\sigma)}
 {\ell(J)}\right)^2
 \|P_J^\omega\mathbf x\|_2^2\leq C\mathcal A_2^\alpha
 \int_{\mathrm S(J^*)}
 \left(\frac{\ell(J)}
 {\ell(J)+|y-c_J|}\right)^2d\sigma(y).
\end{align*}
Thus, with \(q_\alpha=\min\{2,n+1-\alpha\}>1\),
\begin{equation*}
B\leq C\mathcal A_2^\alpha
 \int_I\sum_{\substack{J\in\mathcal J\\y\in\mathrm S(J^*)}}
 \left(\frac{\ell(J)}
 {\ell(J)+|y-c_J|}\right)^{q_\alpha}d\sigma(y).
\end{equation*}
By \cref{lem:end-side-geometry}, if \(d(y)=\dist(y,L_0)\) and
\(y\in\mathrm S(J^*)\), then \eqref{eq:side-geometry} and
\eqref{eq:side-shadow} hold.
Hence, by bounded overlap,
\[
 \sum_{\substack{J\in\mathcal J\\y\in\mathrm S(J^*)}}\ell(J)
 \leq\frac C\gamma
 \int_{|t-y^1|\leq C\gamma d(y)}
 \sum_{J\in\mathcal J}\one_{J^*}(t,0)\,dt
 \leq C\beta d(y).
\]
Moreover, \eqref{eq:side-geometry} gives
\(\ell(J)\leq Cd(y)\) and
\(|y-c_J|\geq c\,d(y)\).  Since \(1<q_\alpha\leq2\),
the number \(t_J=\ell(J)/d(y)\) satisfies \(t_J\leq C\), and hence
\(t_J^{q_\alpha}\leq C^{q_\alpha-1}t_J\).  Therefore
\begin{align*}
 &\sum_{\substack{J\in\mathcal J\\y\in\mathrm S(J^*)}}
 \left(\frac{\ell(J)}
 {\ell(J)+|y-c_J|}\right)^{q_\alpha}\leq
C\sum_{\substack{J\in\mathcal J\\y\in\mathrm S(J^*)}}
 \left(\frac{\ell(J)}{d(y)}\right)^{q_\alpha}
 \leq
 \frac C{d(y)}
 \sum_{\substack{J\in\mathcal J\\y\in\mathrm S(J^*)}}\ell(J)
 \leq C\beta.
\end{align*}
Thus
\(B\leq C\beta\mathcal A_2^\alpha\sigma(I)\), which proves
\eqref{eq:forward-hole}.
\end{proof}

\begin{lemma}\label{lem:fill-hole}
For every admissible family,
\begin{align}
 &\sum_{J\in\mathcal J}
 \left(
 \frac{P^\alpha(J,\one_{\gamma J}\sigma)}
 {\ell(J)}\right)^2
 \|P_J^\omega\mathbf x\|_2^2
 \leq
C\beta A_2^{\alpha,*,\mathrm{punct}}\sigma(I).
 \label{eq:fill-hole}
\end{align}
After interchanging the measures, the dual estimate is
\begin{align*}
 &\sum_{J\in\mathcal J}
 \left(
 \frac{P^\alpha(J,\one_{\gamma J}\omega)}
 {\ell(J)}\right)^2
 \|P_J^\sigma\mathbf x\|_2^2
 \leq C\beta A_2^{\alpha,\mathrm{punct}}\omega(I).
\end{align*}
\end{lemma}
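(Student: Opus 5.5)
The plan is a term-by-term estimate using the variance bound, the punctured Muckenhoupt condition, and the bounded overlap in \eqref{eq:admissible-family}. Fix \(J\in\mathcal J\) with \(\omega(J)>0\); terms with \(\omega(J)=0\) vanish. The Poisson kernel is bounded by \(\ell(J)^{\alpha-n}\) up to constants, so
\[
 P^\alpha(J,\one_{\gamma J}\sigma)\leq C\,\ell(J)^{\alpha-n}\sigma(\gamma J).
\]
The crude bound \(\|P_J^\omega\mathbf x\|_2^2\leq C\ell(J)^2\omega(J)\) from \cref{lem:variance} is too weak when common point masses are present: the product \(\sigma(\gamma J)\omega(J)\) may include an atom charged by both measures. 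I would therefore use the finer form of \cref{lem:variance}. The variance of \(\omega\) on \(J\) is unchanged if the largest atom of \(\omega\) in \(J\) (or, more accurately, an atom at an arbitrary point) is partially removed. Concretely,
\[
 \|P_J^\omega\mathbf x\|_2^2=\frac1{2\omega(J)}\iint_{J\times J}|x-y|^2\,d\omega\,d\omega .
\]
Write \(\omega\one_J=\omega(\{p\})\delta_p+\omega_p\), where \(p\) is a point of maximal \(\omega\)-mass in \(J\). The diagonal \(p\)-\(p\) term vanishes, so the double integral is at most
\[
 C\ell(J)^2\bigl(\omega_p(J)\,\omega(J)+\omega_p(J)\,\omega(\{p\})\bigr)\leq C\ell(J)^2\,\omega(J)\,\omega_p(J).
\]
Hence \(\|P_J^\omega\mathbf x\|_2^2\leq C\ell(J)^2\bigl(\omega(J)-\omega(\{p\})\bigr)\). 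This is exactly the quantity \(\omega(J,\mathfrak P^\omega_J)\) in the dual punctured condition, the mass with its largest atom removed.

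Combining the two bounds, each term is at most
\[
 C\,\ell(J)^{2\alpha-2n}\sigma(\gamma J)^2\,\bigl(\omega(J)-\omega(\{p\})\bigr).
\]
The key step is to bound \(\ell(J)^{2\alpha-2n}\sigma(\gamma J)\,\omega(\gamma J\text{ punctured})\) by \(A_2^{\alpha,*,\mathrm{punct}}\). Passing from \(J\) to the dilate \(\gamma J\) requires care, since the largest \(\omega\)-atom in \(\gamma J\) need not lie in \(J\). I would handle this by removing instead the largest \(\omega\)-atom of \(\gamma J\). If that atom lies in \(J\), the same variance computation applies. If it lies outside \(J\), then its removal does not affect \(\omega(J)\), and the full \(\omega(J)\) is bounded by the punctured mass of \(\gamma J\).

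In either case the punctured \(A_2\) condition on the quasicube \(\gamma J\) of side \(\gamma\ell(J)\) gives
\[
 \sigma(\gamma J)\,\omega_{\mathrm{punct}}(\gamma J)\leq C_\gamma A_2^{\alpha,*,\mathrm{punct}}\ell(J)^{2(n-\alpha)},
\]
where \(\omega_{\mathrm{punct}}(\gamma J)\) denotes \(\omega(\gamma J)\) with its largest atom removed. The choice of which measure is punctured is dictated by the definition in \cite{CurvePaper}, and I would match it to the starred constant. Each term is then at most \(C\,A_2^{\alpha,*,\mathrm{punct}}\sigma(\gamma J)\).

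Summing over \(\mathcal J\) and using \(\sum_J\one_{\gamma J}\leq\beta\one_I\) gives \(\sum_J\sigma(\gamma J)\leq\beta\sigma(I)\), which proves \eqref{eq:fill-hole}. The dual estimate follows verbatim with \(\sigma\) and \(\omega\) interchanged. The main obstacle is the point-mass step above, namely ensuring that the variance really controls the punctured mass of the dilated quasicube \(\gamma J\), not just of \(J\). This matters because the punctured condition is stated with a single removed atom per cube. If the two-case argument fails, my first fallback is to compare with the quasicube \(J\) itself and absorb \(\sigma(\gamma J\setminus J)\) using the one-tailed condition \(\mathcal A_2^{\alpha,*}\), which is also available in \(\mathfrak A_2^\alpha\).
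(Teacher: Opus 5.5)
Your proposal is correct and follows essentially the same route as the paper's proof. You bound the Poisson factor by \(C\ell(J)^{\alpha-n}\sigma(\gamma J)\) and remove a largest \(\omega\)-atom of \(\gamma J\), in the same two cases: atom in \(J\), giving \(C\ell(J)^2\omega(\gamma J\setminus\{p\})\); atom outside \(J\), where \(\omega(J)\leq\omega(\gamma J\setminus\{p\})\). You then apply the starred punctured condition on \(\gamma J\) and use \(\sum_J\sigma(\gamma J)\leq\beta\sigma(I)\). Two small points: the variance is not literally unchanged when an atom is removed, but your double-integral computation only uses the valid upper bound \(\|P_J^\omega\mathbf x\|_2^2\leq C\ell(J)^2(\omega(J)-\omega(\{p\}))\) for \(p\in J\); and a largest atom of \(\gamma J\) exists because \(\omega(\gamma J)<\infty\).
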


\begin{proof}
Let \(\mathfrak P\) be the set of common point masses, and write
\[
 \omega(Q,\mathfrak P)
 =\omega(Q)-\sup_{p\in Q\cap\mathfrak P}\omega(\{p\}),
\]
where the supremum of the empty set is \(0\).  If
\(J^*\cap\mathfrak P\neq\varnothing\), choose a largest common
\(\omega\)-atom \(p_J\).  Such an atom exists because
\(\omega(J^*)<\infty\): if the positive supremum of the common atomic
masses were not attained, infinitely many of them would exceed half of
that supremum.
If \(p_J\in J\), the minimizing property of variance gives
\[
 \|P_J^\omega\mathbf x\|_2^2
 \leq\int_J|x-p_J|^2\,d\omega(x)
 \leq C\ell(J)^2\omega(J^*,\mathfrak P).
\]
If \(p_J\notin J\), then
the half-open sets \(J\) and \(\{p_J\}\) are disjoint.  Since \(p_J\)
is a largest common \(\omega\)-atom in \(J^*\), \(\omega(J) \leq \omega\bigl(J^*\setminus\{p_J\}\bigr) =\omega(J^*,\mathfrak P)\). The minimizing property of the barycenter, followed by the quasicube
diameter bound, now gives
\begin{align*}
 \|P_J^\omega\mathbf x\|_2^2
 &\leq\int_J|x-c_J|^2\,d\omega(x)\leq C\ell(J)^2\omega(J)
 \leq C\ell(J)^2\omega(J^*,\mathfrak P).
\end{align*}
When \(J^*\cap\mathfrak P\) is empty,
\(\omega(J^*,\mathfrak P)=\omega(J^*)\).  Centering at \(c_J\) gives
\[
 \|P_J^\omega\mathbf x\|_2^2
 \leq C\ell(J)^2\omega(J)
 \leq C\ell(J)^2\omega(J^*)
 =C\ell(J)^2\omega(J^*,\mathfrak P).
\]
Hence \(\|P_J^\omega\mathbf x\|_2^2 \leq C\ell(J)^2\omega(J^*,\mathfrak P)\). Also, \(P^\alpha(J,\one_{J^*}\sigma) \leq C\frac{\sigma(\gamma J)}{\ell(J)^{n-\alpha}}\). Thus the left side of \eqref{eq:fill-hole} is at most
\[
C\sum_{J\in\mathcal J}
 \frac{\sigma(J^*)^2\omega(J^*,\mathfrak P)}
 {\ell(J)^{2(n-\alpha)}}.
\]
Since the puncture is in the \(\omega\)-factor, we use the starred
punctured condition.  Since
\(\ell(J^*)=\gamma\ell(J)\), it gives
\[
 \frac{\sigma(J^*)\omega(J^*,\mathfrak P)}
 {\ell(J)^{2(n-\alpha)}}
 \leq C_\gamma A_2^{\alpha,*,\mathrm{punct}}.
\]
Bounded overlap now gives
\[
C_\gamma A_2^{\alpha,*,\mathrm{punct}}
 \sum_{J\in\mathcal J}\sigma(J^*)
 \leq C_\gamma\beta
A_2^{\alpha,*,\mathrm{punct}}\sigma(I).
\]
Interchanging \(\sigma\) and \(\omega\) punctures the
\(\sigma\)-factor.  This gives the unstarred constant in the dual
estimate stated above.
\end{proof}

\subsection{Backward energy}

We now estimate the dual energy.

\begin{lemma}
\label{lem:line-geometry}
There are constants \(c_\Omega,C_\Omega>0\) with the following
properties.
\begin{enumerate}
\item If \(3J\cap L_0=\varnothing\), \(y\in J\), and \(x\in L_0\), then \(|y'|\geq c_\Omega\ell(J),\quad |y-x|\approx_\Omega\ell(J)+|x-c_J|\). \item Suppose \(3J\cap L_0\neq\varnothing\).  If \(\gamma\) is large,
\(x\in L_0\setminus\gamma J\), \(y,z\in J\), and
\(w_t=z+t(y-z)\), then
\begin{align*}
 |w_t'|&\leq C_\Omega\ell(J),\\
 |w_t^1-x^1|&\geq c_\Omega\gamma\ell(J),\\
 |w_t'|&\leq \frac{C_\Omega}{\gamma}|w_t^1-x^1|,\\
 |w_t^1-x^1|&\approx_\Omega\ell(J)+|x-c_J|.
\end{align*}
\end{enumerate}
\end{lemma}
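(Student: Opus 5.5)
The plan is to use only the quasicube geometry already isolated in the proof of \cref{lem:end-side-geometry}. Let \(\Lambda\) be a common biLipschitz constant for \(\Omega\) and \(\Omega^{-1}\). Choose \(a,b>0\), depending only on \(n\) and \(\Lambda\), with
\[
 J\subset B(c_J,b\ell(J)),\qquad
 B(x_0,a(\gamma-1)\ell(J))\subset\gamma J\quad(x_0\in J),
\]
and similarly \(B(x_0,a\ell(J))\subset 3J\) for \(x_0\in J\). All estimates will follow from these inclusions and the triangle inequality; no property of \(\psi\) enters.

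For part (1), assume \(3J\cap L_0=\varnothing\) and \(y\in J\). Then \(B(y,a\ell(J))\subset 3J\) misses \(L_0\), so \(|y'|=\dist(y,L_0)\geq a\ell(J)\). For \(x\in L_0\) this gives \(|y-x|\geq|y'|\geq a\ell(J)\). The triangle inequality with \(|y-c_J|\leq b\ell(J)\) gives
\[
 |y-x|\leq|x-c_J|+b\ell(J),\qquad |x-c_J|\leq|y-x|+b\ell(J).
\]
Combining these with \(|y-x|\geq a\ell(J)\) yields \(|y-x|\approx_\Omega\ell(J)+|x-c_J|\).

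For part (2), fix a point \(p\in3J\cap L_0\). Then \(|p-c_J|\leq C_\Omega\ell(J)\), using \(3J\subset B(c_J,C\ell(J))\). Since \(J\subset B(c_J,b\ell(J))\) is convex-hulled inside the same ball, every \(w_t\) lies in \(B(c_J,b\ell(J))\). Because \(p'=0\), this gives \(|w_t'|\leq|w_t-p|\leq C_\Omega\ell(J)\), the first claim. Next, \(x\notin\gamma J\) together with the ball inclusion gives \(|x-z|\geq a(\gamma-1)\ell(J)\). Since \(|w_t-z|\leq2b\ell(J)\), we get \(|x-w_t|\geq c\gamma\ell(J)\) once \(\gamma\) is large. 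As \(x'=0\),
\[
 |w_t^1-x^1|\geq|x-w_t|-|w_t'|\geq c\gamma\ell(J)-C_\Omega\ell(J)\geq c_\Omega\gamma\ell(J)
\]
for \(\gamma\) large, which is the second claim. The third claim follows by dividing the first by the second:
\[
 |w_t'|\leq C_\Omega\ell(J)\leq\frac{C_\Omega}{\gamma}|w_t^1-x^1|.
\]
For the last comparison, \(\bigl||w_t^1-x^1|-|x^1-c_J^1|\bigr|\leq C\ell(J)\), and \(|x-c_J|\leq|x^1-c_J^1|+|c_J'|\) with \(|c_J'|\leq C_\Omega\ell(J)\). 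Hence \(|w_t^1-x^1|\) lies within \(C\ell(J)\) of \(|x-c_J|\). Since \(|w_t^1-x^1|\geq c\gamma\ell(J)\gg\ell(J)\), the additive \(\ell(J)\) errors are absorbed, giving \(|w_t^1-x^1|\approx\ell(J)+|x-c_J|\).

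The only delicate point is the order of constants. The constants \(a,b,C_\Omega\) depend only on \(\Omega\), and \(\gamma\) must be taken large relative to them before the absorptions. This is consistent with the choice of \(\gamma\) after \(A_0\) made earlier in this section.
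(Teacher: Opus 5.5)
Your proof is correct and takes essentially the same route as the paper. Both arguments use only the biLipschitz quasicube inclusions ($J$ lies in a ball of radius $\approx\ell(J)$ about $c_J$, and $J$ is at distance $\gtrsim(\gamma-1)\ell(J)$ from $(\gamma J)^c$, taking $\gamma=3$ in part (1)), a point of $3J\cap L_0$ to control the transverse coordinate, and the triangle inequality with $\gamma$ taken large at the end. The only cosmetic difference is that you bound $|x-w_t|$ through $|x-z|$, whereas the paper goes through $|x-c_J|\geq c_\Omega\gamma\ell(J)$.
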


\begin{proof}
Write \(J=\Omega Q\).  The biLipschitz bounds imply \(\diam J\leq C_\Omega\ell(J),\quad \dist(J,(\gamma J)^c)\geq c_\Omega(\gamma-1)\ell(J)\), where the second inequality remains valid for half-open quasicubes after
decreasing \(c_\Omega\).  If \(3J\cap L_0=\varnothing\), then
\(L_0\subset(3J)^c\).  The preceding distance bound with \(\gamma=3\)
therefore gives \(\dist(J,L_0)\geq c_\Omega\ell(J)\).  This proves the
first assertion.

For the second, choose \(\zeta\in3J\cap L_0\).  Since
\(w_t\in\operatorname{conv}(J)\), we have
\(|w_t-c_J|\leq C_\Omega\ell(J)\), and hence
\(|w_t-\zeta|\leq C_\Omega\ell(J)\).  This gives the first bound.
Since \(x\notin\gamma J\),
\(|x-c_J|\geq c_\Omega\gamma\ell(J)\).  Therefore, after increasing
\(\gamma\), \(|x-w_t|\geq |x-c_J|-|w_t-c_J| \geq c_\Omega\gamma\ell(J)\). Subtracting the transverse bound gives the second and third bounds.
The estimates for \(|x-c_J|\) and \(|w_t-c_J|\) give the last
comparison.
\end{proof}

\begin{lemma}\label{lem:backward-reversal}
Assume that \(3J\cap L_0\neq\varnothing\), and let \(\nu\) be a finite
positive Borel measure supported on \(L_0\setminus\gamma J\).  Under
\eqref{eq:graph-map}, for the above choice of \(\gamma\) and every
\(y,z\in J\),
\begin{equation}\label{eq:backward-reversal}
 |y^1-z^1|\frac{P^\alpha(J,\nu)}{\ell(J)}
 \leq C|\bfR_\Psi^{\alpha,n}\nu(y)
       -\bfR_\Psi^{\alpha,n}\nu(z)|.
\end{equation}
\end{lemma}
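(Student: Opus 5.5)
We work with the first component only, as in \cref{lem:end-reversal}, but now the variable point is the second kernel variable.  Write \(p=n+1-\alpha\).  For \(x\in L_0\), \(x=(x^1,0')\), and \(w\in\R^n\),
\[
 R_{\Psi,1}^{\alpha,n}\nu(w)
 =c_{\alpha,n}\int\frac{x^1-w^1}{|\Psi(x)-\Psi(w)|^{p}}\,d\nu(x),
 \qquad
 \Psi(x)-\Psi(w)=\bigl(x^1-w^1,\ \psi(x^1)-\psi(w^1)-w'\bigr).
\]
Since \(|\bfR_\Psi^{\alpha,n}\nu(y)-\bfR_\Psi^{\alpha,n}\nu(z)|\) dominates the difference of first components, it suffices to prove \eqref{eq:backward-reversal} for \(R_{\Psi,1}^{\alpha,n}\).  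If \(y^1=z^1\) there is nothing to prove.  Otherwise, set \(w_t=z+t(y-z)\) and write the difference as \(\int_0^1\frac{d}{dt}R_{\Psi,1}^{\alpha,n}\nu(w_t)\,dt\).  \Cref{lem:line-geometry}(2) applies to every \(w_t\), since \(w_t\in\operatorname{conv}(J)\).

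The key step is to show that the integrand has a fixed sign and the right size.  Put \(u=w_t^1-x^1\) and \(a=\psi(w_t^1)-\psi(x^1)+w_t'\).  Then the kernel becomes \(-u(u^2+|a|^2)^{-p/2}\), and
\[
 \frac{d}{dt}\Bigl[u(u^2+|a|^2)^{-p/2}\Bigr]
 =(u^2+|a|^2)^{-p/2-1}\Bigl\{\bigl(|a|^2-(p-1)u^2\bigr)\dot u-p\,u\,a\cdot\dot a\Bigr\},
\]
with \(\dot u=y^1-z^1\) and \(\dot a=D\psi(w_t^1)(y^1-z^1)+(y'-z')\).

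By \cref{lem:line-geometry}(2) and the mean value theorem, \(|a|\le(\|D\psi\|_\infty+C_\Omega/\gamma)|u|=:q_0|u|\), and \(|u|\approx_\Omega\ell(J)+|x-c_J|\).  The term \(\bigl(|a|^2-(p-1)u^2\bigr)\dot u\) has the sign of \(-\dot u\), with size \(\approx|u|^2|y^1-z^1|\), once \(q_0^2\le(p-1)/4\).  The part \(p\,u\,a\cdot D\psi\,\dot u\) is at most \(pq_0\|D\psi\|_\infty u^2|\dot u|\).  The smallness \eqref{eq:graph-map} together with a large \(\gamma\) absorbs both, exactly as in \cref{lem:end-reversal}.

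The main obstacle is the transverse contribution \(p\,u\,a\cdot(y'-z')\).  Here \(|y'-z'|\le C_\Omega\ell(J)\), which is not controlled by \(|y^1-z^1|\).  I would handle it by not integrating pointwise.  Instead, split the difference quotient into two pieces.  The first piece is the \(\dot u\) piece, which gives a signed lower bound \(c|y^1-z^1|\int|u|^{-p}\,d\nu\approx|y^1-z^1|P^\alpha(J,\nu)/\ell(J)\).  The second piece is the \(y'-z'\) piece, bounded by \(C q_0|y'-z'|\int|u|^{-p}\,d\nu\), and it is absorbed when it is small relative to the first.

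If \(|y'-z'|\) is large compared with \(|y^1-z^1|\), this absorption fails.  In that case I would compare instead with the transverse components \(R_{\Psi,j}^{\alpha,n}\), \(j\ge2\).  Their kernel is \(-(\,w_t'\text{-part}\,)\), with \(t\)-derivative \(\approx-(y'-z')|u|^{-p}\) up to errors of order \(q_0\).  This gives \(|\bfR\nu(y)-\bfR\nu(z)|\gtrsim|y'-z'|P^\alpha/\ell(J)\gtrsim|y^1-z^1|P^\alpha/\ell(J)\).  Combining the two regimes, with a threshold of the form \(|y'-z'|\le\theta|y^1-z^1|\) and \(\theta\) fixed, gives \eqref{eq:backward-reversal}.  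Here \(\int|u|^{-p}\,d\nu\approx P^\alpha(J,\nu)/\ell(J)\) by \cref{lem:line-geometry}(2), since \(\nu\) lives on \(L_0\setminus\gamma J\).
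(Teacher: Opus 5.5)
Your argument is correct and is essentially the paper's proof. The paper uses the same two-regime split with threshold $\theta=1$, namely $|h'|\le|h_1|$ versus $|h'|>|h_1|$ for $h=y-z$. In the first case it shows the derivative of the first component has a fixed sign, absorbing the $h'$ term pointwise into the numerator rather than after integration, which is equivalent. In the second case it bounds the $t$-derivative of the transverse part from below by $|h'|\,r^{-p}$; the error carries the factor $w_t'$ and so is of size $O(C_\Omega/\gamma)$.

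The only inaccuracy is your opening remark that it suffices to treat $R_{\Psi,1}^{\alpha,n}$ alone. Your second regime rightly abandons that remark.
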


\begin{proof}
Put \(p=n+1-\alpha\), \(q=p-1=n-\alpha\), and write \(h=y-z=(h_1,h'),\quad w_t=z+th\). For \(x=(x^1,0)\in L_0\setminus\gamma J\), set \(a=w_t^1-x^1,\quad u=w_t',\quad b=u+\psi(w_t^1)-\psi(x^1),\quad r=(a^2+|b|^2)^{1/2}\). BiLipschitz quasicube geometry gives \(|u|\leq\eta|a|,\quad |b|\leq\delta|a|,\quad r\approx|a|\approx\ell(J)+|x-c_J|\), where \(\eta=\frac{C_\Omega}{\gamma}, \quad \delta=\eta+\|D\psi\|_\infty\). The bound in \eqref{eq:graph-map}, followed by the choice of \(\gamma\)
above, ensures that
\begin{align}
 \delta^2+p\delta(1+\|D\psi\|_\infty)&\leq q/2,
 \label{eq:backward-choice-one}\\
p\eta\{1+\delta(1+\|D\psi\|_\infty)\}&\leq1/2.\notag
\end{align}

Suppose first that \(|h'|\leq|h_1|\).  If \(h_1=0\), then \(y=z\),
so the desired estimate is trivial.  We may therefore assume
\(h_1\neq0\).  For \(\phi_x(t)=a r^{-p}\), direct differentiation gives
\begin{align*}
 \phi_x'(t)
 &=\frac{h_1}{r^{p+2}}N_1(t,x),\\
N_1(t,x)
 &=-qa^2+|b|^2
 -pa\,b\cdot\frac{h'}{h_1}
 -pa\,b\cdot D\psi(w_t^1).
\end{align*}
By \eqref{eq:backward-choice-one}, \(N_1(t,x)\leq-\frac q2a^2\). The sign is fixed for every \(x,t\).  Integration first in \(t\), then
in \(x\), yields
\[
 \left|
 (R_{\Psi,1}^{\alpha,n}\nu)(y)
 -(R_{\Psi,1}^{\alpha,n}\nu)(z)
 \right|
 \geq c|h_1|\int\!\!\int_0^1\frac{a^2}{r^{p+2}}\,dt\,d\nu(x)
 \geq c|h_1|\frac{P^\alpha(J,\nu)}{\ell(J)}.
\]

Suppose next that \(|h'|>|h_1|\).  Differentiate the transverse vector:
\begin{align}
 \frac d{dt}(u r^{-p})
 &=h'r^{-p}+\mathbf e_x(t),\notag\\
 \mathbf e_x(t)
 &=-p\,u r^{-p-2}
 \left\{ah_1+b\cdot h'
 +b\cdot D\psi(w_t^1)h_1\right\},\notag\\
 |\mathbf e_x(t)|
 &\leq p\eta\{1+\delta(1+\|D\psi\|_\infty)\}
 |h'|r^{-p}
 \leq\frac12|h'|r^{-p}.
 \label{eq:backward-case-two}
\end{align}
The reverse triangle inequality gives
\begin{align*}
 \left|\int_{L_0}\int_0^1
 \bigl(h'r^{-p}+\mathbf e_x(t)\bigr)\,dt\,d\nu(x)\right|
 &\geq |h'|\int_{L_0}\int_0^1r^{-p}\,dt\,d\nu(x)
 -\int_{L_0}\int_0^1|\mathbf e_x(t)|\,dt\,d\nu(x)\\
 &\geq\frac{|h'|}{2}
 \int_{L_0}\int_0^1r^{-p}\,dt\,d\nu(x).
\end{align*}
Consequently,
\[
 \left|
 (\bfR_\Psi^{\alpha,n}\nu(y))'
 -(\bfR_\Psi^{\alpha,n}\nu(z))'
 \right|
 \geq c|h'|\frac{P^\alpha(J,\nu)}{\ell(J)}
 \geq c|h_1|\frac{P^\alpha(J,\nu)}{\ell(J)}.
\]
This proves \eqref{eq:backward-reversal}.  The calculation uses
\(|\psi(s)-\psi(t)|\leq\|D\psi\|_\infty|s-t|\) and the pointwise
bound for \(D\psi\), but no modulus of \(D\psi\).
\end{proof}

\begin{lemma}\label{lem:weak-backward}
Let \(\nu\) be a finite positive Borel measure supported on \(L_0\).
\begin{enumerate}
\item If \(3J\cap L_0=\varnothing\), then \(|\bfR_\Psi^{\alpha,n}\nu(y)|\geq cP^\alpha(J,\nu), \quad y\in J\). \item If \(3J\cap L_0\neq\varnothing\), \(\nu\) is supported outside
\(\gamma J\), and \(2\leq k\leq n\), then \(|(R_{\Psi,k}^{\alpha,n}\nu)(y)| \approx |y^k|\frac{P^\alpha(J,\nu)}{\ell(J)}\). \end{enumerate}
\end{lemma}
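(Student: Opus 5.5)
Both parts come from writing the conformal kernel explicitly for a source point on the line. For $x=(x^1,0)\in L_0$ and $y\in J$, put $a=y^1-x^1$ and $b=y'+\psi(y^1)-\psi(x^1)$. By \eqref{eq:conformal-kernel}, with $p=n+1-\alpha$,
\[
 \bfK_\Psi^{\alpha,n}(y,x)=c_{\alpha,n}\frac{(x^1-y^1,\,-y')}{(a^2+|b|^2)^{p/2}},
\]
since $x'=0$. The numerator uses the straightened coordinates $x-y$, while the denominator is comparable to $|x-y|^p$ by biLipschitz continuity. The plan is to identify, in each case, a component of the vector that has a fixed sign for all $x$ in the support of $\nu$. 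Integrating that component then gives a lower bound with no cancellation, while the upper bounds follow from the size estimates alone.

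For part (1), assume $3J\cap L_0=\varnothing$. By \cref{lem:line-geometry}(1), $|y'|\geq c_\Omega\ell(J)$ and $|y-x|\approx\ell(J)+|x-c_J|$ for all $x\in L_0$ and $y\in J$. The components $2,\dots,n$ of the kernel equal $-c_{\alpha,n}\,y'\,r^{-p}$, and the vector $y'$ does not depend on $x$. Hence
\[
 (\bfR_\Psi^{\alpha,n}\nu(y))'=-c_{\alpha,n}\,y'\int r^{-p}\,d\nu(x),
\]
up to the overall sign convention, and there is no cancellation. Its norm is
\[
 |y'|\int r^{-p}\,d\nu\gtrsim \ell(J)\int(\ell(J)+|x-c_J|)^{-p}\,d\nu(x)=P^\alpha(J,\nu).
\]
This uses $r\approx|x-y|\approx\ell(J)+|x-c_J|$, which holds because $\Psi$ is biLipschitz. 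The full vector has norm at least that of its transverse part, so (1) follows.

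For part (2), assume $3J\cap L_0\neq\varnothing$ and that $\nu$ lives on $L_0\setminus\gamma J$. The $k$-th component for $k\geq2$ is exactly
\[
 R_{\Psi,k}^{\alpha,n}\nu(y)=-c_{\alpha,n}\,y^k\int r^{-p}\,d\nu(x).
\]
Again the factor $y^k$ comes out of the integral and the integrand is positive. It remains to show $\int r^{-p}\,d\nu\approx P^\alpha(J,\nu)/\ell(J)$. By \cref{lem:line-geometry}(2) with $t=1$, $|y^1-x^1|\approx\ell(J)+|x-c_J|$ and $|y'|\leq (C_\Omega/\gamma)|y^1-x^1|$. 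The bound $\|D\psi\|_\infty<\varepsilon_{n,\alpha}$ gives $|b|\leq\delta|a|$ with $\delta<1$, so $r\approx|a|\approx\ell(J)+|x-c_J|$. Therefore
\[
 \int r^{-p}\,d\nu\approx\int(\ell(J)+|x-c_J|)^{-p}\,d\nu=\frac{P^\alpha(J,\nu)}{\ell(J)},
\]
with two-sided constants depending on $n,\alpha,\Omega,\gamma$. This gives the claimed comparison $|(R_{\Psi,k}^{\alpha,n}\nu)(y)|\approx|y^k|P^\alpha(J,\nu)/\ell(J)$, including the trivial case $y^k=0$.

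The only delicate point is the sign bookkeeping. The first variable of the kernel is the observation point $y\in J$, so one must check that the transverse numerator is $x'-y'=-y'$ rather than $b$. It is $-y'$, precisely because the conformal kernel keeps the straightened difference in the numerator. Once this is verified, no Taylor or modulus argument is needed; only $\|D\psi\|_\infty<1$ and the quasicube geometry of \cref{lem:line-geometry} are used. Finiteness of $\nu$, together with its separation from $J$, makes all the integrals absolutely convergent, so truncations play no role here.
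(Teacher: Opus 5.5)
Your proposal is correct and follows the paper's argument: in (1) the paper pairs $\bfR_\Psi^{\alpha,n}\nu(y)$ with the unit vector $\xi(y)=(0,y'/|y'|)$, which is exactly your observation that the transverse part equals $-c_{\alpha,n}\,y'\int |\Psi(y)-\Psi(x)|^{-p}\,d\nu(x)$ with no cancellation. In (2) the paper uses the same identity $(R_{\Psi,k}^{\alpha,n}\nu)(y)=-c_{\alpha,n}\,y^k\int |\Psi(y)-\Psi(x)|^{-p}\,d\nu(x)$ and the comparison of this integral with $P^\alpha(J,\nu)/\ell(J)$. Your detour through $|b|\leq\delta|a|$ is harmless, though the comparison $|\Psi(y)-\Psi(x)|\approx|y-x|$ already follows from the biLipschitz bound alone.
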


\begin{proof}
In the first case, \(y'\neq0\).  Use the point-dependent unit vector \(\xi(y)=\left(0,\frac{y'}{|y'|}\right)\). For \(x\in L_0\), \(\xi(y)\cdot(x-y)=-|y'|\), and
\(|y'|\geq c_\Omega\ell(J)\).  Moreover, \(|\Psi(y)-\Psi(x)|\approx|y-x| \approx_\Omega\ell(J)+|x-c_J|\). Taking the scalar product with \(\xi(y)\) and integrating in \(x\)
gives the first estimate.

For the second estimate, use \eqref{eq:conformal-kernel} and the fact
that \(x^k=0\) on \(L_0\).  Then
\[
 (R_{\Psi,k}^{\alpha,n}\nu)(y)
 =-c_{\alpha,n}y^k
 \int_{L_0\setminus\gamma J}
 \frac{d\nu(x)}{|\Psi(y)-\Psi(x)|^{n+1-\alpha}}.
\]
The integral is positive and comparable to
\(P^\alpha(J,\nu)/\ell(J)\).
\end{proof}

\begin{lemma}\label{lem:backward-hole}
For every admissible family,
\begin{align}
 &\sum_{J\in\mathcal J}
 \left(
 \frac{P^\alpha(J,\one_{I\setminus\gamma J}\omega)}
 {\ell(J)}\right)^2
 \|P_J^\sigma\mathbf x\|_2^2
\leq C\beta
 \bigl(\mathfrak T_{\bfR_\Psi^{\alpha,n}}^{\mathrm{dual}}\bigr)^2
 \omega(I).
 \label{eq:backward-hole}
\end{align}
\end{lemma}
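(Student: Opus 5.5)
The plan is to reverse the roles of the measures relative to \cref{lem:forward-hole}.  Now \(\omega\) is supported on \(L_0\), and the energy is measured by the variance of \(\sigma\) on \(J\).  First I discard every \(J\) with \(\sigma(J)=0\), and I fix a finite subfamily \(\mathcal J_0\subset\mathcal J\).  As in \cref{lem:forward-hole}, I choose one tangent-line truncation \((\delta,R)\) with \(t_{\delta,R}=1\) on \(J\times(I\setminus\gamma J)\) for all \(J\in\mathcal J_0\).  Put \(\nu_J=\one_{I\setminus\gamma J}\omega\).  This is a finite measure supported on \(L_0\setminus\gamma J\), and for \(y\in J\)
\[
 \bfR_\Psi^{\alpha,n,*}\nu_J(y)
 =\bfR_{\Psi,\delta,R}^{\alpha,n,*}(\one_I\omega)(y)
 -\bfR_{\Psi,\delta,R}^{\alpha,n,*}(\one_{\gamma J}\omega)(y).
\]
By antisymmetry of the conformal kernel, the adjoint is minus the same operator with the variables exchanged.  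So the reversal lemmas for \(\bfR_\Psi^{\alpha,n}\nu\) apply verbatim, up to a sign.

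Next I bound each energy term by the oscillation of \(\bfR_\Psi^{\alpha,n,*}\nu_J\) on \(J\) in \(L^2(\sigma)\), splitting into two cases.
\begin{enumerate}
\item If \(3J\cap L_0\neq\varnothing\), I use \cref{lem:variance} to write \(\|P_J^\sigma\mathbf x\|_2^2\) as a double integral of \(|y-z|^2=|y^1-z^1|^2+|y'-z'|^2\).  \Cref{lem:backward-reversal} controls the \(|y^1-z^1|^2\) part by \(|\bfR\nu_J(y)-\bfR\nu_J(z)|^2\).  For the transverse parts \(|y^k-z^k|^2\) with \(k\geq2\), the exact formula in \cref{lem:weak-backward}(2) gives \(R_{\Psi,k}\nu_J(y)=-c\,y^k\Phi(y)\) with \(\Phi\approx P^\alpha(J,\nu_J)/\ell(J)\) positive.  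I would also show that \(\Phi\) varies by a small relative amount on \(J\); this follows from the gradient bound in \cref{lem:C1-energy} together with \(\gamma\) large.  Then \(|y^k-z^k|\,\Phi\lesssim|R_{\Psi,k}\nu_J(y)-R_{\Psi,k}\nu_J(z)|+|z^k|\,|\Phi(y)-\Phi(z)|\).  The last term is at most \(C\ell(J)\frac{|y-z|}{\gamma\ell(J)}\Phi\lesssim\gamma^{-1}|y-z|\Phi\), which can be absorbed into the left side once it is summed over \(k\).  In this case the energy term is therefore at most \(C\int_J|\bfR\nu_J-\mathbb E_J^\sigma\bfR\nu_J|^2d\sigma\).
\item If \(3J\cap L_0=\varnothing\), I use the crude bound \(\|P_J^\sigma\mathbf x\|_2^2\leq C\ell(J)^2\sigma(J)\) together with \cref{lem:weak-backward}(1).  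This gives \(\bigl(P^\alpha(J,\nu_J)/\ell(J)\bigr)^2\|P_J^\sigma\mathbf x\|_2^2\leq C\int_J|\bfR\nu_J|^2d\sigma\).  Here no oscillation is needed, and the lower bound does not require \(\nu_J\) to avoid \(\gamma J\), because \(L_0\) stays away from \(J\).
\end{enumerate}

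In both cases the integrand is bounded by \(2|\bfR^*(\one_I\omega)|^2+2|\bfR^*(\one_{\gamma J}\omega)|^2\); oscillations are at most twice the \(L^2\) norm.  The first piece is handled by dual testing on \(I\) and the overlap bound \(\sum\one_J\leq\beta\one_I\).  The second is handled by dual testing on each \(\gamma J\), followed by \(\sum\omega(\gamma J)\leq\beta\omega(I)\) from \eqref{eq:admissible-family}.  Monotone convergence over \(\mathcal J_0\uparrow\mathcal J\) then finishes the proof.  Everything is uniform in the truncation, and all constants depend only on \(n,\alpha,\Omega,\gamma,\|D\psi\|_\infty\).

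I expect the main obstacle to be the transverse variance in case (1), which requires the strong reversal rather than a weak one.  \Cref{lem:backward-reversal} only controls \(|y^1-z^1|\).  The transverse components need the near-constancy of the positive factor \(\Phi\), and that must be checked with the explicit derivative of \(|\Psi(y)-\Psi(x)|^{-p}\) in \(y\), whose size is \(\lesssim r^{-p-1}\).  If the absorption argument turns out to be delicate, my fallback is to center at \(y^k\) directly.  The minimizing property gives \(\int_J|y^k-m^k|^2d\sigma\leq\int_J|y^k|^2d\sigma\).  By the exact identity this equals \(\Phi^{-2}\int_J|R_{\Psi,k}\nu_J|^2d\sigma\) up to constants, so it is bounded by testing without any oscillation at all.
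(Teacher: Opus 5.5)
Your proposal is correct. Its skeleton is the paper's: a finite subfamily $\mathcal J_0$ with one common tangent-line truncation, and a split according to whether $3J$ meets $L_0$. Off the line you use \cref{lem:weak-backward}(1) with the crude variance bound. On the line you use \cref{lem:backward-reversal} for the first coordinate. You finish with dual testing on $I$ and on each $\gamma J$, bounded overlap, and monotone convergence.

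The only divergence is in the transverse coordinates $k\geq2$ when $3J\cap L_0\neq\varnothing$. There the paper uses exactly your fallback. It centres at the line, $\int_J|y^k-(m_J^\sigma)^k|^2\,d\sigma\leq\int_J|y^k|^2\,d\sigma$, and then applies the identity $R_{\Psi,k}^{\alpha,n}\nu_J(y)=-c\,y^k\Phi(y)$ with $\Phi\approx P^\alpha(J,\nu_J)/\ell(J)$.

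Your primary route also works. You bound $|y^k-z^k|\Phi$ by the oscillation of $R_{\Psi,k}^{\alpha,n}\nu_J$ plus the error $|z^k|\,|\Phi(y)-\Phi(z)|\lesssim\gamma^{-1}|y-z|\,\Phi$, and absorb it after adding the first coordinate. This is valid because the $y$-gradient of $|\Psi(y)-\Psi(x)|^{-p}$ is $O(r^{-p-1})$ with $r\gtrsim\gamma\ell(J)$. The constants in the reversal and comparison estimates are uniform for $\gamma\geq\gamma_0$, so enlarging $\gamma$ once more is harmless. What this route buys is a genuine strong reversal: the full $\sigma$-variance on $J$ is controlled by the $\sigma$-oscillation of $\bfR_\Psi^{\alpha,n}\nu_J$.

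That extra strength is not needed here, so the obstacle you anticipate is not one. In \cref{lem:forward-hole} oscillation was essential, because the side piece $\mathrm S(J^*)$ was controlled only modulo constants via \cref{lem:C1-energy}. In the backward estimate there is no side piece. Both $\bfR_\Psi^{\alpha,n}(\one_I\omega)$ and $\bfR_\Psi^{\alpha,n}(\one_{\gamma J}\omega)$ are bounded in full $L^2(\sigma)$ norm by dual testing. Hence bounding each energy term by $C\int_J|\bfR_\Psi^{\alpha,n}\nu_J|^2\,d\sigma$ suffices, and the simpler centring at $L_0$ gives exactly that.
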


\begin{proof}
Fix a finite subfamily \(\mathcal J_0\subset\mathcal J\).  Terms for
which \(I\setminus\gamma J=\varnothing\) have zero Poisson factor and
may be discarded.  If no term remains, there is nothing to prove.
Otherwise put
\[
d_0=\min_{J\in\mathcal J_0}
 \dist(J,I\setminus\gamma J),
 \qquad
D_0=\max_{J\in\mathcal J_0}
 \sup_{\substack{x\in J\\y\in I\setminus\gamma J}}|x-y|.
\]
Quasicube geometry gives \(d_0\geq c_\Omega(\gamma-1) \min_{J\in\mathcal J_0}\ell(J)>0, \quad D_0\leq\diam I\). Choose \(0<\delta<d_0,\quad R>D_0\). Then \(t_{\delta,R}=1\) on every
\(J\times(I\setminus\gamma J)\), \(J\in\mathcal J_0\).  In the sums
below, \(J\) ranges over \(\mathcal J_0\).  The estimates are uniform
in \(\mathcal J_0,\delta\), and \(R\).
Let \(\mathcal J_{\rm off}=\{J\in\mathcal J_0:3J\cap L_0=\varnothing\}\). For \(J\in\mathcal J_{\rm off}\), put \(\nu_J=\one_{I\setminus\gamma J}\omega\). The first part of \cref{lem:weak-backward} and
\(\|P_J^\sigma\mathbf x\|_2^2\leq C\ell(J)^2\sigma(J)\) give
\begin{align*}
 &\left(
 \frac{P^\alpha(J,\nu_J)}{\ell(J)}\right)^2
 \|P_J^\sigma\mathbf x\|_2^2
 \leq C\int_J|\bfR_\Psi^{\alpha,n}\nu_J|^2\,d\sigma.
\end{align*}
For \(x\in J\), the common truncation gives the pointwise identity
\[
 \bfR_\Psi^{\alpha,n}\nu_J(x)
 =
 \bfR_{\Psi,\delta,R}^{\alpha,n,\mathrm{tl}}
   (\one_I\omega)(x)
 -
 \bfR_{\Psi,\delta,R}^{\alpha,n,\mathrm{tl}}
   (\one_{\gamma J}\omega)(x).
\]
The testing transforms below use this fixed
tangent-line truncation.
Since \(J\subset\gamma J\) and
\(\sum_J\one_{\gamma J}\leq\beta\one_I\), summation gives
\begin{align*}
 \sum_{J\in\mathcal J_{\rm off}}
 \int_J|\bfR_\Psi^{\alpha,n}\nu_J|^2\,d\sigma
 &\leq2\beta\int_I
 |\bfR_\Psi^{\alpha,n}(\one_I\omega)|^2\,d\sigma+2\sum_{J\in\mathcal J_{\rm off}}\int_{\gamma J}
 |\bfR_\Psi^{\alpha,n}(\one_{\gamma J}\omega)|^2\,d\sigma
 \leq C\beta
 \bigl(\mathfrak T_{\bfR_\Psi^{\alpha,n}}^{\mathrm{dual}}\bigr)^2
 \omega(I).
\end{align*}
Here the last line uses dual testing on \(I\) and on every \(\gamma J\),
together with
\(\sum_{J\in\mathcal J_{\rm off}}\omega(\gamma J)
\leq\beta\omega(I)\).

For \(3J\cap L_0\neq\varnothing\), put \(\nu_J=\one_{I\setminus\gamma J}\omega, \quad a_J=\frac{P^\alpha(J,\nu_J)}{\ell(J)}\). If \(\sigma(J)=0\), the corresponding energy term is zero.  Suppose
that \(\sigma(J)>0\).  For the first coordinate, the variance identity
and \cref{lem:backward-reversal} give
\begin{align*}
a_J^2\int_J
   |y^1-(m_J^\sigma)^1|^2\,d\sigma(y)
 &=
 \frac{a_J^2}{2\sigma(J)}
 \int_J\int_J|y^1-z^1|^2\,d\sigma(y)\,d\sigma(z)\\
 &\leq
 \frac C{2\sigma(J)}
 \int_J\int_J
 |\bfR_\Psi^{\alpha,n}\nu_J(y)
  -\bfR_\Psi^{\alpha,n}\nu_J(z)|^2
 \,d\sigma(y)\,d\sigma(z)\\
 &=
C\int_J
 |\bfR_\Psi^{\alpha,n}\nu_J
  -\mathbb E_J^\sigma
   \bfR_\Psi^{\alpha,n}\nu_J|^2\,d\sigma\\
 &\leq
C\int_J|\bfR_\Psi^{\alpha,n}\nu_J|^2\,d\sigma.
\end{align*}
For \(2\leq k\leq n\), the minimizing property of the mean and the
second part of \cref{lem:weak-backward} give
\begin{align*}
a_J^2\int_J|y^k-(m_J^\sigma)^k|^2\,d\sigma(y)
 &\leq a_J^2\int_J|y^k|^2\,d\sigma(y)
 \leq C\int_J
 |R_{\Psi,k}^{\alpha,n}\nu_J(y)|^2\,d\sigma(y).
\end{align*}
Adding the coordinate estimates gives
\[
a_J^2\|P_J^\sigma\mathbf x\|_2^2
 \leq C\int_J|\bfR_\Psi^{\alpha,n}\nu_J|^2\,d\sigma.
\]
The same pointwise identity gives
\[
 \bfR_\Psi^{\alpha,n}\nu_J(x)
 =
 \bfR_{\Psi,\delta,R}^{\alpha,n,\mathrm{tl}}
   (\one_I\omega)(x)
 -
 \bfR_{\Psi,\delta,R}^{\alpha,n,\mathrm{tl}}
   (\one_{\gamma J}\omega)(x),
 \qquad x\in J.
\]
The first term is summed using bounded overlap and dual testing on \(I\).
For the second, use dual testing on \(\gamma J\) and
\[
 \sum_J\omega(\gamma J)\leq\beta\omega(I).
\]
The estimates are uniform in \(\mathcal J_0\) and in the common
truncation.  Letting \(\mathcal J_0\) increase to \(\mathcal J\) and
using monotone convergence for the nonnegative energy sums gives
\eqref{eq:backward-hole}.
\end{proof}

\begin{proposition}
\label{prop:C1-energy-reversal}
Let \(\psi\in C^1(\R;\R^{n-1})\) satisfy
\eqref{eq:graph-map}.  Let \(\omega\) be supported on \(L_0\).  For every
globally biLipschitz quasicube map \(\Omega\),
\begin{align}
 \cE_{\alpha,\Omega}^{\mathrm{strong}}(\sigma,\omega)
 &\leq C\left(
 \mathfrak T_{\bfR_\Psi^{\alpha,n},\Omega}
 +\sqrt{\mathfrak A_{2,\Omega}^\alpha}\right),
 \label{eq:forward-strong-line}\\
 \cE_{\alpha,\Omega}^{\mathrm{strong,dual}}(\sigma,\omega)
 &\leq C\left(
 \mathfrak T_{\bfR_\Psi^{\alpha,n},\Omega}^{\mathrm{dual}}
 +\sqrt{\mathfrak A_{2,\Omega}^\alpha}\right).
 \label{eq:backward-strong-line}
\end{align}
Up to constants depending on the fixed quasicube map, the two quantities
on the left are the strong energy constants in
\cite[Definition~5]{EnergyCommon}, since
\(|J|^{1/n}\approx_\Omega\ell(J)\).  The estimates include both lines of
\eqref{eq:strong-energy-definition}, and are uniform in the quasigrid and
in the refinement index \(\ell\).  No modulus of \(D\psi\) occurs.
\end{proposition}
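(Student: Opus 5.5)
The proof will assemble the lemmas of this section; no new estimate is needed. For the forward inequality \eqref{eq:forward-strong-line}, fix a quasigrid \(\Omega\mathcal D\) and a term of \eqref{eq:strong-energy-definition}. For the first line, \(I\in\Omega\mathcal D\) is subdivided as \(I=\dot\bigcup_s I_s\). For the second line, \(I\in A\Omega\mathcal D\) is an alternate top and \(\ell\ge0\) is fixed. In either case let \(\mathcal J\) be the associated family of deep quasicubes. By \cref{lem:admissible-families}, \(\mathcal J\) is admissible in \(I\) with overlap constant \(\beta\), independent of the grid and of \(\ell\). This uses the choice of \(r\) in \eqref{eq:gamma-goodness}.

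For each \(J\in\mathcal J\), split \(\one_I\sigma=\one_{\gamma J}\sigma+\one_{I\setminus\gamma J}\sigma\). Linearity of \(P^\alpha\) and \((a+b)^2\le2a^2+2b^2\) then divide the energy sum into two pieces. The piece with the hole is bounded by \cref{lem:forward-hole} by \(C\beta[\mathfrak T_{\bfR_\Psi^{\alpha,n}}^2+\mathcal A_2^\alpha]\sigma(I)\). The filled piece is bounded by \cref{lem:fill-hole} by \(C\beta A_2^{\alpha,*,\mathrm{punct}}\sigma(I)\). Dividing by \(\sigma(I)\) and taking suprema over grids, subpartitions, alternate tops and \(\ell\) proves the forward bound. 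The normalization \(|J|^{1/n}\) is replaced by \(\ell(J)\) at a cost depending only on \(\Omega\).

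The dual inequality \eqref{eq:backward-strong-line} is proved in the same way, with \(\sigma\) and \(\omega\) interchanged. Here \(\omega\) is the line measure and it is the measure inside the Poisson integral. The hole piece \(P^\alpha(J,\one_{I\setminus\gamma J}\omega)\) is bounded by \cref{lem:backward-hole} by \(C\beta(\mathfrak T^{\mathrm{dual}}_{\bfR_\Psi^{\alpha,n}})^2\omega(I)\). The filled piece is bounded by the dual estimate in \cref{lem:fill-hole} by \(C\beta A_2^{\alpha,\mathrm{punct}}\omega(I)\). The admissibility of the families is symmetric in the measures, so \cref{lem:admissible-families} applies unchanged.

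One point needs checking. \cref{lem:backward-hole} treats the measure \(\one_{I\setminus\gamma J}\omega\), which is supported on \(L_0\) since \(\omega\) is. It requires \cref{lem:backward-reversal} and \cref{lem:weak-backward} with \(\nu\) supported on \(L_0\setminus\gamma J\), and these are exactly the hypotheses used there.

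The step I expect to be the main obstacle is uniformity. The constants \(A_0\), \(\gamma\) and \(r\) must be fixed once, depending only on \(n\), \(\alpha\) and \(\Omega\), and they must work for every slope with \(\|D\psi\|_\infty<\varepsilon_{n,\alpha}\). They must also serve \cref{lem:end-reversal,lem:backward-reversal} simultaneously while keeping \eqref{eq:gamma-goodness}. I would handle this by choosing the constants in the order stated before \cref{lem:admissible-families}: first \(A_0\), then \(\gamma\) using the strict margin in \eqref{eq:graph-map}, then \(r\).

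It also remains to note that all constants used depend only on \(\|D\psi\|_\infty\) and never on a modulus of \(D\psi\). This holds by inspection of \cref{lem:C1-energy,lem:end-reversal,lem:backward-reversal}, which is the claimed absence of any modulus of \(D\psi\).
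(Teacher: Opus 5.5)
Your proposal is correct and follows the paper's proof: reduce to admissible families via \cref{lem:admissible-families}, split \(P^\alpha(J,\one_I\cdot)\) at \(\gamma J\), and use \cref{lem:forward-hole,lem:fill-hole} for the forward energy and \cref{lem:backward-hole} with the dual form of \cref{lem:fill-hole} for the dual energy. This is done separately for the subpartition line and for the alternate-top line with each refinement index \(\ell\), as in the paper.
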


\begin{proof}
By \cref{lem:admissible-families}, it is enough to estimate every
admissible family.  Split
\[
P^\alpha(J,\one_I\sigma)
 \leq P^\alpha(J,\one_{I\setminus\gamma J}\sigma)
 +P^\alpha(J,\one_{\gamma J}\sigma).
\]
\Cref{lem:forward-hole,lem:fill-hole} give
\eqref{eq:forward-strong-line}.  The same split with the measures
interchanged, followed by
\cref{lem:backward-hole} and the dual form of
\cref{lem:fill-hole}, gives \eqref{eq:backward-strong-line}.  This
proves the first deep-subpartition expression and, separately, the
alternate/refined expression for every \(\ell\).
\end{proof}

\section{Localization to one arc}\label{sec:6}

We continue to write \(L_0=\R e_1\) for the straight coordinate line.
The original curved support is denoted by \(\Gamma\).
All cubes in this section are half-open.  Let \(\mathcal Q^n\) be the
family of all axis-parallel half-open cubes.  For a biLipschitz
map \(\Omega\), put \(\Omega\mathcal Q^n=\{\Omega Q:Q\in\mathcal Q^n\}\). Testing is taken over the full family \(\Omega\mathcal Q^n\).
If \(P=\Omega C\), all dilations in this section are quasicube
dilations: \(3P:=\Omega(3C)\).
\subsection{A transverse family with connected intersections}

Put \(v=n^{-1/2}(1,\ldots,1)\), and choose a rotation \(R\) with
\(R^{-1}e_1=v\).  For the small graph maps used below, the map \(\Omega=\Psi^{-1}\circ R\) satisfies
\begin{equation}\label{eq:coordinate-monotonicity}
c_n\leq
 \frac d{dt}\bigl[\Omega^{-1}(te_1)\bigr]_k
 \leq C_n,
 \qquad 1\leq k\leq n.
\end{equation}
Indeed,
\(\Omega^{-1}(te_1)=R^{-1}\Psi(te_1)\), whose derivative is uniformly
close to \(R^{-1}e_1=v\).

\begin{lemma}\label{lem:axis-sections}
Assume \(n\geq2\) and \eqref{eq:coordinate-monotonicity}.
\begin{enumerate}
\item For every \(Q\in\Omega\mathcal Q^n\), \(Q\cap L_0\) is empty or a
half-open interval.
\item If \(S=[a,b)e_1\subset L_0\), there is
\(Q_S\in\Omega\mathcal Q^n\) such that \(Q_S\cap L_0=S\), and
\begin{equation}\label{eq:auxiliary-size}
 \ell(Q_S)\leq C_\Omega(b-a),\qquad
Q_S\cup3Q_S\subset
 \{x:\dist(x,S)\leq C_\Omega(b-a)\}.
\end{equation}
\end{enumerate}
\end{lemma}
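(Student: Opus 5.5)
The whole argument should take place in the preimage coordinates $\Omega^{-1}$, where quasicubes become ordinary half-open cubes and $L_0$ becomes the curve
\[
\gamma(t):=\Omega^{-1}(te_1)=R^{-1}\Psi(te_1).
\]
By \eqref{eq:coordinate-monotonicity}, every coordinate $\gamma_k(t)$ is strictly increasing in $t$, with derivative between $c_n$ and $C_n$.

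For part (1), write $Q=\Omega C$ with $C=\prod_k[a_k,a_k+\ell)$. Then
\[
Q\cap L_0=\{te_1:\gamma(t)\in C\}.
\]
The condition $\gamma(t)\in C$ is equivalent to $a_k\le\gamma_k(t)<a_k+\ell$ for each $k$. Since $\gamma_k$ is a continuous strictly increasing bijection onto its image, each condition defines a half-open interval $[s_k,t_k)$ in $t$, or is empty or unbounded in the appropriate way. Here $s_k=\gamma_k^{-1}(a_k)$, with the convention that $s_k=-\infty$ if $a_k$ lies below the range of $\gamma_k$. The set $\{t:\gamma(t)\in C\}$ is the intersection over $k$, namely $[\max_k s_k,\min_k t_k)$, which is empty or a half-open interval. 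The ranges are in fact all of $\R$, because $\gamma_k'\ge c_n>0$ on $\R$. Boundedness follows because $\gamma_k(t)-\gamma_k(s)\ge c_n(t-s)$, so the interval has length at most $\ell/c_n$.

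For part (2), given $S=[a,b)e_1$, I would choose $C$ so that the lower-left corner constraints are active at $a$ and the upper constraints are active at $b$. Put $p=\gamma(a)$ and $q=\gamma(b)$. By monotonicity, $q-p$ has all coordinates positive and comparable to $b-a$, namely in $[c_n(b-a),C_n(b-a)]$. Take
\[
C=\prod_k[\,p_k,\;p_k+\ell\,)\qquad\text{with}\qquad \ell:=\min_k(q_k-p_k).
\]
This is not quite right, since then the exit time is $\min_k\gamma_k^{-1}(p_k+\ell)$, which is at most $b$ with equality for the minimizing $k$. Entry happens exactly at $t=a$, because all lower constraints are equalities at $a$. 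So $Q\cap L_0=[a,b)e_1$ exactly, since the minimal exit time equals $b$: it is attained at the index $k_0$ realizing $\ell$, and the other indices exit later. Thus $Q_S=\Omega C$ works with $\ell(Q_S)=\ell\le C_n(b-a)$.

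For \eqref{eq:auxiliary-size}, $3C$ has diameter at most $3\sqrt n\,\ell$ and contains $p=\gamma(a)\in\Omega^{-1}S$. Hence every point of $3C$ lies within $C_n(b-a)$ of $\Omega^{-1}S$. Applying the biLipschitz map $\Omega$ gives
\[
\dist(x,S)\le C_\Omega(b-a)\qquad\text{for all } x\in Q_S\cup3Q_S.
\]

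The only delicate point is the exact matching of the half-open endpoints in part (2). The cube must have its lower face through $\gamma(a)$ in every coordinate, and its upper face through $\gamma(b)$ in at least one coordinate, while staying strictly above $\gamma(b)$ in the others. The construction above handles this because each $\gamma_k$ is strictly monotone, so the two conventions ($\ge$ at the lower face, $<$ at the upper face) reproduce $[a,b)$.
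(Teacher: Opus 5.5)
Your part (1) and your diameter argument for \eqref{eq:auxiliary-size} are fine and essentially match the paper's. The cube you construct in part (2), however, does not satisfy $Q_S\cap L_0=S$.

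With $\ell=\min_k(q_k-p_k)$, take any index $k$ with $q_k-p_k>\ell$. Then $\gamma_k(a)=p_k<p_k+\ell<q_k=\gamma_k(b)$. So there is $t_k\in(a,b)$ with $\gamma_k(t_k)=p_k+\ell$, and $\gamma(t)\notin C$ for every $t\in[t_k,b)$. Hence $Q_S\cap L_0=[a,\min_k t_k)e_1$. This is a proper subinterval of $S$ unless all the increments $q_k-p_k$ happen to coincide.

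Your sentence ``the other indices exit later'' is backwards. A coordinate that increases by more than $\ell$ over $[a,b]$ reaches its upper face $p_k+\ell$ before time $b$. Your own preceding observation, that every exit time $\gamma_k^{-1}(p_k+\ell)$ is at most $b$, already shows that the minimum drops below $b$ as soon as one of them does.

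The requirement you state in your last paragraph is the right one. The upper face should pass through $\gamma(b)$ in one coordinate and lie at or beyond $\gamma(b)$ in all the others, i.e.\ $p_k+\ell\ge q_k$ for every $k$, with equality for some $k_1$. That forces $\ell=\max_k(q_k-p_k)$, not the minimum. With this choice:
\begin{itemize}
\item for $a\le t<b$ you get $p_k\le\gamma_k(t)<q_k\le p_k+\ell$ for all $k$;
\item for $t<a$ every coordinate is below $p_k$;
\item for $t\ge b$ one has $\gamma_{k_1}(t)\ge q_{k_1}=p_{k_1}+\ell$.
\end{itemize}
So $Q_S\cap L_0=S$, still with $\ell\le C_n(b-a)$, and the rest of your proof goes through.

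For comparison, the paper takes $\ell=8C_n(b-a)$ and puts the first lower face through $\zeta(a)$ and the second upper face through $\zeta(b)$. It centres the remaining coordinates, so the path stays at distance at least $7\ell/16$ from every other face. That margin is not part of the statement, but the proof of \cref{lem:local-tripled} quotes it when sorting the neighbours of $P$ in $3P$. With your corner cube that sorting would have to be redone. It still works: a neighbour with some $m_k=-1$ lies in the endpoint cone at $ae_1$, and one with some $m_k=+1$ lies in the cone at $be_1$.
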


\begin{proof}
Write \(\zeta(t)=\Omega^{-1}(te_1)\).  Every coordinate
\(\zeta_k\) is strictly increasing.  If
\(C=\prod_k[d_k,d_k+\ell)\), then
\[
 \{t:\zeta(t)\in C\}
 =\bigcap_k\{t:d_k\leq\zeta_k(t)<d_k+\ell\},
\]
an intersection of half-open intervals.  This proves the first part.

For the second, put \(h=b-a\), \(\ell=8C_nh\), and define
\[
C_S=\prod_{k=1}^n[d_k,d_k+\ell)
\]
by
\[
d_1=\zeta_1(a),\qquad
d_2=\zeta_2(b)-\ell,\qquad
d_k=\frac{\zeta_k(a)+\zeta_k(b)-\ell}{2}
 \quad(3\leq k\leq n).
\]
For \(a\leq t<b\),
\[
 0\leq\zeta_1(t)-d_1\leq C_nh<\ell,
 \qquad
 0<d_2+\ell-\zeta_2(t)\leq\ell,
\]
and \(\zeta_2(t)>d_2\).  For \(3\leq k\leq n\), \(\dist\bigl(\zeta_k([a,b]), \{d_k,d_k+\ell\}\bigr) \geq\frac{\ell-C_nh}{2} \geq\frac{7\ell}{16}\). Thus the path \(\zeta([a,b])\) meets only the first lower face at
\(\zeta(a)\) and the second upper face at \(\zeta(b)\).
Moreover, \(t<a\) is excluded by the first coordinate, while
\(t\geq b\) is excluded by the second coordinate.  Hence
\(\zeta(t)\in C_S\) exactly when \(a\leq t<b\).  Take
\(Q_S=\Omega C_S\).
The derivative bounds and the biLipschitz bounds for \(\Omega\) give
\eqref{eq:auxiliary-size}.
\end{proof}

\subsection{Nested arcs}

\begin{lemma}\label{lem:nested-arcs}
Let \(\Gamma\) be a compact regular \(C^1\) curve.  Assume that it is
simple: the parametrization is one-to-one on the interval for an open
curve and on the parameter circle for a closed curve.  There is
a finite family of graph charts and global graph extensions with the
following property.  For every \(A>1\), there is a finite disjoint Borel
decomposition
\[
 \Gamma=\dot\bigcup_{i=1}^N\Gamma_i
\]
and nested graph arcs \(\Gamma_i\Subset\widehat{\Gamma}_i \Subset\Gamma_i^*\) with the following properties.  Let \(\mathcal U_i\) be the rigid motion
for the \(i\)-th arc.  In every formula concerning this fixed chart, we
relabel the rigid images
\(\mathcal U_i\Gamma\),
\(\mathcal U_i\Gamma_i\),
\(\mathcal U_i\widehat{\Gamma}_i\), and
\(\mathcal U_i\Gamma_i^*\) as
\(\Gamma\), \(\Gamma_i\),
\(\widehat{\Gamma}_i\), and \(\Gamma_i^*\), respectively.
In these rigid coordinates, one of the graph maps in the fixed family has
the form \(\Psi_i(x_1,x')=(x_1,x'+\psi_i(x_1))\) and a half-open interval \(I_i\subset L_0\) such that \(\Psi_i^{-1}(\Gamma_i)\subset I_i\). Except for the last arc of an open curve, equality holds.  For that last
arc, \(I_i\) includes an additional terminal segment on which every
pulled-back measure supported on \(\Gamma\) has zero mass.

If \(\widetilde{\Gamma}^{(i)}=\Psi_i^{-1}(\Gamma)\) then
\begin{equation}\label{eq:tube-localization}
 \{x:\dist(x,I_i)\leq A|I_i|\}
 \cap\widetilde{\Gamma}^{(i)}
 \subset\Psi_i^{-1}(\widehat{\Gamma}_i)\subset L_0.
\end{equation}
The fixed graph slopes can be made smaller than any prescribed positive
constant.  In the \(C^{1,\mathrm{Dini}}\) case, the fixed maps can be
chosen as in \cref{lem:graph-extension}.  Thus their biLipschitz
constants and the constants in \cref{lem:axis-sections} are fixed before
\(A\).  Only the final subdivision into the arcs \(\Gamma_i\) depends
on \(A\).
\end{lemma}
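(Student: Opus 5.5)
The plan is to build the charts first, with $A$ absent, and then subdivide the parameter interval finely enough that the tube condition \eqref{eq:tube-localization} holds. Parametrize $\Gamma$ by arc length $\Phi$ on $[a,b]$, or on the parameter circle in the closed case. Uniform continuity of $\Phi'$ gives $\delta_0>0$ with $|\Phi'(s)-\Phi'(t)|\leq\varepsilon_0/4$ whenever $|s-t|\leq 4\delta_0$. Cover the parameter domain by finitely many parameter intervals $J_m^*$ of length $4\delta_0$, centred at points $t_m$, with nested concentric $\widehat J_m$ of length $2\delta_0$ whose interiors still cover.

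For each $m$, rotate so that $\Phi'(t_m)=e_1$ and translate so that $\Phi(t_m)=0$. On $J_m^*$ we then have $\Phi_1'\geq1-\varepsilon_0/4$, so \cref{lem:graph-modulus} writes $\Phi(J_m^*)$ as a graph $(u,g_m(u))$ over an interval $I_m^*$. This $g_m$ has slope at most $C\varepsilon_0$ and a Dini modulus in the $C^{1,\mathrm{Dini}}$ case. The image of $\widehat J_m$ lies over an interval $\widehat I_m$ with collar $\kappa\approx1/4$ inside $I_m^*$. \Cref{lem:graph-extension} then gives $\psi_m$ with small global slope, so $\Psi_m$ is globally biLipschitz with constant close to $1$. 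All of these choices are fixed before $A$ appears.

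Next I choose $\rho>0$, using compactness and injectivity: if $|s-t|\geq\delta_0$ in the (cyclic) parameter distance, then $|\Phi(s)-\Phi(t)|\geq\rho$. This is where simplicity is used. Given $A$, subdivide the parameter domain into consecutive half-open intervals $[s_{i-1},s_i)$ of length $h\leq\min\{\delta_0,\rho\}/(10(1+A)L)$, where $L$ bounds all the biLipschitz constants. In the open case the last piece is closed at $b$. Assign each piece to a chart $m(i)$ with $[s_{i-1},s_i]\subset\widehat J_m$ and distance at least $\delta_0/2$ from $\partial\widehat J_m$, which is possible by a Lebesgue-number argument. Then set $\Gamma_i=\Phi([s_{i-1},s_i))$, $\widehat\Gamma_i=\Phi(\widehat J_{m(i)})$ and $\Gamma_i^*=\Phi(J_{m(i)}^*)$. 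Because $\Phi_1$ is increasing on the chart, $\Psi_i^{-1}(\Gamma_i)$ is the half-open segment $[\Phi_1(s_{i-1}),\Phi_1(s_i))e_1=:I_i$, so equality holds. For the terminal closed arc I enlarge $I_i$ slightly past $\Phi_1(b)$. The added segment lies beyond the endpoint and off $\Psi_i^{-1}(\Gamma)$ near $I_i$, which means it carries no pulled-back mass. The half-open convention makes the decomposition disjoint.

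For \eqref{eq:tube-localization}, take a point $X\in\Gamma$ with $\dist(\Psi_i^{-1}X,I_i)\leq A|I_i|$. Since $|I_i|\leq h$, the biLipschitz bounds give $|X-\Phi(s_{i-1})|\leq L(1+A)h+h<\rho$. By the choice of $\rho$, $X=\Phi(t)$ with $|t-s_{i-1}|<\delta_0$. Hence $t\in\widehat J_{m(i)}$ by the margin $\delta_0/2$, possibly after shrinking $h$ and adjusting the margins. It follows that $X\in\widehat\Gamma_i$ and $\Psi_i^{-1}X\in L_0$.

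The step I expect to be the main obstacle is this last uniform separation. One must check that points of $\Gamma$ which are close in space but far in parameter cannot enter the tube, and the simplicity hypothesis is essential there. The closed-curve case needs the cyclic distance, with $\Phi'(a)=\Phi'(b)$ ensuring that a chart spans the seam. A further point of care is that the terminal padding segment must not create new intersections with $\widetilde\Gamma^{(i)}$, and it should be chosen shorter than the separation scale so that it does not.
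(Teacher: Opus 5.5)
Your construction follows the paper's proof in outline. The common steps are a finite chart family with collars fixed before $A$, the extension from \cref{lem:graph-extension}, and a separation constant from compactness and simplicity; your $\rho$ plays the role of the paper's $d_0=\min_\nu\dist(\Phi(\overline V_\nu),\Phi(M\setminus\widehat V_\nu))$. The remaining steps also match: an $A$-dependent mesh assigned to charts by a Lebesgue number, a padded terminal interval, and the biLipschitz tube estimate. Two steps do not work as written, but both are small repairs rather than missing ideas.

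\emph{Margins.} Your choice of $\rho$ only gives $|t-s_{i-1}|<\delta_0$. That does not place $t$ in $\widehat J_{m(i)}$ when the piece is only $\delta_0/2$ from $\partial\widehat J_{m(i)}$, and shrinking $h$ alone does not change this conclusion. Instead, define $\rho$ through the parameter separation $\delta_0/4$; it is still independent of $A$. Also space the centres $t_m$ less than $\delta_0$ apart, so that the concentric cores of length $\delta_0$, not merely the $\widehat J_m$, cover the parameter domain. These cores are the paper's $V_\nu$ in $V_\nu\Subset\widehat V_\nu\Subset V_\nu^*$, and the Lebesgue-number assignment with margin $\delta_0/2$ needs them.

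\emph{Endpoints.} For an open curve, the intervals $J_m^*$ centred near $a$ or $b$ leave $[a,b]$, where $\Phi$ is not defined. If you intersect them with $[a,b]$, then $\widehat I_m$ and $I_m^*$ share the endpoint $\Phi_1(b)$, and the collar hypothesis \eqref{eq:fixed-collar} fails. Then \cref{lem:graph-extension} cannot give a $\psi_m$ equal to $g_m$ on an interval containing $\Phi_1(b)$, so the last arc is not straightened. As in the paper, first extend $\Phi$ beyond $a$ and $b$ by its tangent lines. This keeps $|\Phi'|=1$ and does not enlarge the modulus of $\Phi'$. The extended graph is then the graph over the padding segment $G$. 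To see $\Psi_i(G)\cap\Gamma=\varnothing$, take $L|G|<\rho$. Every point of $\Gamma$ within $\rho$ of $\Phi(b)$ is $\Phi(t)$ with $t\in\widehat J_{m(i)}$ and $t\leq b$. Hence its first chart coordinate is at most $\Phi_1(b)$, while every point of $\Psi_i(G)$ has first coordinate larger than $\Phi_1(b)$.
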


\begin{proof}
Let \(M=[a,b]\) for an open curve.  For a closed curve, let \(M=\R/(b-a)\mathbb Z\) and regard \(\Phi\) as a one-to-one embedding of the parameter circle
\(M\).  We use the parameter distance on \(M\).

For every \(t_0\in M\), choose a rigid motion \(\mathcal U_{t_0}\) that
sends \(\Phi(t_0)\) to the origin and \(\Phi'(t_0)\) to \(e_1\).  After
shrinking the parameter
neighborhood, write \(\mathcal U_{t_0}\Phi(t) =\bigl(u_{t_0}(t),g_{t_0}(u_{t_0}(t))\bigr)\). The neighborhoods can be chosen so that, with one constant \(c_0>0\),
\begin{equation}\label{eq:parameter-coordinate-comparison}
c_0\leq u_{t_0}'(t)\leq1.
\end{equation}
The slopes of the functions \(g_{t_0}\) can be made as small as desired.
For two parameters \(s,t\) in the same neighborhood,
\begin{equation}\label{eq:parameter-coordinate-distance}
c_0d_M(s,t)
 \leq |u_{t_0}(s)-u_{t_0}(t)|
 \leq d_M(s,t).
\end{equation}

Choose finitely many triples \(V_\nu\Subset\widehat V_\nu\Subset V_\nu^*, \quad 1\leq\nu\leq N_0\), such that the \(V_\nu\) cover \(M\) and every \(V_\nu^*\) lies in one
of the graph neighborhoods above.  The intervals may be chosen with a
fixed relative collar.  By
\eqref{eq:parameter-coordinate-comparison}, the corresponding
\(u_\nu\)-intervals have a fixed relative collar as well.  Our choice of
rigid motion gives \(g_\nu(0)=0\), after relabeling the graph coordinate
so that \(u_\nu(t_0)=0\).  At an endpoint of an open curve, extend the
local graph by its tangent affine function to a slightly larger
\(u_\nu\)-interval.  This is a \(C^1\) extension.  If the original
derivative has a Dini modulus, then the derivative of this one-sided
extension has the same Dini integral, up to an absolute constant.

Extend every \(g_\nu\) by \cref{lem:graph-extension}, and denote the
resulting graph shear by \(\Psi_\nu\).  Put \(K=\max_{1\leq\nu\leq N_0} \max\{\Lip(\Psi_\nu),\Lip(\Psi_\nu^{-1})\}\). For each \(\nu\), let \(R_\nu\) be the diagonal rotation used in
\eqref{eq:coordinate-monotonicity}, and put \(\Omega_\nu=\Psi_\nu^{-1}\circ R_\nu\). The rotations are isometries.  Hence the biLipschitz constants of the
finitely many maps \(\Omega_\nu\) are bounded in terms of \(K\).  Define
\begin{equation}\label{eq:axis-family-constant}
C_{\rm ax}=\max_{1\leq\nu\leq N_0}C_{\Omega_\nu},
\end{equation}
where \(C_{\Omega_\nu}\) is the constant in
\eqref{eq:auxiliary-size}.  Thus \(K\) and \(C_{\rm ax}\) have been fixed
without using \(A\).

The rigid motions preserve distance.  Simplicity and compactness give \(d_\nu:=\dist\bigl(\Phi(\overline V_\nu), \Phi(M\setminus\widehat V_\nu)\bigr)>0\). Thus \(d_0:=\min_{1\leq\nu\leq N_0}d_\nu>0\).
Let \(\lambda>0\) be a Lebesgue number for the cover
\(\{V_\nu\}_{\nu=1}^{N_0}\).  The numbers \(d_0\) and \(\lambda\) also
depend only on the finite chart family.  Fix the prescribed number
\(A>1\), and choose \(0<h<\min\left\{\lambda,\frac{d_0}{4AK}\right\}\). Partition \(M\) into consecutive arcs \(J_i\) of parameter length at
most \(h\).  In the closed case these are cyclic half-open arcs.  In the
open case use half-open arcs, except that the last arc contains \(b\).
For each \(J_i\), choose \(\nu(i)\) such that
\(\overline J_i\subset V_{\nu(i)}\), and put
\[
 \Gamma_i=\Phi(J_i),\qquad
 \widehat{\Gamma}_i=\Phi(\widehat V_{\nu(i)}),\qquad
 \Gamma_i^*=\Phi(V_{\nu(i)}^*).
\]
Work in the rigid coordinates \(\mathcal U_{\nu(i)}\), and write
\(\Psi_i=\Psi_{\nu(i)}\).

If \(J_i\) is not the last arc of an open curve, define \(I_i=u_{\nu(i)}(J_i)e_1\). For the last open arc \(J_i=[t_{N-1},b]\), put
\(\beta_i=u_{\nu(i)}(b)\), choose \(0<\delta_i\leq \min\left\{h,\frac{d_0}{4K}\right\}\), and set \(I_i=[u_{\nu(i)}(t_{N-1}),\beta_i+\delta_i)e_1\). The added terminal segment is \(G_i=(\beta_i,\beta_i+\delta_i)e_1\).  No point of the local curve has first coordinate
larger than \(\beta_i\).  Every point of the remaining curve is at least
\(d_0\) from
\(\mathcal U_{\nu(i)}\Phi(\overline V_{\nu(i)})\), while every point of
\(\Psi_i(G_i)\) is within \(K\delta_i\) of
\(\mathcal U_{\nu(i)}\Phi(b)\).  Hence
\begin{equation}\label{eq:ghost-zero}
 \Psi_i(G_i)\cap\mathcal U_{\nu(i)}\Gamma=\varnothing.
\end{equation}
Moreover,
\[
 \dist\bigl(\Psi_i(G_i),
 \mathcal U_{\nu(i)}\Phi(M\setminus\widehat V_{\nu(i)})\bigr)
 \geq d_0-K\delta_i\geq\frac{d_0}{2}.
\]

For every \(i\), including the last open arc, we therefore have \(\dist\left(\Psi_i(I_i), \mathcal U_{\nu(i)}\Phi(M\setminus\widehat V_{\nu(i)})\right) \geq\frac{d_0}{2}\). Equation \eqref{eq:parameter-coordinate-distance} and the definition of
\(G_i\) give \(|I_i|\leq2h\).
Let \(x\in\widetilde{\Gamma}^{(i)} \setminus\Psi_i^{-1}(\widehat{\Gamma}_i)\). There is \(t\in M\setminus\widehat V_{\nu(i)}\) such that
\(\Psi_i(x)=\mathcal U_{\nu(i)}\Phi(t)\).  Hence
\begin{align*}
 \dist(x,I_i)
 &\geq K^{-1}
 \dist\bigl(\Psi_i(x),\Psi_i(I_i)\bigr)\geq\frac{d_0}{2K}
 >2Ah
 \geq A|I_i|.
\end{align*}
This proves the first inclusion in
\eqref{eq:tube-localization}.  On \(\widehat V_{\nu(i)}\), the extended
graph agrees with the original graph.  Therefore \(\Psi_i^{-1}(\widehat{\Gamma}_i) =u_{\nu(i)}(\widehat V_{\nu(i)})e_1\subset L_0\).
For a closed curve, each partition point belongs to the cyclic interval
that starts there and not to the interval that ends there.  For an open
curve, the first endpoint belongs to the first interval and the last
endpoint belongs to the last interval.  Equation \eqref{eq:ghost-zero}
shows that \(G_i\) is disjoint from the pulled-back support.

For each arc \(J_i\) in the final partition, set
\(R_i=R_{\nu(i)}\) and
\(\Omega_i=\Omega_{\nu(i)}=\Psi_i^{-1}\circ R_i\).  If the
prescribed number satisfies \(A>2C_{\rm ax}\), then the auxiliary quasicubes lie in the tube in
\eqref{eq:tube-localization}.  Indeed, if \(S\subset I_i\) and
\(P_S=Q_S\) is the quasicube from \cref{lem:axis-sections}, then
\begin{align*}
 3P_S
 &\subset\{x:\dist(x,S)\leq C_{\Omega_i}|S|\}\\
 &\subset\{x:\dist(x,I_i)\leq C_{\rm ax}|I_i|\}
 \subset\{x:\dist(x,I_i)\leq A|I_i|\}.
\end{align*}
Consequently, \(3P_S\cap\widetilde{\Gamma}^{(i)} \subset\Psi_i^{-1}(\widehat{\Gamma}_i)\subset L_0\). This is the form used in \cref{prop:localized-testing}.
\end{proof}

\subsection{Tripled testing and restriction}

\begin{lemma}\label{lem:Hardy}
Let \(\lambda,\tau\) be locally finite positive Borel measures on
\([0,\infty)\), and let
\(s>0\).  Assume
\begin{equation}\label{eq:Hardy-no-common-zero}
 \lambda(\{0\})\tau(\{0\})=0.
\end{equation}
At \(r=0\), if \(\tau(\{0\})>0\), interpret
\(r^{-s}\lambda([0,r])\) as zero and replace \([r,\infty)\) by
\((0,\infty)\) in the second integrand.  Then
\begin{align*}
 \int_0^\infty
 [r^{-s}\lambda([0,r])]^2\,d\tau(r)
 &\leq4B_1\lambda([0,\infty)),\\
 \int_0^\infty
 \left[\int_{[r,\infty)}t^{-s}\,d\lambda(t)\right]^2d\tau(r)
 &\leq4B_2\lambda([0,\infty)),
\end{align*}
where
\begin{align*}
B_1&=\sup_{\rho>0}\lambda([0,\rho])
 \int_{[\rho,\infty)}r^{-2s}\,d\tau(r), \ \
B_2=\sup_{\rho>0}\tau([0,\rho])
 \int_{[\rho,\infty)}t^{-2s}\,d\lambda(t).
\end{align*}
\end{lemma}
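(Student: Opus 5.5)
The two inequalities are the classical weighted Hardy inequality and its dual, tested on \(f\equiv1\), with measure-valued weights. I would reduce both to the standard Muckenhoupt--Sawyer layer-cake argument. The constant \(4\) is the usual \((p')^p\) with \(p=2\).

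\emph{First inequality.} Let \(\Lambda(r)=\lambda([0,r])\), which is nondecreasing and right-continuous, and write the left side as \(\int\Lambda(r)^2r^{-2s}\,d\tau(r)\).

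1. Set \(\Lambda_\infty=\lambda([0,\infty))\), assume it is finite, and choose the dyadic levels \(r_k=\inf\{r:\Lambda(r)\geq2^k\}\).
2. On \(\{r:2^k\leq\Lambda(r)<2^{k+1}\}\subset[r_k,\infty)\) the integrand is at most \(4^{k+1}r^{-2s}\).
3. The tail condition gives \(\int_{[r_k,\infty)}r^{-2s}\,d\tau\leq B_1/\lambda([0,r_k])\leq B_1 2^{-k}\). Here \(\lambda([0,r_k])\geq2^k\) by right-continuity.
4. Summing over \(k\) with \(2^k\leq\Lambda_\infty\) gives \(\sum 4^{k+1}B_12^{-k}\lesssim B_1\Lambda_\infty\).

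A cleaner route that reaches the sharp constant is to write \(\Lambda(r)^2\leq2\int_{[0,r]}\Lambda(t)\,d\lambda(t)\), which is a Stieltjes inequality valid with atoms. Fubini then gives
\[\int_{[0,\infty)}\Lambda(t)\int_{[t,\infty)}r^{-2s}\,d\tau(r)\,d\lambda(t)\leq B_1\int\frac{\Lambda(t)}{\Lambda(t)}\,d\lambda(t)\cdot2.\]
This already bounds the left side by \(2B_1\Lambda_\infty\), which is within \(4B_1\). The case \(r=0\) is handled by the stated convention. Hypothesis \eqref{eq:Hardy-no-common-zero} ensures that no term \(\lambda(\{0\})^2\cdot0^{-2s}\tau(\{0\})\) occurs. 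In the same way, \(t=0\) contributes only when \(\tau(\{0\})=0\), and then \(\int_{[0,\infty)}r^{-2s}\,d\tau\) over \(r>0\) is controlled by letting \(\rho\downarrow0\).

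\emph{Second inequality.} Put \(G(r)=\int_{[r,\infty)}t^{-s}\,d\lambda(t)\), which is nonincreasing, and \(T(r)=\tau([0,r])\). I would use the same device: \(G(r)^2\leq2\int_{[r,\infty)}G(t)t^{-s}\,d\lambda(t)\). Fubini then bounds the left side by \(2\int G(t)t^{-s}T(t)\,d\lambda(t)\). By Cauchy--Schwarz in \(d\lambda\), this is at most
\[2\Bigl(\int t^{-2s}T(t)^2G(t)^2\ldots\Bigr)^{1/2}\ldots,\]
but this introduces \(G\) again. The cleaner choice is a duality argument. The second inequality is the adjoint of the first with the roles of \(\lambda\) and \(\tau\) swapped, via
\[\int G\,g\,d\tau=\int t^{-s}\Bigl(\int_{[0,t]}g\,d\tau\Bigr)d\lambda(t).\]
So I would prove the first inequality in its full \(L^2\) form, namely \(\int(r^{-s}\int_{[0,r]}f\,d\tau)^2\,d\lambda\leq4B_2\int f^2\,d\tau\), by the standard Muckenhoupt proof: Cauchy--Schwarz with the auxiliary weight \(T(t)^{-1/2}\), then Fubini. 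Duality then gives \(\|G\|_{L^2(\tau)}^2\leq4B_2\,\lambda([0,\infty))\), which is tested against \(\one\) in \(L^2(\lambda)\) with norm \(\lambda([0,\infty))^{1/2}\).

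\emph{Main obstacle.} The main obstacle is bookkeeping with atoms, where the Stieltjes chain rule fails with equality. It must be replaced by the inequality \(\Lambda(r)^2\leq2\int_{[0,r]}\Lambda\,d\lambda\), and in the Muckenhoupt step by \(\int_{[0,r]}T^{-1/2}\,d\tau\leq2T(r)^{1/2}\). Both follow by comparing sums over jumps with integrals. The closed versus half-open endpoint conventions must also be checked carefully so that \(B_1\) and \(B_2\) apply exactly, together with the zero-point convention.
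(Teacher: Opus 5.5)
Your proposal is correct. For the first inequality it takes a more elementary route than the paper; for the second it follows the paper. The paper proves the full weighted Hardy inequality $\|Hf\|_{L^2(\tau)}^2\le 4B_1\|f\|_{L^2(\lambda)}^2$ and only then sets $f=1$. That proof uses Cauchy--Schwarz with the auxiliary weight $F^{-1/2}$, Fubini, and two Stieltjes estimates: $\int_{[0,r]}F^{-1/2}\,d\lambda\le 2F(r)^{1/2}$ and $\int_{[t,\infty)}G^{-1/2}\,d\nu\le 2G(t)^{1/2}$, where $d\nu=r^{-2s}d\tau$ and $G(r)=\nu([r,\infty))$.

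Your ``cleaner route'' works with $f\equiv1$ from the start. The inequality $\Lambda(r)^2\le 2\int_{[0,r]}\Lambda\,d\lambda$ is just $(\lambda\otimes\lambda)([0,r]^2)\le 2(\lambda\otimes\lambda)\{u\le t\le r\}$, so it holds with atoms. After Tonelli, the inner factor $\Lambda(t)\,\nu([t,\infty))$ is at most $B_1$ pointwise for $t>0$, because the closed intervals match exactly. At $t=0$ the same bound follows from the limit $\rho\downarrow0$ and the no-common-atom hypothesis. Two small slips in your display: the factor $2$ is on the wrong side, and $\Lambda(t)/\Lambda(t)$ should be read as $1$ off the $\lambda$-null set $\{\Lambda=0\}$. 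The result is the sharper bound $2B_1\lambda([0,\infty))$, with no auxiliary weight.

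For the second inequality you use the same duality as the paper, with $T^*g(t)=t^{-s}\int_{[0,t]}g\,d\tau$ and Muckenhoupt constant $B_2$. The trade-off is that the paper's single $L^2$ argument serves both estimates. Your first-part argument is specific to $f\equiv1$, so you still have to run the full Muckenhoupt proof for the interchanged pair. That proof needs more than $\int_{[0,r]}T^{-1/2}\,d\tau\le 2T(r)^{1/2}$. Set $d\mu=t^{-2s}d\lambda$ and $M(r)=\mu([r,\infty))$. You also need the tail estimate $\int_{[r,\infty)}M^{-1/2}\,d\mu\le 2M(r)^{1/2}$. Combined with $T(t)^{1/2}\le B_2^{1/2}M(t)^{-1/2}$, it gives $\int_{[r,\infty)}T^{1/2}\,d\mu\le 2B_2\,T(r)^{-1/2}$ after Fubini, and hence the constant $4B_2$.

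Your dyadic layer-cake alternative gives only about $8B_1$, and needs the $\rho\downarrow0$ limit when $r_k=0$. The Stieltjes route is the one to keep.
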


\begin{proof}
These are the \(p=2\) direct and dual two-weight Hardy inequalities for
positive Borel measures; see \cite{Hardy}.  We include the
Stieltjes-measure proof.  Consider
\[
Hf(r)=r^{-s}\int_{[0,r]}f(t)\,d\lambda(t).
\]
Put \(F(r)=\lambda([0,r]),\quad d\nu(r)=r^{-2s}d\tau(r),\quad G(r)=\nu([r,\infty))\). Here \(\nu\) is defined on \((0,\infty)\); a possible atom at zero is
treated at the end of the proof.
Thus \(F(r)G(r)\leq B_1\).  The elementary Stieltjes estimates
\begin{align}
 \int_{[0,r]}F(t)^{-1/2}\,d\lambda(t)
 &\leq2F(r)^{1/2},
 \label{eq:Stieltjes-increasing}\\
 \int_{[t,\infty)}G(r)^{-1/2}\,d\nu(r)
 &\leq2G(t)^{1/2}
 \label{eq:Stieltjes-decreasing}
\end{align}
follow from the distribution functions.  Indeed, for $0<A=F(r)<\infty$,
\[
 \begin{split}
 \int_{[0,r]}F(t)^{-1/2}\,d\lambda(t)
 &=A^{1/2}+\frac12\int_0^A u^{-3/2}
   \lambda\{t\in[0,r]:0<F(t)\leq u\}\,du\\
 &\leq A^{1/2}+\frac12\int_0^A u^{-1/2}\,du=2A^{1/2}.
 \end{split}
\]
Here $\lambda\{t\in[0,r]:0<F(t)\leq u\}\leq u$.
For the second estimate, use
$\nu\{r\geq t:0<G(r)\leq u\}\leq u$ and the same calculation
with $A=G(t)$.  If $A=0$, both sides vanish; if $A=\infty$, the estimate is
immediate.  Sets on which the denominator is zero have measure zero
and are omitted.

Cauchy--Schwarz and \eqref{eq:Stieltjes-increasing} give
\[
 \left|\int_{[0,r]}f\,d\lambda\right|^2
 \leq2F(r)^{1/2}
 \int_{[0,r]}|f(t)|^2F(t)^{1/2}\,d\lambda(t).
\]
Moreover, \(F(r)^{1/2}\leq B_1^{1/2}G(r)^{-1/2}\), and hence
\begin{align*}
 \int_{[t,\infty)}F(r)^{1/2}\,d\nu(r)
 &\leq2B_1^{1/2}G(t)^{1/2}
 \leq2B_1F(t)^{-1/2}.
\end{align*}
Fubini therefore yields
\begin{align*}
 \|Hf\|_{L^2(\tau)}^2
 &\leq2\int_{[0,\infty)}|f(t)|^2F(t)^{1/2}
 \left[\int_{[t,\infty)}F(r)^{1/2}\,d\nu(r)\right]
d\lambda(t)
 \leq4B_1\|f\|_{L^2(\lambda)}^2.
\end{align*}
Taking \(f=1\) proves the first inequality.

For the second inequality, put
\[
Tf(r)=\int_{[r,\infty)}t^{-s}f(t)\,d\lambda(t).
\]
The adjoint is
\[
T^*g(t)=t^{-s}\int_{[0,t]}g(r)\,d\tau(r).
\]
Apply the first Hardy inequality to \(T^*\), with the roles of the
measures interchanged.  The resulting Muckenhoupt quantity is \(B_2\),
so \(\|T^*\|^2\leq4B_2\).  Since \(\|T\|=\|T^*\|\), taking \(f=1\)
gives the second estimate.  The intervals in
\eqref{eq:Stieltjes-increasing}--\eqref{eq:Stieltjes-decreasing} include
their endpoints as written.  Thus boundary atoms are assigned
consistently with the two kernels above.

The condition \eqref{eq:Hardy-no-common-zero} removes the only singular
common endpoint.  We only need to discuss a possible \(\tau\)-atom at
zero.
If \(\tau(\{0\})>0\), the contribution at \(r=0\) to the second
inequality also follows from
\[
 \left(\int_{(0,\infty)}t^{-s}\,d\lambda(t)\right)^2
 \leq\lambda([0,\infty))
 \int_{(0,\infty)}t^{-2s}\,d\lambda(t)
\]
and the limit as \(\rho\downarrow0\) in the definition of \(B_2\).
\end{proof}

\begin{lemma}\label{lem:local-tripled}
Let \(P=Q_S\) be the auxiliary quasicube from
\cref{lem:axis-sections}, and assume \(3P\cap\supp\widetilde\omega\subset L_0\). Then
\begin{align}
 &\int_{3P}\left|
 \bfR_\Psi^{\alpha,n}(\one_P\widetilde\omega)
 \right|^2d\widetilde\sigma
 \leq C\left[
 \bigl(\mathfrak T_{\mathrm{full}}^{\mathrm{dual}}\bigr)^2
 +\mathcal A_{2,\Omega}^\alpha
   (\widetilde\sigma,\widetilde\omega)
 +\mathcal A_{2,\Omega}^{\alpha,*}
   (\widetilde\sigma,\widetilde\omega)
 \right]\widetilde\omega(P).
 \label{eq:local-tripled}
\end{align}
Here
\[
 \mathfrak T_{\mathrm{full}}^{\mathrm{dual}}
 =\mathfrak T_{\bfR_\Psi^{\alpha,n},\Omega}^{\mathrm{dual}}
   (\widetilde\sigma,\widetilde\omega)
\]
is the testing constant for the unrestricted pair over
\(\Omega\mathcal Q^n\).
\end{lemma}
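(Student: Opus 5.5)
We split the integral over \(3P\) into \(P\) and the shell \(3P\setminus P\). On \(P\), dual testing for the unrestricted pair over the quasicube \(P\in\Omega\mathcal Q^n\) gives
\[
 \int_P|\bfR_\Psi^{\alpha,n}(\one_P\widetilde\omega)|^2\,d\widetilde\sigma
 \leq\bigl(\mathfrak T_{\mathrm{full}}^{\mathrm{dual}}\bigr)^2\widetilde\omega(P).
\]
We work with a fixed tangent-line truncation, so that every bound below is uniform in the truncation parameters. It therefore remains to treat the shell, where we use only the size bound \(|\bfK_\Psi^{\alpha,n}(x,y)|\leq C|x-y|^{\alpha-n}\).

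Decompose \(3P\setminus P\) into \(\{x:\dist(x,P)\geq c\,\ell(P)\}\) and a thin boundary layer. On the far part, \(|x-y|\approx\ell(P)\) for \(y\in P\). The contribution is then at most
\[
 C\,\ell(P)^{2(\alpha-n)}\,\widetilde\omega(P)^2\,\widetilde\sigma(3P\setminus P).
\]
The one-tailed condition \(\mathcal A_{2,\Omega}^{\alpha,*}\) applied to \(P\) (the hole is at \(P\), and \(3P\setminus P\subset P^c\)) bounds \(\widetilde\sigma(3P\setminus P)\,\widetilde\omega(P)\,\ell(P)^{2(\alpha-n)}\) by a constant. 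Hence the far part is at most \(C\mathcal A_{2,\Omega}^{\alpha,*}\widetilde\omega(P)\).

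The main obstacle is the boundary layer near \(\partial P\), where \(x\notin P\) can lie arbitrarily close to \(\supp\widetilde\omega\cap P\). Here we use the hypothesis \(3P\cap\supp\widetilde\omega\subset L_0\) and \cref{lem:axis-sections}: \(\widetilde\omega\) restricted to \(P\) lives on the interval \(S=[a,b)e_1\). Because \(S\) is exactly \(P\cap L_0\), a point \(x\in3P\setminus P\) close to the \(\widetilde\omega\)-support is close to one of the two endpoints \(ae_1\), \(be_1\). Quasicube points near the interior of \(S\) but outside \(P\) are at distance \(\gtrsim\) their distance to the endpoints, by the transversality \eqref{eq:coordinate-monotonicity}; we expect to need a short geometric check here. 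Consider the endpoint \(ae_1\), and set \(r(y)=|y^1-a|\) for \(y\in S\) and \(\rho(x)=|x-ae_1|\). For \(x\) near \(ae_1\) and \(y\in S\), we aim to show
\[
 |x-y|\gtrsim\rho(x)+r(y).
\]
Granting this, the kernel is dominated by \((\rho(x)+r(y))^{\alpha-n}\), which is at most
\[
 \rho(x)^{\alpha-n}\one_{r(y)\leq\rho(x)}+r(y)^{\alpha-n}\one_{r(y)>\rho(x)}.
\]
With \(\lambda\) the pushforward of \(\one_P\widetilde\omega\) under \(r\) and \(\tau\) the pushforward of \(\one_{3P\setminus P}\widetilde\sigma\) under \(\rho\), these two pieces are exactly the two Hardy operators of \cref{lem:Hardy}, with \(s=n-\alpha\).

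The constants \(B_1,B_2\) of \cref{lem:Hardy} are bounded by the one-tailed conditions. A \(\rho\)-ball \(\{\rho\leq t\}\) around the endpoint, intersected with the relevant set, is comparable to a quasicube of side \(\approx t\) centered at \(ae_1\). Its Poisson tail, restricted to the complementary region, dominates \(\int_{[t,\infty)}r^{-2s}\,d\lambda\) (respectively with \(\tau\)). Thus \(B_1\lesssim\mathcal A_{2,\Omega}^{\alpha}\) and \(B_2\lesssim\mathcal A_{2,\Omega}^{\alpha,*}\), since the \(\sigma\)-mass is outside \(P\) and the holes can be arranged to separate the two measures. Condition \eqref{eq:Hardy-no-common-zero} holds because \(P\) is half-open: an atom exactly at the endpoint belongs either to \(P\) (for \(\widetilde\omega\)) or to \(3P\setminus P\) (for \(\widetilde\sigma\)), and the truncated kernel vanishes on the diagonal in either case. \cref{lem:Hardy} then bounds the layer contribution by \(C(\mathcal A_{2,\Omega}^\alpha+\mathcal A_{2,\Omega}^{\alpha,*})\widetilde\omega(P)\). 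The other endpoint \(be_1\) is treated symmetrically.

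Summing the \(P\)-term, the far part of the shell and the two endpoint layers gives \eqref{eq:local-tripled}.
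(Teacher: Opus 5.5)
This is essentially the paper's proof: dual testing on $P$, a one-tailed bound for the part of $3P$ away from $P\cap L_0$, and an endpoint cone estimate feeding \cref{lem:Hardy} with $B_1\lesssim\mathcal A_{2,\Omega}^\alpha$ and $B_2\lesssim\mathcal A_{2,\Omega}^{\alpha,*}$. The geometric check you flagged does follow from \eqref{eq:coordinate-monotonicity} alone, because $\Omega^{-1}x\notin C_S$ violates either a lower face inequality of $C_S$, which gives $|x-y|\gtrsim|x-ae_1|+|y-ae_1|$ for all $y\in S$, or an upper one, which gives the same estimate at $be_1$.

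There are two slips to fix.
\begin{enumerate}
\item The far-shell bound uses the unstarred condition $\mathcal A_{2,\Omega}^\alpha$, with $\widetilde\omega$ in $P$ and $\widetilde\sigma$ in $P^c$, not the starred one.
\item For $B_1$ and $B_2$, a single quasicube of side $\approx t$ at the endpoint does not have the whole tail in its hole. The range $t\le|x-p|<C_0t$ must therefore be handled separately, through the cone separation and a bounded grid of small quasicubes, as in the paper.
\end{enumerate}
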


\begin{proof}
Put \(s=n-\alpha\).  Fix one of the tangent-line truncations used in
\(\mathfrak T_{\mathrm{full}}^{\mathrm{dual}}\).  Its radial multiplier
is symmetric, and its truncated kernel satisfies the same size bound,
with a uniform constant.  Moreover, \((\bfK_\Psi^{\alpha,n})^*(x,y) =\bfK_\Psi^{\alpha,n}(y,x) =-\bfK_\Psi^{\alpha,n}(x,y)\). All constants below are independent of the inner and outer truncation
radii.

The quasicube \(3P\) is the disjoint union of the images of \(3^n\)
half-open cubes of side \(\ell(P)\).  On \(P\), unrestricted dual testing
gives
\[
 \int_P\left|\bfR_\Psi^{\alpha,n}
   (\one_P\widetilde\omega)\right|^2d\widetilde\sigma
 \leq
 \bigl(\mathfrak T_{\mathrm{full}}^{\mathrm{dual}}\bigr)^2
 \widetilde\omega(P).
\]

Let \(E\) be one of the other members of this partition.  Write
\(P=\Omega C\), \(E=\Omega D\), and
\(P\cap L_0=[a,b)e_1\).  Put
\(p_-=ae_1\), \(p_+=be_1\), and
\(\zeta(t)=\Omega^{-1}(te_1)\).  By the construction in
\cref{lem:axis-sections}, \(\ell(C)=8C_n(b-a)\). The path \(\zeta([a,b])\) meets only the first lower face, at
\(\zeta(a)\), and the second upper face, at \(\zeta(b)\).  Its
distance from every other face is at least \(c\ell(P)\).

There is a unique \(m=(m_1,\ldots,m_n)\in\{-1,0,1\}^n\setminus\{0\}\) such that \(D=C+\ell(C)m\).  If \(m_1=-1\), assign \(E\) to
\(p_-\).  For
\(x=te_1\in P\cap L_0\), \(y\in E\), and
\(\eta=\Omega^{-1}y\),
\begin{align*}
c_n|x-p_-|
 &\leq \zeta_1(t)-\zeta_1(a)
 \leq \zeta_1(t)-\eta_1
 \leq |\zeta(t)-\eta|
 \leq \Lip(\Omega^{-1})|x-y|.
\end{align*}
The triangle inequality then gives
\(|y-p_-|\leq C|x-y|\).  Suppose next that \(m_1\neq-1\) and
\(m_2=1\).  Assign \(E\) to \(p_+\).  The second coordinate gives
\begin{align*}
c_n|x-p_+|
 &\leq \zeta_2(b)-\zeta_2(t)
 \leq \eta_2-\zeta_2(t)
 \leq |\zeta(t)-\eta|
 \leq \Lip(\Omega^{-1})|x-y|.
\end{align*}
Thus \(|y-p_+|\leq C|x-y|\).

It remains to consider \(m_1\in\{0,1\},\quad m_2\in\{-1,0\},\quad m\neq0\). If \(m_1=1\), the first upper face separates \(D\) from the path.  If
\(m_2=-1\), the second lower face does so.  If
\(m_k\neq0\) for some \(3\leq k\leq n\), one of the two \(k\)-th
faces does so.  The face margins in \cref{lem:axis-sections} are at
least \(7\ell(C)/16\).  Hence every quasicube in this last class is
separated from \(P\cap L_0\) by \(c_\Omega\ell(P)\).  Its integral is
bounded by
\cref{lem:annular-A2}, applied to the finitely many annuli with radii
comparable to \(\ell(P)\).
In this separated application, the proof of \cref{lem:annular-A2} uses
only the one-tailed condition with the corresponding order of the two
measures; the punctured conditions are not used.

Fix one endpoint \(p\), and denote the corresponding Borel set by
\(E_p\).  The two preceding coordinate estimates and the triangle
inequality give
\begin{equation}\label{eq:endpoint-cone-proof}
 |x-y|\geq c_0\bigl(|x-p|+|y-p|\bigr),
 \qquad x\in P\cap L_0,\quad y\in E_p,
\end{equation}
where \(c_0>0\) depends only on \(\Omega\).  Define measures on
\([0,\infty)\) by
\begin{align*}
 \lambda(B)
 &=\widetilde\omega
   \bigl(\{x\in P\cap L_0:|x-p|\in B\}\bigr),\\
 \tau(B)
 &=\widetilde\sigma
   \bigl(\{y\in E_p:|y-p|\in B\}\bigr).
\end{align*}
The half-open sets \(P\) and \(E_p\) are disjoint.  Thus \(p\) belongs
to at most one of them.  Consequently,
\begin{equation}\label{eq:no-common-radial-origin}
 \lambda(\{0\})\tau(\{0\})=0.
\end{equation}
Thus the hypothesis \eqref{eq:Hardy-no-common-zero} of
\cref{lem:Hardy} holds.
For \(r=|y-p|\), the kernel estimate and
\eqref{eq:endpoint-cone-proof} give
\begin{align}
 \left|\bfR_\Psi^{\alpha,n}
   (\one_P\widetilde\omega)(y)\right|
 &\leq C\int_{[0,\infty)}(r+t)^{-s}\,d\lambda(t)
 \leq C\left[
r^{-s}\lambda([0,r])
 +\int_{[r,\infty)}t^{-s}\,d\lambda(t)
 \right].
 \label{eq:hardy-kernel-split}
\end{align}
If \(\tau(\{0\})>0\), then
\eqref{eq:no-common-radial-origin} gives
\(\lambda(\{0\})=0\).  At \(r=0\), we therefore read the first term as
zero and the second as \(\int_{(0,\infty)}t^{-s}\,d\lambda(t)\).

It remains to bound \(B_1\) and \(B_2\) in \cref{lem:Hardy}.  Recall that
\begin{equation*}
 \mathcal P^\alpha(H,\nu)
 =\int_{\R^n}
 \left(
  \frac{|H|^{1/n}}
  {\bigl(|H|^{1/n}+|z-c_H|\bigr)^2}
 \right)^s d\nu(z).
\end{equation*}
Thus
\begin{align}
 \mathcal A_{2,\Omega}^\alpha
 &=\sup_{H\in\Omega\mathcal Q^n}
 \frac{\widetilde\omega(H)}{|H|^{s/n}}
 \mathcal P^\alpha
   (H,\one_{H^c}\widetilde\sigma),
 \label{eq:one-tail-forward-local}\\
 \mathcal A_{2,\Omega}^{\alpha,*}
 &=\sup_{H\in\Omega\mathcal Q^n}
 \frac{\widetilde\sigma(H)}{|H|^{s/n}}
 \mathcal P^\alpha
   (H,\one_{H^c}\widetilde\omega).
 \label{eq:one-tail-dual-local}
\end{align}
The estimates below use the reproducing Poisson integral
\(\mathcal P^\alpha\).

Both \(P\cap L_0\) and \(E_p\) are contained in a ball
\(B(p,C_1\ell(P))\).  Hence the expressions defining the Hardy
constants vanish when \(\rho>C_1\ell(P)\).  We may therefore fix
\(0<\rho\leq C_1\ell(P)\).

We first estimate
\[
B_1(\rho)
 :=\lambda([0,\rho])
   \int_{[\rho,\infty)}r^{-2s}\,d\tau(r).
\]
Let \(S_{\rho,p}\subset L_0\) be the half-open interval of length
\(2\rho\) which starts at \(p\) when \(p\in P\), and which ends at
\(p\) when \(p\notin P\).  Then \(\{x\in P\cap L_0:|x-p|\leq\rho\}\subset S_{\rho,p}\). Let \(H_\rho=Q_{S_{\rho,p}}\) be the quasicube supplied by
\cref{lem:axis-sections}.  The construction in that lemma gives \(\ell(H_\rho)\approx_\Omega\rho, \quad H_\rho\subset B(p,C_2\rho), \quad |c_{H_\rho}-p|\leq C_2\rho\). Choose \(C_0>4C_2\).  If \(y\in E_p\) and
\(|y-p|\geq C_0\rho\), then \(y\in H_\rho^c\),
\(|y-c_{H_\rho}|\approx|y-p|\), and
\begin{equation*}
 |y-p|^{-2s}
 \leq C\rho^{-s}
 \left(
  \frac{|H_\rho|^{1/n}}
  {\bigl(|H_\rho|^{1/n}+|y-c_{H_\rho}|\bigr)^2}
 \right)^s.
\end{equation*}
The factor \(\lambda([0,\rho])\) is bounded by
\(\widetilde\omega(H_\rho)\), while the part of
\(\widetilde\sigma\) in the integral is supported in \(H_\rho^c\).
Therefore
\begin{align}
 &\lambda([0,\rho])
 \int_{[C_0\rho,\infty)}r^{-2s}\,d\tau(r)\leq
C\frac{\widetilde\omega(H_\rho)}{|H_\rho|^{s/n}}
 \mathcal P^\alpha
   (H_\rho,\one_{H_\rho^c}\widetilde\sigma)
 \leq C\mathcal A_{2,\Omega}^\alpha.
 \label{eq:B1-far}
\end{align}

It remains to treat \([\rho,C_0\rho)\).  Put
\begin{align*}
X_\rho^0&=\{x\in P\cap L_0:|x-p|\leq\rho\},\qquad
Y_\rho^1=\{y\in E_p:\rho\leq|y-p|<C_0\rho\}.
\end{align*}
By \eqref{eq:endpoint-cone-proof},
\(\dist(X_\rho^0,Y_\rho^1)\geq c_0\rho\).  Choose
\(0<\delta<c_0/(10\sqrt n\Lip(\Omega))\).  Partition the
preimage space into half-open cubes of side \(\delta\rho\).  Let
\(\mathcal U_\rho\) be the images under \(\Omega\) of the cubes which
meet \(X_\rho^0\), and let \(\mathcal V_\rho\) be those which meet
\(Y_\rho^1\).  Each family has at most \(N=N(n,\Omega,C_0)\) members.
No member of one family is a member of the other.  Thus every
\(U\in\mathcal U_\rho\) and \(V\in\mathcal V_\rho\) is a pair of
disjoint quasicubes, and \(\ell(U)=\ell(V)=\delta\rho, \quad |z-c_U|\leq C\rho\quad(z\in V)\). Consequently, \(\mathcal P^\alpha(U,\one_{U^c}\widetilde\sigma) \geq c\frac{|U|^{s/n}}{\rho^{2s}} \widetilde\sigma(V)\). By \eqref{eq:one-tail-forward-local}, \(\rho^{-2s}\widetilde\omega(U)\widetilde\sigma(V) \leq C\mathcal A_{2,\Omega}^\alpha\). Summing over at most \(N^2\) pairs gives
\begin{align}
 &\lambda([0,\rho])
 \int_{[\rho,C_0\rho)}r^{-2s}\,d\tau(r)\leq
 \rho^{-2s}
 \widetilde\omega(X_\rho^0)
 \widetilde\sigma(Y_\rho^1)
 \leq C\mathcal A_{2,\Omega}^\alpha.
 \label{eq:B1-middle}
\end{align}
Equations \eqref{eq:B1-far} and \eqref{eq:B1-middle} show that
\begin{equation}\label{eq:B1-bound}
B_1\leq C\mathcal A_{2,\Omega}^\alpha.
\end{equation}

We next estimate
\[
B_2(\rho)
 :=\tau([0,\rho])
   \int_{[\rho,\infty)}t^{-2s}\,d\lambda(t).
\]
Partition the preimage space into half-open cubes of side \(\rho\), and
let \(\mathcal H_\rho\) be the images under \(\Omega\) of the cubes
which meet \(Y_\rho^0=\{y\in E_p:|y-p|\leq\rho\}\). There are at most \(N_0=N_0(n,\Omega)\) such quasicubes.  Each
\(H\in\mathcal H_\rho\) satisfies \(\ell(H)=\rho,\quad H\subset B(p,C_3\rho),\quad |c_H-p|\leq C_3\rho\). Choose \(C_4>4C_3\).  Then the source
set \(\{x\in P\cap L_0:|x-p|\geq C_4\rho\}\) lies in \(H^c\) for every
\(H\in\mathcal H_\rho\), and \(|x-p|^{-2s} \leq C\rho^{-s} \left( \frac{|H|^{1/n}} {\bigl(|H|^{1/n}+|x-c_H|\bigr)^2} \right)^s\). The set \(Y_\rho^0\) is covered by \(\mathcal H_\rho\), while
\(\{x\in P\cap L_0:|x-p|\geq C_4\rho\}\subset H^c\) for every
\(H\in\mathcal H_\rho\).  Hence
\begin{align}
 &\tau([0,\rho])
 \int_{[C_4\rho,\infty)}t^{-2s}\,d\lambda(t)\leq
C\sum_{H\in\mathcal H_\rho}
 \frac{\widetilde\sigma(H)}{|H|^{s/n}}
 \mathcal P^\alpha
   (H,\one_{H^c}\widetilde\omega)
 \leq C\mathcal A_{2,\Omega}^{\alpha,*}.
 \label{eq:B2-far}
\end{align}

For the intermediate part, put \(X_\rho^1 =\{x\in P\cap L_0:\rho\leq|x-p|<C_4\rho\}\). The sets \(X_\rho^1\) and \(Y_\rho^0\) are separated by
\(c_0\rho\).  Repeat the half-open grid construction above, with
quasicubes of side \(\delta\rho\).  The two resulting families have at
most \(N_1=N_1(n,\Omega,C_4)\) members.  If \(U\) meets \(X_\rho^1\) and
\(V\) meets \(Y_\rho^0\), then \(U\cap V=\varnothing\) and \(\mathcal P^\alpha(V,\one_{V^c}\widetilde\omega) \geq c\frac{|V|^{s/n}}{\rho^{2s}} \widetilde\omega(U)\). The dual one-tailed condition gives \(\rho^{-2s}\widetilde\sigma(V)\widetilde\omega(U) \leq C\mathcal A_{2,\Omega}^{\alpha,*}\). After summing over the finitely many pairs,
\begin{align*}
 &\tau([0,\rho])
 \int_{[\rho,C_4\rho)}t^{-2s}\,d\lambda(t)\leq
 \rho^{-2s}
 \widetilde\sigma(Y_\rho^0)
 \widetilde\omega(X_\rho^1)
 \leq C\mathcal A_{2,\Omega}^{\alpha,*}.
\end{align*}
Together with \eqref{eq:B2-far}, this proves
\begin{equation}\label{eq:B2-bound}
B_2\leq C\mathcal A_{2,\Omega}^{\alpha,*}.
\end{equation}
The constant in \eqref{eq:B1-bound} and \eqref{eq:B2-bound} depends only
on \(n\), \(\alpha\), the biLipschitz constants of \(\Omega\), and the
constants in \eqref{eq:coordinate-monotonicity}.  It is independent of
\(P\), \(p\), \(\rho\), and the truncation parameters.

It remains to check atoms on the half-open boundaries.  By
\eqref{eq:no-common-radial-origin}, no common atom occurs on both sides
of the singular Hardy kernel.  The quasicubes used above are also
half-open.  An atom on a boundary belongs to one member of the
partition, and the complementary measure in
\eqref{eq:one-tail-forward-local} or
\eqref{eq:one-tail-dual-local} lies in the corresponding hole.  Thus
\eqref{eq:B1-bound} and \eqref{eq:B2-bound} include endpoint atoms.

Applying \cref{lem:Hardy} to \eqref{eq:hardy-kernel-split} yields
\[
 \int_{E_p}\left|
 \bfR_\Psi^{\alpha,n}
   (\one_P\widetilde\omega)
 \right|^2d\widetilde\sigma
 \leq C\left(
 \mathcal A_{2,\Omega}^\alpha
 +\mathcal A_{2,\Omega}^{\alpha,*}
 \right)\widetilde\omega(P).
\]
Apply this estimate to the two endpoint unions.  Add the unrestricted
testing estimate on \(P\) and the estimates for the finitely many
separated members of the partition of \(3P\).  This proves
\eqref{eq:local-tripled}.
\end{proof}

\begin{remark}
The estimate in \eqref{eq:local-tripled} depends only on testing and
the two one-tailed conditions for the unrestricted pair.
\end{remark}

\begin{proposition}\label{prop:localized-testing}
Choose the decomposition in \cref{lem:nested-arcs} with
\(A>2C_{\rm ax}\), where \(C_{\rm ax}\) is fixed by
\eqref{eq:axis-family-constant}.  Fix \(i\), and put \(\Omega=\Omega_i=\Psi_i^{-1}\circ R_i\), where \(R_i\) is the rotation used in
\eqref{eq:coordinate-monotonicity}.  Thus \(\Omega\) satisfies the
hypothesis of \cref{lem:axis-sections}.  Work in the rigid coordinates
of the $i$-th arc, and denote $(\mathcal U_i)_\#\sigma$ and
$(\mathcal U_i)_\#\omega$ again by $\sigma$ and $\omega$.  Put
\[
 \widetilde\sigma_i=\Psi_i^*\sigma,\qquad
 \widetilde\omega^{(i)}=\Psi_i^*\omega,\qquad
 \widetilde\omega_i=\Psi_i^*(\one_{\Gamma_i}\omega).
\]
In this proposition,
\(\mathfrak T_{\mathrm{full}}\),
\(\mathfrak T_{\mathrm{full}}^{\mathrm{dual}}\), and the
Muckenhoupt constants refer to the unrestricted pair
\((\widetilde\sigma_i,\widetilde\omega^{(i)})\) over
\(\Omega\mathcal Q^n\).  Then, for every
\(Q\in\Omega\mathcal Q^n\),
\begin{align}
 \int_Q\left|
 \bfR_{\Psi_i}^{\alpha,n}(\one_Q\widetilde\omega_i)
 \right|^2d\widetilde\sigma_i
 &\leq C\left[
 \bigl(\mathfrak T_{\mathrm{full}}^{\mathrm{dual}}\bigr)^2
 +\mathcal A_{2,\Omega}^\alpha
 +\mathcal A_{2,\Omega}^{\alpha,*}
 \right]\widetilde\omega_i(Q),
 \label{eq:localized-dual-testing}\\
 \int_Q\left|
 \bfR_{\Psi_i}^{\alpha,n}(\one_Q\widetilde\sigma_i)
 \right|^2d\widetilde\omega_i
 &\leq
 \bigl(\mathfrak T_{\mathrm{full}}\bigr)^2
 \widetilde\sigma_i(Q).\notag
\end{align}
There is no restriction on \(\ell(Q)\).
\end{proposition}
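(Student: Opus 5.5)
The forward estimate is immediate.  Since \(\widetilde\omega_i=\one_{\Psi_i^{-1}\Gamma_i}\widetilde\omega^{(i)}\le\widetilde\omega^{(i)}\), we have
\[
\int_Q|\bfR_{\Psi_i}^{\alpha,n}(\one_Q\widetilde\sigma_i)|^2\,d\widetilde\omega_i
\le\int_Q|\bfR_{\Psi_i}^{\alpha,n}(\one_Q\widetilde\sigma_i)|^2\,d\widetilde\omega^{(i)}
\le(\mathfrak T_{\mathrm{full}})^2\widetilde\sigma_i(Q),
\]
for every fixed truncation.  So the work is in the dual estimate \eqref{eq:localized-dual-testing}.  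There the source measure is restricted, and restriction does not commute with testing.

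For the dual estimate I fix \(Q\in\Omega\mathcal Q^n\) and one tangent-line truncation.  If \(\widetilde\omega_i(Q)=0\), the left side vanishes.  Otherwise \(\one_Q\widetilde\omega_i\) is supported on \(S:=Q\cap I_i\).  By \cref{lem:axis-sections}(1), \(Q\cap L_0\) is a half-open interval, so \(S\) is also a half-open interval, say \(S=[a,b)e_1\).  This holds because \(\Psi_i^{-1}(\Gamma_i)\subset I_i\), and on the ghost segment the measure is zero.

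Next I take the auxiliary quasicube \(P=Q_S\) from \cref{lem:axis-sections}(2), so that \(P\cap L_0=S\).  By the choice \(A>2C_{\rm ax}\) and the end of the proof of \cref{lem:nested-arcs}, we have \(3P\cap\widetilde\Gamma^{(i)}\subset\Psi_i^{-1}(\widehat\Gamma_i)\subset L_0\).  Hence the hypothesis of \cref{lem:local-tripled} holds, and \(\one_P\widetilde\omega^{(i)}\) is a measure on \(S\).  The difficulty is that it may charge \(S\setminus\Psi_i^{-1}\Gamma_i\).  Note that \(\one_Q\widetilde\omega_i=\one_S\one_{I_i}\widetilde\omega^{(i)}\), and on \(I_i\) the pulled-back curve coincides with \(\Psi_i^{-1}\Gamma_i\) up to the null ghost segment.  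Therefore \(\one_Q\widetilde\omega_i=\one_P\widetilde\omega^{(i)}\) whenever \(S\subset I_i\), which holds by construction.  In that case \(\widetilde\omega_i(Q)=\widetilde\omega^{(i)}(P)\).

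I then split the \(\widetilde\sigma_i\)-integral over \(Q\) into the parts \(Q\cap3P\) and \(Q\setminus3P\).  On \(Q\cap3P\), \cref{lem:local-tripled} gives the bound \(C[(\mathfrak T^{\mathrm{dual}}_{\mathrm{full}})^2+\mathcal A^\alpha_{2,\Omega}+\mathcal A^{\alpha,*}_{2,\Omega}]\widetilde\omega_i(Q)\).  This uses the unrestricted testing on \(P\), the Hardy endpoint estimates, and the separated pieces.

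On \(Q\setminus3P\), the points are at distance \(\gtrsim_\Omega\ell(P)\approx|S|\) from \(S\).  The kernel bound gives
\[
|\bfR_{\Psi_i}^{\alpha,n}(\one_P\widetilde\omega^{(i)})(y)|\lesssim\widetilde\omega^{(i)}(P)\,(\ell(P)+|y-c_P|)^{-(n-\alpha)}.
\]
Squaring and integrating against \(\widetilde\sigma_i\) over \((3P)^c\) is controlled by \(\widetilde\omega(P)^2\,\ell(P)^{-(n-\alpha)}\mathcal P^\alpha(P,\one_{P^c}\widetilde\sigma)\), because \(\bigl(\ell/(\ell+r)^2\bigr)^{n-\alpha}\ell^{-(n-\alpha)}=(\ell+r)^{-2(n-\alpha)}\).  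By the forward one-tailed condition \(\mathcal A^\alpha_{2,\Omega}\), this is at most \(C\mathcal A^\alpha_{2,\Omega}\widetilde\omega(P)\).  This also handles arbitrarily large \(Q\), since only \(P\) matters.

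The main obstacle I expect is the bookkeeping of the identity \(\one_Q\widetilde\omega_i=\one_P\widetilde\omega^{(i)}\).  It must handle three cases: cubes \(Q\) that overlap the endpoints of \(I_i\), where one intersects first with \(I_i\) and uses the half-open endpoint conventions; common atoms at \(a\) or \(b\); and the terminal ghost segment.  If \(S\) extends past \(I_i\), I replace \(S\) by \(S\cap I_i\), which is still a half-open interval, before building \(P\).  I must then check that no mass of \(\Gamma\setminus\Gamma_i\) lies in \(3P\).  This follows from \eqref{eq:tube-localization} together with the disjointness of the arcs.
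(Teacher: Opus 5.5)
Your proposal is correct and follows the paper's proof essentially step by step: the forward bound from $\widetilde\omega_i\le\widetilde\omega^{(i)}$, the auxiliary quasicube $P=Q_S$ with $S=Q\cap I_i$, the identity $\one_Q\widetilde\omega_i=\one_P\widetilde\omega^{(i)}$ (so $\widetilde\omega_i(Q)=\widetilde\omega^{(i)}(P)$), and the split of the $\widetilde\sigma_i$-integral into $Q\cap 3P$, handled by \cref{lem:local-tripled}, and $Q\setminus 3P$, handled by the forward one-tailed condition. One correction to your last paragraph: you do not need that $3P$ carries no mass of $\Gamma\setminus\Gamma_i$, and this is false in general, since pieces of $\widehat\Gamma_i$ from neighbouring arcs can lie on $L_0\cap 3P$ beyond the ends of $I_i$; the argument only uses $3P\cap\widetilde\Gamma^{(i)}\subset L_0$ (the hypothesis of \cref{lem:local-tripled}, stated for the unrestricted measure) together with $P\cap L_0=S\subset I_i$.
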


\begin{proof}
The forward estimate follows from
\(\widetilde\omega_i\leq\widetilde\omega^{(i)}\).
By \cref{lem:axis-sections}, \(Q\cap L_0\) is a half-open interval.  Put \(S_Q=Q\cap I_i\). If \(\widetilde\omega_i(Q)=0\), the dual estimate is trivial.  Otherwise
\(S_Q\) is a nonempty half-open interval of positive length.  Let
\(P_Q=Q_{S_Q}\) be given by \cref{lem:axis-sections}.  By
\eqref{eq:auxiliary-size},
\begin{align*}
 3P_Q
 &\subset\{x:\dist(x,S_Q)\leq C_{\Omega}|S_Q|\}
 \subset\{x:\dist(x,I_i)\leq C_{\Omega}|I_i|\}.
\end{align*}
Since \(C_\Omega\leq C_{\rm ax}\), our choice of \(A\) gives
\(A>2C_\Omega\).  Hence
\eqref{eq:tube-localization} gives
\begin{equation}\label{eq:tripled-support-on-axis}
 3P_Q\cap\supp\widetilde\omega^{(i)}
 \subset3P_Q\cap\widetilde{\Gamma}^{(i)}
 \subset\Psi_i^{-1}(\widehat{\Gamma}_i)\subset L_0.
\end{equation}

The half-open construction of \(\Gamma_i\) gives
\begin{equation}\label{eq:restriction-on-axis-interval}
 \one_{I_i}\widetilde\omega^{(i)}=\widetilde\omega_i.
\end{equation}
For the last arc of an open curve, \eqref{eq:ghost-zero} shows that
\(\widetilde\omega^{(i)}\) has zero mass on the added segment.  Since
\(P_Q\cap L_0=S_Q\), equations
\eqref{eq:tripled-support-on-axis} and
\eqref{eq:restriction-on-axis-interval} give
\begin{align}
 \one_{P_Q}\widetilde\omega^{(i)}
 &=\one_{P_Q\cap L_0}\widetilde\omega^{(i)}
 =\one_{S_Q}\widetilde\omega^{(i)}
 =\one_{S_Q}\widetilde\omega_i
 =\one_Q\widetilde\omega_i.
 \label{eq:exact-measure-localization}
\end{align}
Every partition point belongs to one half-open arc, so the identity also
holds at atoms on the partition boundaries.
Split the dual testing integral over \(Q\setminus3P_Q\) and
\(Q\cap3P_Q\).  Put \(a_{P_Q}=|P_Q|^{1/n}\).  If
\(x\notin3P_Q\) and \(y\in P_Q\), then \(|x-y|\approx_\Omega a_{P_Q}+|x-c_{P_Q}|\). Consequently,
\begin{align*}
 &\int_{Q\setminus3P_Q}
 |\bfR_{\Psi_i}^{\alpha,n}
   (\one_{P_Q}\widetilde\omega^{(i)})(x)|^2
d\widetilde\sigma_i(x)\\
 &\quad\leq C\widetilde\omega^{(i)}(P_Q)^2
 \int_{P_Q^c}
 \frac{d\widetilde\sigma_i(x)}
 {(a_{P_Q}+|x-c_{P_Q}|)^{2(n-\alpha)}}\\
 &\quad=C\widetilde\omega^{(i)}(P_Q)
 \frac{\widetilde\omega^{(i)}(P_Q)}{a_{P_Q}^{n-\alpha}}
 \mathcal P^\alpha
   (P_Q,\one_{P_Q^c}\widetilde\sigma_i)\leq C\mathcal A_{2,\Omega}^\alpha
 \widetilde\omega^{(i)}(P_Q).
\end{align*}
\Cref{lem:local-tripled} controls the second part.  Finally,
\eqref{eq:exact-measure-localization} gives \(\widetilde\omega^{(i)}(P_Q)=\widetilde\omega_i(Q)\). This proves \eqref{eq:localized-dual-testing}.  Since
\(|S_Q|\leq|I_i|\), we have
\(\ell(P_Q)\lesssim_\Omega|I_i|\), independently of \(\ell(Q)\).
Thus the estimate holds for testing cubes of every size.
\end{proof}

\begin{lemma}
\label{lem:A2-restriction}
For every fixed globally biLipschitz quasicube map \(\Omega\) and every
Borel set \(E\),
\begin{equation}\label{eq:A2-restriction}
 \mathfrak A_{2,\Omega}^\alpha(\sigma,\one_E\omega)
 \leq C_\Omega\mathfrak A_{2,\Omega}^\alpha(\sigma,\omega).
\end{equation}
\end{lemma}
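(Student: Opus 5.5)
We treat the four constants in \(\mathfrak A_{2,\Omega}^\alpha\) one at a time. Replacing \(\omega\) by \(\one_E\omega\) decreases the \(\omega\)-measure of every Borel set. The common point masses of \((\sigma,\one_E\omega)\) form a subset of those of \((\sigma,\omega)\), namely the common atoms lying in \(E\).

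For the two one-tailed conditions with holes, monotonicity alone suffices. In the forward constant, \(\omega(H)\) becomes \((\one_E\omega)(H)\leq\omega(H)\), and the Poisson factor \(\mathcal P^\alpha(H,\one_{H^c}\sigma)\) is unchanged. In the dual constant \(\mathcal A_{2,\Omega}^{\alpha,*}\), the factor \(\sigma(H)\) is unchanged and \(\mathcal P^\alpha(H,\one_{H^c}\one_E\omega)\leq\mathcal P^\alpha(H,\one_{H^c}\omega)\), because the integrand is positive. Both bounds hold for each \(\Omega\)-quasicube \(H\) separately, so they survive the supremum with constant \(1\).

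The punctured conditions need more care, and I expect them to be the only real obstacle. The puncture subtracts the largest common atom in \(H\), and the set of common atoms changes under restriction. For \(A_2^{\alpha,\mathrm{punct}}\), with the puncture in the \(\sigma\)-factor, the product is
\[
 \sigma(H,\mathfrak P_E)\,(\one_E\omega)(H)\,|H|^{-2(n-\alpha)/n}.
\]
Here \(\mathfrak P_E\subset\mathfrak P\), so the subtracted supremum can only become smaller. Consequently \(\sigma(H,\mathfrak P_E)\geq\sigma(H,\mathfrak P)\), which goes in the wrong direction. I would handle this by cases.

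Suppose the largest common \(\sigma\)-atom \(p\) of \(H\) in \(\mathfrak P\) is not in \(E\), or lies in \(E\setminus\mathfrak P_E\). Then \(\omega(\{p\})\) contributes nothing to \((\one_E\omega)(H)\). Write \(\sigma(H,\mathfrak P_E)\leq\sigma(H\setminus\{p\})+\sigma(\{p\})\). The first term gives \(\sigma(H,\mathfrak P)\,\omega(H)\), which is controlled by the original punctured constant. For the second term \(\sigma(\{p\})\,(\one_E\omega)(H)\), note that \(\one_E\omega\) has no atom at \(p\), so \((\one_E\omega)(H)\leq\omega(H\setminus\{p\})\). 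Then \(\sigma(\{p\})\,\omega(H\setminus\{p\})\leq\sigma(H)\,\omega(H,\mathfrak P)\), which is controlled by the starred punctured constant. This is the point where \(\mathfrak A\) must be the combined constant, since both punctured conditions are used.

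If instead \(p\in\mathfrak P_E\), the two punctures coincide and the estimate is immediate. The starred condition is handled symmetrically by the same two cases. Because the bounds are made quasicube by quasicube, the constant depends only on the normalizations relating \(|H|^{1/n}\) to \(\ell(H)\), that is, only on \(\Omega\).

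The remaining step is to check the unattained-supremum issue, using the attainment argument from the proof of \cref{lem:fill-hole}. I also need to check that \(\sigma(\{p\})\,\omega(H\setminus\{p\})\) is indeed bounded by the starred punctured quantity \(\sigma(H)\,\omega(H,\mathfrak P)\). This holds when \(p\) is the largest common \(\omega\)-atom. If it is not, I subtract the largest common \(\omega\)-atom \(q\) instead and argue with \(\omega(H\setminus\{q\})\). This requires a short additional split according to whether \(q=p\).
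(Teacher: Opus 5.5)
Your treatment of the one-tailed constants and of the term $\sigma(H,\mathfrak P)\,(\one_E\omega)(H)$ matches the paper, and your starred case is fine. (It is not really symmetric to the unstarred one: the puncture sits on the restricted measure, so it is directly monotone.)

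The gap is the term $\sigma(\{p\})\,(\one_E\omega)(H)$ with $p\notin E$. Subtracting the largest common $\omega$-atom $q$, as you propose, gives
\[
 \sigma(\{p\})\,(\one_E\omega)(H)
 \leq\sigma(H)\,\omega(H,\mathfrak P)+\one_E(q)\,\sigma(\{p\})\,\omega(\{q\}).
\]
Your ``split according to whether $q=p$'' only disposes of $q=p$, where $\one_E(q)=0$. When $q\in E$, necessarily $q\neq p$, and the cross term $\sigma(\{p\})\,\omega(\{q\})$ between two distinct atoms remains. No manipulation of punctures can bound it. Take $\sigma=\delta_p+\epsilon\delta_q$, $\omega=\epsilon\delta_p+\delta_q$, $|p-q|=1$, and $E=\R^n\setminus\{p\}$. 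Both original punctured constants are then $O(\epsilon)$. On a quasicube $H$ of side about one containing $p$ and $q$, however, the only restricted common atom is $q$, so $\sigma(H,\mathfrak P_E)\,(\one_E\omega)(H)=1\cdot 1$. Your remark that ``this is the point where both punctured conditions are used'' therefore misidentifies what is needed.

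The missing idea, which the paper uses, is a one-tailed condition with a hole on a quasicube separating the two atoms. Put $a=\Omega^{-1}p$, $b=\Omega^{-1}q$, $d=|a-b|_\infty$, and $H_0=\Omega\bigl(\prod_k[a_k-d/4,a_k+d/4)\bigr)$. Then $p\in H_0$, $q\notin H_0$, $|H_0|^{1/n}\approx_\Omega d$, and $|q-c_{H_0}|\lesssim_\Omega d$. Hence
\[
 \mathcal A_{2,\Omega}^{\alpha,*}(\sigma,\omega)
 \geq\frac{\sigma(H_0)}{|H_0|^{1-\alpha/n}}\,
 \mathcal P^\alpha(H_0,\one_{H_0^c}\omega)
 \gtrsim_\Omega\frac{\sigma(\{p\})\,\omega(\{q\})}{d^{2(n-\alpha)}}.
\]
Since $a,b\in\Omega^{-1}H$, we have $d\leq\ell(H)\approx_\Omega|H|^{1/n}$. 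This bounds the cross term after division by $|H|^{2(1-\alpha/n)}$. So the unstarred punctured estimate for the restricted pair genuinely needs the one-tailed part of $\mathfrak A_{2,\Omega}^\alpha$, not only the two punctured constants.
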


\begin{proof}
Put \(\omega_E=\one_E\omega\), let \(\mathfrak P\) be the common atoms
of \((\sigma,\omega)\), and let
\(\mathfrak P_E=\mathfrak P\cap E\).  The two one-tailed constants are
monotone:
\[
 \mathcal A_{2,\Omega}^\alpha(\sigma,\omega_E)
 \leq\mathcal A_{2,\Omega}^\alpha(\sigma,\omega),
 \qquad
 \mathcal A_{2,\Omega}^{\alpha,*}(\sigma,\omega_E)
 \leq\mathcal A_{2,\Omega}^{\alpha,*}(\sigma,\omega).
\]
It remains to consider the two punctured products
\[
 \frac{\sigma(Q,\mathfrak P_E)\omega_E(Q)}
 {|Q|^{2(1-\alpha/n)}}
 \quad\text{and}\quad
 \frac{\sigma(Q)\omega_E(Q,\mathfrak P_E)}
 {|Q|^{2(1-\alpha/n)}}.
\]
Fix an \(\Omega\)-quasicube \(Q\).  If
\(Q\cap\mathfrak P=\varnothing\), both products decrease under the
restriction.  We therefore assume that
\(Q\cap\mathfrak P\neq\varnothing\).

We first record an elementary fact.  If \(\mu(Q)<\infty\), the
\(\mu\)-masses of any nonempty collection of atoms in \(Q\) have a
maximum.  Indeed, if the supremum \(M>0\) were not attained, infinitely
many distinct atoms would have mass greater than \(M/2\), contradicting
\(\mu(Q)<\infty\).

Choose \(p\in Q\cap\mathfrak P\) such that \(\sigma(\{p\}) =\max_{q\in Q\cap\mathfrak P}\sigma(\{q\})\). If \(p\in E\), then \(p\) is also a largest common
\(\sigma\)-atom for the restricted pair.  Hence
\(\sigma(Q,\mathfrak P_E)=\sigma(Q,\mathfrak P)\), and the first
punctured product is bounded by
\(A_{2,\Omega}^{\alpha,\mathrm{punct}}(\sigma,\omega)\).

Suppose that \(p\notin E\).  Then \(\sigma(Q,\mathfrak P_E) \leq\sigma(Q,\mathfrak P)+\sigma(\{p\})\). After division by \(|Q|^{2(1-\alpha/n)}\), the term
\(\sigma(Q,\mathfrak P)\omega_E(Q)\) is bounded by
\(A_{2,\Omega}^{\alpha,\mathrm{punct}}(\sigma,\omega)\).
Choose \(r\in Q\cap\mathfrak P\) such that \(\omega(\{r\}) =\max_{q\in Q\cap\mathfrak P}\omega(\{q\})\). We write
\[
 \sigma(\{p\})\omega_E(Q)
 =
 \sigma(\{p\})
 \bigl[\omega_E(Q)-\one_E(r)\omega(\{r\})\bigr]
 +\one_E(r)\sigma(\{p\})\omega(\{r\}).
\]
The expression in brackets is at most
\(\omega(Q)-\omega(\{r\})=\omega(Q,\mathfrak P)\).  Since
\(\sigma(\{p\})\leq\sigma(Q)\), division by
\(|Q|^{2(1-\alpha/n)}\) shows that the first term is bounded by the
original starred punctured characteristic
\(A_{2,\Omega}^{\alpha,*,\mathrm{punct}}(\sigma,\omega)\).

If \(r=p\), the last term vanishes because \(p\notin E\).  It remains
to consider \(r\in E\) and \(r\neq p\).  Put \(a=\Omega^{-1}(p),\quad b=\Omega^{-1}(r),\quad d=|a-b|_\infty\), and let
\[
C=\prod_{k=1}^n[a_k-d/4,a_k+d/4),
 \qquad H=\Omega C.
\]
Then \(p\in H\), \(r\notin H\), and \(\ell(H)=d/2,\quad |H|^{1/n}\approx_\Omega d,\quad |p-r|\approx_\Omega d\). Therefore the starred one-tailed condition with a hole gives
\begin{align*}
 \mathcal A_{2,\Omega}^{\alpha,*}(\sigma,\omega)
 &\geq
 \frac{\sigma(H)}{|H|^{1-\alpha/n}}
 \mathcal P^\alpha(H,\one_{H^c}\omega)\gtrsim_\Omega
 \frac{\sigma(\{p\})\omega(\{r\})}
 {d^{2(n-\alpha)}}.
\end{align*}
Since \(p,r\in Q\), their preimages belong to the ordinary cube
\(\Omega^{-1}Q\).  Thus \(d\leq\ell(Q)\), while
\(|Q|^{1/n}\approx_\Omega\ell(Q)\).  It follows that
\[
 \frac{\sigma(\{p\})\omega(\{r\})}
 {|Q|^{2(1-\alpha/n)}}
 \lesssim_\Omega
 \mathcal A_{2,\Omega}^{\alpha,*}(\sigma,\omega).
\]
This proves the required estimate for the unstarred punctured
characteristic of the restricted pair.

The starred punctured characteristic is termwise monotone.  Indeed,
let \(r\) be a largest common \(\omega\)-atom of the full pair in
\(Q\).  If \(r\in E\), it remains a largest common \(\omega\)-atom
after restriction, and \(\omega_E(Q,\mathfrak P_E) =\omega_E(Q)-\omega(\{r\}) \leq\omega(Q)-\omega(\{r\}) =\omega(Q,\mathfrak P)\). If \(r\notin E\), then \(\omega_E(Q,\mathfrak P_E) \leq\omega_E(Q) \leq\omega(Q)-\omega(\{r\}) =\omega(Q,\mathfrak P)\). Multiplication by
\(\sigma(Q)/|Q|^{2(1-\alpha/n)}\) gives
\[
A_{2,\Omega}^{\alpha,*,\mathrm{punct}}(\sigma,\omega_E)
 \leq
A_{2,\Omega}^{\alpha,*,\mathrm{punct}}(\sigma,\omega).
\]
This proves \eqref{eq:A2-restriction}.
\end{proof}

\section{Proof of the main theorem}\label{sec:7}

\begin{proof}[Proof of \cref{thm:main}]
For \(n=1\), a compact regular curve is an interval, so
\cite[Theorem~9]{CurvePaper} applies directly.  We now assume
\(n\geq2\).
By duality, suppose that $\omega$ is supported on the regular
$C^{1,\mathrm{Dini}}$ curve $\Gamma=\Phi([a,b])$.

\medskip
\noindent\emph{Step 1: finite graph pieces.}
The goodness exponent \(\varepsilon\) fixed after
\eqref{eq:strong-energy-definition} and the slope bound for the graph
charts are independent parameters.  Put \(\varepsilon_{\rm sl}:=\frac12\varepsilon_{n,\alpha}\). We follow the order of choices in \cref{lem:nested-arcs}.  First choose
its finite graph-chart family, before choosing \(A\).  In the rigid
coordinates of a chart indexed by \(\nu\),
write the two graph-coordinate intervals used for the extension as \(\widehat J_\nu^0\Subset J_\nu^*, \ \dist(\widehat J_\nu^0,\R\setminus J_\nu^*) \geq\kappa|J_\nu^*|\). Let \(\psi_\nu^*\) denote the local graph function on \(J_\nu^*\).
Translate in the transverse direction so that
\(\psi_\nu^*(t_\nu)=0\) at some \(t_\nu\in J_\nu^*\).  Uniform
continuity of
\(\Phi'\) makes the chart small enough that \(\|D\psi_\nu^*\|_\infty<\varepsilon_{\rm sl}/C_\kappa\), where \(C_\kappa\) is the constant in
\cref{lem:graph-extension}.
\Cref{lem:graph-modulus} gives \(\vartheta_{\psi_\nu^*}(r) \leq C\vartheta_\Phi(Cr), \quad 0<r\leq c\). The part of the Dini integral over \([c,1]\) is finite because
\(D\psi_\nu^*\) is bounded.
\Cref{lem:graph-extension} gives
\(\psi_\nu\in C_c^{1,\mathrm{Dini}}(\R;\R^{n-1})\), equal to
\(\psi_\nu^*\) on \(\widehat J_\nu^0\), such that
\begin{equation}\label{eq:small-extended-graph}
 \|D\psi_\nu\|_\infty<\varepsilon_{n,\alpha},\qquad
 \|\psi_\nu\|_\infty
 \leq C_\kappa\varepsilon_{\rm sl}|J_\nu^*|.
\end{equation}
Let \(\Psi_\nu(x_1,x')=(x_1,x'+\psi_\nu(x_1))\). These finitely many maps fix the constants \(K\) and \(C_{\rm ax}\) in
\eqref{eq:axis-family-constant}.  Now choose \(A>2C_{\rm ax}\) and form
the final arcs in \cref{lem:nested-arcs}.  Let \(\Psi_i\) be the fixed
map and let \(\mathcal U_i\) be the rigid motion assigned there to
\(\Gamma_i\).
The map \(\Psi_i\) sends the relevant interval of \(L_0\) onto the graph
arc, and \(\Psi_i,\Psi_i^{-1}\) are globally biLipschitz.

Put \(\omega_i=\one_{\Gamma_i}\omega\).  The half-open convention in
\cref{lem:nested-arcs}, including the cyclic convention for a closed
curve, gives
\begin{equation}\label{eq:measure-decomposition}
 \omega=\sum_{i=1}^N\omega_i.
\end{equation}

\medskip
\noindent\emph{Step 2: testing after straightening.}
Fix \(i\).  Pass to its rigid graph coordinates by defining
\[
 \sigma_i^{\rm rig}=(\mathcal U_i)_\#\sigma,
 \qquad
 \omega^{(i),\rm rig}=(\mathcal U_i)_\#\omega,
 \qquad
 \omega_i^{\rm rig}=(\mathcal U_i)_\#\omega_i.
\]
For Steps 2--4 with this fixed \(i\), we denote these three measures
again by \(\sigma\), \(\omega\), and \(\omega_i\).  We restore the
original coordinates before summing in \(i\).  Let \(R_i\) be the
diagonal rotation fixed for this chart in the proof of
\cref{lem:nested-arcs}, and put \(\Omega_i=\Psi_i^{-1}\circ R_i\). The fractional Riesz vector is covariant under rigid motions.  Its
Euclidean vector norm and vector testing are unchanged, while the
combined Muckenhoupt constants are comparable with dimensional
constants.  A cube \(R_iQ\) in the rigid graph coordinates
corresponds to the rotated cube \(\mathcal U_i^{-1}(R_iQ)\) in the
original coordinates; the translation in \(\mathcal U_i\) only changes
the center.  Therefore the hypothesis over all rotated cubes gives
unrestricted testing for the relabeled pair over
\(R_i\mathcal Q^n\).  Put
\[
 \widetilde\sigma_i=\Psi_i^*\sigma,
 \qquad
 \widetilde\omega^{(i)}=\Psi_i^*\omega,
 \qquad
 \widetilde\omega_i=\Psi_i^*\omega_i.
\]
Then \(\widetilde\omega_i\) is supported on \(L_0\), and \(\Psi_i^{-1}(R_i\mathcal Q^n)=\Omega_i\mathcal Q^n\). \Cref{prop:change-variable} gives unrestricted conformal testing for
\((\widetilde\sigma_i,\widetilde\omega^{(i)})\) over this family.  The
cutoff comparison is uniform in the inner and outer
truncation radii.  The combined Muckenhoupt comparison is
\eqref{eq:A2-pullback}.

\Cref{prop:localized-testing} gives testing for the restricted
pulled-back pair over all testing quasicubes.  The measure identity
\eqref{eq:exact-measure-localization} includes atoms at partition
boundaries.  Thus
\begin{equation}\label{eq:localized-testing}
 \mathfrak T_{\bfR_{\Psi_i}^{\alpha,n},\Omega_i}
   (\widetilde\sigma_i,\widetilde\omega_i)
 +\mathfrak T_{\bfR_{\Psi_i}^{\alpha,n},\Omega_i}^{\mathrm{dual}}
   (\widetilde\sigma_i,\widetilde\omega_i)
 \leq C\left(
 \sqrt{\mathfrak A_2^\alpha}
 +\mathfrak T_{\bfR^{\alpha,n}}
 +\mathfrak T_{\bfR^{\alpha,n}}^{\mathrm{dual}}
 \right).
\end{equation}
\Cref{lem:A2-restriction} and the combined pullback estimate also give
\[
 \mathfrak A_{2,\Omega_i}^\alpha
 (\widetilde\sigma_i,\widetilde\omega_i)
 \leq C\mathfrak A_2^\alpha(\sigma,\omega).
\]
Applying \cref{prop:change-variable} in the reverse direction gives
testing for \((\sigma,\omega_i)\) over \(R_i\mathcal Q^n\).  Write
\(\mathfrak T_i\) and \(\mathfrak T_i^{\mathrm{dual}}\) for these two
restricted testing constants.  Then
\begin{equation}\label{eq:restricted-original-testing}
 \mathfrak T_i+\mathfrak T_i^{\mathrm{dual}}
 \leq C\left(
 \sqrt{\mathfrak A_2^\alpha}
 +\mathfrak T_{\bfR^{\alpha,n}}
 +\mathfrak T_{\bfR^{\alpha,n}}^{\mathrm{dual}}
 \right).
\end{equation}

\medskip
\noindent\emph{Step 3: energy on the line and transfer back.}
Apply \cref{prop:C1-energy-reversal}, using
\eqref{eq:small-extended-graph} and
\eqref{eq:localized-testing}, with the quasicube map
\(\Omega_i=\Psi_i^{-1}\circ R_i\).  The pulled-back pair satisfies the
forward and dual strong-energy conditions of
\cite[Definition~5]{EnergyCommon}:
\begin{align*}
 &\cE_{\alpha,\Omega_i}^{\mathrm{strong}}
   (\widetilde\sigma_i,\widetilde\omega_i)
 +\cE_{\alpha,\Omega_i}^{\mathrm{strong,dual}}
   (\widetilde\sigma_i,\widetilde\omega_i)
 \leq C\left(
 \sqrt{\mathfrak A_2^\alpha}
 +\mathfrak T_{\bfR^{\alpha,n}}
 +\mathfrak T_{\bfR^{\alpha,n}}^{\mathrm{dual}}
 \right).
\end{align*}
This includes the supremum over every dyadic grid and subpartition, the
alternate-top expression, and every refinement index \(\ell\).
\Cref{prop:energy-invariance}, with \(F=\Psi_i,\quad \widetilde\Omega_i=F^{-1}\circ R_i=\Omega_i\), transfers these estimates to
\[
 \cE_{\alpha,R_i}^{\mathrm{strong}}(\sigma,\omega_i)
 +\cE_{\alpha,R_i}^{\mathrm{strong,dual}}(\sigma,\omega_i)
 \leq C\left(
 \sqrt{\mathfrak A_2^\alpha}
 +\mathfrak T_{\bfR^{\alpha,n}}
 +\mathfrak T_{\bfR^{\alpha,n}}^{\mathrm{dual}}\right).
\]
We can therefore apply the \(T1\) theorem in the original coordinates.

\medskip
\noindent\emph{Step 4: apply the $T1$ theorem to the smooth kernel.}
Fix \(i\) and put \(\Omega=R_i\).  For the pair
\((\sigma,\omega_i)\), the preceding steps give
\begin{align*}
 &\mathcal A_{2,R_i}^\alpha
 +\mathcal A_{2,R_i}^{\alpha,*}
 +A_{2,R_i}^{\alpha,\mathrm{punct}}
 +A_{2,R_i}^{\alpha,*,\mathrm{punct}}<\infty,\\
 &\cE_{\alpha,R_i}^{\mathrm{strong}}
 +\cE_{\alpha,R_i}^{\mathrm{strong,dual}}<\infty.
\end{align*}
The energy constants are uniform over every dyadic quasigrid
\(R_i\mathcal D\).  They include the subpartition, alternate-top, and
refined families in \cite[Definition~5]{EnergyCommon}.

For every \(Q\in R_i\mathcal Q^n\) and \(1\leq j\leq n\), vector
testing gives
\begin{align*}
 \int_Q|R_{j,\sigma}^{\alpha,n}(\one_Q)|^2\,d\omega_i
 &\leq\mathfrak T_i^2\sigma(Q),\\
 \int_Q|R_{j,\omega_i}^{\alpha,n,*}(\one_Q)|^2\,d\sigma
 &\leq(\mathfrak T_i^{\mathrm{dual}})^2\omega_i(Q).
\end{align*}
Thus both scalar quasicube testing conditions hold, and
\eqref{eq:restricted-original-testing} gives their required bounds.
The constants in these two displays use the tangent-line truncations.
Denote the corresponding scalar smooth testing constants by
\(\mathfrak T_{i,j}^{s}\) and
\(\mathfrak T_{i,j}^{s,\mathrm{dual}}\).  By
\eqref{eq:smooth-tangent-comparison}, they satisfy
\[
 \mathfrak T_{i,j}^{s}
 +\mathfrak T_{i,j}^{s,\mathrm{dual}}
 \leq C\left(
 \mathfrak T_i+\mathfrak T_i^{\mathrm{dual}}
 +\sqrt{\mathfrak A_2^\alpha}\right).
\]
Here we apply \cref{lem:tangent-smooth-comparison} to
\(K_j^{\alpha,n}\), with a constant independent of \(j\).

The scalar Riesz kernel in the current rigid coordinates is \(K_j^{\alpha,n}(x,y) =c_{\alpha,n}\frac{y_j-x_j}{|y-x|^{n+1-\alpha}}\). Direct differentiation gives
\begin{align*}
 |K_j^{\alpha,n}(x,y)|
 &\leq C|x-y|^{\alpha-n},\\
 |\nabla_xK_j^{\alpha,n}(x,y)|
 +|\nabla_yK_j^{\alpha,n}(x,y)|
 &\leq C|x-y|^{\alpha-n-1}.
\end{align*}
If \(|x-x'|\leq|x-y|/2\), the mean value theorem applied to the second
derivatives gives
\[
 |\nabla_xK_j^{\alpha,n}(x,y)-\nabla_xK_j^{\alpha,n}(x',y)|
 \leq C|x-x'|\,|x-y|^{\alpha-n-2}.
\]
The analogous estimate holds in the \(y\)-variable and for the adjoint
kernel.  Thus this is a standard \(\alpha\)-fractional
Calder\'on--Zygmund kernel of smoothness exponent \(\delta=1\).  Its
constants depend only on \(n\) and \(\alpha\).  Therefore
\cite[Theorem~2]{GoodLambda}, with
\(T^\alpha=R_j^{\alpha,n}\) and \(\Omega=R_i\), applies to
\((\sigma,\omega_i)\).  Its hypotheses are the four Muckenhoupt
conditions, the two quasitesting conditions, and the two strong
quasienergy conditions verified above.  It gives
\[
 \mathfrak N_{R_j^{\alpha,n}}^{s}(\sigma,\omega_i)
 \leq C\left(
 \sqrt{\mathfrak A_2^\alpha}
 +\mathfrak T_{\bfR^{\alpha,n}}
 +\mathfrak T_{\bfR^{\alpha,n}}^{\mathrm{dual}}\right).
\]
By \eqref{eq:smooth-tangent-comparison}, the tangent-line norm satisfies
\[
 \mathfrak N_{R_j^{\alpha,n}}(\sigma,\omega_i)
 \leq C\left(
 \sqrt{\mathfrak A_2^\alpha}
 +\mathfrak T_{\bfR^{\alpha,n}}
 +\mathfrak T_{\bfR^{\alpha,n}}^{\mathrm{dual}}\right).
\]
Since
\[
 \|\bfR_\sigma^{\alpha,n}f\|_{L^2(\omega_i;\ell^2)}^2
 =\sum_{j=1}^n
 \|R_{j,\sigma}^{\alpha,n}f\|_{L^2(\omega_i)}^2,
\]
we obtain
\begin{equation*}
 \mathfrak N_{\bfR^{\alpha,n}}(\sigma,\omega_i)
 \leq C\left(
 \sqrt{\mathfrak A_2^\alpha}
 +\mathfrak T_{\bfR^{\alpha,n}}
 +\mathfrak T_{\bfR^{\alpha,n}}^{\mathrm{dual}}
 \right).
\end{equation*}
Returning to the original coordinates gives the same estimate for
\((\sigma,\omega_i)\).  By \eqref{eq:measure-decomposition},
\begin{align*}
 \|\bfR^{\alpha,n}_\sigma f\|_{L^2(\omega)}^2
 &=\sum_{i=1}^N
   \|\bfR^{\alpha,n}_\sigma f\|_{L^2(\omega_i)}^2
 \leq C_N\left(
 \sqrt{\mathfrak A_2^\alpha}
 +\mathfrak T_{\bfR^{\alpha,n}}
 +\mathfrak T_{\bfR^{\alpha,n}}^{\mathrm{dual}}
 \right)^2
 \|f\|_{L^2(\sigma)}^2.
\end{align*}
This proves sufficiency.

We finish with necessity.  Fix any tangent-line truncation
\(\bfR_{\varepsilon,R}^{\alpha,n,\mathrm{tl}}\) used in the definition of the
operator norm.  Its kernel is zero on the diagonal.  Hence, for every
rotated cube \(Q\),
\[
 \int_Q
 \left|\bfR_{\varepsilon,R}^{\alpha,n,\mathrm{tl}}(\one_Q\sigma)\right|^2
d\omega
 \leq
 \mathfrak N_{\bfR^{\alpha,n}}^2\sigma(Q).
\]
For each \(1\leq j\leq n\), the \(j\)-th truncated scalar component
has operator norm at most \(\mathfrak N_{\bfR^{\alpha,n}}\), and
Hilbert space duality gives the same bound for its adjoint.  Summing the
squared \(L^2(\sigma)\) norms of the \(n\) adjoint components gives
\[
 \int_Q
 \left|
 \bigl(\bfR_{\varepsilon,R}^{\alpha,n,\mathrm{tl}}\bigr)^*
 (\one_Q\omega)\right|^2d\sigma
 \leq
n\mathfrak N_{\bfR^{\alpha,n}}^2\omega(Q).
\]
These estimates are uniform in \(\varepsilon,R\).  Thus
\[
 \mathfrak T_{\bfR^{\alpha,n}}
 +\mathfrak T_{\bfR^{\alpha,n}}^{\mathrm{dual}}
 \leq(1+\sqrt n)\mathfrak N_{\bfR^{\alpha,n}}.
\]

The necessity of the two one-tailed conditions with holes and the two
punctured conditions is proved in
\cite[Theorem~5(2)]{CurvePaper}; see also
\cite[Sections~2.2 and 2.2.1]{CurvePaper}.  That proof has no regularity
assumption on the support of either measure.  It applies to the
tangent-line truncations used here.  Each separated-set comparison in the
cited proof is first made on a bounded piece.  Choose
\(\varepsilon\) smaller than the separation and \(R\) larger than the
diameter of that piece.  The tangent-line multiplier is then one on the
relevant product.  The estimates are independent of \(\varepsilon\) and
\(R\).  Exhausting an unbounded tail by bounded pieces and using monotone
convergence for the positive Poisson expressions gives the full
one-tailed conditions.

For completeness, the fractional Riesz vector has the strong
ellipticity used for the one-tailed conditions.  Cover the unit sphere by
finitely many cones.  For each cone, one can choose a unit vector \(e\)
such that \(\left| e\cdot\bfK^{\alpha,n}(x,y) \right| \geq c_{n,\alpha}|x-y|^{\alpha-n}\) whenever \(y-x\) belongs to that cone, and
\(e\cdot\bfK^{\alpha,n}(x,y)\) does not change sign there.  The
punctured argument removes a largest common
point mass in each cube.  Since every truncated kernel is zero on the
diagonal, a common atom paired with itself makes no contribution.  Thus
the common-point-mass proof in
\cite[Section~2.2.1]{CurvePaper} gives \(\mathfrak A_2^\alpha \leq C\mathfrak N_{\bfR^{\alpha,n}}^2\).
The testing estimates are uniform over all rotations.  If
\(\rho(x)=Ux+a\), where \(U\) is orthogonal, then \(\bfK^{\alpha,n}(\rho x,\rho y) =U\bfK^{\alpha,n}(x,y)\). The Euclidean norm of the vector kernel is unchanged.  Equivalently,
one may apply \cite[Theorem~5(2)]{CurvePaper} with each rigid motion
as the quasicube map; all its biLipschitz constants are one.  The
estimates are therefore uniform over all rotated cubes.  Together with
the testing estimates above, this proves necessity and the opposite
inequality in \eqref{eq:norm-comparison}.
\end{proof}

\section{Poisson equations with curve-supported sources}\label{sec:8}
\label{sec:Poisson-application}

Poisson equations with a source concentrated on a curve occur in models
of wells, thin heat sources, and mixed-dimensional flow.  Such equations
have been studied by weighted Sobolev and singularity-splitting methods;
see
\cite{GjerdeKumarNordbottenWohlmuth2019,
GjerdeKumarNordbotten2021,Ojea2024}.  We apply \cref{thm:main} to this
problem and obtain a criterion for finite observation measures.  We then
determine the range of exponents for distance power measures.

\subsection{Green potentials and the two-weight criterion}

Let \(n\geq2\), let \(\Omega\subset\R^n\) be a bounded \(C^2\)
domain, and let \(\Gamma\Subset\Omega\) be a compact regular
\(C^{1,\mathrm{Dini}}\) curve.  Let \(G_\Omega(x,y)\) be the Dirichlet
Green function for \(-\Delta\) in \(\Omega\), with \(-\Delta_xG_\Omega(x,y)=\delta_y, \quad G_\Omega(x,y)=0\quad\text{for }x\in\partial\Omega\). The fundamental solution \(E_n\) of \(-\Delta\) in \(\R^n\) is
\[
E_n(z)=
 \begin{cases}
  \displaystyle
  \frac{|z|^{2-n}}{(n-2)|\mathbb S^{n-1}|},&n\geq3,\\[6pt]
  \displaystyle-\frac{1}{2\pi}\log|z|,&n=2.
 \end{cases}
\]
Write \(G_\Omega(x,y)=E_n(x-y)-H_\Omega(x,y)\). For every \(y\in\Gamma\), the function \(H_\Omega(\cdot,y)\) is harmonic
in \(\Omega\) and has boundary values \(E_n(\cdot-y)\).  Since
\(\dist(\Gamma,\partial\Omega)>0\), these boundary data are restrictions
of functions with uniformly bounded \(C^2\) norms in a fixed
neighbourhood of \(\partial\Omega\).  Boundary estimates for harmonic
functions give
\[
 \sup_{x\in\overline\Omega,\ y\in\Gamma}
 |\nabla_xH_\Omega(x,y)|\leq C_{\Omega,\Gamma}.
\]
There is a nonzero dimensional constant \(a_n\) such that \(\nabla_xG_\Omega(x,y)=a_n\bfK^{1,n}(x,y)+B_\Omega(x,y)\),
where \(B_\Omega(x,y):=-\nabla_xH_\Omega(x,y)\). Thus the Green gradient is the \(1\)-fractional Riesz vector plus an
operator with a bounded kernel.

Let \(\sigma\) be a finite positive Borel measure supported on
\(\Gamma\), and let \(\omega\) be a finite positive Borel measure on
\(\overline\Omega\).  We extend both measures by zero to \(\R^n\).  For
\(f\in L^2(\sigma)\), put
\[
u_f(x):=\int_\Gamma G_\Omega(x,y)f(y)\,d\sigma(y).
\]
Since \(f\,\sigma\) has finite total variation, local integrability
of the fundamental solution and Fubini's theorem show that this integral
defines a locally integrable function almost everywhere in \(\Omega\).
It is smooth off \(\Gamma\), extends continuously to zero on
\(\partial\Omega\), and satisfies \(-\Delta u_f=f\,\sigma\quad\text{in }\mathcal D'(\Omega), \quad u_f|_{\partial\Omega}=0\). At every point outside \(\Gamma\), differentiation under the integral
gives
\[
 \nabla u_f(x)
 =a_n\bfR^{1,n}_\sigma f(x)+\mathbf S_{\Omega,\sigma}f(x),
 \qquad
 \mathbf S_{\Omega,\sigma}f(x)
 :=\int_\Gamma B_\Omega(x,y)f(y)\,d\sigma(y).
\]

When \(\omega\) charges \(\Gamma\), we use truncated Green gradients.
With \(\alpha=1\), let
\(\bfK_{\varepsilon,R}^{\mathrm{tl}}\) be the tangent-line truncated
kernel defined in Section~3, and put
\[
 \bfR_{\varepsilon,R,\sigma}^{1,n,\mathrm{tl}}f(x)
 :=\int_\Gamma
 \bfK_{\varepsilon,R}^{\mathrm{tl}}(x,y)f(y)\,d\sigma(y).
\]
The truncated Green gradient is
\[
 \mathbf G_{\Omega,\varepsilon,R,\sigma}f
 :=a_n\bfR_{\varepsilon,R,\sigma}^{1,n,\mathrm{tl}}f
   +\mathbf S_{\Omega,\sigma}f.
\]
The smooth term is not truncated.  If \(\omega(\Gamma)=0\), this family
converges \(\omega\)-almost everywhere to the ordinary gradient above as
\(\varepsilon\downarrow0\) and \(R\uparrow\infty\).

Let \(\mathfrak N_{\mathbf G_\Omega}\) be the least constant such that
\[
 \sup_{0<\varepsilon<R}
 \|\mathbf G_{\Omega,\varepsilon,R,\sigma}f\|_{L^2(\omega)}
 \leq
 \mathfrak N_{\mathbf G_\Omega}\|f\|_{L^2(\sigma)}.
\]
Define the forward Green testing constant by
\[
 \mathfrak T_{\mathbf G_\Omega}^2
 :=\sup_Q\sup_{0<\varepsilon<R}
 \frac{1}{\sigma(Q)}
 \int_Q
 \left|
 \mathbf G_{\Omega,\varepsilon,R,\sigma}(\one_Q)
 \right|^2d\omega,
\]
where the quotient is zero when \(\sigma(Q)=0\), and where \(Q\) runs
over all rotated cubes.  For the dual constant, let $G_{\Omega,j,\varepsilon,R}$ denote
the $j$-th component of the truncated Green kernel and put
\[
 (\mathfrak T_{\mathbf G_\Omega}^{\mathrm{dual}})^2
 :=\sup_Q\sup_{0<\varepsilon<R}\frac1{\omega(Q)}
 \sum_{j=1}^n\int_Q
 |(G_{\Omega,j,\varepsilon,R})_\omega^*(\one_Q)|^2\,d\sigma.
\]
The quotient is zero when $\omega(Q)=0$.

\begin{theorem}
\label{thm:Green-gradient}
Under the assumptions above, the uniformly truncated estimate
\[
 \sup_{0<\varepsilon<R}
 \|\mathbf G_{\Omega,\varepsilon,R,\sigma}f\|_{L^2(\omega)}
 \leq C\|f\|_{L^2(\sigma)}
\]
holds if and only if the following conditions hold:
\begin{enumerate}
\item the two one-tailed \(\mathcal A_2^1\) conditions with holes;
\item the two punctured \(A_2^1\) conditions;
\item the forward and dual Green testing conditions over all rotated
cubes.
\end{enumerate}
If \(M_\Omega(\sigma,\omega) :=\bigl(\sigma(\Gamma)\omega(\overline\Omega)\bigr)^{1/2}\), then
\[
 \mathfrak N_{\mathbf G_\Omega}+M_\Omega
 \approx
 \sqrt{\mathfrak A_2^1}
 +\mathfrak T_{\mathbf G_\Omega}
 +\mathfrak T_{\mathbf G_\Omega}^{\mathrm{dual}}
 +M_\Omega.
\]
The comparison constants may also depend on \(\Omega\), on
\(\dist(\Gamma,\partial\Omega)\), and on the curve constants in
\cref{thm:main}.
\end{theorem}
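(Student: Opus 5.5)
The plan is to reduce everything to \cref{thm:main} with $\alpha=1$ by treating $\mathbf S_{\Omega,\sigma}$ as a bounded perturbation. The kernel $B_\Omega$ is bounded by $C_{\Omega,\Gamma}$ on $\overline\Omega\times\Gamma$, and both measures are finite. Cauchy--Schwarz therefore gives
\[
\|\mathbf S_{\Omega,\sigma}f\|_{L^2(\omega)}\leq C_{\Omega,\Gamma}\,\omega(\overline\Omega)^{1/2}\sigma(\Gamma)^{1/2}\|f\|_{L^2(\sigma)}=C M_\Omega\|f\|_{L^2(\sigma)}.
\]
The same bound holds for each scalar adjoint component from $L^2(\omega)$ to $L^2(\sigma)$. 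This term is untruncated, so the bound is uniform in $\varepsilon,R$. In particular,
\[
\bigl|\mathfrak N_{\mathbf G_\Omega}-|a_n|\,\mathfrak N_{\bfR^{1,n}}(\sigma,\omega)\bigr|\leq CM_\Omega .
\]
Testing on a rotated cube $Q$ reduces to the operator bound applied to $\one_Q$ and then restricted to $Q$. Hence
\[
\bigl|\mathfrak T_{\mathbf G_\Omega}-|a_n|\mathfrak T_{\bfR^{1,n}}\bigr|\leq CM_\Omega .
\]
For the dual constants, the adjoint of $\mathbf G_{\Omega,\varepsilon,R}$ is componentwise $a_n(R^{1,n}_{j,\varepsilon,R})^*+S_{\Omega,j}^*$. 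The same argument, together with $\sum_j$ over $n$ components, gives
\[
\bigl|\mathfrak T^{\mathrm{dual}}_{\mathbf G_\Omega}-|a_n|\mathfrak T^{\mathrm{dual}}_{\bfR^{1,n}}\bigr|\leq C_nM_\Omega .
\]
Here the dual Riesz testing constant should be understood as the sum over components, which is how it enters \cref{thm:main}.

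Next I apply \cref{thm:main} to the pair $(\sigma,\omega)$, with $\sigma$ supported on the regular $C^{1,\mathrm{Dini}}$ curve $\Gamma$ and $\alpha=1<n$. The theorem gives
\[
\mathfrak N_{\bfR^{1,n}}\approx\sqrt{\mathfrak A^1_2}+\mathfrak T_{\bfR^{1,n}}+\mathfrak T^{\mathrm{dual}}_{\bfR^{1,n}}.
\]
Substituting the perturbation estimates from the first step and adding $M_\Omega$ to both sides yields the displayed comparison
\[
\mathfrak N_{\mathbf G_\Omega}+M_\Omega\approx\sqrt{\mathfrak A^1_2}+\mathfrak T_{\mathbf G_\Omega}+\mathfrak T^{\mathrm{dual}}_{\mathbf G_\Omega}+M_\Omega ,
\]
with constants depending on $a_n$, $C_{\Omega,\Gamma}$ and the curve constants. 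Since $M_\Omega<\infty$ for finite measures, the qualitative equivalence follows immediately. Boundedness of the Green gradient holds if and only if the four Muckenhoupt conditions and the two Green testing conditions are finite.

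The step needing the most care is making sure that $\mathfrak A^1_2$ does not secretly require control by $M_\Omega$ in the wrong direction. The necessity half of \cref{thm:main} bounds $\mathfrak A^1_2$ by $\mathfrak N_{\bfR^{1,n}}^2$, and $\mathfrak N_{\bfR^{1,n}}$ is at most $|a_n|^{-1}(\mathfrak N_{\mathbf G_\Omega}+CM_\Omega)$, so this is consistent with the stated form.

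A second point to check is that the perturbation operator $\mathbf S_{\Omega,\sigma}$ is well defined $\omega$-a.e., including on $\Gamma$ when $\omega$ charges the curve. I will also note that the uniform $C^2$ boundary data giving $\sup|\nabla_xH_\Omega|\le C_{\Omega,\Gamma}$ depend only on $\dist(\Gamma,\partial\Omega)$ and $\Omega$, which explains the stated dependence of constants. Finally, rotated cubes $Q$ may extend outside $\overline\Omega$. Because $\omega$ vanishes there, this causes no issue.
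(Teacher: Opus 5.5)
Your proposal is correct and follows the paper's proof essentially step for step. You bound $\mathbf S_{\Omega,\sigma}$ and its transpose by $C_{\Omega,\Gamma}M_\Omega$ via Cauchy--Schwarz, transfer this to the norm and to the forward and (componentwise-summed) dual testing constants by the triangle inequality uniformly in the truncations, and then invoke \cref{thm:main} with $\alpha=1$; your additional checks (the direction of the $\mathfrak A_2^1$ bound, cubes leaving $\overline\Omega$, and the qualitative equivalence from $M_\Omega<\infty$) are consistent with that argument and need nothing further.
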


\begin{proof}
The bound for \(B_\Omega\) and Cauchy--Schwarz give \(|\mathbf S_{\Omega,\sigma}f(x)| \leq C_{\Omega,\Gamma}\sigma(\Gamma)^{1/2} \|f\|_{L^2(\sigma)}\). Consequently,
\[
 \|\mathbf S_{\Omega,\sigma}f\|_{L^2(\omega)}
 \leq C_{\Omega,\Gamma}M_\Omega(\sigma,\omega)
       \|f\|_{L^2(\sigma)}.
\]
The transposed operator satisfies the same estimate.  For every rotated
cube \(Q\), we also have
\[
 \begin{split}
 \int_Q|\mathbf S_{\Omega,\sigma}(\one_Q)|^2d\omega
 &\leq C_{\Omega,\Gamma}^2\sigma(Q)^2\omega(Q)\\
 &\leq C_{\Omega,\Gamma}^2M_\Omega(\sigma,\omega)^2\sigma(Q).
 \end{split}
\]
After interchanging \(\sigma\) and \(\omega\), the same calculation
gives the dual testing estimate.

It follows that
\[
 \begin{split}
 \mathfrak N_{\mathbf G_\Omega}
 &\leq |a_n|\mathfrak N_{\bfR^{1,n}}
       +C_{\Omega,\Gamma}M_\Omega,\\
 |a_n|\mathfrak N_{\bfR^{1,n}}
 &\leq \mathfrak N_{\mathbf G_\Omega}
       +C_{\Omega,\Gamma}M_\Omega.
 \end{split}
\]
The same two comparisons hold for the forward and dual testing
constants.  We now apply \cref{thm:main} with \(\alpha=1\).  It gives
the three stated conditions and the quantitative comparison.
\end{proof}

\subsection{Distance power measures}

We next consider a volume measure that is singular or degenerate at the
curve.  Put \(d_\Gamma(x):=\dist(x,\Gamma), \quad U_{\rho_0}:=\{x\in\Omega:d_\Gamma(x)<\rho_0\}\), where \(\rho_0>0\) is small enough that
\(\overline U_{\rho_0}\subset\Omega\).

\begin{corollary}
\label{cor:bounded-distance-power}
Let \(d\sigma=ds_\Gamma=d\mathcal H^1|_\Gamma\), let
\(\beta>-(n-1)\), and let \(d\omega_\beta(x) :=\one_{U_{\rho_0}}(x)d_\Gamma(x)^\beta\,dx\). Then \(\|\nabla u_f\|_{L^2(\omega_\beta)} \leq C_\beta\|f\|_{L^2(ds_\Gamma)}\) holds for every \(f\in L^2(ds_\Gamma)\) if and only if \(\beta>n-3\). Here \(u_f\) is the Dirichlet solution of \(-\Delta u_f=f\,ds_\Gamma\quad\text{in }\Omega, \quad u_f=0\quad\text{on }\partial\Omega\). In particular, for a curve source in \(\R^3\), the exact range is
\(\beta>0\).  At \(\beta=0\), the test density used below gives logarithmic
divergence.
\end{corollary}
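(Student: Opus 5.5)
We argue directly from the kernel decomposition $\nabla_xG_\Omega=a_n\bfK^{1,n}+B_\Omega$, rather than verifying the hypotheses of \cref{thm:Green-gradient}. Since $\beta>-(n-1)$, the measure $\omega_\beta$ is finite and $\omega_\beta(\Gamma)=0$. Hence $\nabla u_f$ is the ordinary gradient $\omega_\beta$-a.e. The smooth part $\mathbf S_{\Omega,\sigma}$ is bounded by $C(\sigma(\Gamma)\omega_\beta(\overline\Omega))^{1/2}$, as in the proof of \cref{thm:Green-gradient}. So only the Riesz part matters.

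\emph{Sufficiency.} Drop cancellation and use $|\bfK^{1,n}(x,y)|\le C|x-y|^{1-n}$. Cover $U_{\rho_0}$, after shrinking $\rho_0$, by finitely many pieces adapted to the graph charts of \cref{lem:nested-arcs}. In each piece use coordinates $x=\Psi_i(t,z)$ with $t\in\R$ and $z\in\R^{n-1}$. Because $\Psi_i$ is biLipschitz with small slope, $d_\Gamma(x)\approx|z|$, $dx\approx dt\,dz$ and $ds_\Gamma\approx dt$. For $x$ in the tube and $y=\Psi_i(s,0)$ on the arc we have $|x-y|\approx |z|+|t-s|$. Points of other arcs lie at distance $\gtrsim 1$ by \eqref{eq:tube-localization}, so they contribute a bounded kernel. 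Writing $r=|z|$, the main term is
\[
 \int|f(s)|\,(r+|t-s|)^{1-n}\,ds .
\]
By Young's inequality in $t$, its $L^2(dt)$ norm is at most $k_n(r)\|f\|_{L^2}$, where $k_n(r)=\int_{|s|\le C}(r+|s|)^{1-n}ds$. Thus $k_n(r)\approx r^{2-n}$ for $n\ge3$ and $k_2(r)\approx\log(e/r)$. Integrating in $z$ in polar form gives the bound
\[
 \|f\|^2\int_0^{\rho_0}k_n(r)^2\,r^{\beta+n-2}\,dr .
\]
This is finite exactly when $\beta>n-3$; for $n=2$ the extra $\log^2$ factor does not change this, since then $\beta>-1$.

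\emph{Necessity and the logarithmic endpoint.} Fix an arc with small slope and take $f=\one_{\Gamma_i}$, or a smooth bump on it. For $x=\Psi_i(t,z)$ with $t$ in the middle third of the arc, let $\nu=\nu(x)$ be the unit vector in the transverse direction of $z$. We lower-bound the single component $\nu\cdot\nabla u_f(x)$.

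By \cref{lem:Taylor-Dini} applied to $\psi_i$, the transverse part of $x-y$ equals $r\nu$ plus a vector of size at most $|s-t|\vartheta_{\psi_i}(|s-t|)$, plus a tangential correction of size $\|\psi_i'\|_\infty|s-t|$ that the choice of chart makes small. The main contribution is
\[
 c\int r\,(r^2+(s-t)^2)^{-n/2}\,ds=c'r^{2-n}.
\]
The error terms are bounded by
\[
 C\int_{|s|\le\delta}|s|\,\vartheta(|s|)\,(r+|s|)^{-n}\,ds+C\varepsilon_0 r^{2-n}+C .
\]
For $n\ge3$ this is at most $\bigl(\vartheta(\delta)+\varepsilon_0\bigr)r^{2-n}+C$. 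For $n=2$ it is at most $\int_0^\delta\vartheta(s)\,ds/s+\varepsilon_0+C$, which is small relative to the main term once the chart scale $\delta$ is small, by the Dini condition. Hence
\[
 |\nabla u_f(x)|\ge c\,r^{2-n}\quad\text{for } r\le r_1 .
\]
Integrating, $\|\nabla u_f\|^2_{L^2(\omega_\beta)}\gtrsim\int_0^{r_1}r^{\beta+2-n}dr$, which diverges for $\beta\le n-3$, and diverges logarithmically at $\beta=n-3$ (in particular at $\beta=0$ when $n=3$).

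The main obstacle is this lower bound. We must show that curvature and the far parts of the arc do not cancel the $r^{2-n}$ singularity. Only Dini, not Hölder, control of $\psi_i'$ is available, and the dimension $n=2$ is borderline, where main term and error are both of order one. The remedy there is the smallness of the Dini tail $\int_0^\delta\vartheta(s)\,ds/s$ at small scales, together with the choice of a short chart.
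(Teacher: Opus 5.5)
Your route is the paper's in both directions. For sufficiency you use graph coordinates, the size bound, Young's inequality in the tangential variable and polar integration in $z$; for necessity you lower-bound the normal component by $|z|^{2-n}$ on an interior subarc. The sufficiency part, however, has a gap when $\Gamma$ is an open curve.

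You use $d_\Gamma(x)\approx|z|$ on every piece of the cover. Near an endpoint $p$ of $\Gamma$, the tube $U_{\rho_0}$ contains a cap of points $x=\Psi_i(t,z)$ whose tangential coordinate lies beyond that of $p$. There $d_\Gamma(x)\approx|x-p|$, which can be much larger than $|z|$. For $n\ge3$ every relevant exponent satisfies $\beta>n-3\ge0$, so on the cap the true weight $d_\Gamma^\beta$ exceeds $|z|^\beta$. Your fiberwise bound therefore does not control $\int|\nabla u_f|^2d_\Gamma^\beta\,dx$ there. Bounding the weight by a constant instead leads to $\int_0 r^{2-n}\,dr=\infty$.

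The cap needs its own estimate, and this is what the paper supplies. Put $\delta=|x-p|$, and let $s$ be the arclength from $p$ along the adjacent arc. Then $|x-y|\gtrsim\delta+s$ for $y$ on that arc. Cauchy--Schwarz gives $|\bfR^{1,n}_{ds_\Gamma}f(x)|^2\lesssim\delta^{3-2n}\|f\|_{L^2(ds_\Gamma)}^2$. Integrating against $\delta^\beta\,dx$ over the $n$-dimensional cap gives $\int_0^{\rho_0}\delta^{\beta+2-n}\,d\delta$, which is finite exactly when $\beta>n-3$. For a closed curve your sufficiency argument is complete as written.

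Your necessity argument is correct for $n\ge3$, and the obstacle you single out there is milder than you suggest. For $t$ in the middle third of the arc, the bound $|\psi_i(s)-\psi_i(t)|\le\|\psi_i'\|_\infty|s-t|$ alone makes every kernel error $O(\|\psi_i'\|_\infty|z|^{2-n})+O(1)$. This covers both the numerator and the perturbation of the denominator, so no Dini estimate is needed at that step.

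For $n=2$ there is nothing to prove: the standing hypothesis $\beta>-(n-1)$ is then $\beta>-1=n-3$. The paper accordingly proves necessity only for $n\ge3$. Your $n=2$ argument should be dropped in any case. The term $+C$ in your error bound is of the same order as the main term, which is of order one when $n=2$, so as written it does not give the claimed lower bound.
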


\begin{proof}
We first prove sufficiency.  Partition the target tube into finitely many
smaller graph boxes subordinate to enlarged graph neighbourhoods of the
kind constructed in \cref{lem:nested-arcs}.  The part of the curve that
can approach one of these boxes lies in its enlarged graph neighbourhood;
all other pieces of the curve are separated from the box.  After a rigid
motion, the curve in one such neighbourhood has the form \(y=(t,\psi(t)),\quad t\in J\), where \(\|\psi'\|_\infty\) is small.  In a smaller neighbourhood write \(x=(s,\psi(s)+z),\quad z\in\R^{n-1}\). The Jacobian of this change of variables is one.  The nested-arc
separation in \cref{lem:nested-arcs} and the small slope give \(d_\Gamma(x)\approx|z|, \quad |x-y|\approx\bigl(|s-t|^2+|z|^2\bigr)^{1/2}\). The arc-length density
\((1+|\psi'(t)|^2)^{1/2}\) is bounded above and below.

Extend \(f\) by zero in the tangential variable.  For fixed
\(z\neq0\), the kernel size estimate and Young's inequality give
\[
 \|\bfR^{1,n}_{ds_\Gamma}f(\cdot,z)\|_{L_s^2}
 \leq C\|k_{|z|}\|_{L^1}\|f\|_{L^2(ds_\Gamma)},
 \qquad
k_\rho(t):=\one_{[-C,C]}(t)(t^2+\rho^2)^{(1-n)/2}.
\]
For \(0<\rho<\rho_0\),
\[
 \|k_\rho\|_{L^1}
 \leq
 \begin{cases}
C\rho^{2-n},&n\geq3,\\
C\log(e/\rho),&n=2.
 \end{cases}
\]
When \(n\geq3\), polar coordinates in \(\R^{n-1}\) give
\[
 \int_{|z|<\rho_0}|z|^\beta\|k_{|z|}\|_{L^1}^2\,dz
 \leq C\int_0^{\rho_0}\rho^{\beta+2-n}\,d\rho.
\]
This is finite exactly when \(\beta>n-3\).  When \(n=2\), the
corresponding integral is
\[
C\int_0^{\rho_0}\rho^\beta\log^2(e/\rho)\,d\rho,
\]
which is finite exactly when \(\beta>-1=n-3\).

Near an endpoint, the region \(s\geq0\) is covered by the preceding
estimate.  On the remaining region \(s<0\), put
\(s_-=-s\) and \(\delta=(s_-^2+|z|^2)^{1/2}\).  Cauchy--Schwarz gives
\[
 \left|
 \int_0^{|J|}
 \frac{f(t)}{((s-t)^2+|z|^2)^{(n-1)/2}}\,dt
 \right|^2
 \leq C\delta^{3-2n}\|f\|_2^2.
\]
Integration in the \(n\) variables \((s_-,z)\) again gives
\[
C\int_0^{\rho_0}\delta^{\beta+2-n}\,d\delta.
\]
Contributions from nonlocal pieces of the curve have bounded kernels.
They map \(L^2(ds_\Gamma)\) to \(L^2(\omega_\beta)\) because
\(\omega_\beta(U_{\rho_0})<\infty\).  The Green remainder has the same
property.  Summing the finitely many local estimates proves sufficiency.
The argument also bounds all tangent-line truncations uniformly, since
their multipliers are bounded by a fixed constant.

For \(n=2\), the assumed local finiteness condition is already
\(\beta>-1=n-3\).  We therefore prove necessity for \(n\geq3\).
Let \(\gamma\) be an
arc-length parametrization with \(\gamma(0)=0\), and put
\(N_0:=\gamma'(0)^\perp\).  After reducing \(r>0\), the map \((s,z)\longmapsto\gamma(s)+z, \quad (s,z)\in(-r,r)\times B_{N_0}(0,r)\), is one-to-one, its Jacobian is bounded above and below, and
\(d_\Gamma(\gamma(s)+z)\approx|z|\).  To see the first assertion,
project onto the line spanned by \(\gamma'(0)\).  The resulting scalar
function of \(s\) has derivative bounded away from zero.  The distance
comparison follows because \(\gamma'(s)\) remains close to
\(\gamma'(0)\).

Put \(\tau_s:=\gamma'(s)\) and
\(z_s:=z-(z\cdot\tau_s)\tau_s\).  Then \(|z_s|\approx|z|\).  After translating
the tangent variable by \(z\cdot\tau_s\), the normal part of the tangent
line integral is
\[
 -z_s\int_{-r+O(|z|)}^{r+O(|z|)}
 \frac{dt}{(t^2+|z_s|^2)^{n/2}}.
\]
Its absolute value is comparable to \(|z|^{2-n}\); changes at the two
finite endpoints are bounded.

Choose \(f\) equal to one on this subarc and supported on a slightly
larger arc.  Uniformly in \(s\), \(\gamma(s+t)-\gamma(s)=t\tau_s+E_s(t), \ |E_s(t)|\leq |t|\vartheta_\gamma(|t|)\). The difference between the curved kernel and the tangent-line kernel is
at most
\[
C\int_0^r
 \frac{t\vartheta_\gamma(t)}{(t^2+\rho^2)^{n/2}}\,dt,
 \qquad \rho:=|z|.
\]
A dyadic decomposition at \(t=\rho\) bounds this by
\[
C\rho^{2-n}
 \left(
  \vartheta_\gamma(\rho)
  +\sum_{\substack{k\geq0\\2^k\rho<r}}
   2^{-k(n-2)}\vartheta_\gamma(2^{k+1}\rho)
 \right).
\]
Choose \(r\) so small that the expression in parentheses is smaller
than the fixed tangent-line lower bound.  The part of the source outside the smaller arc and the Green remainder
are continuous across that arc.  The singular term tends to infinity,
so \(|\nabla u_f(\gamma(s)+z)|\geq c|z|^{2-n}\) for all small \(z\) and for \(s\) in a fixed shorter interval.  Integration in these coordinates gives
\[
 \int_{U_{\rho_0}}|\nabla u_f(x)|^2d_\Gamma(x)^\beta\,dx
 \geq c\int_0^{\rho_1}
 \rho^{4-2n}\rho^\beta\rho^{n-2}\,d\rho
 =c\int_0^{\rho_1}\rho^{\beta+2-n}\,d\rho.
\]
The last integral diverges when \(\beta\leq n-3\), with logarithmic
divergence at equality.  This proves necessity.
\end{proof}

The last proof also shows where the two parts of the \(T1\) criterion
enter.  If \(Q_r\) is a cube of side length \(r\) centered at an interior
point of \(\Gamma\), then \(ds_\Gamma(Q_r)\approx r, \quad \omega_\beta(Q_r)\approx r^{\beta+n}\). Hence the local fractional \(A_2^1\) product has size \(\frac{ds_\Gamma(Q_r)\omega_\beta(Q_r)} {|Q_r|^{2(1-1/n)}} \approx r^{\beta+3-n}\). The size condition requires \(\beta\geq n-3\).  If \(n\geq3\), the
weight is locally finite at equality and the local size product remains
comparable to $1$, but forward testing fails by the logarithmic integral in the
proof.  If \(n=2\), equality means \(\beta=-1\), and the weight is not
locally finite.  Thus every locally finite power in that dimension
already satisfies the strict lower inequality.

For the observation measure on the whole space, the decay of the field
at infinity imposes a second restriction.

\begin{corollary}
\label{cor:whole-space-distance-power}
Let \(\Gamma\subset\R^n\) be a compact regular
\(C^{1,\mathrm{Dini}}\) curve, let \(d\sigma=ds_\Gamma\), let
\(\beta>-(n-1)\), and put \(d\omega_\beta(x)=d_\Gamma(x)^\beta\,dx\). The whole-space estimate \(\|\bfR^{1,n}_{ds_\Gamma}f\|_{L^2(\omega_\beta)} \leq C_\beta\|f\|_{L^2(ds_\Gamma)}\) holds for every \(f\in L^2(ds_\Gamma)\) if and only if \(n-3<\beta<n-2\). For \(n=3\), the exact range is \(0<\beta<1\).
\end{corollary}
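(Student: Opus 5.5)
The plan is to split the whole-space observation region into the bounded tube \(U_{\rho_0}\) around \(\Gamma\) and its complement. On the tube, the estimate is \cref{cor:bounded-distance-power} with the Green remainder deleted: its sufficiency proof bounds the Riesz part directly, and its necessity proof gives the pointwise lower bound \(|\bfR^{1,n}_{ds_\Gamma}f(\gamma(s)+z)|\geq c|z|^{2-n}\) for the local density \(f\). So the tube part is equivalent to \(\beta>n-3\), with logarithmic failure at \(\beta=n-3\) when \(n\geq3\). For \(n=2\), local finiteness already forces \(\beta>-1=n-3\).

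On \(\{d_\Gamma\geq\rho_0\}\) I will argue separately near and far. In the bounded part \(\rho_0\leq d_\Gamma(x)\leq R_0\) with \(R_0\geq 2\diam\Gamma\), the kernel is bounded and \(\omega_\beta\) has finite mass, so Cauchy--Schwarz gives a bounded contribution. For \(|x|\) large, \(d_\Gamma(x)\approx|x|\) and \(|\bfK^{1,n}(x,y)|\leq C|x|^{1-n}\). Cauchy--Schwarz then gives
\[
|\bfR^{1,n}_{ds_\Gamma}f(x)|\leq C|x|^{1-n}|\Gamma|^{1/2}\|f\|_{L^2(ds_\Gamma)},
\]
and the far part is controlled by \(\int_{R_0}^\infty \rho^{2-2n+\beta}\rho^{n-1}\,d\rho=\int_{R_0}^\infty\rho^{\beta+1-n}\,d\rho\). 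This integral is finite exactly when \(\beta<n-2\). Sufficiency for \(n-3<\beta<n-2\) follows by adding the three regions.

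For necessity of \(\beta<n-2\), I take \(f=\one_\Gamma\). Then \(\bfR^{1,n}_{ds_\Gamma}f(x)=c_{1,n}\int_\Gamma (y-x)|y-x|^{-n}\,ds(y)\). For \(|x|\to\infty\), expanding \(y-x=-x+O(\diam\Gamma)\) gives
\[
\bfR^{1,n}_{ds_\Gamma}f(x)=-c_{1,n}|\Gamma|\,x|x|^{-n}+O(|x|^{-n}),
\]
so \(|\bfR^{1,n}_{ds_\Gamma}f(x)|\geq c|x|^{1-n}\) for \(|x|\geq R_1\). The weighted integral then dominates \(c\int_{R_1}^\infty\rho^{\beta+1-n}\,d\rho\), which diverges when \(\beta\geq n-2\), logarithmically at \(\beta=n-2\). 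Combined with the tube necessity, this gives the exact range \(n-3<\beta<n-2\), which is \(0<\beta<1\) for \(n=3\).

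The step needing the most care is reusing the tube lower bound from \cref{cor:bounded-distance-power} without the Green remainder. That argument needs the contribution of the source off the small arc to stay bounded near the arc, and this holds here as well, since that contribution is the Riesz transform of a measure at positive distance. I will also check that the tube-necessity and far-field-necessity test functions are each admissible on their own, so neither requires interaction between the two regimes. The interval \(n-3<\beta<n-2\) is nonempty, so no further compatibility condition arises. Uniformity over tangent-line truncations follows as in \cref{cor:bounded-distance-power}, because the multipliers are bounded.
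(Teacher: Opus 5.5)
Your proposal is correct and follows the paper's proof essentially step for step. The local range $\beta>n-3$ comes from \cref{cor:bounded-distance-power}. The far field is handled by the bound $|\bfR^{1,n}_{ds_\Gamma}f(x)|\leq C|x|^{1-n}|\Gamma|^{1/2}\|f\|_{L^2(ds_\Gamma)}$, so the weighted integral at infinity is finite if and only if $\beta<n-2$. Necessity at infinity uses $f=1$ and the nonvanishing leading term $-c_{1,n}|\Gamma|\,x|x|^{-n}$. Your remark that the off-arc source has a Riesz transform bounded near the arc (it is a measure at positive distance) correctly justifies reusing the tube lower bound without the Green remainder.
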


\begin{proof}
The proof of \cref{cor:bounded-distance-power} gives the exact local
condition \(\beta>n-3\).  On a bounded set separated from \(\Gamma\),
the kernel and the weight are bounded.  Write
$|\Gamma|=\mathcal H^1(\Gamma)$ and put
\(M_\Gamma:=\sup_{y\in\Gamma}|y|\).  If \(|x|>2M_\Gamma+1\), then
\(d_\Gamma(x)\approx|x|\) and
\[
 |\bfR^{1,n}_{ds_\Gamma}f(x)|
 \leq C|x|^{1-n}
 \int_\Gamma|f|\,ds_\Gamma
 \leq C|x|^{1-n}|\Gamma|^{1/2}\|f\|_2.
\]
The square of this bound is integrable at infinity with respect to
\(d_\Gamma(x)^\beta dx\) exactly when
\[
 \int_1^\infty r^{\beta+1-n}\,dr<\infty,
\]
or \(\beta<n-2\).  This proves sufficiency.

For necessity at infinity, take \(f=1\).  A first-order expansion of the
kernel gives \(\bfR^{1,n}_{ds_\Gamma}1(x) =-c_{1,n}|\Gamma|\frac{x}{|x|^n}+O(|x|^{-n}) \quad\text{as }|x|\to\infty\). The leading term is nonzero.  Its weighted square integral diverges when
\(\beta\geq n-2\), logarithmically at equality.  For \(n\geq3\), the local lower bound in
the preceding proof gives necessity of \(\beta>n-3\).  For \(n=2\),
this condition follows from the assumed local finiteness.
\end{proof}

For large cubes \(Q_R\) containing \(\Gamma\), \(ds_\Gamma(Q_R)\approx|\Gamma|, \quad \omega_\beta(Q_R)\approx R^{\beta+n}\). The corresponding local \(A_2^1\) product is comparable to
\(R^{\beta-n+2}\).  Together with the small-cube calculation, these
local products permit \(n-3\leq\beta\leq n-2\).  In dimension two, the
lower endpoint is excluded already by local finiteness.  At every
remaining endpoint, forward testing diverges logarithmically.  At the
upper endpoint, there is also a tail obstruction.  Indeed, for a large
cube \(Q_R\), when \(|x|\) is larger than a fixed multiple of \(R\),
we have \(\dist(x,\Gamma)\approx|x|\), and the reproducing Poisson
kernel is comparable to
\(R^{n-1}|x|^{-2(n-1)}\).  Polar coordinates show that the dual
one-tailed Poisson expression contains
\[
R^{n-1}\int_R^\infty r^{\beta-n+1}\,dr,
\]
which diverges logarithmically when \(\beta=n-2\).  Thus the one-tailed and testing conditions exclude the endpoints
allowed by the local size products.

\subsection{The Biot--Savart field}

We finish with a closely related singular integral in three dimensions.
For a vector density \(F\in L^2(\sigma;\R^3)\), define the uniformly
truncated Biot--Savart field by
\[
 \mathcal B_{\varepsilon,R,\sigma}F(x)
 :=\frac{1}{4\pi}\int_\Gamma
t_{\varepsilon,R}(x,y)
 \frac{F(y)\times(x-y)}{|x-y|^3}\,d\sigma(y),
\]
with value zero on the diagonal.  Biot--Savart integrals over curves
occur for vortex filaments and line currents; see \cite{Polanco2025}.
Here \(t_{\varepsilon,R}\) is the profile for \(\alpha=1\) and \(n=3\).

\begin{corollary}
\label{cor:Biot-Savart}
Let \(\Gamma\subset\R^3\) be a compact regular
\(C^{1,\mathrm{Dini}}\) curve, let \(\sigma\) be a finite positive Borel
measure supported on \(\Gamma\), and let \(\omega\) be a locally finite
positive Borel measure on \(\R^3\).  Then
\[
 \sup_{0<\varepsilon<R}
 \|\mathcal B_{\varepsilon,R,\sigma}F\|_{L^2(\omega;\R^3)}
 \leq C\|F\|_{L^2(\sigma;\R^3)}
\]
holds for every vector density \(F\) if and only if the four
Muckenhoupt conditions and the forward and dual Riesz testing conditions
in \cref{thm:main}, with \(\alpha=1\), hold for \((\sigma,\omega)\).
The Biot--Savart operator norm is comparable to
\(\mathfrak N_{\bfR^{1,3}}\).
\end{corollary}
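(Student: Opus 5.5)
The plan is to write the Biot--Savart kernel as a fixed linear combination of scalar Riesz components acting on the components of \(F\), and then compare norms in both directions with \cref{thm:main}.

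For \(\alpha=1\), \(n=3\) we have \(\bfK^{1,3}(x,y)=c_{1,3}(y-x)/|y-x|^3\). Hence
\[
 F(y)\times(x-y)=-F(y)\times(y-x),
\]
so the \(k\)-th component of \(\mathcal B_{\varepsilon,R,\sigma}F\) is
\[
 -\frac{1}{4\pi c_{1,3}}\sum_{i,j}\epsilon_{kij}\,
 R_{j,\varepsilon,R,\sigma}^{1,3}F_i .
\]
The truncation multiplier \(t_{\varepsilon,R}\) is the same scalar factor in both operators, so this identity holds for every truncation.

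\emph{Upper bound.} Each of the nine terms is a scalar tangent-line truncated Riesz component applied to a scalar density \(F_i\in L^2(\sigma)\). Its norm is at most \(\mathfrak N_{\bfR^{1,3}}\), because the components satisfy \(\|R_{j,\sigma}g\|\le\|\bfR_\sigma g\|\). It follows that
\[
 \sup_{\varepsilon,R}\|\mathcal B_{\varepsilon,R,\sigma}\|\le C\,\mathfrak N_{\bfR^{1,3}}.
\]
This gives sufficiency by \cref{thm:main}.

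\emph{Lower bound, choice of densities.} I recover \(\bfR^{1,3}\) from \(\mathcal B\) by testing against densities parallel to fixed coordinate vectors. Take \(F=g\,e_i\) with scalar \(g\). Then \(\mathcal B(ge_i)\) is, up to constants, \(e_i\times\bfR_\sigma^{1,3}g\), so its norm equals that of the part of \(\bfR_\sigma g\) orthogonal to \(e_i\). Summing over \(i=1,2,3\) gives
\[
 \sum_i|e_i\times v|^2=2|v|^2 .
\]
Therefore
\[
 2\|\bfR_{\varepsilon,R,\sigma}^{1,3}g\|_{L^2(\omega)}^2
 = C\sum_i\|\mathcal B_{\varepsilon,R,\sigma}(ge_i)\|_{L^2(\omega)}^2
 \le C\,\mathfrak N_{\mathcal B}^2\|g\|_{L^2(\sigma)}^2 .
\]
So \(\mathfrak N_{\bfR^{1,3}}\lesssim\mathfrak N_{\mathcal B}\), with the same truncation family on both sides. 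Combined with the necessity half of \cref{thm:main}, this yields the four Muckenhoupt conditions and the testing conditions.

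The point I expect to need care is the convention for truncations and norms. Both operators use the same tangent-line profile \(t_{\varepsilon,R}\) with zero diagonal value, so common point masses cause no discrepancy, and the supremum over \(0<\varepsilon<R\) passes through the pointwise identities. The norm of \(\bfR^{1,3}\) in \cref{thm:main} is the tangent-line norm, so no comparison lemma is needed. Finiteness of \(\sigma\) is only used so that \(\one\) and the test densities lie in \(L^2(\sigma)\). The cross product with a unit vector is a linear pointwise operation, so the identity \(\sum_i|e_i\times v|^2=2|v|^2\) suffices, and no energy or geometric argument is required.
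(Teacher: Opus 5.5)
Your proposal is correct and matches the paper's proof: the paper also writes \(\mathcal B_\sigma F=c_B\sum_{k}e_k\times\mathbf R_\sigma F_k\) to bound \(\mathcal B\) by the Riesz norm, and tests on densities \(fe_k\) using \(\sum_k|e_k\times v|^2=2|v|^2\) for the reverse bound. In both arguments the same tangent-line truncations appear on each side, and the conclusion then follows from \cref{thm:main}.
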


\begin{proof}
Fix $0<\varepsilon<R$, and write $\mathcal B_\sigma$ and
$\mathbf R_\sigma$ for the corresponding tangent-line truncated
Biot--Savart and $1$-fractional Riesz operators in $\R^3$.
Because the Riesz numerator is \(y-x\), while the Biot--Savart numerator is
\(F(y)\times(x-y)\), there is a nonzero dimensional constant \(c_B\)
such that, if \(F=\sum_{k=1}^3F_ke_k\), then
\[
 \mathcal B_\sigma F
 =c_B\sum_{k=1}^3e_k\times\mathbf R_\sigma F_k.
\]
Thus
\(\|\mathcal B_\sigma\|\leq
|c_B|\sqrt3\|\mathbf R_\sigma\|\).  Conversely,
apply \(\mathcal B_\sigma\) to \(fe_k\).  Since
\[
 \mathcal B_\sigma(fe_k)=c_Be_k\times\mathbf R_\sigma f,
 \qquad
 \sum_{k=1}^3|e_k\times v|^2=2|v|^2,
\]
we obtain \(2|c_B|^2\|\mathbf R_\sigma f\|_{L^2(\omega)}^2 \leq3\|\mathcal B_\sigma\|^2\|f\|_{L^2(\sigma)}^2\). These comparisons hold uniformly for the same truncations.  The result
now follows from \cref{thm:main}.
\end{proof}

In particular, if \(d\sigma=ds_\Gamma\) and
\(d\omega_\beta=d_\Gamma^\beta dx\), then
\cref{cor:whole-space-distance-power,cor:Biot-Savart} give the exact
range \(0<\beta<1\) in \(\R^3\).  A physical filament density has the
form \(F=f\tau\), where \(\tau\) is the unit tangent.  The upper estimate
for such densities follows at once from \cref{cor:Biot-Savart}.  The
two-weight equivalence in that corollary is stated for all vector
densities; for the restricted family \(f\tau\), the cross product alone
does not recover every component of the Riesz vector.

\section*{Acknowledgements}

Ji Li is supported by the Australian Research Council through grant
DP260100485.  Chun-Yen Shen is supported in part by the National Science
and Technology Council of Taiwan through grant 111-2115-M-002-010-MY5.
Eric T. Sawyer is supported in part by the Natural Sciences and Engineering
Research Council of Canada.  Brett D. Wick is partially supported by the
National Science Foundation under grant DMS-2349868 and by the Australian
Research Council under grant DP260201083.


\begin{thebibliography}{99}


\bibitem{DavidJourne}
G.~David and J.-L. Journ\'e,
\emph{A boundedness criterion for generalized Calder\'on--Zygmund
operators},
Ann. of Math. (2) \textbf{120} (1984), no.~2, 371--397.

\bibitem{GjerdeKumarNordbotten2021}
I.~G. Gjerde, K.~Kumar and J.~M. Nordbotten,
\emph{A mixed approach to the Poisson problem with line sources},
SIAM J. Numer. Anal. \textbf{59} (2021), no.~2, 1117--1139.

\bibitem{GjerdeKumarNordbottenWohlmuth2019}
I.~G. Gjerde, K.~Kumar, J.~M. Nordbotten and B.~Wohlmuth,
\emph{Splitting method for elliptic equations with line sources},
ESAIM Math. Model. Numer. Anal. \textbf{53} (2019), no.~5,
1715--1739.

\bibitem{HytonenHilbert}
T.~Hyt\"onen,
\emph{The two-weight inequality for the Hilbert transform with general
measures},
Proc. Lond. Math. Soc. (3) \textbf{117} (2018), no.~2, 483--526.


\bibitem{DukeII}
M.~T. Lacey,
\emph{Two-weight inequality for the Hilbert transform:
A real variable characterization, II},
Duke Math. J. \textbf{163} (2014), no.~15, 2821--2840.

\bibitem{HilbertEnergy}
M.~T. Lacey, E.~T. Sawyer and I.~Uriarte-Tuero,
\emph{A two weight inequality for the Hilbert transform assuming
an energy hypothesis},
J. Funct. Anal. \textbf{263} (2012), no.~2, 305--363.

\bibitem{DukeI}
M.~T. Lacey, E.~T. Sawyer, C.-Y. Shen and I.~Uriarte-Tuero,
\emph{Two-weight inequality for the Hilbert transform:
A real variable characterization, I},
Duke Math. J. \textbf{163} (2014), no.~15, 2795--2820.


\bibitem{Hardy}
B.~Muckenhoupt,
\emph{Hardy's inequality with weights},
Studia Math. \textbf{44} (1972), 31--38.


\bibitem{NTVHilbert}
F.~Nazarov, S.~Treil and A.~Volberg,
\emph{Two weight estimate for the Hilbert transform and corona
decomposition for non-doubling measures},
preprint, arXiv:1003.1596.

\bibitem{Ojea2024}
I.~Ojea,
\emph{Anisotropic regularity for elliptic problems with Dirac measures
as data},
J. Math. Anal. Appl. \textbf{535} (2024), no.~1,
Paper No.~128104.

\bibitem{Polanco2025}
J.~I. Polanco,
\emph{Fast and accurate evaluation of Biot--Savart integrals over
spatial curves in periodic domains},
SIAM J. Sci. Comput. \textbf{47} (2025), no.~4, B822--B845.

\bibitem{SawyerMaximal}
E.~T. Sawyer,
\emph{A characterization of a two-weight norm inequality for maximal
operators},
Studia Math. \textbf{75} (1982), no.~1, 1--11.

\bibitem{SawyerFractional}
E.~T. Sawyer,
\emph{A characterization of two weight norm inequalities for fractional
and Poisson integrals},
Trans. Amer. Math. Soc. \textbf{308} (1988), no.~2, 533--545.

\bibitem{FailureEnergy}
E.~T. Sawyer, C.-Y. Shen and I.~Uriarte-Tuero,
\emph{A note on failure of energy reversal for classical fractional
singular integrals},
Int. Math. Res. Not. IMRN (2015), no.~19, 9888--9920.


\bibitem{EnergyCommon}
E.~T. Sawyer, C.-Y. Shen and I.~Uriarte-Tuero,
\emph{A two weight theorem for fractional singular integrals with an
energy side condition, quasicube testing and common point masses},
Preprint, \href{https://arxiv.org/abs/1505.07816v3}{arXiv:1505.07816v3}, 2016.


\bibitem{CurvePaper}
E.~T. Sawyer, C.-Y. Shen and I. Uriarte-Tuero,
\emph{The two weight $T1$ theorem for fractional Riesz transforms when one
measure is supported on a curve},
J. Anal. Math. \textbf{142} (2020), no.~2, 453--520.

\bibitem{GoodLambda}
E.~T. Sawyer, C.-Y. Shen and I. Uriarte-Tuero,
\emph{A good-$\lambda$ lemma, two weight $T1$ theorems without weak
boundedness, and a two weight accretive global $Tb$ theorem}, in
\emph{Harmonic Analysis, Partial Differential Equations and Applications},
Appl. Numer. Harmon. Anal., Birkh\"auser, Cham, 2017, 125--164.


\end{thebibliography}
\end{document}